\documentclass[a4paper]{amsart}

\usepackage{enumerate, amsmath, amsfonts, amssymb, amsthm, wasysym, graphics, graphicx, xcolor, url, hyperref, hypcap, a4wide, xargs, multicol, overpic, pdflscape, multirow, hvfloat, minibox, accents, array, xifthen}
\hypersetup{colorlinks=true, citecolor=darkblue, linkcolor=darkblue}
\usepackage[all]{xy}
\usepackage[bottom]{footmisc}
\usepackage{tikz}\usetikzlibrary{trees,snakes,shapes,arrows,matrix,calc}
\usepackage{pgfplots}
\usepgfplotslibrary{polar}
\graphicspath{{figures/}{3dimensionalCompatibilityFansLabeled/}}
\makeatletter
\def\input@path{{figures/}}
\makeatother


\title{Compatibility fans for graphical nested complexes}

\thanks{VP was partially supported by the Spanish MICINN grant MTM2011-22792 and by the French ANR grants EGOS~(12\,JS02\,002\,01) and SC3A~(15\,CE40\,0004\,01).}

\author{Thibault Manneville}
\address[TM]{LIX, \'Ecole Polytechnique, Palaiseau}
\email{thibault.manneville@lix.polytechnique.fr}
\urladdr{http://www.lix.polytechnique.fr/~manneville/}

\author{Vincent Pilaud}
\address[VP]{CNRS \& LIX, \'Ecole Polytechnique, Palaiseau}
\email{vincent.pilaud@lix.polytechnique.fr}
\urladdr{http://www.lix.polytechnique.fr/~pilaud/}


\newtheorem{theorem}{Theorem}
\newtheorem{corollary}[theorem]{Corollary}
\newtheorem{proposition}[theorem]{Proposition}
\newtheorem{lemma}[theorem]{Lemma}
\newtheorem{definition}[theorem]{Definition}
\newtheorem{conjecture}[theorem]{Conjecture}

\theoremstyle{definition}
\newtheorem{example}[theorem]{Example}
\newtheorem{remark}[theorem]{Remark}
\newtheorem{question}[theorem]{Question}

\newcommand{\R}{\mathbb{R}} 
\newcommand{\N}{\mathbb{N}} 
\newcommand{\HH}{\mathbb{H}} 
\newcommand{\cA}{\mathcal{A}} 
\newcommand{\cF}{\mathcal{F}} 
\newcommand{\cG}{\mathcal{G}} 
\newcommand{\cD}{\mathcal{D}} 
\renewcommand{\b}[1]{\mathbf{#1}} 

\newcommand{\set}[2]{\left\{ #1 \;\middle|\; #2 \right\}} 
\newcommand{\bigset}[2]{\big\{ #1 \;|\; #2 \big\}} 
\newcommand{\biggset}[2]{\bigg\{ #1 \;\bigg|\; #2 \bigg\}} 
\newcommand{\ssm}{\smallsetminus} 
\newcommand{\dotprod}[2]{\langle \, #1 \; | \; #2 \, \rangle} 
\newcommand{\symdif}{\,\triangle\,} 
\newcommand{\eqdef}{\mbox{\,\raisebox{0.2ex}{\scriptsize\ensuremath{\mathrm:}}\ensuremath{=}\,}} 
\newcommand{\defeq}{\mbox{~\ensuremath{=}\raisebox{0.2ex}{\scriptsize\ensuremath{\mathrm:}} }} 
\newcommand{\polar}{^\diamond} 
\newcommand{\HS}{\b{H}^\ge} 

\newcommand{\Asso}{\mathsf{Asso}} 
\newcommand{\Nest}{\mathsf{Nest}} 

\newcommand{\ground}{\mathrm{V}} 
\newcommandx{\graphG}[1][1=G]{\mathrm{#1}} 
\newcommand{\pathG}{\graphG[P]} 
\newcommand{\cycleG}{\graphG[O]} 
\newcommand{\completeG}{\graphG[K]} 
\newcommand{\starG}{\graphG[X]} 
\newcommand{\spiderG}{\mathfrak{X}} 
\newcommand{\octopusG}{\mathcal{X}} 

\newcommandx{\tube}[1][1=t]{\mathsf{#1}} 
\newcommandx{\tubing}[1][1=T]{\mathsf{#1}} 
\newcommandx{\spine}[1][1=S]{\mathsf{#1}} 
\newcommand{\lab}{\lambda} 
\newcommand{\nestedComplex}{\mathcal{N}} 
\newcommand{\compatibilityDegree}[2]{(#1\,\|\,#2)} 
\newcommand{\biggCompatibilityDegree}[2]{\bigg(#1\,\bigg\|\,#2\bigg)} 
\newcommand{\compatibilityVector}[2]{\b{d}(#1, #2)} 
\newcommand{\compatibilityMatrix}[2]{\b{d}(#1, #2)} 
\newcommand{\compatibilityFan}[2]{\cD(#1, #2)} 
\newcommand{\dualCompatibilityVector}[2]{\b{d}^*(#1, #2)} 
\newcommand{\dualCompatibilityMatrix}[2]{\b{d}^*(#1, #2)} 
\newcommand{\dualCompatibilityFan}[2]{\cD^*(#1, #2)} 
\newcommand{\connectedComponents}{\kappa} 
\newcommand{\tsup}{\overline{\tube}}
\newcommand{\tinf}{\underline{\tube}}
\newcommand{\squareTube}[1]{{#1}\design}
\newcommand{\rsup}{\overline{r}}
\newcommand{\ninf}{\underline{n}}
\newcommand{\rot}{\circlearrowright} 
\newcommand{\rev}{\leftrightarrow} 
\newcommand{\design}{^{\protect\scalebox{0.5}{$\square$}}} 
\newcommand{\designNestedComplex}{\mathcal{N}\design} 
\newcommand{\designCompatibilityFan}[2]{\cD\design(#1, #2)} 
\newcommand{\transpose}[1]{#1^t} 

\DeclareMathOperator{\vect}{vect} 

\newcommand{\fref}[1]{Figure~\ref{#1}} 
\newcommand{\ie}{\textit{i.e.}~} 
\newcommand{\eg}{\textit{e.g.}~} 
\newcommand{\ex}{_{\textrm{ex}}} 
\definecolor{darkblue}{rgb}{0,0,0.7} 
\definecolor{green}{RGB}{57,181,74} 
\newcommand{\darkblue}{\color{darkblue}} 
\newcommand{\defn}[1]{\emph{\darkblue #1}} 
\usepackage{todonotes}

\newcommand{\para}[1]{\medskip\noindent\textbf{#1.}} 


\begin{document}

\begin{abstract}
Graph associahedra are natural generalizations of the classical associahedra. They provide polytopal realizations of the nested complex of a graph~$\graphG$, defined as the simplicial complex whose vertices are the tubes (\ie connected induced subgraphs) of~$\graphG$ and whose faces are the tubings (\ie collections of pairwise nested or non-adjacent tubes) of~$\graphG$. The constructions of M.~Carr and S.~Devadoss, of A.~Postnikov, and of A.~Zelevinsky for graph associahedra are all based on the nested fan which coarsens the normal fan of the permutahedron. In view of the combinatorial and geometric variety of simplicial fan realizations of the classical associahedra, it is tempting to search for alternative fans realizing graphical nested complexes.

Motivated by the analogy between finite type cluster complexes and graphical nested complexes, we transpose in this paper S.~Fomin and A.~Zelevinsky's construction of compatibility fans from the former to the latter setting. For this, we define a compatibility degree between two tubes of a graph~$\graphG$. Our main result asserts that the compatibility vectors of all tubes of~$\graphG$ with respect to an arbitrary maximal tubing on~$\graphG$ support a complete simplicial fan realizing the nested complex of~$\graphG$. In particular, when the graph~$\graphG$ is reduced to a path, our compatibility degree lies in~$\{-1,0,1\}$ and we recover F.~Santos' Catalan many simplicial fan realizations of the associahedron.

\medskip
\noindent
\textsc{keywords.} Graph associahedra $\cdot$ finite type cluster algebras $\cdot$ compatibility degrees $\cdot$ compatibility fans.

\medskip
\noindent
\textsc{MSC classes.} 52B11, 52B12, 05E45.
\end{abstract}

\vspace*{.1cm}

\maketitle

\vspace{-.7cm}


\section{Introduction}

\para{Associahedra}
The $n$-dimensional \defn{associahedron} is a simple polytope whose $\frac{1}{n+2}\binom{2n+2}{n+1}$ vertices correspond to Catalan objects (triangulations of an $(n+3)$-gon, binary trees on $n+1$ nodes, ...) and whose edges correspond to mutations between them (diagonal flips, edge rotations, ...). Its combinatorial structure appeared in early works of D.~Tamari~\cite{Tamari} and \mbox{J.~Stasheff~\cite{Stasheff}}, and was first realized as a convex polytope by M.~Haiman~\cite{Haiman} and C.~Lee~\cite{Lee}. Since then, the associahedron has motivated a flourishing research trend with rich connections to combinatorics, geometry and algebra: polytopal constructions~\cite{Loday, HohlwegLange, CeballosSantosZiegler, LangePilaud}, Tamari and Cambrian lattices~\cite{TamariFestschrift, Reading-latticeCongruences, Reading-CambrianLattices}, diameter and Hamiltonicity~\cite{SleatorTarjanThurston, Dehornoy, Pournin, HurtadoNoy}, geometric properties~\cite{BergeronHohlwegLangeThomas, HohlwegLortieRaymond, PilaudStump-barycenter}, combinatorial Hopf algebras \cite{LodayRonco, HivertNovelliThibon-algebraBinarySearchTrees, Chapoton, ChatelPilaud}, to cite a few. The associahedron was also generalized in several directions, in particular to secondary and fiber polytopes~\cite{GelfandKapranovZelevinsky, BilleraFillimanSturmfels}, graph associahedra and nestohedra~\cite{CarrDevadoss, Devadoss, Postnikov, FeichtnerSturmfels, Zelevinsky, Pilaud}, pseudotriangulation polytopes~\cite{RoteSantosStreinu-polytope}, cluster complexes and generalized associahedra~\cite{FominZelevinsky-YSystems, ChapotonFominZelevinsky, HohlwegLangeThomas, Stella, Hohlweg}, and brick polytopes~\cite{PilaudSantos-brickPolytope, PilaudStump-brickPolytope}.

\para{Graph associahedra}
This paper deals with graph associahedra, which were defined by M.~Carr and S.~Devadoss~\cite{CarrDevadoss} in connection to C.~De Concini and C.~Procesi's wonderful arrangements~\cite{DeConciniProcesi}. Given a simple graph~$\graphG$ with~$\connectedComponents$ connected components and~$n+\connectedComponents$ vertices, the \defn{$\graphG$-associahedron}~$\Asso(\graphG)$ is an $n$-dimensional simple polytope whose combinatorial structure encodes the connected subgraphs of~$\graphG$ and their nested structure. More precisely, the face lattice of the polar of the \mbox{$\graphG$-associahedron} is isomorphic to the \defn{nested complex}~$\Nest(\graphG)$ on~$\graphG$, defined as the simplicial complex of all collections of tubes (connected induced subgraphs) of~$\graphG$ which are pairwise compatible (either nested, or disjoint and non-adjacent). As illustrated in Figures~\ref{fig:associahedron}, \ref{fig:cyclohedron} and~\ref{fig:permutahedron}, the graph associahedra of certain special families of graphs happen to coincide with well-known families of polytopes: classical associahedra are path associahedra, cyclohedra are cycle associahedra, and permutahedra are complete graph associahedra. The graph associahedra were extended to the \defn{nestohedra}, which are simple polytopes realizing the nested complex of arbitrary building sets~\cite{Postnikov, FeichtnerSturmfels}. Graph associahedra and nestohedra have been geometrically realized in different ways: by successive truncations of faces of the standard simplex~\cite{CarrDevadoss}, as Minkowski sums of faces of the standard simplex~\cite{Postnikov, FeichtnerSturmfels}, or from their normal fans by exhibiting explicit inequality descriptions~\cite{Devadoss, Zelevinsky}. For a given graph~$\graphG$, the resulting polytopes all have the same normal fan which coarsens the type~$A$ Coxeter arrangement: its rays are the characteristic vectors of the tubes, and its cones are generated by characteristic vectors of compatible tubes. Alternative realizations of graph associahedra with different normal fans are obtained by successive truncations of faces of the cube in~\cite{Volodin, DevadossForceyReisdorfShowers}. The objective of this paper is to provide a new unrelated family of complete simplicial fans realizing the graphical nested complex~$\Nest(\graphG)$ for any graph~$\graphG$.
\begin{figure}[p]
  \capstart
  \centerline{\includegraphics[scale=1.13]{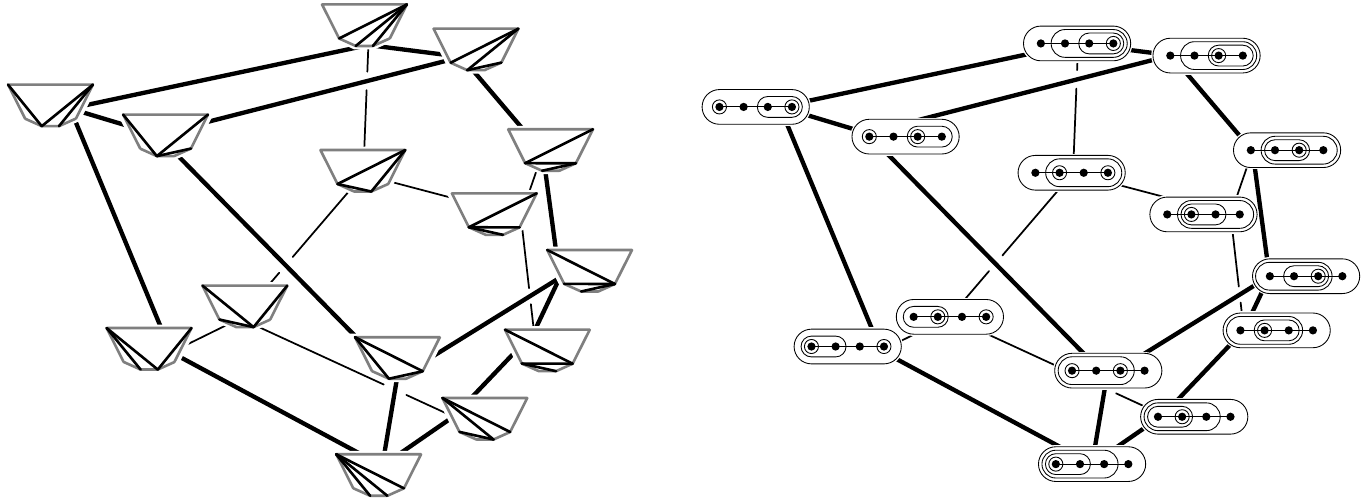}}
  \caption{The classical associahedron (left) is the path associahedron (right).}
  \label{fig:associahedron}
\end{figure}
\begin{figure}[p]
  \capstart
  \centerline{\includegraphics[scale=1.13]{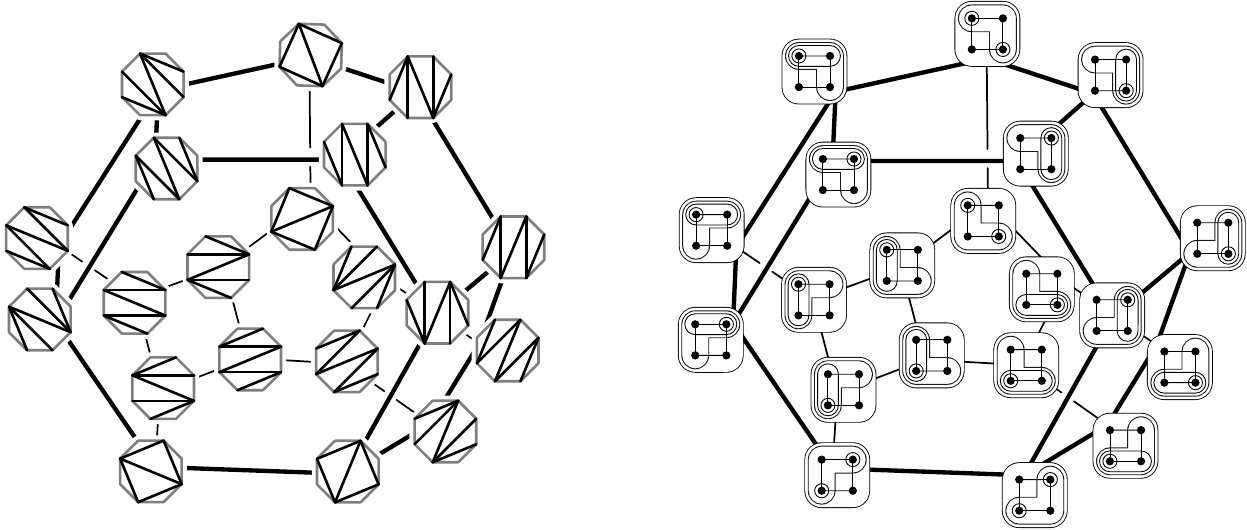}}
  \caption{The cyclohedron (left) is the cycle associahedron (right).}
  \label{fig:cyclohedron}
\end{figure}
\begin{figure}[p]
  \capstart
  \centerline{\includegraphics[scale=1.13]{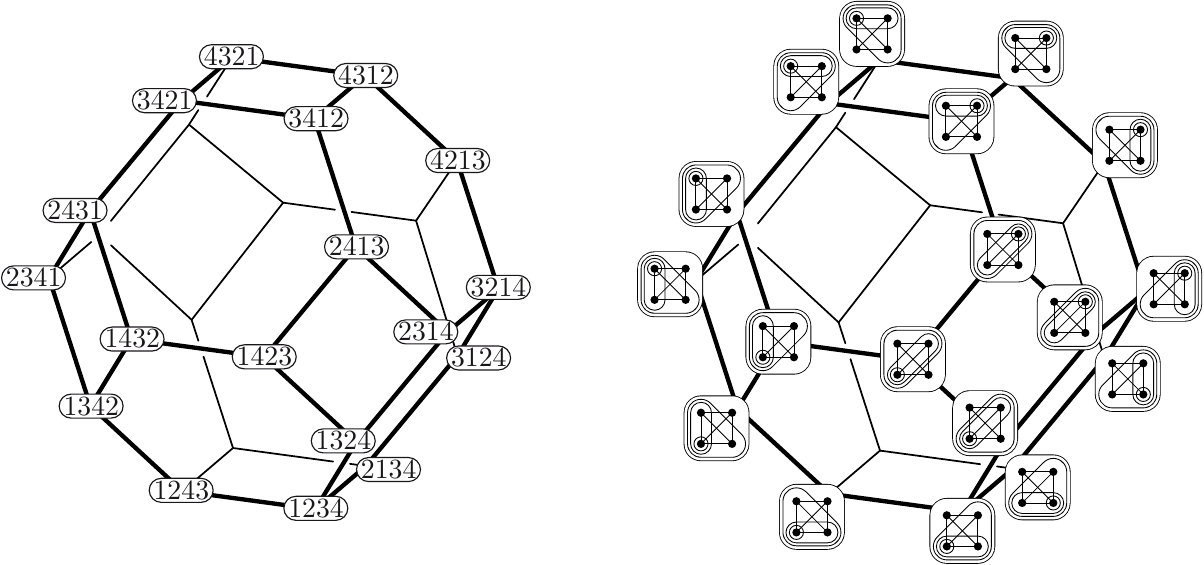}}
  \caption{The permutahedron (left) is the complete graph associahedron (right).}
  \label{fig:permutahedron}
\end{figure}

\para{Cluster algebras and cluster fans} 
Our construction is directly inspired from combinatorial and geometric properties of finite type cluster algebras and generalized associahedra introduced by S.~Fomin and A.~Zelevinsky in~\cite{FominZelevinsky-ClusterAlgebrasI, FominZelevinsky-ClusterAlgebrasII, FominZelevinsky-YSystems}. Note that A.~Zelevinsky~\cite{Zelevinsky} already underlined the closed connection between nested complexes, nested fans, and nestohedra on one hand and cluster complexes, cluster fans and generalized associahedra on the other hand. More recently, T.~Lam and P.~Pylyavskyy established an even deeper connection with linear Laurent Phenomenon algebras~\cite{LamPylyavskyy-LaurentPhenomenonAlgebras, LamPylyavskyy-LinearLaurentPhenomenonAlgebras}, see the discussion in Section~\ref{subsec:LPA}. In this paper, we use ideas from cluster algebras to obtain results on graphical nested complexes, which in turn translate to relevant properties of the geometry of finite type cluster algebras.

\defn{Cluster algebras} are commutative rings generated by a set of \defn{cluster variables} grouped into overlapping \defn{clusters}. The clusters are obtained from an initial cluster~$X^\circ$ by a mutation process. Each mutation exchanges a single variable in a cluster according to a formula controlled by a combinatorial object. We refer to~\cite{FominZelevinsky-ClusterAlgebrasI} since precise details on this mutation are not needed here. Two cluster variables are \defn{compatible} if they belong to a same cluster and \defn{exchangeable} if they are not compatible and belong to two clusters connected by a mutation. The \defn{cluster complex} of a cluster algebra~$\cA$ is the pure simplicial complex whose vertices are cluster variables of~$\cA$ and whose facets are the clusters of~$\cA$. A cluster algebra is of \defn{finite type} if it has finitely many cluster variables, and thus if its cluster complex is finite. Finite type cluster algebras were classified in~\cite{FominZelevinsky-ClusterAlgebrasII}: up to isomorphism, there is one finite type cluster algebra for each finite crystallographic root system.

The Laurent Phenomenon~\cite{FominZelevinsky-ClusterAlgebrasI} asserts that each cluster variable~$x$ can be expressed as a Laurent polynomial in terms of the cluster variables~$x_1^\circ, \dots, x_n^\circ$ of the initial cluster~$X^\circ$. The \defn{$\b{d}$-vector} of~$x$ with respect to~$X^\circ$ is the vector~$\b{d}(X^\circ, x)$ whose $i$th coordinate is the exponent of the initial variable~$x_i^\circ$ in the denominator of~$x$. In finite type, this exponent was also interpreted in~\cite{FominZelevinsky-YSystems, CeballosPilaud} as the \defn{compatibility degree}~$\compatibilityDegree{x_i^\circ}{x}$ between the cluster variables~$x_i^\circ$ and~$x$. The compatiblity degree~$\compatibilityDegree{\cdot}{\cdot}$ has the following properties: for any distinct cluster variables~$x$ and~$x'$, we have~$\compatibilityDegree{x}{x'} \ge 0$ with equality if and only if~$x$ and~$x'$ are compatible, and~$\compatibilityDegree{x}{x'} = 1 = \compatibilityDegree{x'}{x}$ if and only if $x$ and~$x'$ are exchangeable. The $\b{d}$-vectors can be used to construct a simplicial fan realization of the cluster complex, called \defn{$\b{d}$-vector fan}: it is known for certain initial clusters in finite type cluster algebras, that the cones generated by the $\b{d}$-vectors of all collections of compatible cluster variables form a complete simplicial fan realizing the cluster complex. This is proved by S.~Fomin and A.~Zelevinsky~\cite{FominZelevinsky-ClusterAlgebrasII} for the bipartite initial cluster, by S.~Stella~\cite{Stella} for all acyclic initial clusters, and by F.~Santos~\cite[Section~5]{CeballosSantosZiegler} for any initial cluster in type~$A$. In fact, we expect this property to hold for any initial cluster, acyclic or not, of any finite type cluster algebra: this paper proves it for types~$A$, $B$, and~$C$ (our general proof provides as a particular case a new proof in type~$A$, similar to that of~\cite[Section~5]{CeballosSantosZiegler}), and the other finite types are investigated in a current project of the authors in collaboration with C.~Ceballos.

There is another complete simplicial fan realizing the cluster complex, whose rays are now given by the \defn{$\b{g}$-vectors} of the cluster variables, defined as the multi-degrees of the cluster variables expressed in the cluster algebra with principal coefficients~\cite{FominZelevinsky-ClusterAlgebrasIV}. The fact that the cones generated by the $\b{g}$-vectors of all collections of compatible cluster variables form a complete simplicial fan is a consequence of~\cite{Reading-UniversalClusterAlgebra}. When the initial cluster is acyclic, the resulting $\b{g}$-vector fan is the \defn{Cambrian fan} of N.~Reading and D.~Speyer~\cite{ReadingSpeyer} and it coarsens the Coxeter fan.

\para{Compatibility fans for graphical nested complexes}
\enlargethispage{.1cm}
Motivated by the combinatorial and geometric richness of the~$\b{d}$- and $\b{g}$-vector fans described above for finite type cluster algebras, we want to construct various simplicial fan realizations of the nested complex. As it coarsens the Coxeter fan, the normal fan of the graph-associahedra and nestohedra of~\cite{CarrDevadoss, Devadoss, Postnikov, FeichtnerSturmfels, Zelevinsky} should be considered as an analogue of the $\b{g}$-vector fan. A tentative approach to construct alternative $\b{g}$-vector fans for tree associahedra can be found in~\cite{Pilaud}.

In this paper, we construct an analogue of the $\b{d}$-vector fan for graphical nested complexes. For two tubes~$\tube, \tube'$ of a graph~$\graphG$, we define the \defn{compatibility degree}~$\compatibilityDegree{\tube}{\tube'}$ of~$\tube$ with~$\tube'$ to be~$\compatibilityDegree{\tube}{\tube'} = -1$ if~$\tube = \tube'$, $\compatibilityDegree{\tube}{\tube'} = 0$ if~$\tube \ne \tube'$ are compatible, and~${\compatibilityDegree{\tube}{\tube'} = |\{\text{neighbors of $\tube$ in $\tube' \ssm \tube$}\}|}$ otherwise. Similar to the compatibility degree for cluster algebras, it satisfies~$\compatibilityDegree{\tube}{\tube'} \ge 0$ for any distinct tubes~$\tube, \tube'$ of~$\graphG$, with equality if and only if $\tube$ and~$\tube'$ are compatible, and~$\compatibilityDegree{\tube}{\tube'} = 1 = \compatibilityDegree{\tube'}{\tube}$ if and only if $\tube$ and~$\tube'$ are exchangeable. We define the \defn{compatibility vector}~$\compatibilityVector{\tubing^\circ}{\tube} \eqdef \left[ \compatibilityDegree{\tube_1^\circ}{\tube}, \dots, \compatibilityDegree{\tube_n^\circ}{\tube} \right]$ of a tube~$\tube$ with respect to an initial maximal tubing~$\tubing^\circ \eqdef \{\tube_1^\circ, \dots, \tube_n^\circ\}$, and the \defn{compatibility matrix}~$\compatibilityMatrix{\tubing^\circ}{\tubing} \eqdef [\compatibilityDegree{\tube_i^\circ}{\tube_j}]_{i \in [n], j \in [m]}$ of a tubing~$\tubing \eqdef \{\tube_1, \dots, \tube_m\}$ of~$\graphG$ with respect to the initial tubing~$\tubing^\circ$. We write~$\R_{\ge 0} \, \compatibilityMatrix{\tubing^\circ}{\tubing}$ to denote the polyhedral cone generated by the compatibility vectors of the tubes of~$\tubing$ with respect to~$\tubing^\circ$.

Notice that our degree is asymmetric, which yields a natural notion of duality. We define the \defn{dual compatibility vector}~$\dualCompatibilityVector{\tube}{\tubing^\circ} \eqdef \left[ \compatibilityDegree{\tube}{\tube_1^\circ}, \dots, \compatibilityDegree{\tube}{\tube_n^\circ} \right]$ of~$\tube$ with respect to~$\tubing^\circ$ and the \defn{dual compatibility matrix}~$\dualCompatibilityMatrix{\tubing}{\tubing^\circ} \eqdef [\compatibilityDegree{\tube_j}{\tube_i^\circ}]_{i\in [n], j \in [m]}$. To make the distinction clear from the dual compatibility vector, we often call~$\compatibilityVector{\tubing^\circ}{\tube}$ the \defn{primal} compatibility vector.

Although there is no denominators involved anymore, we still use the letter~$\b{d}$ to stand for compatibility \textbf{d}egree vector, and to match with the cluster algebra notations. Indeed, our compatibility degrees for paths and cycles coincide with the compatibility degree of~\cite{FominZelevinsky-YSystems} in types~$A$, $B$, and~$C$. Compatibility degrees on type~$A$ cluster variables correspond to compatibility (and dual compatibility) degrees on tubes of paths while compatibility degrees in type~$C$ (resp.~$B$) cluster variables correspond to compatibility (resp.~dual compatibility) degrees on tubes of cycles.

Our main result is the following analogue of the compatibility fan for path associahedra constructed by F.~Santos in~\cite[Section~5]{CeballosSantosZiegler}. 

\begin{theorem}
For any graph~$\graphG$, the compatibility vectors (resp.~dual compatibility vectors) of all tubes of~$\graphG$ with respect to any initial maximal tubing~$\tubing^\circ$ on~$\graphG$ support a complete simplicial fan realizing the nested complex on~$\graphG$. More precisely, both collections of cones
\[
\compatibilityFan{\graphG}{\tubing^\circ} \eqdef \bigset{\R_{\ge 0} \, \compatibilityVector{\tubing^\circ}{\tubing}}{\tubing \text{ tubing on } \graphG}
\;\;\,\text{and}\;\;\;\;
\dualCompatibilityFan{\graphG}{\tubing^\circ} \eqdef \bigset{\R_{\ge 0} \, \dualCompatibilityMatrix{\tubing}{\tubing^\circ}}{\tubing \text{ tubing on } \graphG}
\]
are complete simplicial fans.
\end{theorem}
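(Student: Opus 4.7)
To prove that $\compatibilityFan{\graphG}{\tubing^\circ}$ is a complete simplicial fan, the plan is to verify the three standard ingredients of a separation/panel criterion for simplicial fans: (a)~the nested complex $\Nest(\graphG)$ is a pseudomanifold whose dual (flip) graph is connected; (b)~for every maximal tubing $\tubing$ of $\graphG$, the compatibility vectors $\set{\compatibilityVector{\tubing^\circ}{\tube}}{\tube \in \tubing}$ form a linear basis of $\R^n$; and (c)~for every pair of adjacent maximal tubings $\tubing$ and $\tubing'$ with $\tubing \ssm \tubing' = \{\tube\}$ and $\tubing' \ssm \tubing = \{\tube'\}$, the vectors $\compatibilityVector{\tubing^\circ}{\tube}$ and $\compatibilityVector{\tubing^\circ}{\tube'}$ lie on opposite open sides of the hyperplane spanned by $\set{\compatibilityVector{\tubing^\circ}{\tube''}}{\tube'' \in \tubing \cap \tubing'}$. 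Ingredient (a) is well known from the theory of graph associahedra and will be quoted from the literature on nested complexes.

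For (b), I would argue by induction on the flip distance between $\tubing$ and $\tubing^\circ$. The base case $\tubing = \tubing^\circ$ is immediate from the definition of the compatibility degree: the diagonal entries $\compatibilityDegree{\tube_i^\circ}{\tube_i^\circ}$ equal $-1$, while the off-diagonal entries vanish because the tubes of $\tubing^\circ$ are pairwise compatible, so the compatibility matrix is exactly $-\mathrm{Id}$. The inductive step reduces to the exchange relation of (c): a single flip modifies precisely one column of the compatibility matrix, and the relation shows that this operation preserves nonvanishing of the determinant.

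The heart of the argument is (c), where I would exhibit an explicit linear dependence
\[
\alpha \, \compatibilityVector{\tubing^\circ}{\tube} + \alpha' \, \compatibilityVector{\tubing^\circ}{\tube'} = \sum_{\tube'' \in \tubing \cap \tubing'} \gamma_{\tube''} \, \compatibilityVector{\tubing^\circ}{\tube''}
\]
with $\alpha, \alpha' > 0$ and $\gamma_{\tube''} \ge 0$. Coordinate by coordinate, this reduces to the numerical identity
\[
\alpha \, \compatibilityDegree{\tube^\circ}{\tube} + \alpha' \, \compatibilityDegree{\tube^\circ}{\tube'} = \sum_{\tube'' \in \tubing \cap \tubing'} \gamma_{\tube''} \, \compatibilityDegree{\tube^\circ}{\tube''}
\]
with coefficients $\alpha, \alpha', \gamma_{\tube''}$ that must be \emph{independent} of the choice of $\tube^\circ \in \tubing^\circ$. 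Establishing this uniformly is the main technical step: it requires a combinatorial dissection of the flip, tracking how the neighbors of $\tube^\circ$ in $\tube \ssm \tube^\circ$ and $\tube' \ssm \tube^\circ$ redistribute among the tubes of $\tubing \cap \tubing'$, likely organized through the spine description of maximal tubings. A natural guess is $\alpha = \alpha' = 1$ with $\gamma_{\tube''}$ counting how $\tube''$ mediates between $\tube$ and $\tube'$ in the flip, but its validity has to be verified by a careful case analysis on the relative position of $\tube^\circ$ with respect to the nested structure determined by $\tubing \cap \tubing'$.

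The dual statement for $\dualCompatibilityFan{\graphG}{\tubing^\circ}$ is obtained in parallel, using the analogous exchange relation for dual compatibility vectors (the asymmetry of $\compatibilityDegree{\cdot}{\cdot}$ means the two analyses are similar but not formally identical). The principal obstacle throughout lies in (c): producing a single tuple of coefficients $(\alpha, \alpha', (\gamma_{\tube''}))$ that simultaneously satisfies the identity above for all $\tube^\circ \in \tubing^\circ$ is what makes the argument delicate, and this is where the combinatorics of graphical tubings must be exploited in depth.
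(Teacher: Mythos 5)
Your skeleton---one basis cone plus a separating exchange relation, i.e.\ the two conditions of Proposition~\ref{prop:characterizationFan}---is indeed the route the paper takes, but the way you propose to execute step~(c) would fail, and (c) is where all the work is. First, the form you guess for the dependence is wrong: the coefficients need not satisfy $\alpha=\alpha'=1$, the right-hand coefficients $\gamma_{\tube''}$ need not be nonnegative, and the dependence need not be supported on the forced tubes of the flip. Example~\ref{exm:exmLinearDependence} already exhibits a flip with $\alpha=2$, a negative coefficient (on $\{k,l\}$, once the dependence is rewritten in your form), and a non-forced tube ($\{d\}$) appearing with nonzero coefficient. Only the signs of $\alpha,\alpha'$ matter for the fan criterion, so requiring $\gamma_{\tube''}\ge 0$ is both false and unnecessary. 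Second, and more seriously, you plan to produce one tuple of coefficients valid for an \emph{arbitrary} adjacent pair $\tubing,\tubing'$ realizing the exchange of $\tube$ and $\tube'$; the authors state explicitly that they cannot do this. The paper's proof (Theorem~\ref{theo:compatibilityFanRefined}) instead constructs an explicit separating and \emph{local} dependence for one carefully chosen adjacent pair, built from the chain of initial tubes through the roots $r,r'$ and auxiliary tubes $\tube^\star_i$ (a five-case analysis, with an induction on $\dim\nestedComplex(\graphG)$ via restriction and reconnected complement to reduce to the case where no initial tube is compatible with both $\tube$ and $\tube'$), and then proves a \textbf{Span Property}---the span of $\set{\compatibilityVector{\tubing^\circ}{\tube[s]}}{\tube[s]\in\tubing,\,\tube[s]\subseteq\tube[u]}$ depends only on $\tube[u]$, not on $\tubing$---to transfer the separation conclusion to every other adjacent pair. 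Without such a transfer mechanism your ``careful case analysis on the relative position of $\tube^\circ$'' must handle all adjacent pairs simultaneously, which is precisely the obstruction.

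The dual statement is also not obtained ``in parallel'': the paper has no explicit dependences (nor even a local flip property) for dual compatibility vectors. It instead uses the transposition identity $\dualCompatibilityMatrix{\tubing}{\tubing^\circ}=\transpose{\compatibilityMatrix{\tubing}{\tubing^\circ}}$ and a determinant-sign argument: multiplying the two inequalities obtained from the primal separation property applied with initial tubings $\tubing$ and $\tubing'$, and propagating signs from the base case $\tubing^\circ=\tubing$ along flips of $\tubing^\circ$. You would need either this trick or a genuinely new case analysis for the dual vectors; rerunning the primal analysis verbatim does not go through because the compatibility degree is asymmetric.
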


We then study the number of distinct compatibility fans. As in~\cite{CeballosSantosZiegler}, we consider that two compatibility fans~$\compatibilityFan{\graphG}{\tubing^\circ}$ and~$\compatibilityFan{\graphG'}{\tubing'^\circ}$ are equivalent if they differ by a linear isomorphism. Such a linear isomorphism induces an isomorphism between the nested complexes~$\nestedComplex(\graphG)$ and~$\nestedComplex(\graphG')$. Besides those induced by graph isomorphisms between~$\graphG$ and~$\graphG'$, there are non-trivial nested complex isomorphisms: for example, the complementation~$\tube \to \ground \ssm \tube$ on the complete graph, or the map on tubes of the path corresponding to the rotation of diagonals in the polygon. Extending these two examples, we exhibit a non-trivial nested complex isomorphism on any spider (a set of paths attached by one endpoint to a clique). Our next statement shows that these are essentially the only non-trivial nested complex isomorphisms.

\begin{theorem}
All nested complex isomorphisms~$\nestedComplex(\graphG) \to \nestedComplex(\graphG')$ are induced by graph isomorphisms~$\graphG \to \graphG'$, except if one of the connected components of~$\graphG$ is a spider.
\end{theorem}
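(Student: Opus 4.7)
My plan proceeds in three stages: reducing to a connected non-spider graph, identifying singleton tubes intrinsically from the nested complex, and finally recovering the graph isomorphism. First, since tubes from distinct connected components of~$\graphG$ are always compatible, the nested complex decomposes as a join $\nestedComplex(\graphG) = \nestedComplex(\graphG_1) * \dots * \nestedComplex(\graphG_\kappa)$ over the connected components of~$\graphG$. I would verify that $\nestedComplex(\graphG_i)$ is join-irreducible for connected~$\graphG_i$: a nontrivial partition of tubes into two mutually compatible classes must place all singletons on one side (since singletons of adjacent vertices cannot lie on opposite sides of such a partition), and then every tube on the other side would have to be compatible with every singleton, forcing it to equal the whole vertex set, which is excluded as a proper tube. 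Joins into irreducible factors being essentially unique, any isomorphism $\phi: \nestedComplex(\graphG) \to \nestedComplex(\graphG')$ permutes factors and restricts to isomorphisms between matched components; it therefore suffices to fix a connected non-spider~$\graphG$ and prove every such~$\phi$ is induced by a graph isomorphism.

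The core step is to show that~$\phi$ sends singletons to singletons. For any tube~$\tube$, a direct compatibility check shows that the link $\operatorname{lk}_{\nestedComplex(\graphG)}(\tube)$ factors as a join $\nestedComplex^{<\tube}(\graphG[\tube]) * \Psi_\graphG(\tube)$, where the first factor records the proper sub-tubes of~$\tube$ and the second factor records the tubes compatible with~$\tube$ from above or outside (the latter can itself be expressed as the link of a distinguished vertex in the nested complex of a suitably contracted graph $\graphG/\tube$). For a singleton~$\{v\}$ the first factor is empty, giving a candidate intrinsic characterization. I would then run a case analysis on connected~$\graphG$ to rule out accidental coincidences between $\operatorname{lk}(\tube)$ for some non-singleton~$\tube$ and $\operatorname{lk}(\{v\})$ for some singleton; the claim is that such coincidences arise precisely when~$\graphG$ is a spider, matching the rotation/complementation isomorphisms alluded to in the introduction.

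Once $\phi$ maps singletons to singletons, it induces a bijection $\phi_0: \ground(\graphG) \to \ground(\graphG')$. Since $\{\{u\}, \{v\}\}$ is a tubing of~$\graphG$ if and only if $u \neq v$ and~$uv$ is not an edge of~$\graphG$, the map~$\phi_0$ preserves non-adjacency and is therefore a graph isomorphism. Because every tube~$\tube$ is determined by the set $\{v : \{v\} \subseteq \tube\}$ of singletons it contains together with the ambient induced subgraph, $\phi$ must coincide with the nested complex isomorphism induced by~$\phi_0$, concluding the proof.

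The main obstacle is the second step: pinning down exactly the connected graphs where the link of a non-singleton tube is combinatorially indistinguishable from that of a singleton. The spider family sits right at this boundary, so the analysis must be sharp enough to both identify every spider-type coincidence and to exclude any further exception. I expect this to require an induction on the number of vertices of~$\graphG$ combined with careful direct inspection of the small building blocks (paths, cycles, complete graphs) from which spiders are assembled, using finer invariants of the link (e.g.\ its join-irreducible components or its $f$-vector on specific strata of tubes) whenever the coarse join factorization fails to separate two cases.
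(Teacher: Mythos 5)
Your overall architecture (reduce to connected components, show that singletons map to singletons, then recover a graph isomorphism) is reasonable and runs parallel to the paper's, but there is a genuine gap at the central step, which is precisely the step that carries the whole theorem. You reduce everything to ruling out ``accidental coincidences'' between the link of a singleton and the link of a non-singleton tube, and then write that you ``would run a case analysis'' and ``expect this to require an induction'': that analysis is never carried out, and it is the entire content of the result. Moreover, the invariant you propose (the abstract isomorphism type of the link) is not sharp enough on its own. Since the link of a tube~$\tube$ is the nested complex of~$\graphG{}[\tube] \sqcup \graphG^\star\tube$, the links of a tube of size~$k$ and of a tube of size~$|\ground|-k$ are nested complexes of graphs with the same component sizes, so the coincidence you want to exclude is pervasive rather than exceptional (it already occurs for every tube of the complete graph, via complementation). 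The paper extracts from the link only its connected size partition, which yields the dichotomy $|\Phi(\tube)| \in \{|\tube|, |\ground|-|\tube|\}$ for \emph{every} tube, and then performs a genuinely combinatorial analysis of the ``swapped'' tubes: swapping propagates upward to all tubes containing a swapped tube, every non-trivially swapped tube contains a swapped singleton, the swapped vertices form a clique, and each maintained vertex has at most one swapped neighbor and conversely. The inductive reconstruction (pass to the link of a swapped singleton, apply the induction hypothesis to the reconnected complement, and rebuild~$\graphG$ as a spider while identifying~$\Phi$ with~$\Omega$) occupies several pages and has no counterpart in your proposal; without it the claim ``such coincidences arise precisely when $\graphG$ is a spider'' is an assertion, not a proof.

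A secondary, fixable issue: in your final step you assert that a tube is determined by the set of singletons it contains and that this set is visible from the complex. Compatibility of~$\{v\}$ with~$\tube$ only tells you that $v \in \tube$ \emph{or} $v$ is not adjacent to~$\tube$, so containment of singletons cannot be read off directly from the compatibility relation (in the $3$-path, the tubes $\{1\}$ and $\{3\}$ have the same compatibility pattern with singletons). The paper instead proves $\Phi(\tube) = \Psi(\tube)$ by induction on~$|\tube|$, using the already-established equality of cardinalities together with adjacency arguments. That part can be repaired along the same lines, but the missing spider analysis in your second stage cannot be waved away.
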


\begin{corollary}
Except if one of the connected components of~$\graphG$ is a path, the number of linear isomorphism classes of compatibility fans of~$\graphG$ is the number of orbits of maximal tubings on~$\graphG$ under graph automorphisms of~$\graphG$.
\end{corollary}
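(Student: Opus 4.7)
The plan is to combine Theorem~2 with a dictionary between linear isomorphisms of compatibility fans and nested complex isomorphisms.

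First I would establish a \emph{direction that is easy}: any graph automorphism~$\sigma$ of~$\graphG$ induces, via its action on tubes, a linear isomorphism between~$\compatibilityFan{\graphG}{\tubing^\circ}$ and~$\compatibilityFan{\graphG}{\sigma(\tubing^\circ)}$. Indeed, the compatibility degree is defined purely in terms of the graph structure (adjacency, inclusion, and the set of neighbors of~$\tube$ inside~$\tube' \ssm \tube$), so~$\compatibilityDegree{\sigma(\tube)}{\sigma(\tube')} = \compatibilityDegree{\tube}{\tube'}$ for all tubes~$\tube, \tube'$. It follows that the permutation of coordinates associated to the bijection~$\tube_i^\circ \mapsto \sigma(\tube_i^\circ)$ sends each ray~$\compatibilityVector{\tubing^\circ}{\tube}$ of~$\compatibilityFan{\graphG}{\tubing^\circ}$ to the ray~$\compatibilityVector{\sigma(\tubing^\circ)}{\sigma(\tube)}$ of~$\compatibilityFan{\graphG}{\sigma(\tubing^\circ)}$, and compatibility of tubes is preserved, so all cones match. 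Thus two initial maximal tubings in the same $\Aut(\graphG)$-orbit yield linearly isomorphic compatibility fans.

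Next I would establish the \emph{converse direction} modulo Theorem~2. A linear isomorphism~$\Phi : \compatibilityFan{\graphG}{\tubing^\circ} \to \compatibilityFan{\graphG'}{\tubing'^\circ}$ sends rays to rays and maximal cones to maximal cones, hence induces a bijection~$\psi$ between tubes of~$\graphG$ and tubes of~$\graphG'$ that preserves compatibility and sends maximal tubings to maximal tubings. This~$\psi$ is precisely a nested complex isomorphism~$\nestedComplex(\graphG) \to \nestedComplex(\graphG')$. Moreover~$\psi(\tubing^\circ) = \tubing'^\circ$, because~$\tubing^\circ$ and~$\tubing'^\circ$ are distinguished as the unique maximal cones whose rays form the standard basis of~$\R^n$. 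Under the hypothesis that no connected component of~$\graphG$ is a spider, Theorem~2 forces~$\psi$ to come from a graph isomorphism~$\sigma: \graphG \to \graphG'$, and in particular~$\tubing'^\circ = \sigma(\tubing^\circ)$. Combined with the first paragraph, this shows that the linear isomorphism classes of compatibility fans of~$\graphG$ are in bijection with the $\Aut(\graphG)$-orbits of maximal tubings.

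It remains to treat the case where some connected component of~$\graphG$ is a spider but no component is a path. Here Theorem~2 allows a non-trivial nested complex automorphism~$\psi$ (the generalized ``rotation'' on the spider component, identity elsewhere), and one must check that~$\psi$ does \emph{not} lift to a linear isomorphism of compatibility fans --- otherwise it would create spurious identifications and the corollary would fail. I expect this to be the \textbf{main obstacle}: the path case really is special because Santos' rotation acts on $\b{d}$-vectors of path tubes by a linear (indeed cyclic) map, and one needs a combinatorial argument showing that for any non-path spider~$\spiderG$ there exist tubes whose compatibility vectors~$\compatibilityVector{\tubing^\circ}{\tube}$ and~$\compatibilityVector{\tubing^\circ}{\psi(\tube)}$ cannot be matched by a single linear map on~$\R^n$. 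The natural strategy is to exhibit explicitly, for a convenient initial tubing~$\tubing^\circ$ on a non-path spider, a small collection of tubes whose compatibility vectors are linearly dependent (or have a prescribed integer relation) while the compatibility vectors of their images under~$\psi$ do not satisfy the corresponding relation. Once this obstruction is in place, the same dictionary as in the previous paragraph yields the counting statement, completing the proof.
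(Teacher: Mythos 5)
Your first paragraph and the reduction to Theorem~2 are the right skeleton, but two things go wrong. The smaller one: your justification that $\psi(\tubing^\circ)=\tubing'^\circ$ fails as written. The rays of the initial cone are the \emph{negatives} $-\b{e}_1,\dots,-\b{e}_n$ of the basis vectors, and in a complete simplicial fan the rays of \emph{every} maximal cone form a basis of~$\R^n$, so ``the unique maximal cone whose rays form the standard basis'' singles out nothing. What is actually invariant under a linear isomorphism is, for instance, that the initial cone is the unique maximal cone~$C$ with every ray of the fan contained in~$C\cup(-C)$ (all non-initial compatibility vectors lie in the positive orthant~$-C$), or one can work with the $n$ pairs of opposite rays~$\{\pm\b{e}_i\}$, which are exactly the initial tubes and their flips. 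This is repairable but needs to be argued, and the same point is needed again in the spider case.

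The genuine gap is the spider-but-not-path case, which you correctly isolate but leave as an announced strategy rather than a proof. The paper needs no ad hoc search for tubes violating an integer relation, because it has two results you are not using. First, Proposition~\ref{prop:exmAutomorphism}: the exceptional automorphism~$\Omega$ of~$\nestedComplex(\spiderG_{\ninf})$ \emph{dualizes} the compatibility degree, $\compatibilityDegree{\Omega(\tube)}{\Omega(\tube')}=\compatibilityDegree{\tube'}{\tube}$; consequently a linear isomorphism between two primal compatibility fans inducing a non-trivial nested complex automorphism would, after absorbing the trivial part by a coordinate permutation, yield a linear isomorphism between~$\compatibilityFan{\graphG}{\tubing^\circ}$ and the dual fan~$\dualCompatibilityFan{\graphG}{\tubing^\circ}$. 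Second, Lemma~\ref{lem:comparisonPrimalDual}: such a linear isomorphism forces~$\graphG$ to be an octopus. A spider that is not a path has a clique of size at least~$3$ as its body, hence is not a tree and in particular not an octopus, and this contradiction closes the case uniformly for all non-path spiders. So your outline proves the statement only for graphs with no spider component; for spider components that are not paths, the decisive mechanism of the paper (duality of~$\Omega$ combined with the primal/dual comparison lemma) is absent from your argument, and the substitute you propose is not carried out.
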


The next step would be to realize all these complete simplicial fans as normal fans of convex polytopes. This question remains open, except for some particular graphs: besides all graphs with at most~$4$ vertices, we settle the case of paths and cycles following a similar proof as~\cite{CeballosSantosZiegler}.

\begin{theorem}
All compatibility and dual compatibility fans of paths and cycles are polytopal.
\end{theorem}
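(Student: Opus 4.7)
The plan is to apply the standard polytopality criterion for complete simplicial fans, following the strategy of F.~Santos in~\cite[Section~5]{CeballosSantosZiegler}. Having established by the main theorem that the compatibility vectors form a complete simplicial fan, polytopality of~$\compatibilityFan{\graphG}{\tubing^\circ}$ reduces to producing a height vector~$(h_\tube)$ indexed by tubes of~$\graphG$ such that, for every flip between adjacent maximal tubings~$\tubing \cup \{\tube\}$ and~$\tubing \cup \{\tube'\}$ with linear dependence
\[
\alpha\,\compatibilityVector{\tubing^\circ}{\tube} + \alpha'\,\compatibilityVector{\tubing^\circ}{\tube'} = \sum_{\tube[s] \in \tubing} \beta_{\tube[s]}\,\compatibilityVector{\tubing^\circ}{\tube[s]}
\]
with~$\alpha, \alpha' > 0$, the strict wall-crossing inequality $\alpha\, h_\tube + \alpha'\, h_{\tube'} > \sum_{\tube[s] \in \tubing} \beta_{\tube[s]}\, h_{\tube[s]}$ holds. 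Such heights then define the polytope $P_h \eqdef \bigset{x}{\dotprod{\compatibilityVector{\tubing^\circ}{\tube}}{x} \le h_\tube \text{ for every tube } \tube \text{ of } \graphG}$, whose normal fan is~$\compatibilityFan{\graphG}{\tubing^\circ}$.

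The first concrete step is to enumerate the flip types on~$\pathG_n$ and~$\cycleG_n$ and to compute, for each flip type, the precise linear dependence among the compatibility vectors involved. For a path the tubes are subintervals and flips correspond to local exchanges, yielding a short finite list of configurations. For a cycle the tubes are arcs and one must additionally treat the ``long'' flips, where a maximal tube spans almost the entire cycle. In every case the formula $\compatibilityDegree{\tube}{\tube'} = |\{\text{neighbors of } \tube \text{ in } \tube' \ssm \tube\}|$ is local and bounded, so each dependence has only a few nonzero coefficients and can be written down explicitly from the shape of the two tubings involved.

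The second step is to propose the heights. For paths, I would reuse the height construction of~\cite[Section~5]{CeballosSantosZiegler}, expressed as a specific function of the endpoints and length of each tube; by Santos's case analysis it satisfies all wall-crossing inequalities, so essentially no new work is required for~$\pathG_n$. For cycles, I would extend these heights to arcs by exploiting the rotational symmetry of~$\cycleG_n$, introducing enough free parameters that the wall-crossing inequalities cut out a nonempty open region in the space of height vectors. The dual fan~$\dualCompatibilityFan{\graphG}{\tubing^\circ}$ is handled in parallel: since the dual compatibility degree simply swaps the two arguments, any height argument for the primal fan transposes to one for the dual fan (on paths this is vacuous since primal and dual coincide, while on cycles it exchanges the type~$B$ and type~$C$ perspectives outlined in the introduction).

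The main obstacle will be verifying the wall-crossing inequalities for the ``long'' cyclic flips in~$\cycleG_n$, which couple the two ends of a fundamental domain of~$\tubing^\circ$ and therefore cannot be reduced to a purely local check as in the path case. I expect the resolution to come from a symmetry-aware choice of heights together with a careful grouping of flip types, mirroring Santos's case analysis but with extra terms accounting for the complement arc; alternatively, a reduction argument might derive the cycle statement from the path statement by suitably projecting away a single initial tube. Once the primal cycle case is settled, the path case and both dual cases follow by the same template, completing the theorem.
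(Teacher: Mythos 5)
Your overall strategy is the paper's: apply the standard polytopality criterion (Proposition~\ref{prop:polytopalityFan}), enumerate the finitely many linear dependences arising from flips on~$\pathG_{n+1}$ and~$\cycleG_{n+1}$, and choose heights $\omega(\tube) = f(|\tube|)$ for a strictly concave increasing positive~$f$ (with a large constant on initial tubes), exactly as in Santos's type~$A$ argument. For the \emph{primal} cycle fan your worry about ``long cyclic flips'' is in fact overblown: the only new dependence beyond the path ones is $\compatibilityVector{\tubing^\circ}{\tube} + \compatibilityVector{\tubing^\circ}{\tube'} = 2\,\compatibilityVector{\tubing^\circ}{\tinf_1}$ for a component $\tinf_1$ of $\tube\cap\tube'$, and the same height function handles it with no extra parameters or symmetry considerations.

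The genuine gap is your claim that the dual fan ``is handled in parallel'' because the dual degree ``simply swaps the two arguments.'' Transposition of compatibility matrices proves that $\dualCompatibilityFan{\cycleG_{n+1}}{\tubing^\circ}$ is a complete simplicial fan (that is how the paper proves Theorem~\ref{theo:dualCompatibilityFan}), but it transfers neither the explicit linear dependences nor the wall-crossing inequalities: the dual dependences on the cycle are genuinely different. For instance, when $|\tsup| = n$ one gets
\[
\dualCompatibilityVector{\tube}{\tubing^\circ} + \dualCompatibilityVector{\tube'}{\tubing^\circ} = 2\,\dualCompatibilityVector{\tsup}{\tubing^\circ} + \dualCompatibilityVector{\tinf}{\tubing^\circ},
\]
and here the height $\omega = f(|\cdot|)$ \emph{fails}: since $|\tube|, |\tube'| \le n = |\tsup|$ and $f$ is increasing and positive, $f(|\tube|) + f(|\tube'|) \le 2f(n) < 2f(n) + f(|\tinf|)$, so the required strict inequality goes the wrong way. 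The paper repairs this by setting $\omega(\tube) = f(|\tube|)/2$ for non-initial tubes with $|\tube| = n$ (and keeping $\omega(\tube) = f(|\tube|)$ otherwise), after which concavity of~$f$ together with $|\tube| + |\tube'| = n + |\tinf|$ restores all the inequalities, including the additional dual dependences with $|\tube| = |\tube'| = n$. So your plan needs this explicit enumeration of the dual dependences and the modified height; the ``transpose the primal argument'' step as written would not go through.
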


\para{Overview}
The paper is organized as follows. We first recall in Section~\ref{sec:preliminaries} definitions and basic notions on polyhedral fans and graphical nested complexes. In particular, we state in Proposition~\ref{prop:characterizationFan} a crucial sufficient condition for a set of vectors indexed by a set~$X$ to support a simplicial fan realization of a simplicial complex on~$X$. We also briefly survey the classical constructions of graph associahedra of~\cite{CarrDevadoss, Postnikov, Zelevinsky}.

In Section~\ref{sec:compatibilityFan}, we define the compatibility degree between two tubes of a graph, review its combinatorial properties, and state our geometric results on compatibility and dual compatibility~fans.

We study various examples in Section~\ref{sec:specificGraphs}. After an exhaustive description of the compatibility fans of all graphs with at most~$4$ vertices, we study four families of graphs: paths, cycles, complete graphs, and stars. The first two families connect our construction to S.~Fomin and A.~Zelevinsky's $\b{d}$-vector fans for type~$A$, $B$, and $C$ cluster complexes.

Section~\ref{sec:furtherTopics} discusses various further topics. We first study the behavior of the compatibility fans with respect to products and links. We then describe all nested complex isomorphisms in order to show that most compatibility fans are not linearly isomorphic. We also discuss the question of the realization of our compatibility fans as normal fans of convex polytopes. We extend our construction to design nested complexes~\cite{DevadossHeathVipismakul}. Finally, we discuss the connection of this paper to Laurent Phenomenon algebras~\cite{LamPylyavskyy-LaurentPhenomenonAlgebras, LamPylyavskyy-LinearLaurentPhenomenonAlgebras}.

Finally, we have chosen to gather all proofs of our results in Section~\ref{sec:proofs} with the hope that the properties and examples of compatibility fans treated in Sections~\ref{sec:specificGraphs} and~\ref{sec:furtherTopics} help the reader's~intuition.


\section{Preliminaries}
\label{sec:preliminaries}

In this section, we briefly review classical material to recall basic definitions and fix notations. The reader familiar with polyhedral geometry and graph associahedra can skip these preliminaries and proceed directly to Section~\ref{sec:compatibilityFan}.

\subsection{Polyhedral geometry and fans}

We first recall classical definitions from polyhedral geometry. More details can be found in the textbooks~\cite[Lecture~1]{Ziegler} and~\cite[Section~2.1.1]{DeLoeraRambauSantos}.

A \defn{closed polyhedral cone} is a subset of~$\R^n$ defined equivalently as the positive span of finitely many vectors or as the intersection of finitely many closed linear halfspaces. The \defn{dimension} of a cone is the dimension of its linear span. The \defn{faces} of a cone~$C$ are the intersections of~$C$ with the supporting hyperplanes of~$C$. Faces of polyhedral cones are polyhedral cones. The faces of dimension~$1$ (resp.~codimension~$1$) are called \defn{rays} (resp.~\defn{facets}). We will only consider \defn{pointed} cones, which contain no entire line of~$\R^n$. Therefore, $C$ is the positive span of its rays and the intersection of the halfspaces defined by its facets and containing it. We say that~$C$ is \defn{simplicial} if its rays form a linear basis of its linear span.

A \defn{polyhedral fan} is a collection~$\cF$ of polyhedral cones of~$\R^n$ closed under faces and which intersect properly, \ie
\begin{enumerate}
\item if~$C \in \cF$ and~$F$ is a face of~$C$, then~$F \in \cF$;
\item the intersection of any two cones of~$\cF$ is a face of both.
\end{enumerate}
A polyhedral fan is \defn{simplicial} if all its cones are, and \defn{complete} if the union of its cones covers the entire space~$\R^n$. We will use the following characterization of complete simplicial fans, whose formal proof can be found \eg in~\cite[Corollary 4.5.20]{DeLoeraRambauSantos}.

\begin{proposition}
\label{prop:characterizationFan}
For a simplicial sphere~$\Delta$ with vertex set~$X$ and a set of vectors~$\b{V} \eqdef (\b{v}_x)_{x \in X}$ of~$\R^n$, the collection of cones~$\bigset{\R_{\ge 0}\b{V}_\triangle}{\triangle \in \Delta}$, where~$\R_{\ge 0}\b{V}_\triangle$ denotes the positive span of~$\b{V}_\triangle \eqdef \set{\b{v}_x}{x \in \triangle}$, forms a complete simplicial fan if and~only~if
\begin{enumerate}
\item there exists a facet~$\triangle$ of~$\Delta$ such that~$\b{V}_\triangle$ is a basis of~$\R^n$ and such that the open cones~$\R_{> 0}\b{V}_\triangle$ and~$\R_{> 0}\b{V}_{\triangle'}$ are disjoint for any facet~$\triangle'$ of~$\Delta$ distinct from~$\triangle$;
\item for two adjacent facets~$\triangle, \triangle'$ of~$\Delta$ with~$\triangle \ssm \{x\} = \triangle' \ssm \{x'\}$, there is a linear dependence
\[
\alpha \, \b{v}_x + \alpha' \, \b{v}_{x'} + \sum_{y \in \triangle \cap \triangle'} \beta_y \, \b{v}_y = 0
\]
on~$\b{V}_{\triangle \cup \triangle'}$ in which the coefficients~$\alpha$ and~$\alpha'$ have the same sign (different from~$0$). Note that when these conditions hold, this linear dependence is unique up to rescaling.

\end{enumerate}
\end{proposition}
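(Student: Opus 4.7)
The forward direction (``only if'') is essentially bookkeeping from the definitions. If the collection $\bigset{\R_{\ge 0}\b{V}_\triangle}{\triangle \in \Delta}$ is a complete simplicial fan, then for every facet $\triangle$ the cone $\R_{\ge 0}\b{V}_\triangle$ is full-dimensional and simplicial, so $\b{V}_\triangle$ must be a basis of $\R^n$; disjointness of the open facet cones is immediate from the proper-intersection axiom, yielding (1) for any facet. For (2), adjacent facets $\triangle,\triangle'$ sharing the wall $\triangle \cap \triangle'$ have cones lying on opposite sides of the hyperplane $H \eqdef \vect(\b{V}_{\triangle \cap \triangle'})$; writing the unique (up to scaling) linear dependence on $\b{V}_{\triangle \cup \triangle'}$ and reading the signs of the coefficients of $\b{v}_x$ and $\b{v}_{x'}$ relative to $H$ gives the required same-sign condition.

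The substance lies in the reverse direction. My plan is to propagate simpliciality and proper gluing facet by facet along the dual graph of $\Delta$. Fix the anchor facet $\triangle_0$ from (1), so that $C_0 \eqdef \R_{\ge 0}\b{V}_{\triangle_0}$ is full-dimensional simplicial. I first establish a single-flip lemma: if $\triangle,\triangle'$ are adjacent facets with $\triangle \ssm \{x\} = \triangle' \ssm \{x'\}$ and $\R_{\ge 0}\b{V}_\triangle$ is full-dimensional simplicial, then condition~(2) forces $\b{v}_{x'}$ to lie on the opposite side of $H \eqdef \vect(\b{V}_{\triangle \cap \triangle'})$ from $\b{v}_x$ (this is exactly what ``$\alpha$ and $\alpha'$ have the same nonzero sign'' means in the dependence). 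Hence $\b{V}_{\triangle'}$ is again a basis, and $\R_{\ge 0}\b{V}_\triangle$ and $\R_{\ge 0}\b{V}_{\triangle'}$ meet precisely along the shared wall $\R_{\ge 0}\b{V}_{\triangle \cap \triangle'}$, with disjoint interiors. Since $\Delta$ is a simplicial sphere, its flip graph is connected, so iterating the single-flip lemma starting from $\triangle_0$ shows that every facet cone is full-dimensional simplicial and that any two cones sharing a wall glue properly.

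To turn these local gluings into a global fan, I would consider the piecewise-linear map $\Phi$ from the geometric realization $\|\Delta\|$ to the unit sphere $S^{n-1} \subset \R^n$ obtained by sending each facet simplex of $\Delta$ affinely onto the spherical projection of its corresponding simplicial cone. The single-flip lemma implies $\Phi$ is a local homeomorphism, and since $\|\Delta\| \cong S^{n-1}$ is a closed connected $(n-1)$-manifold, $\Phi$ is a covering map onto its image, hence a covering of $S^{n-1}$. Condition~(1) supplies a point in the interior of $\Phi(\triangle_0)$ avoided by every other open facet cone, which forces the degree of $\Phi$ to equal one. Thus $\Phi$ is a homeomorphism, so the facet cones tile $\R^n$ without overlap, which is precisely the statement that $\bigset{\R_{\ge 0}\b{V}_\triangle}{\triangle \in \Delta}$ (closed under faces) is a complete simplicial fan.

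The main obstacle is this last step: passing from the local information (each pair of wall-adjacent cones fits correctly) to the global conclusion (no two facet cones overlap anywhere, and together they cover $\R^n$). Local gluing alone would only produce a ``fan-like'' complex that might wrap around $\R^n$ multiple times or leave gaps; the sphere topology of $\Delta$ is what rules this out. Condition~(1) plays the role of a normalisation, pinning down one facet so that the covering map $\Phi$ has degree exactly one rather than some larger integer. An alternative, more combinatorial route would be an induction along a shelling of $\Delta$, showing at each step that the newly attached facet cone glues onto the union of previously attached cones along a full subcomplex of its boundary; this avoids explicit covering-space language but relies on the same underlying sphere structure.
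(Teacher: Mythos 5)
You should first be aware that the paper does not prove this proposition at all: it is quoted with a pointer to \cite[Corollary~4.5.20]{DeLoeraRambauSantos}, so there is no internal proof to compare against and your argument has to stand on its own. Your forward direction is fine, and your single-flip lemma is correct and correctly propagated along the (connected) dual graph: every $\b{V}_\triangle$ is a basis and wall-adjacent cones glue properly along their common wall.

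The genuine gap is the sentence ``the single-flip lemma implies $\Phi$ is a local homeomorphism.'' The flip lemma controls $\Phi$ only near points lying in the relative interior of a facet or of a ridge. At a point $q$ in the relative interior of a face $F$ of codimension~$\ge 2$, the link of $F$ in $\Delta$ is a sphere of dimension~$\ge 1$ (a circle when the codimension is exactly $2$), and condition~(2) only forces consecutive cones of the star of $F$ to lie on opposite sides of each shared wall; it does not prevent these cones from winding around the cone $\R_{\ge 0}\b{V}_F$ several times. If they wind $k\ge 2$ times, $\Phi$ near $q$ is a PL analogue of $z\mapsto z^k$ times an identity factor, which is \emph{not} a local homeomorphism, so the covering-space argument cannot even start. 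Excluding this local winding is precisely the hard content of the reverse implication, so asserting local homeomorphy begs the question. (For $n=2$ there are no codimension-$2$ faces and your argument is complete; the difficulty begins at $n=3$.) The standard repair keeps your degree idea but implements it combinatorially: for $p$ generic (outside every cone $\R_{\ge 0}\b{V}_\sigma$ with $\sigma$ a non-facet face), set $N(p) \eqdef |\set{\triangle}{p \in \R_{>0}\b{V}_\triangle}|$; a path between two generic points can be chosen to cross one open wall cone at a time while avoiding all codimension-$2$ loci, and condition~(2) shows $N$ is unchanged at each crossing, so $N$ is constant and condition~(1) pins it to~$1$, giving both disjointness of the open cones and completeness. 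Your proposed fallback via a shelling also fails in the stated generality: simplicial spheres need not be shellable, so no shelling order is available for an arbitrary $\Delta$.
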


A \defn{polytope} is a subset~$P$ in~$\R^n$ defined equivalently as the convex hull of finitely many points in~$\R^n$ or as a bounded intersection of finitely many closed half-spaces of~$\R^n$. The faces of~$P$ are the intersections of~$P$ with its supporting hyperplanes. The (outer) \defn{normal cone} of a face~$F$ of~$P$ is the cone generated by the outer normal vectors of the facets (codimension~$1$ faces) of~$P$ containing~$F$. Finally, the (outer) \defn{normal fan} of~$P$ is the collection of the (outer) normal cones of all its faces.

\subsection{Graphical nested complexes}
\label{subsec:nestedComplex}

We review the definitions and basic properties of graphical nested complexes. We refer to~\cite{CarrDevadoss, Devadoss} for the original construction of graph associahedra. This construction extends to nested complexes on arbitrary building sets, see~\cite{Postnikov, FeichtnerSturmfels, Zelevinsky}. Although we remain in the graphical situation, our presentation borrows results from these~papers.

Fix a graph~$\graphG$ with vertex set~$\ground$. Let~$\connectedComponents(\graphG)$ denote the set of connected components of~$\graphG$ and define~$n \eqdef |\ground|-|\connectedComponents(\graphG)|$. A \defn{tube} of~$\graphG$ is a non-empty subset~$\tube$ of vertices of~$\graphG$ inducing a connected subgraph~$\graphG{}[\tube]$ of~$\graphG$. The inclusion maximal tubes of~$\graphG$ are its connected components~$\connectedComponents(\graphG)$; all other tubes are called \defn{proper}. Two tubes~$\tube, \tube'$ of~$\graphG$ are \defn{compatible} if they are either nested (\ie $\tube \subseteq \tube'$ or~$\tube' \subseteq \tube$), or disjoint and non-adjacent (\ie $\tube \cup \tube'$ is not a tube of~$\graphG$). A \defn{tubing} on~$\graphG$ is a set~$\tubing$ of pairwise compatible proper tubes of~$\graphG$. The collection of all tubings on~$\graphG$ is a simplicial complex, called \defn{nested complex} of~$\graphG$ and denoted by~$\nestedComplex(\graphG)$.

\begin{example}
\label{exm:exmTubes}
To illustrate the content of the paper, we will follow a toy example, presented in \fref{fig:exmTubes}. We have represented a graph~$\graphG\ex$ on the left with a tube~$\tube^\circ\ex \eqdef \{a,b,d,f,g,h,i,k,l\}$, and a maximal tubing~$\tubing^\circ\ex$ on~$\graphG\ex$ on the right.

\begin{figure}
  \capstart
  \centerline{\includegraphics[scale=.9]{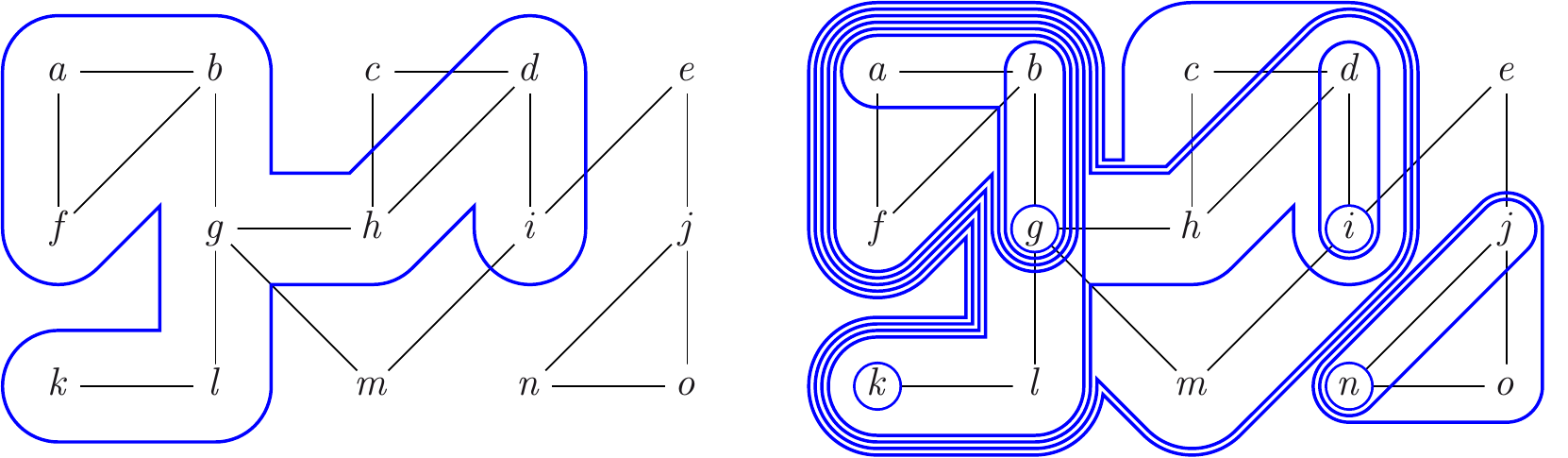}}
  \caption{A tube~$\tube^\circ\ex$ of~$\graphG\ex$ (left) and a maximal tubing~$\tubing^\circ\ex$ on~$\graphG\ex$ (right).}
  \label{fig:exmTubes}
\end{figure}
\end{example}

For a tubing~$\tubing$ on~$\graphG$ and a tube~$\tube$ of~$\tubing \cup \connectedComponents(\graphG)$, we define~$\lambda(\tube, \tubing) \eqdef \tube \ssm \bigcup_{\tube' \in \tubing, \tube' \subsetneq \tube} \tube'$. The sets~$\lambda(\tube, \tubing)$ for~$\tube \in \tubing \cup \connectedComponents(\graphG)$ form a partition of the vertex set of~$\graphG$. When~$\tubing$ is a maximal tubing, each set~$\lambda(\tube, \tubing)$ contains a unique vertex of~$\graphG$ that we call the \defn{root} of~$\tube$ in~$\tubing$.

The nested complex~$\nestedComplex(\graphG)$ is an $(n-1)$-dimensional simplicial sphere. The dual graph of this complex is the graph whose vertices are maximal tubings on~$\graphG$ and whose edges are flips between them. A \defn{flip} is a pair of distinct maximal tubings~$\tubing, \tubing'$ on~$\graphG$ such that~$\tubing \ssm \{\tube\} = \tubing' \ssm \{\tube'\}$ for some tubes~$\tube \in \tubing$ and~$\tube' \in \tubing'$. Since~$\nestedComplex(\graphG)$ is a sphere, any tube of any maximal tubing can be flipped, and the resulting tube is described in the following proposition, whose proof is left to the reader. Remember that we denote by~$\graphG{}[U]$ the subgraph of~$\graphG$ induced by a subset~$U \subseteq \ground$.

\begin{proposition}
\label{prop:flip}
Let~$\tube$ be a tube in a maximal tubing~$\tubing$ on~$\graphG$, and let~$\tsup$ be the inclusion minimal tube of~$\tubing \cup \connectedComponents(\graphG)$ which strictly contains~$\tube$. Then the unique tube~$\tube'$ such that~$\tubing' = \tubing \symdif \{\tube, \tube'\}$ is again a maximal tubing on~$\graphG$ is the connected component of~$\graphG{}[\tsup \ssm \lambda(\tube, \tubing)]$ containing~$\lambda(\tsup, \tubing)$. 
\end{proposition}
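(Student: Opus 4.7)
The plan is to leverage that $\nestedComplex(\graphG)$ is a simplicial $(n-1)$-sphere, as recalled just above the statement: the codimension-one face $\tubing \ssm \{\tube\}$ is contained in exactly two facets, so there exists a unique tube $\tube'$ with $\tube' \notin \tubing$ such that $\tubing \symdif \{\tube, \tube'\} = (\tubing \ssm \{\tube\}) \cup \{\tube'\}$ is again a maximal tubing. It is therefore enough to verify that the candidate $\tube^*$ described in the statement (the connected component of~$\graphG[\tsup \ssm \lambda(\tube, \tubing)]$ containing~$\lambda(\tsup, \tubing)$) is a proper tube, is distinct from every tube of~$\tubing$, and is compatible with every tube of $\tubing \ssm \{\tube\}$; uniqueness then forces $\tube^* = \tube'$.

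That $\tube^*$ is a proper tube is immediate from its definition, provided it is non-empty: the key observation is that $\lambda(\tsup, \tubing)$ is, by definition, disjoint from every tube of $\tubing$ strictly contained in $\tsup$, in particular from $\tube$, so $\lambda(\tsup, \tubing) \subseteq \tsup \ssm \lambda(\tube, \tubing)$, and $\tube^*$ contains it. The same observation gives $\tube^* \ne \tube$ and $\tube^* \ne \tube''$ for any $\tube'' \in \tubing$ with $\tube'' \subsetneq \tsup$, while $\tube^* \subsetneq \tsup$ rules out $\tube^* = \tube''$ for tubes $\tube''$ containing or disjoint from $\tsup$.

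The compatibility of $\tube^*$ with a tube $\tube'' \in \tubing \ssm \{\tube\}$ is handled by comparing $\tube''$ with $\tsup$. If $\tube'' \supseteq \tsup$, then $\tube^* \subseteq \tsup \subseteq \tube''$ are nested. If $\tube''$ is disjoint and non-adjacent to $\tsup$, the same conclusion holds for $\tube^* \subseteq \tsup$. The critical case is $\tube'' \subsetneq \tsup$, where the plan is first to show $\tube'' \cap \lambda(\tube, \tubing) = \emptyset$: if $\tube'' \subsetneq \tube$ this is the very definition of $\lambda(\tube, \tubing)$; $\tube'' \supsetneq \tube$ is excluded by the minimality of $\tsup$; and if $\tube''$ is incomparable with $\tube$, then compatibility of~$\tube''$ and~$\tube$ inside~$\tubing$ forces $\tube'' \cap \tube = \emptyset$. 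Thus $\tube''$ is a connected subset of $\graphG[\tsup \ssm \lambda(\tube, \tubing)]$ and lies entirely in one of its connected components. If that component is $\tube^*$ then $\tube'' \subseteq \tube^*$ are nested; otherwise $\tube''$ and $\tube^*$ sit in distinct components, which forbids both intersection and edges between them in~$\graphG$ (any such edge would be an edge of the induced subgraph, contradicting the choice of distinct components).

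Once these checks are in place, $\tubing \symdif \{\tube, \tube^*\}$ is a tubing of cardinality~$n$, hence a facet of $\nestedComplex(\graphG)$ by purity, and the sphere property identifies $\tube^*$ with the unique flipped tube $\tube'$. The main obstacle is really the last case above: one must recognise $\lambda(\tube, \tubing)$ as a separating set inside $\tsup$ whose removal detaches each sub-tube of $\tubing$ lying in~$\tube$ from the vertex $\lambda(\tsup, \tubing)$, so that the flipped tube is pinned down as a single connected component.
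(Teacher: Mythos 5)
Your argument is correct: the paper leaves this proof to the reader, and your verification (that the candidate component is a proper tube containing the root $\lambda(\tsup,\tubing)$, hence distinct from every tube of $\tubing$, and compatible with every tube of $\tubing\ssm\{\tube\}$ via the case analysis on the position of $\tube''$ relative to $\tsup$ and $\tube$) combined with purity and the pseudo-manifold property of the sphere $\nestedComplex(\graphG)$ is exactly the intended argument. The only point worth making explicit is that $\lambda(\tsup,\tubing)$ is a single vertex because $\tubing$ is maximal, which is what makes ``the connected component containing $\lambda(\tsup,\tubing)$'' well defined.
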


We say that two distinct tubes~$\tube, \tube'$ of~$\graphG$ are \defn{exchangeable} if there exists two adjacent maximal tubings~$\tubing, \tubing'$ on~$\graphG$ such that~$\tubing \ssm \{\tube\} = \tubing' \ssm \{\tube'\}$. Note that several such pairs~$\{\tubing, \tubing'\}$ are possible, but they all contain certain tubes. We call \defn{forced tubes} of the exchangeable pair~$\{\tube, \tube'\}$ any tube which belongs to any adjacent maximal tubings~$\tubing, \tubing'$ such that~$\tubing \ssm \{\tube\} = \tubing' \ssm \{\tube'\}$. These tubes are easy to describe: they are precisely the tube~$\tsup \eqdef \tube \cup \tube'$ and the connected components of~$\tsup \ssm (\lambda(\tube,\tubing) \cup \lambda(\tube',\tubing'))$.

\begin{example}
\label{exm:exmFlip}
\fref{fig:exmFlip} illustrates the flip between two maximal tubings~$\tubing\ex$ and~$\tubing'\ex$ on~$\graphG\ex$. The flipped tubes~$\tube\ex = \{a,b,c,d,f,g,h,k,l,m\}$ (with root~$g$) and~$\tube'\ex = \{c,d,e,h,i,m\}$ (with root~$i$) are dashed green, while the forced tubes of the exchangeable pair~$\{\tube\ex, \tube'\ex\}$ are red.

\begin{figure}
  \capstart
  \centerline{\includegraphics[scale=.9]{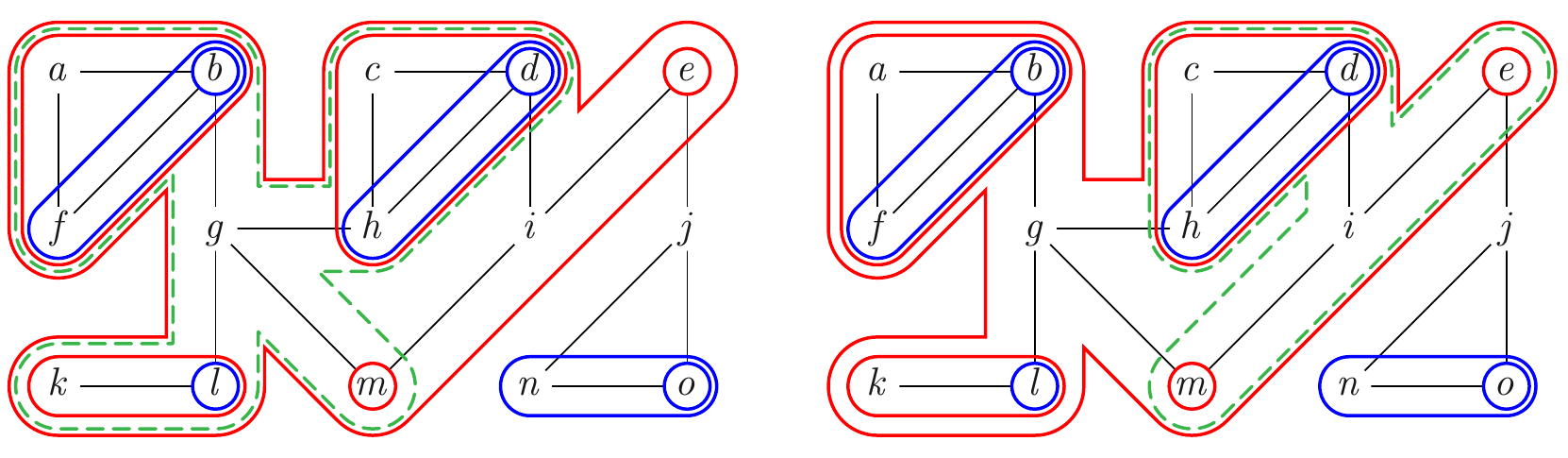}}
  \caption{The flip between two maximal tubings~$\tubing\ex$ and~$\tubing'\ex$ on~$\graphG\ex$.}
  \label{fig:exmFlip}
\end{figure}
\end{example}

\subsection{Graphical nested fans and graph associahedra}
\label{subsec:graphAssociahedra}

Although we will not use them in the remaining of this paper, we briefly survey some constructions of the nested fan and graph associahedron to illustrate the previous definitions. The nested fan is constructed implicitly in~\cite{CarrDevadoss}, and explicitly in~\cite{Postnikov, FeichtnerSturmfels, Zelevinsky} in the more general situation of nested complexes on arbitrary building sets.

Let~$(\b{e}_v)_{v \in \ground}$ be the canonical basis of~$\R^\ground$, let~${\HH \eqdef \bigset{\b{x} \in \R^\ground}{\sum_{w \in W} x_w = 0 \text{ for all } W \in \connectedComponents(\graphG)}}$ and~$\pi : \R^\ground \to \HH$ denote the orthogonal projection on~$\HH$. Let~$\b{g}(\tube) \eqdef \pi \big( \sum_{v \in \tube} \b{e}_v \big)$ denote the projection of the characteristic vector of a tube~$\tube$ of~$\graphG$, and define~$\b{g}(\tubing) \eqdef \set{\b{g}(\tube)}{\tube \in \tubing}$ for a tubing~$\tubing$ on~$\graphG$. These vectors support a complete simplicial fan realization of the nested complex:

\begin{theorem}[\cite{CarrDevadoss, Postnikov, FeichtnerSturmfels, Zelevinsky}]
\label{theo:gFan}
For any graph~$\graphG$, the collection of cones
\[\cG(\graphG) \eqdef \set{\R_{\ge 0} \, \b{g}(\tubing)}{\tubing \text{ tubing on } \graphG}\]
is a complete simplicial fan of~$\HH$, called \defn{nested fan} of~$\graphG$, which realizes the nested complex~$\nestedComplex(\graphG)$.
\end{theorem}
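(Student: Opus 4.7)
The plan is to apply Proposition~\ref{prop:characterizationFan} to the simplicial sphere $\nestedComplex(\graphG)$ with the family of vectors $(\b{g}(\tube))$ indexed by proper tubes of~$\graphG$. Since each maximal tubing on~$\graphG$ has exactly $n = |\ground| - |\connectedComponents(\graphG)|$ tubes and $\dim \HH = n$, the dimensions are consistent.

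For condition~(1), I would construct an explicit seed maximal tubing~$\tubing_0$ by fixing a rooted spanning tree $T_W$ in each connected component $W$ of~$\graphG$ and collecting, for every non-root vertex~$v$, the tube $\tube_v$ of descendants of~$v$ in~$T_W$. These tubes are pairwise nested in $T_W$, so they form a maximal tubing on~$\graphG$. Using the relation $\pi \big( \sum_{w \in W} \b{e}_w \big) = 0$, the family $(\pi(\b{e}_v))_{v \text{ non-root}}$ is a basis of~$\HH$, and the matrix expressing $(\b{g}(\tube_v))_v$ in this basis is unitriangular once the non-root vertices are listed in depth-first pre-order. Hence $\b{g}(\tubing_0)$ is a basis of~$\HH$. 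To separate the open cone $\R_{>0}\,\b{g}(\tubing_0)$ from $\R_{>0}\,\b{g}(\tubing')$ for any other maximal tubing~$\tubing'$, I would exhibit a linear functional on~$\HH$ that is strictly positive on $\b{g}(\tubing_0)$ and fails to be strictly positive on some $\b{g}(\tube) \in \b{g}(\tubing')$, exploiting the triangular structure just described.

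For condition~(2), given an exchangeable pair $\{\tube, \tube'\}$ with $\tubing \ssm \{\tube\} = \tubing' \ssm \{\tube'\}$, the pointwise identity $\mathbf{1}_{\tube} + \mathbf{1}_{\tube'} = \mathbf{1}_{\tube \cup \tube'} + \mathbf{1}_{\tube \cap \tube'}$ on~$\R^\ground$, after applying the projection~$\pi$, yields the linear dependence
\[
\b{g}(\tube) + \b{g}(\tube') - \b{g}(\tube \cup \tube') - \sum_{i=1}^k \b{g}(F_i) = 0,
\]
where $F_1, \dots, F_k$ are the connected components of $\tube \cap \tube'$ (the sum being empty when $\tube \cap \tube' = \emptyset$). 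The tube $\tube \cup \tube'$ is the forced tube $\tsup$ of the exchangeable pair, hence lies in $\tubing \cap \tubing'$. The coefficients of $\b{g}(\tube)$ and $\b{g}(\tube')$ are both $+1$, nonzero and of the same sign, matching the required sign condition.

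The main obstacle is to verify that each connected component $F_i$ of $\tube \cap \tube'$ also belongs to $\tubing \cap \tubing'$. The argument runs as follows. Proposition~\ref{prop:flip} describes $\tube'$ as a component of $\graphG[\tsup \ssm \lambda(\tube, \tubing)]$, so $\lambda(\tube, \tubing) \cap \tube' = \emptyset$ and $\tube \cap \tube'$ is contained in the disjoint union of the inclusion-maximal strict sub-tubes of~$\tube$ in~$\tubing$. Each such sub-tube~$\tube''$ is compatible with~$\tube'$ inside~$\tubing'$; the nested alternative $\tube' \subseteq \tube''$ is excluded, since it would force $\tube' \subsetneq \tube$, contradicting the incompatibility of~$\tube$ and~$\tube'$. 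Hence every $\tube''$ is either contained in or disjoint and non-adjacent to~$\tube'$, so the connected components of $\tube \cap \tube'$ are precisely those sub-tubes~$\tube''$ of~$\tube$ in~$\tubing$ that lie inside~$\tube'$, and these all belong to $\tubing \cap \tubing'$. Once both conditions of Proposition~\ref{prop:characterizationFan} are established, the theorem follows.
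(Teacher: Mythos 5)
The paper does not actually prove Theorem~\ref{theo:gFan}; it quotes it from \cite{CarrDevadoss, Postnikov, FeichtnerSturmfels, Zelevinsky}, where completeness is obtained directly from the braid-cone description $\R_{\ge 0}\,\b{g}(\tubing) = \set{\b{x} \in \HH}{x_v \le x_w \text{ for } v \to w \in \spine}$ recalled in the remark following the theorem, rather than via Proposition~\ref{prop:characterizationFan}. Your treatment of condition~(2) is correct: the inclusion--exclusion dependence is the right one (it is the same dependence~\eqref{eq:linearDependence1} that the paper later uses for compatibility vectors), and your argument that the connected components of $\tube \cap \tube'$ are exactly the inclusion-maximal tubes of~$\tubing$ strictly contained in~$\tube$ that lie inside~$\tube'$, hence belong to~$\tubing \cap \tubing'$, is sound. (One small point: when $\tsup = \tube \cup \tube'$ is an entire connected component it is not a proper tube and so does not lie in $\tubing \cap \tubing'$; but then $\b{g}(\tsup) = 0$, so the dependence still only involves $\b{V}_{\tubing \cup \tubing'}$.)

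Condition~(1) is where the genuine gaps are. First, the descendant sets of an \emph{arbitrary} rooted spanning tree do not form a tubing: for the triangle~$\completeG_3$ rooted at~$a$ with tree edges $ab$ and $ac$, the tubes $\{b\}$ and $\{c\}$ are disjoint but adjacent in~$\completeG_3$, hence incompatible. Descendant sets of incomparable vertices are disjoint, not nested, and disjointness is not compatibility. This is repairable: take a depth-first search tree (so that all non-tree edges join a vertex to an ancestor), or use the standard recursive construction rooting each component and recursing on the components of the complement of the root; alternatively, observe that for \emph{any} maximal tubing~$\tubing$ the matrix of the~$\b{g}(\tube)$ in the basis $(\pi(\b{e}_{\lambda(\tube,\tubing)}))_{\tube \in \tubing}$ is unitriangular for any linear extension of inclusion. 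Second, and more seriously, the disjointness of $\R_{>0}\,\b{g}(\tubing_0)$ from every other open cone is the hard half of condition~(1), and your sketch does not establish it: a linear functional that is positive on all generators of the seed cone and non-positive on \emph{one} generator of $\R_{>0}\,\b{g}(\tubing')$ does not separate the two open cones, since a strictly positive combination of the generators of~$\tubing'$ can still take a positive value on that functional. You would need the functional to be non-positive on all of $\R_{\ge 0}\,\b{g}(\tubing')$, or equivalently to show that a generic interior point of the seed cone lies in no other cone; this is exactly the step the classical proofs handle via the braid-cone inequality description and the fact that each linear order on~$\ground$ is refined by the spine of exactly one maximal tubing. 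As written, this step is asserted rather than proved.
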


\begin{remark}
The cones of this fan can be encoded as follows. Fix a tubing~$\tubing$ on~$\graphG$. The \defn{spine} of~$\tubing$ is the forest~$\spine$ given by the Hasse diagram of the inclusion poset on~$\tubing \cup \connectedComponents(\graphG)$ where the vertex corresponding to a tube~$\tube$ is labeled by~$\lambda(\tube, \tubing)$. Then the cone~$\R_{\ge 0} \, \b{g}(\tubing)$ is the \defn{braid cone} of~$\spine$ and is polar to the \defn{incidence cone} of~$\spine$:
\[
\R_{\ge 0} \, \b{g}(\tubing) = \set{\b{x} \in \HH}{x_v \le x_w \text{ for all } v \to w \in \spine} = \big(\R_{\ge 0} \set{\b{e}_v-\b{e}_w}{v \to w \in \spine} \big)\polar
\]
where $v \to w \in \spine$ means that there is a directed path in~$\spine$ from the node containing~$v$ to the node of~$\spine$ containing~$w$. Spines are called $B$-trees in~\cite{Postnikov}.
\end{remark}

It is proved in~\cite{CarrDevadoss, Devadoss, Postnikov, FeichtnerSturmfels, Zelevinsky} that the nested fan comes from a polytope. 

\begin{theorem}[\cite{CarrDevadoss, Devadoss, Postnikov, FeichtnerSturmfels, Zelevinsky}]
\label{theo:graphAssociahedron}
For any graph~$\graphG$, the nested fan~$\cG(\graphG)$ is the normal fan of the graph associahedron~$\Asso(\graphG)$.
\end{theorem}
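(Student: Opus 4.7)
The plan is to realize~$\Asso(\graphG)$ concretely as a Minkowski sum of coordinate simplices, following A.~Postnikov~\cite{Postnikov}, and then identify its normal fan with the nested fan~$\cG(\graphG)$ of Theorem~\ref{theo:gFan}. Concretely, I would set
\[
\Asso(\graphG) \eqdef \sum_{\tube \text{ tube of } \graphG} \simplex_\tube, \qquad \simplex_\tube \eqdef \conv\{\b{e}_v : v \in \tube\} \subset \R^\ground,
\]
viewed inside an appropriate translate of~$\HH$.

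The normal fan of a Minkowski sum is the common refinement of the summand normal fans, and the normal fan of a single simplex~$\simplex_\tube$, restricted to~$\HH$, consists of the maximal cones~$N_v^\tube \eqdef \{\b{x} \in \HH : x_v \ge x_w \text{ for all } w \in \tube\}$ indexed by vertices~$v \in \tube$. Hence the maximal cones of the normal fan of~$\Asso(\graphG)$ are precisely the full-dimensional intersections~$C_\phi \eqdef \bigcap_\tube N_{\phi(\tube)}^\tube$, where~$\phi$ ranges over the choice functions assigning a vertex~$\phi(\tube) \in \tube$ to every tube~$\tube$ of~$\graphG$.

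It remains to match these intersections with the cones~$\R_{\ge 0}\, \b{g}(\tubing)$ of~$\cG(\graphG)$. Given a maximal tubing~$\tubing$ on~$\graphG$ with spine~$\spine$, for each tube~$\tube$ of~$\graphG$ let~$\hat{\tube}$ denote the inclusion-minimal element of~$\tubing \cup \connectedComponents(\graphG)$ containing~$\tube$, and set~$\phi_\tubing(\tube)$ to be the unique vertex of~$\lambda(\hat{\tube}, \tubing)$ (a singleton, by maximality of~$\tubing$). A short connectivity argument, using that sibling tubes in~$\spine$ are pairwise disjoint and non-adjacent in~$\graphG$ and hence have disconnected union, shows that~$\tube$ must contain the root of~$\hat{\tube}$, so~$\phi_\tubing(\tube) \in \tube$. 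The core computation is then the cone identity
\[
C_{\phi_\tubing} \;=\; \{\b{x} \in \HH : x_v \le x_w \text{ for all } v \to w \text{ in } \spine\} \;=\; \R_{\ge 0}\, \b{g}(\tubing).
\]
One inclusion is direct: for any tube~$\tube$ of~$\graphG$ and any~$v \in \tube \subseteq \hat{\tube}$, the node of~$\spine$ containing~$v$ is a descendant of~$\hat{\tube}$'s node, so~$x_v \le x_{\phi_\tubing(\tube)}$ whenever~$\b{x}$ satisfies all spine inequalities. The reverse inclusion is obtained by taking, for each spine relation~$v \to w$, the tube~$\tube_w \in \tubing \cup \connectedComponents(\graphG)$ whose root is~$w$: then~$\phi_\tubing(\tube_w) = w$, and the corresponding defining inequality of~$C_{\phi_\tubing}$ is exactly~$x_v \le x_w$.

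The conclusion is then immediate: every maximal cone of~$\cG(\graphG)$ appears as a maximal cone~$C_{\phi_\tubing}$ of the normal fan of~$\Asso(\graphG)$, and since~$\cG(\graphG)$ is a complete simplicial fan by Theorem~\ref{theo:gFan}, the two fans must coincide. The main technical point is the combinatorial lemma that~$\phi_\tubing(\tube) \in \tube$ for every tube~$\tube$ of~$\graphG$, together with the direct cone identity above; everything else is a formal consequence of the standard Minkowski-sum machinery and the spine description of~$\cG(\graphG)$ recorded in Section~\ref{subsec:graphAssociahedra}.
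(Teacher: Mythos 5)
The paper does not prove this statement itself---it is imported from the cited references---so what matters is whether your Minkowski-sum route (essentially Postnikov's and Feichtner--Sturmfels') is sound. The skeleton is, and your combinatorial lemma is correct: every tube~$\tube$ contains the root of the inclusion-minimal element~$\hat{\tube}$ of~$\tubing \cup \connectedComponents(\graphG)$ containing it, since~$\hat{\tube}$ minus its root splits into the children of~$\hat{\tube}$ in the spine and a connected~$\tube$ avoiding the root would lie in one of them, contradicting minimality. The concluding step (a complete fan all of whose maximal cones are cones of another complete fan must equal it) is also fine.

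However, the core cone identity is false as stated, because of a sign error, and the proof as written establishes that the outer normal fan of~$\sum_\tube \simplex_\tube$ is~$-\cG(\graphG)$ rather than~$\cG(\graphG)$. On the cone~$\R_{\ge 0}\,\b{g}(\tubing)$, a point~$\b{x} = \sum_{\tube[s] \in \tubing} c_{\tube[s]}\, \b{g}(\tube[s])$ with~$c_{\tube[s]} \ge 0$ has~$x_v$ equal (up to a constant per connected component) to~$\sum_{\tube[s] \ni v} c_{\tube[s]}$, and the root of~$\hat{\tube}$ lies in the \emph{fewest} tubes of~$\tubing$ among the vertices of~$\tube$; so your~$\phi_\tubing(\tube)$ picks the vertex of~$\simplex_\tube$ that \emph{minimizes}~$x_v$ over~$\R_{\ge 0}\,\b{g}(\tubing)$, not the one that maximizes it. Consequently~$C_{\phi_\tubing} = -\R_{\ge 0}\,\b{g}(\tubing)$; your ``direct'' first inclusion silently reverses the orientation of~$\spine$ forced by the paper's formula~$\R_{\ge 0}\,\b{g}(\tubing) = \set{\b{x} \in \HH}{x_v \le x_w \text{ for } v \to w \in \spine}$. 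This is not a harmless relabeling: for the $3$-path, $\sum_\tube \simplex_\tube$ is the pentagon with vertices~$(3,2,1)$, $(3,1,2)$, $(2,1,3)$, $(1,2,3)$, $(1,4,1)$, whose only two-chamber outer normal cone is~$\set{\b{x}}{x_2 \ge x_1,\, x_2 \ge x_3}$, whereas the two-chamber cone of~$\cG(\pathG_3)$ is~$\set{\b{x}}{x_2 \le x_1,\, x_2 \le x_3}$ (the braid cone of~$\{\{1\},\{3\}\}$); and for the star on four vertices the six Weyl chambers where the central vertex is smallest form a single cone of the nested fan but lie in six distinct outer normal cones of~$\sum_\tube \simplex_\tube$, so the two fans are genuinely different. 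The repair is cheap---set~$\Asso(\graphG) \eqdef \sum_\tube \conv\set{-\b{e}_v}{v \in \tube}$, equivalently use the inner normal fan of your polytope (replace argmax by argmin in the Minkowski-sum machinery)---after which your choice function, your lemma, and the rest of the argument go through verbatim; but you must make this sign choice explicitly, since with the paper's conventions (outer normal fan, positive characteristic vectors~$\b{g}(\tube)$) the identity~$C_{\phi_\tubing} = \R_{\ge 0}\,\b{g}(\tubing)$ you rely on is wrong.
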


In fact, these different papers define different constructions and realizations for the graph associahedron~$\Asso(\graphG)$. Originally, M.~Carr and S.~Devadoss constructed~$\Asso(\graphG)$ by iterative truncations of faces of the standard simplex. S.~Devadoss then gave explicit integer coordinates for the facets in~\cite{Devadoss}. In a different context, A.~Postnikov~\cite{Postnikov} and independently E.~M.~Feichtner and B.~Sturmfels~\cite{FeichtnerSturmfels} constructed graph associahedra (more generally nestohedra) by Minkowski sums of faces of the standard simplex. Finally, A.~Zelevinsky~\cite{Zelevinsky} discussed which polytopes can realize the nested fan using a characterization of all possible facet inequality descriptions.


\section{Compatibility degrees, vectors, and fans}
\label{sec:compatibilityFan}

In this section, we define our compatibility degree on tubes and state our main results. To go straight to results and examples, the proofs are delayed to Section~\ref{sec:proofs}. We refer to Section~\ref{sec:specificGraphs} for examples of compatibility fans of relevant graphs in connection to the geometry of type~$A$, $B$ and~$C$ cluster complexes, and to Section~\ref{sec:furtherTopics} for further properties of compatibility fans.

\subsection{Compatibility degree}

Motivated by the compatibility degrees in finite type cluster algebras, we introduce an analogous notion on tubes of graphical nested complexes.

\begin{definition}
\label{def:compatibilityDegree}
For two tubes~$\tube, \tube'$ of~$\graphG$, the \defn{compatibility degree} of~$\tube$ with~$\tube'$ is
\[
\compatibilityDegree{\tube}{\tube'} =
\begin{cases}
-1 & \text{if } \tube = \tube', \\
|\{\text{neighbors of $\tube$ in } \tube' \ssm \tube\}| & \text{if } \tube \not\subseteq \tube', \\
0 & \text{otherwise.}
\end{cases}
\]
\end{definition}

\begin{example}
\label{exm:exmCompatibilityDegree}
On the graph~$\graphG\ex$ of Examples~\ref{exm:exmTubes} and~\ref{exm:exmFlip}, the compatibility degrees of the green tubes~$\tube\ex$, $\tube'\ex$ and of the red tube~$\tsup\ex = \tube\ex \cup \tube'\ex$ of \fref{fig:exmFlip} with the tube~$\tube^\circ\ex$ of \fref{fig:exmTubes}\,(left) are given by
\[
\begin{array}{l@{\qquad}l@{\qquad}l}
\compatibilityDegree{\tube\ex}{\tube^\circ\ex} = |\{i\}| = 1, & \compatibilityDegree{\tube'\ex}{\tube^\circ\ex} = |\{g\}| = 1, & \compatibilityDegree{\tsup\ex}{\tube^\circ\ex} = 0, \\
\compatibilityDegree{\tube^\circ\ex}{\tube\ex} = |\{c,m\}| = 2, & \compatibilityDegree{\tube^\circ\ex}{\tube'\ex} = |\{c,e,m\}| = 3, & \compatibilityDegree{\tube^\circ\ex}{\tsup\ex} = 0.
\end{array}
\]
Note that the last~$0$ is forced by the last line of the definition since~$\tube^\circ\ex \subsetneq \tsup\ex$.
\end{example}

We will see in Sections~\ref{subsec:paths} and~\ref{subsec:cycles} that our compatibility degree on tubes of paths (resp.~cycles) corresponds to the compatibility degree on cluster variables in type~$A$ (resp.~$B/C$) cluster algebras defined in~\cite{FominZelevinsky-YSystems}. The compatibility degree in cluster algebras encodes compatibility and exchangeability between cluster variables. The analogous result for the graphical compatibility degree is given by the following proposition, proved in Section~\ref{subsec:proofCompatibilityDegree}.

\begin{proposition}
\label{prop:compatibilityDegree}
For any two tubes~$\tube, \tube'$ of~$\graphG$,
\begin{itemize}
\item $\compatibilityDegree{\tube}{\tube'} < 0 \iff \compatibilityDegree{\tube'}{\tube} < 0 \iff \tube = \tube'$, \label{item:identical}
\item $\compatibilityDegree{\tube}{\tube'} = 0 \iff \compatibilityDegree{\tube'}{\tube} = 0 \iff \tube$ and~$\tube'$ are compatible, and \label{item:compatible}
\item $\compatibilityDegree{\tube}{\tube'} = 1 = \compatibilityDegree{\tube'}{\tube} \iff \tube$ and~$\tube'$ are exchangeable. \label{item:exchangeable}
\end{itemize}
\end{proposition}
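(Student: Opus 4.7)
My plan is to handle the three bullets separately, relying on Proposition~\ref{prop:flip} for the exchangeability statement. The first bullet is immediate: a negative value can only arise from the first clause of the definition, which requires $\tube = \tube'$. For the second bullet, I would argue by case analysis on two distinct tubes $\tube \ne \tube'$. When one contains the other, or when they are disjoint and non-adjacent, both compatibility degrees vanish directly from the definition. When they are disjoint and adjacent, any connecting edge witnesses a strictly positive count on each side. When $\tube \cap \tube' \ne \emptyset$ with neither containing the other, connectivity of $\tube'$ forces an edge from the non-empty proper subset $\tube \cap \tube' \subseteq \tube$ into $\tube' \ssm \tube$, yielding a positive degree; and symmetrically for the reverse.

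For the forward direction of the third bullet, I would assume that $\tube, \tube'$ arise as a flip $\tubing \ssm \{\tube\} = \tubing' \ssm \{\tube'\}$ and set $v \eqdef \lambda(\tube, \tubing)$ and $r \eqdef \lambda(\tsup, \tubing)$, where $\tsup$ is the minimal element of $\tubing \cup \connectedComponents(\graphG)$ strictly containing $\tube$. By Proposition~\ref{prop:flip}, $\tube'$ is the connected component of $\graphG{}[\tsup \ssm \{v\}]$ containing $r$. The other maximal sub-tubes of $\tsup$ in $\tubing$ are the components of $\graphG{}[\tsup \ssm \{r\}]$ distinct from $\tube$; each is non-adjacent to $\tube$ but remains adjacent to $r$ after removing $v$, so each is absorbed into $\tube'$. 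Hence $\tube' \ssm \tube$ decomposes as $\{r\}$ together with these sub-tubes, and only $r$ among them is adjacent to $\tube$, giving $\compatibilityDegree{\tube}{\tube'} = 1$. Running the same argument on the reverse flip, where $v$ and $r$ exchange roles, yields $\compatibilityDegree{\tube'}{\tube} = 1$.

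For the backward direction, I would assume that both degrees equal $1$, so by the second bullet $\tube$ and $\tube'$ are incompatible with neither containing the other. Since each component of $\tube' \ssm \tube$ contributes at least one neighbor of $\tube$ (through $\tube \cap \tube'$ when non-empty, or by direct adjacency otherwise), the single-neighbor assumption forces $\tube' \ssm \tube$ to be connected with a unique vertex $r$ adjacent to $\tube$, and symmetrically $\tube \ssm \tube'$ is connected with a unique vertex $v$ adjacent to $\tube'$. It follows that $\tsup \eqdef \tube \cup \tube'$ is a tube. I would then build a maximal tubing $\tubing$ containing $\tube$ (and $\tsup$ if proper) by combining arbitrary maximal tubings on the components of $\graphG{}[\tube \ssm \{v\}]$, on the components of $\graphG{}[\tsup \ssm (\tube \cup \{r\})]$, and on the rest of $\graphG$ above $\tsup$; pairwise compatibility inside $\tsup$ follows from $r$ being the unique vertex of $\tsup \ssm \tube$ adjacent to $\tube$. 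By construction, $\tsup$ is the minimal parent of $\tube$ in $\tubing$ with roots $v$ and $r$, so Proposition~\ref{prop:flip} identifies the flip of $\tube$ with the component of $\graphG{}[\tsup \ssm \{v\}]$ containing $r$; using that $v$ is the only vertex of $\tube \ssm \tube'$ adjacent to $\tube'$, an argument mirroring the forward direction shows this component equals $\tube'$ exactly.

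The main difficulty lies in this backward direction: one must verify that the two compatibility degrees encode enough local structure to simultaneously produce a valid maximal tubing with the prescribed roots and guarantee that the resulting flip is precisely $\tube'$, not a superset reaching back into $\tube \ssm \tube'$ through edges not already accounted for by the bridge vertex $v$.
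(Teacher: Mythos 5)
Your proof is correct and follows essentially the same route as the paper: the first two bullets by direct inspection of Definition~\ref{def:compatibilityDegree}, the implication from exchangeability to degree one via Proposition~\ref{prop:flip}, and the converse by assembling a maximal tubing around the forced tubes of the pair~$\{\tube,\tube'\}$. The only immaterial difference is in the last step, where the paper forms the ridge from the connected components of~$\tsup$ minus the two bridge vertices and concludes by a cardinality count that both~$\tube$ and~$\tube'$ complete it to a facet, whereas you root a maximal tubing so that Proposition~\ref{prop:flip} applies and then check that the flipped tube is exactly~$\tube'$ --- the verification you flag as the main difficulty does go through, because the uniqueness of the bridge vertex~$v$ in~$\tube\ssm\tube'$ prevents the component of~$\graphG{}[\tsup\ssm\{v\}]$ containing~$r$ from reabsorbing any part of~$\tube\ssm\tube'$.
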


\begin{remark}
It can happen that~$\compatibilityDegree{\tube}{\tube'} = 1$ while~$\compatibilityDegree{\tube'}{\tube} \ne 1$, in which case~$\tube$ and~$\tube'$ are not exchangeable. This situation appears as soon as~$\graphG$ contains a cycle or a trivalent vertex. See \eg Example~\ref{exm:exmCompatibilityDegree}.
\end{remark}

Proposition~\ref{prop:compatibilityDegree} should be understood informally as follows: the compatibility degree between two tubes measures how much they are incompatible. It is natural to use this measure to construct fan realizations of the nested complex: intuitively, pairs of tubes with low compatibility degrees should correspond to rays closed to each other. We make this idea precise in the next section.

\subsection{Compatibility fans}

Similar to the $\b{d}$-vector fan defined by S.~Fomin and A.~Zelevinsky~\cite{FominZelevinsky-ClusterAlgebrasII}, we consider the compatibility vectors with respect to an arbitrary initial maximal tubing~$\tubing^\circ$. Remember that any maximal tubing on~$\graphG$ has precisely~$n \eqdef |\ground| - |\connectedComponents(\graphG)|$ tubes.

\begin{definition}
Let~$\tubing^\circ \eqdef \{\tube_1^\circ, \dots, \tube_n^\circ\}$ be an arbitrary initial maximal tubing on~$\graphG$. The \defn{compatibility vector} of a tube~$\tube$ of~$\graphG$ with respect to~$\tubing^\circ$ is the integer vector $\compatibilityVector{\tubing^\circ}{\tube} \eqdef [\compatibilityDegree{\tube_1^\circ}{\tube}, \dots, \compatibilityDegree{\tube_n^\circ}{\tube}]$. The \defn{compatibility matrix} of a tubing~$\tubing \eqdef \{\tube_1, \dots, \tube_m\}$ on~$\graphG$ with respect to~$\tubing^\circ$ is the matrix $\compatibilityVector{\tubing^\circ}{\tubing} \eqdef [\compatibilityDegree{\tube_i^\circ}{\tube_j}]_{i \in [n], j \in [m]}$.
\end{definition}

Remember that we denote by~$\R_{\ge 0} \, \b{M}$ the polyhedral cone generated by the column vectors of a matrix~$\b{M}$. Note that the compatibility vectors of the initial tubes are given by the negative of the basis vectors, while all other compatibility vectors lie in the positive orthant: $\compatibilityVector{\tubing^\circ}{\tubing^\circ} = -\b{I}_n$ and~$\compatibilityVector{\tubing^\circ}{\tube} \in \R_{\ge 0} \, \b{I}_n$ for~$\tube \notin \tubing^\circ$. Our main result asserts that these compatibility vectors support a complete simplicial fan realization of the graphical nested complex.

\begin{theorem}
\label{theo:compatibilityFan}
For any graph~$\graphG$ and any maximal tubing~$\tubing^\circ$ on~$\graphG$, the collection of cones
\[
\compatibilityFan{\graphG}{\tubing^\circ} \eqdef \set{\R_{\ge 0} \, \compatibilityVector{\tubing^\circ}{\tubing}}{\tubing \text{ tubing on } \graphG}
\]
is a complete simplicial fan which realizes the nested complex~$\nestedComplex(\graphG)$. We call it the \defn{compatibility fan} of~$\graphG$ with respect to~$\tubing^\circ$.
\end{theorem}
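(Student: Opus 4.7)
The plan is to apply Proposition~\ref{prop:characterizationFan} to the simplicial sphere $\nestedComplex(\graphG)$ with the family of rays $\b{V} \eqdef \bigl(\compatibilityVector{\tubing^\circ}{\tube}\bigr)_{\tube}$ and the distinguished facet $\triangle = \tubing^\circ$. Condition~(1) is straightforward: since $\compatibilityVector{\tubing^\circ}{\tube^\circ_i} = -\b{e}_i$, the family $\b{V}_{\tubing^\circ}$ is the negative standard basis of~$\R^n$, and the open cone $\R_{>0}\b{V}_{\tubing^\circ}$ is the open negative orthant. Proposition~\ref{prop:compatibilityDegree} guarantees that $\compatibilityDegree{\tube^\circ_i}{\tube} \ge 0$ for every tube $\tube \ne \tube^\circ_i$, so each vector $\compatibilityVector{\tubing^\circ}{\tube}$ with $\tube \notin \tubing^\circ$ lies in the non-negative orthant and is nonzero. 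Consequently, for any maximal tubing $\tubing \ne \tubing^\circ$, the open cone $\R_{>0}\b{V}_{\tubing}$ contains a vector with a strictly positive entry, hence does not meet the open negative orthant.

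The heart of the proof is condition~(2). Given a flip $\tubing \symdif \tubing' = \{\tube,\tube'\}$, I would exhibit an explicit linear dependence among the $n+1$ compatibility vectors indexed by $\tubing \cup \tubing'$ in which the coefficients of $\compatibilityVector{\tubing^\circ}{\tube}$ and $\compatibilityVector{\tubing^\circ}{\tube'}$ are both equal to~$+1$. Set $\tsup \eqdef \tube \cup \tube'$ and let $\tsup_1,\dots,\tsup_k$ denote the connected components of $\graphG\bigl[\tsup \ssm (\lambda(\tube,\tubing) \cup \lambda(\tube',\tubing'))\bigr]$; by Section~\ref{subsec:nestedComplex}, these tubes (together with $\tsup$ when it is a proper tube) are the forced tubes of the exchangeable pair $\{\tube, \tube'\}$ and all belong to $\tubing \cap \tubing'$. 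Guided by the analogy with the $\b{d}$-vector exchange relations of S.~Fomin and A.~Zelevinsky, I would prove an exchange identity
\[
\compatibilityVector{\tubing^\circ}{\tube} + \compatibilityVector{\tubing^\circ}{\tube'} = \sum_{\tube'' \in \tubing \cap \tubing'} \gamma_{\tube''}\, \compatibilityVector{\tubing^\circ}{\tube''}
\]
with non-negative integer coefficients $\gamma_{\tube''}$ supported on the forced tubes, which verifies condition~(2).

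The main obstacle is proving this exchange identity, which reduces to a coordinate-wise combinatorial identity: for each initial tube $\tube^\circ \in \tubing^\circ$,
\[
\compatibilityDegree{\tube^\circ}{\tube} + \compatibilityDegree{\tube^\circ}{\tube'} = \sum_{\tube'' \in \tubing \cap \tubing'} \gamma_{\tube''}\, \compatibilityDegree{\tube^\circ}{\tube''}.
\]
Both sides count neighbors of $\tube^\circ$ in certain tubes, with $-1$ contributions arising (from Definition~\ref{def:compatibilityDegree}) whenever $\tube^\circ$ coincides with one of these tubes. The verification splits into cases based on the position of $\tube^\circ$ relative to the flipped tubes, their union $\tsup$, the two singleton roots $\lambda(\tube, \tubing)$ and $\lambda(\tube', \tubing')$, and the pieces $\tsup_j$. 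The key combinatorial input is that $\{\lambda(\tube,\tubing), \lambda(\tube',\tubing'), \tsup_1, \dots, \tsup_k\}$ partitions $\tsup$, so every neighbor of $\tube^\circ$ lying in $\tsup \ssm \tube^\circ$ is accounted for on both sides with matching multiplicity, up to $\pm 1$ bookkeeping when $\tube^\circ$ itself belongs to the exchange. The dual compatibility fan $\dualCompatibilityFan{\graphG}{\tubing^\circ}$ follows by the symmetric argument applied to the degrees $\compatibilityDegree{\tube}{\tube^\circ}$ taken in the opposite slot.
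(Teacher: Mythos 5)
Your overall strategy --- verifying the two conditions of Proposition~\ref{prop:characterizationFan}, with condition~(1) checked on the initial facet $\tubing^\circ$ whose compatibility vectors are $-\b{e}_1,\dots,-\b{e}_n$ while all other compatibility vectors lie in the non-negative orthant --- is exactly the paper's strategy, and your treatment of condition~(1) is correct. The gap is in condition~(2): the exchange identity you propose, with coefficients $+1$ on both $\compatibilityVector{\tubing^\circ}{\tube}$ and $\compatibilityVector{\tubing^\circ}{\tube'}$ and support contained in the forced tubes of the pair $\{\tube,\tube'\}$, is false in general. The paper's Example~\ref{exm:exmLinearDependence} is a direct counterexample: there the unique linear dependence has coefficients $2$ and $1$ on the two flipped tubes, and it involves the tube $\{d\}$, which is \emph{not} a forced tube of the exchangeable pair. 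The coordinate-wise identity $\compatibilityDegree{\tube^\circ}{\tube} + \compatibilityDegree{\tube^\circ}{\tube'} = \compatibilityDegree{\tube^\circ}{\tsup} + \compatibilityDegree{\tube^\circ}{\tinf}$ does hold by inclusion-exclusion when $\tube^\circ \not\subseteq \tsup$, because all four degrees then genuinely count neighbors of $\tube^\circ$; but as soon as $\tube^\circ \subseteq \tsup$ the term $\compatibilityDegree{\tube^\circ}{\tsup}$ collapses to $0$ by the last line of Definition~\ref{def:compatibilityDegree}, and the bookkeeping you describe breaks down for initial tubes sitting inside $\tsup$ that contain one root and are adjacent to the other.

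This is precisely where the real work of the paper lies. The actual proof (Section~\ref{subsec:proofCompatibilityFan}) proceeds by induction on $\dim\nestedComplex(\graphG)$ and establishes a strengthened statement (Theorem~\ref{theo:compatibilityFanRefined}) comprising a \textbf{Span Property} and a \textbf{Local Flip Property} in addition to the \textbf{Separating Flip Property} that you need. The coefficients $\alpha,\alpha'$ on the flipped tubes are built from compatibility degrees of the forced tubes with certain maximal initial tubes contained in $\tsup$ (they need only have the same sign, not equal $1$), the support of the dependence must be enlarged by an auxiliary nested chain of tubes $\tube^\star_i$ determined by the initial tubes inside $\tsup$, and the argument first produces the dependence for one well-chosen pair of adjacent maximal tubings containing $\{\tube,\tube'\}$ before transporting it to all such pairs via the Span Property. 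Your proposal identifies the right target but asserts a closed form for the dependence that does not exist; to repair it you would essentially have to reproduce the paper's case analysis (Cases (A)--(E)) or find a genuinely new way to control these dependences.
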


We prove this statement in Section~\ref{subsec:proofCompatibilityFan}. The proof relies on the characterization of complete simplicial fans presented in Proposition~\ref{prop:characterizationFan}. Unfortunately, we are not able to compute the linear dependence between the compatibility vectors involved in an arbitrary flip. To illustrate the difficulty, we show in the following example that these linear dependences may be complicated. In particular, they do not always  involve only the forced tubes of the flip.

\begin{example}
\label{exm:exmLinearDependence}
Consider the initial maximal tubing~$\tubing^\circ\ex$ of the graph~$\graphG\ex$ of \fref{fig:exmTubes}\,(right) and the flip~$\tubing\ex \ssm \{\tube\ex\} = \tubing'\ex \ssm \{\tube'\ex\}$ illustrated in \fref{fig:exmFlip}.
The linear dependence between the compatibility vectors of the tubes of~$\tubing\ex \cup \tubing'\ex$ with respect to~$\tubing^\circ\ex$ is
\begin{gather*}
2 \, \compatibilityVector{\tubing^\circ\ex}{\tube\ex} + \compatibilityVector{\tubing^\circ\ex}{\tube'\ex} - \compatibilityVector{\tubing^\circ\ex}{\{d\}} - \compatibilityVector{\tubing^\circ\ex}{\{e\}} \qquad \qquad \\ \qquad \qquad - 3 \, \compatibilityVector{\tubing^\circ\ex}{\{m\}} + 4 \, \compatibilityVector{\tubing^\circ\ex}{\{k,l\}} - 3 \, \compatibilityVector{\tubing^\circ\ex}{\{c,d,h\}} = 0.
\end{gather*}
Observe that the tube~$\{d\}$ is involved in this linear dependence although it is not a forced tube of the exchangeable pair~$\{\tube\ex,\tube'\ex\}$.
\end{example}

Theorem~\ref{theo:compatibilityFan} has the following consequences, whose direct proof would require some work.

\begin{corollary}
\label{coro:fullRank}
For any initial tubing~$\tubing^\circ$ on~$\graphG$, 
\begin{itemize}
\item the compatibility vector map~$\tube \longmapsto \compatibilityVector{\tubing^\circ}{\tube}$ is injective: $\compatibilityVector{\tubing^\circ}{\tube} = \compatibilityVector{\tubing^\circ}{\tube'} \; \Longrightarrow \; \tube = \tube'$.
\item the compatibility matrix~$\compatibilityVector{\tubing^\circ}{\tubing}$ of any maximal tubing~$\tubing$ on~$\graphG$ has full rank.
\end{itemize}
\end{corollary}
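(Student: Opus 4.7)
Both claims follow directly from Theorem~\ref{theo:compatibilityFan}. The plan is to read the full-rank statement off the fact that the maximal cones of a complete simplicial fan of~$\R^n$ are $n$-dimensional and simplicial, and to derive injectivity by a short case analysis on whether the two putative tubes are compatible or not.

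For the full-rank assertion, the observation is that for any maximal tubing~$\tubing$ the cone $\R_{\ge 0} \, \compatibilityMatrix{\tubing^\circ}{\tubing}$ is a maximal cone of the complete simplicial fan $\compatibilityFan{\graphG}{\tubing^\circ}$, hence full-dimensional and simplicial. Its $n$ ray generators are therefore linearly independent, so the columns of $\compatibilityMatrix{\tubing^\circ}{\tubing}$ form a basis of~$\R^n$ and the matrix has full rank.

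For injectivity, suppose for contradiction that $\tube \ne \tube'$ are tubes of~$\graphG$ with $\compatibilityVector{\tubing^\circ}{\tube} = \compatibilityVector{\tubing^\circ}{\tube'}$. Since $\nestedComplex(\graphG)$ is a pure simplicial sphere, every tube extends to a maximal tubing. If $\tube$ and~$\tube'$ are compatible, one can extend $\{\tube, \tube'\}$ to a common maximal tubing~$\tubing$; then $\compatibilityMatrix{\tubing^\circ}{\tubing}$ has two equal columns, contradicting the full-rank statement just established. Otherwise $\tube$ and~$\tube'$ are incompatible, and I would extend them separately to maximal tubings $\tubing \ni \tube$ and $\tubing' \ni \tube'$. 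Writing $C \eqdef \R_{\ge 0} \, \compatibilityMatrix{\tubing^\circ}{\tubing}$ and $C' \eqdef \R_{\ge 0} \, \compatibilityMatrix{\tubing^\circ}{\tubing'}$, the ray $\rho \eqdef \R_{\ge 0} \, \compatibilityVector{\tubing^\circ}{\tube} = \R_{\ge 0} \, \compatibilityVector{\tubing^\circ}{\tube'}$ lies in~$C$ and in~$C'$, hence in~$C \cap C'$, which by the fan axiom is a common face of~$C$ and~$C'$. Since the fan realizes $\nestedComplex(\graphG)$, this face is uniquely labelled by a subtubing $\tubing'' \subseteq \tubing \cap \tubing'$. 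As $\rho$ is a ray of the simplicial cone $\R_{\ge 0} \, \compatibilityMatrix{\tubing^\circ}{\tubing''}$, it is generated by some $\compatibilityVector{\tubing^\circ}{\tube_0}$ with $\tube_0 \in \tubing''$. Simpliciality of~$C$ (its $n$ generators form a basis) then forces $\tube = \tube_0$, so $\tube \in \tubing'' \subseteq \tubing'$. Together with $\tube' \in \tubing'$, this places $\tube$ and~$\tube'$ in the same tubing and makes them compatible, the desired contradiction.

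The only delicate step is the incompatible case: it requires combining the fan axiom (intersection of two maximal cones is a face of both) with the simpliciality of that face and the uniqueness of its subtubing label in the fan realization, so as to pin the shared ray down to a single tube common to $\tubing$ and~$\tubing'$. Everything else is routine bookkeeping.
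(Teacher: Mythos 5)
Your proposal is correct and follows the same route as the paper, which simply asserts Corollary~\ref{coro:fullRank} as a consequence of Theorem~\ref{theo:compatibilityFan} without writing out the details. Your unpacking — full rank from the full-dimensionality of maximal cones of a complete simplicial fan, and injectivity from the fact that the common face $C \cap C'$ is a simplicial cone whose shared ray is generated by a single tube belonging to both $\tubing$ and $\tubing'$ — is exactly the ``some work'' the paper alludes to, and it is sound provided one reads ``realizes the nested complex'' in Theorem~\ref{theo:compatibilityFan} in the intended strong sense (the labelling map is an isomorphism onto the face poset of the fan), which is what licenses your step that the common face is labelled by a unique subtubing.
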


\subsection{Dual compatibility fan}

It is also interesting to consider the following dual notion of compatibility vectors, where the roles of~$\tube$ and~$\tube_1^\circ, \dots, \tube_n^\circ$ are reversed. The results are similar, and the motivation for this dual definition will become clear in Section~\ref{subsec:cycles}.

\begin{definition}
Let~$\tubing^\circ \eqdef \{\tube_1^\circ, \dots, \tube_n^\circ\}$ be an arbitrary initial maximal tubing on~$\graphG$. The \defn{dual compatibility vector} of a tube~$\tube$ of~$\graphG$ with respect to~$\tubing^\circ$ is the vector $\dualCompatibilityVector{\tube}{\tubing^\circ} \eqdef [\compatibilityDegree{\tube}{\tube_1^\circ}, \dots, \compatibilityDegree{\tube}{\tube_n^\circ}]$. The \defn{dual compatibility matrix} of a tubing~$\tubing \eqdef \{\tube_1, \dots, \tube_m\}$ on~$\graphG$ with respect to~$\tubing^\circ$ is the matrix~$\dualCompatibilityVector{\tubing}{\tubing^\circ} \eqdef [\compatibilityDegree{\tube_j}{\tube_i^\circ}]_{i \in [n], j \in [m]}$.
\end{definition}

The following statement is the analogue of Theorem~\ref{theo:compatibilityFan}.

\begin{theorem}
\label{theo:dualCompatibilityFan}
For any graph~$\graphG$ and any maximal tubing~$\tubing^\circ$ on~$\graphG$, the collection of cones
\[
\dualCompatibilityFan{\graphG}{\tubing^\circ} \eqdef \set{\R_{\ge 0} \, \dualCompatibilityVector{\tubing}{\tubing^\circ}}{\tubing \text{ tubing on } \graphG}\]
is a complete simplicial fan which realizes the nested complex~$\nestedComplex(\graphG)$. We call it the \defn{dual compatibility fan} of~$\graphG$ with respect to~$\tubing^\circ$.
\end{theorem}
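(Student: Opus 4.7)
The plan is to apply Proposition~\ref{prop:characterizationFan} to the simplicial $(n-1)$-sphere $\nestedComplex(\graphG)$ with the family of vectors $\dualCompatibilityVector{\tube}{\tubing^\circ}$ indexed by proper tubes $\tube$ of $\graphG$. The argument will mirror that of Theorem~\ref{theo:compatibilityFan}, with the two arguments of the compatibility degree swapped throughout. I need to verify the two conditions of Proposition~\ref{prop:characterizationFan}: a starting-facet condition, and a linear-dependence condition across each flip.

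For the starting-facet condition, I take $\triangle = \tubing^\circ$. Since the tubes in $\tubing^\circ$ are pairwise compatible, Proposition~\ref{prop:compatibilityDegree} gives $\compatibilityDegree{\tube_i^\circ}{\tube_j^\circ} = -\delta_{ij}$, so $\dualCompatibilityVector{\tubing^\circ}{\tubing^\circ} = -\b{I}_n$ is a basis and $\R_{>0} \, \dualCompatibilityVector{\tubing^\circ}{\tubing^\circ}$ is the open negative orthant. For any other maximal tubing~$\tubing$, pick $\tube \in \tubing \ssm \tubing^\circ$; by maximality of $\tubing^\circ$, some $\tube_i^\circ \in \tubing^\circ$ is incompatible with~$\tube$, whence $\compatibilityDegree{\tube}{\tube_i^\circ} > 0$ by Proposition~\ref{prop:compatibilityDegree}. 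Incompatibility forces $\tube_i^\circ \notin \tubing$, so each $\tube_z^\circ \in \tubing \cap \tubing^\circ$ is distinct from~$\tube_i^\circ$ and compatible with it, giving $\compatibilityDegree{\tube_z^\circ}{\tube_i^\circ} = 0$; moreover, $\compatibilityDegree{\tube^*}{\tube_i^\circ} \ge 0$ for every $\tube^* \in \tubing \ssm \tubing^\circ$. Consequently, the $i$th coordinate of every vector in $\R_{>0} \, \dualCompatibilityVector{\tubing}{\tubing^\circ}$ is strictly positive, separating this open cone from the open negative orthant.

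For the flip condition, I must exhibit, for each flip $\tubing \symdif \tubing' = \{\tube,\tube'\}$, a linear dependence
\[
\alpha \, \dualCompatibilityVector{\tube}{\tubing^\circ} + \alpha' \, \dualCompatibilityVector{\tube'}{\tubing^\circ} + \sum_{\tube_y \in \tubing \cap \tubing'} \beta_y \, \dualCompatibilityVector{\tube_y}{\tubing^\circ} = 0
\]
with $\alpha\alpha' > 0$. Reading this in coordinate~$i$ turns it into an identity among the numbers of neighbors in~$\tube_i^\circ$ of the tubes $\tube$, $\tube'$, and~$\tube_y$, with $i$-independent coefficients. My plan is to use Proposition~\ref{prop:flip} to describe $\tube$ and $\tube'$ via $\tsup = \tube \cup \tube'$, $\lambda(\tube,\tubing)$ and $\lambda(\tube',\tubing')$, then to split into cases according to the position of $\tube_i^\circ$ relative to $\tsup$ (disjoint and non-adjacent to $\tsup$, containing $\tsup$, contained in $\tsup$, or properly overlapping $\tsup$). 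The first two cases make the coordinate relation trivial; the remaining ones should pinpoint a canonical family of $\tube_y \in \tubing \cap \tubing'$ whose neighbor-counts against $\tube_i^\circ$ exactly compensate the defect, with strictly positive $\alpha$ and $\alpha'$ common to all~$i$.

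The flip step is the main obstacle. As Example~\ref{exm:exmLinearDependence} already demonstrates for the primal fan, such a relation may involve $\tube_y$'s that are not forced by the exchangeable pair $\{\tube,\tube'\}$, so no uniform closed formula is available and the coefficients $\beta_y$ must be tracked carefully. I expect that, as in the parallel proof of Theorem~\ref{theo:compatibilityFan} promised in Section~\ref{subsec:proofCompatibilityFan}, an induction on $|\ground|$ (or on the combinatorial depth of~$\tube$ in the spine of~$\tubing$) reduces each flip to a local configuration inside~$\tsup$ in which the relation can be written down explicitly and the sign condition $\alpha\alpha'>0$ verified by hand.
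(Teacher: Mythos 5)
Your verification of condition~(1) of Proposition~\ref{prop:characterizationFan} for the facet~$\tubing^\circ$ is correct. The genuine gap is in the flip condition, which you do not actually establish: you defer it to an expected ``mirror image'' of the proof of Theorem~\ref{theo:compatibilityFan}, but that mirror does not exist in any straightforward form. The primal proof rests on identities of the type $\compatibilityDegree{\tube^\circ}{\tube} + \compatibilityDegree{\tube^\circ}{\tube'} = \compatibilityDegree{\tube^\circ}{\tsup} + \compatibilityDegree{\tube^\circ}{\tinf}$, which hold by inclusion--exclusion because all four degrees count vertices of a decomposable set ($\tube$, $\tube'$, $\tsup = \tube \cup \tube'$, $\tinf = \tube \cap \tube'$) adjacent to the fixed tube~$\tube^\circ$; the dual degrees instead count vertices of the fixed set~$\tube^\circ$ adjacent to varying tubes, and no such inclusion--exclusion holds (compare Examples~\ref{exm:exmLinearDependence} and~\ref{exm:exmLinearDependenceDual}: the dual dependence has different support and coefficients, and $\tsup$ need not even appear in it). Moreover, the induction machinery of the primal proof hinges on the \textbf{Span} and \textbf{Local Flip} properties of Theorem~\ref{theo:compatibilityFanRefined}, and the paper explicitly records that it does not know whether their dual analogues hold; in particular the natural dual locality is not ``supported inside~$\tsup$'' as you propose, but rather ``supported by tubes not strictly included in a connected component of~$\tinf$''. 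So the case analysis you sketch would have to be rebuilt from scratch, and there is no reason to expect it to close along the lines you indicate.

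The paper sidesteps all of this with a transposition trick. Since $\dualCompatibilityMatrix{\tubing}{\tubing^\circ} = \transpose{\compatibilityMatrix{\tubing}{\tubing^\circ}}$, and the separation condition across a flip $\tubing \ssm \{\tube\} = \tubing' \ssm \{\tube'\}$ is equivalent to the sign condition $\det(\dualCompatibilityMatrix{\tubing}{\tubing^\circ}) \cdot \det(\dualCompatibilityMatrix{\tubing'}{\tubing^\circ}) < 0$, one applies the already-proved \textbf{Separating Flip Property} of Theorem~\ref{theo:compatibilityFan} with $\tubing$ and then $\tubing'$ playing the role of the initial tubing and a flip ${\tubing^\circ \to {\tubing^\circ}'}$ playing the role of the flip, multiplies the two resulting determinant inequalities, takes transposes, and propagates the sign along a path of flips from $\tubing^\circ = \tubing$ to an arbitrary~$\tubing^\circ$. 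This yields the separation property for dual vectors without ever exhibiting the dual linear dependences. To repair your argument, either adopt this duality reduction to Theorem~\ref{theo:compatibilityFan}, or genuinely produce the dual dependences for every flip --- a substantial task that the paper itself leaves unresolved.
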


The proof of this statement appears in Section~\ref{subsec:proofDualCompatibilityFan}. It is a direct application of Theorem~\ref{theo:compatibilityFan} using duality between compatibility and dual compatibility matrices.

\begin{example}
\label{exm:exmLinearDependenceDual}
Consider the initial maximal tubing~$\tubing^\circ\ex$ on the graph~$\graphG\ex$ of \fref{fig:exmTubes}\,(right) and the flip~$\tubing\ex \ssm \{\tube\ex\} = \tubing'\ex \ssm \{\tube'\ex\}$ illustrated in \fref{fig:exmFlip}. The linear dependence between the dual compatibility vectors of the tubes of~$\tubing\ex \cup \tubing'\ex$ with respect to~$\tubing^\circ\ex$ is
\[
2 \, \dualCompatibilityVector{\tube\ex}{\tubing^\circ\ex} + \dualCompatibilityVector{\tube'\ex}{\tubing^\circ\ex} - \dualCompatibilityVector{\{e\}}{\tubing^\circ\ex} - \dualCompatibilityVector{\{c,d,h\}}{\tubing^\circ\ex} = 0.
\]
\end{example}


\section{Examples for specific graphs}
\label{sec:specificGraphs}

In this section, we provide examples of compatibility fans for particular families of graphs. We start with graphs with few vertices to illustrate the variety of compatibility fans. We then describe compatibility fans for paths, cycles, complete graphs and stars using alternative combinatorial models (triangulations, lattice paths, ...). For paths and cycles, we give an explicit connection to the compatibility degree in cluster algebras of types~$A$, $B$, and~$C$. The examples of this section shall help the intuition for further properties studied in Section~\ref{sec:furtherTopics} and for the proofs gathered in Section~\ref{sec:proofs}. 

\subsection{Graphs with few vertices}
\label{subsec:fewVertices}

\enlargethispage{.3cm}
In view of Proposition~\ref{prop:product} below, we restrict to connected graphs. The only connected graphs with $3$ vertices are the $3$-path and the triangle, whose compatibility fans are represented in \fref{fig:3vertices}. The other possible choices for the initial tubing in these pictures would produce the same fans: it is clear for the triangle as all maximal tubings are obtained from one another by graph isomorphisms; for the path, it is an illustration of the non-trivial isomorphisms between compatibility fans studied in Section~\ref{subsec:many}.

\begin{figure}
  \capstart
  \centerline{\includegraphics[scale=1.05]{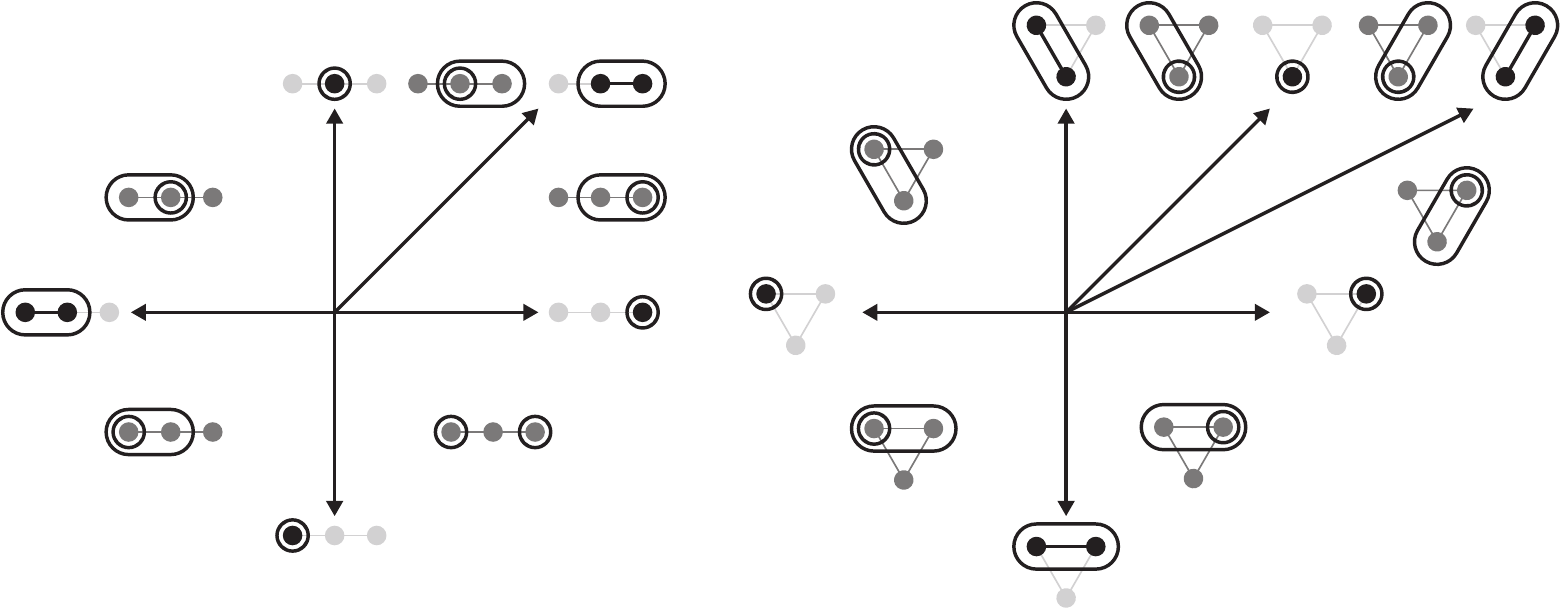}}
  \caption{Compatibility fans of the $3$-path (left) and of the triangle (right).}
  \label{fig:3vertices}
\end{figure}

The first interesting compatibility fans appear in dimension~$3$ for connected graphs on $4$ vertices. All possibilities up to linear transformations are represented in \fref{fig:4vertices}. Instead of representing cones in the $3$-dimensional space, we intersect the compatibility vectors with the unit sphere, make a stereographic projection of the resulting points on the sphere (the pole of the projection is the point of the sphere in direction~$-\b{e}_1-\b{e}_2-\b{e}_3$), and draw the cones on the resulting planar points. Under this projection, the three external vertices correspond to the tubes of the initial tubing, and the external face corresponds to the initial tubing. For the sake of readability, we do not label the remaining vertices of the projection. Their labels can be reconstructed from the initial tubes by flips. For example, the tubes corresponding to the vertices of the top pictures of \fref{fig:4vertices} are given in \fref{fig:allLabels}.

The pictures become more complicated in dimension~$4$. To illustrate them, we have represented in \fref{fig:5vertices} the stereographic projection of the compatibility fan for an arbitrary initial maximal tubing on the path, cycle, complete graph, and star on $5$ vertices.

\begin{figure}[h]
  \capstart
  \centerline{\includegraphics[width=.37\textwidth]{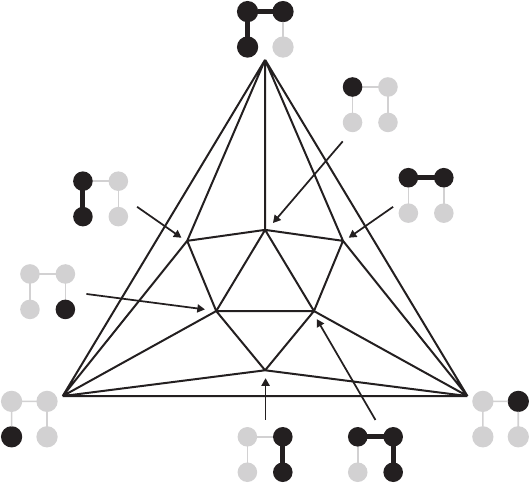} \quad \includegraphics[width=.37\textwidth]{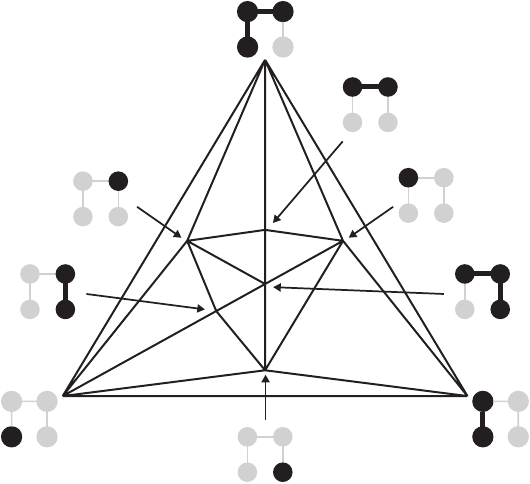} \quad \includegraphics[width=.37\textwidth]{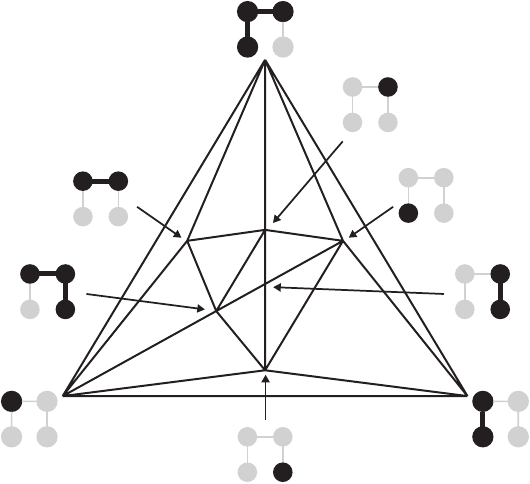}}
  \vspace{-.2cm}
  \caption{All tubes in the top pictures of \fref{fig:4vertices}.}
  \label{fig:allLabels}
\end{figure}

\begin{figure}[p]
  \capstart
  \renewcommand{\arraystretch}{1.3}
  \setlength{\extrarowheight}{3.4cm}
  \centerline{
    \begin{tabular}{@{}|c|ccc|}
    \hline
    \rotatebox{90}{\textbf{Path} (associahedron)} & \includegraphics[width=.37\textwidth]{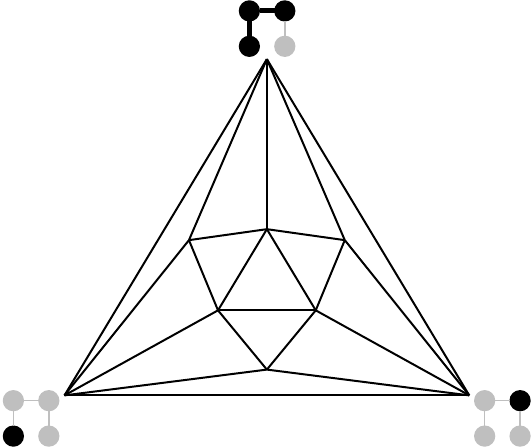} & \includegraphics[width=.37\textwidth]{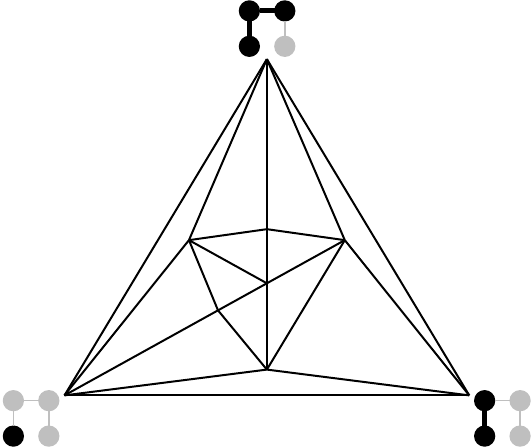} & \includegraphics[width=.37\textwidth]{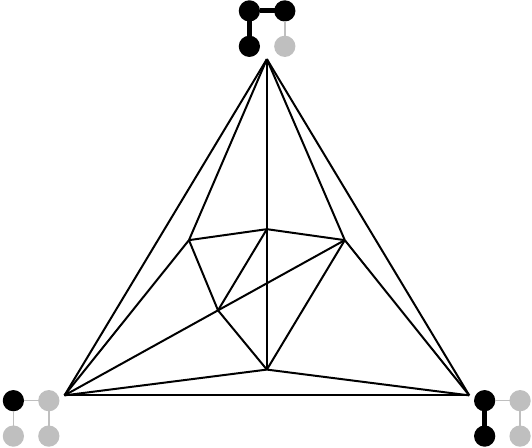} \\
    \hline
    \end{tabular}
  }
  
  \centerline{  
    \begin{tabular}{@{}|c|ccc|}
    \hline
    \rotatebox{90}{\textbf{Cycle} (cyclohedron)} & \includegraphics[width=.37\textwidth]{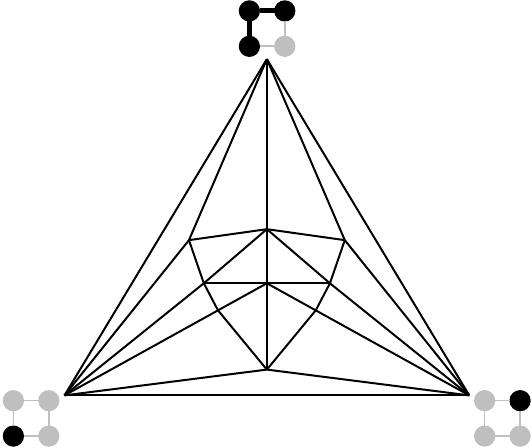} & \includegraphics[width=.37\textwidth]{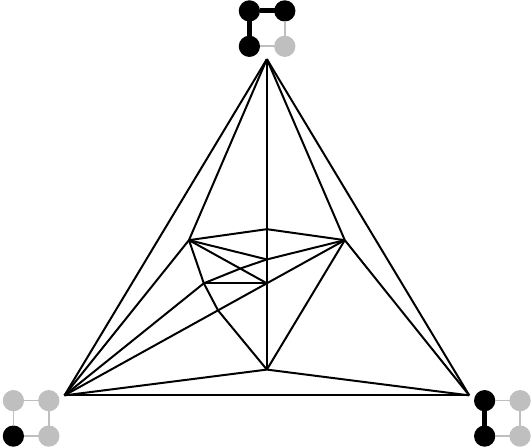} & \includegraphics[width=.37\textwidth]{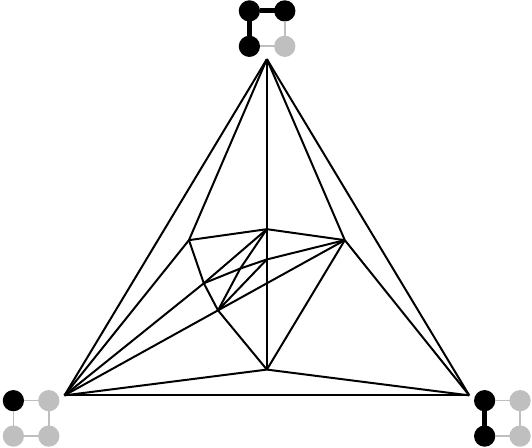} \\
    \hline
    \end{tabular}
  }
  
  \setlength{\extrarowheight}{0cm}
  \centerline{  
    \begin{tabular}[t]{@{}|@{\hspace{.13cm}}c@{\hspace{.13cm}}|}
    \hline
    \textbf{Complete graph} \\[-.15cm] (permutahedron) \\
    \hline
    \\[-.4cm]
    \includegraphics[width=.37\textwidth]{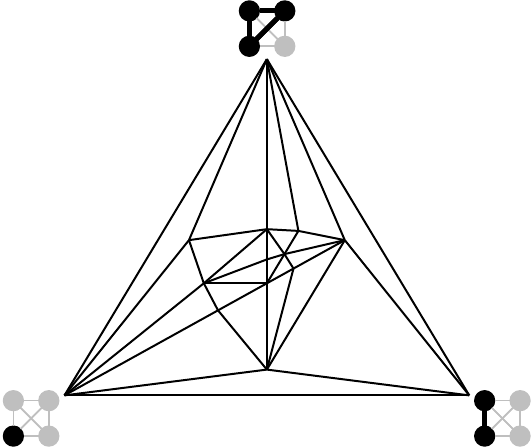} \\
    \hline
    \end{tabular}
    \!\!\!\!\!
    \setlength{\extrarowheight}{3.4cm}
    \begin{tabular}[t]{@{}|c|cc|}
    \hline
     & \includegraphics[width=.37\textwidth]{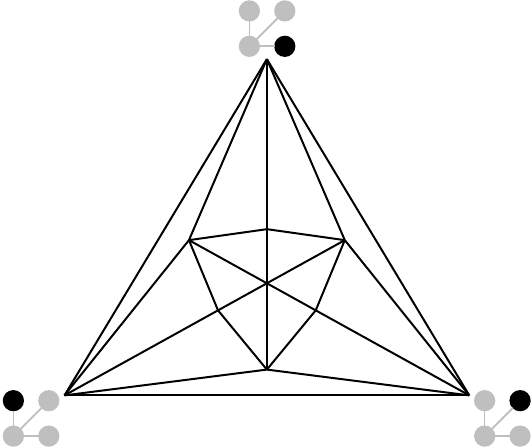} & \includegraphics[width=.37\textwidth]{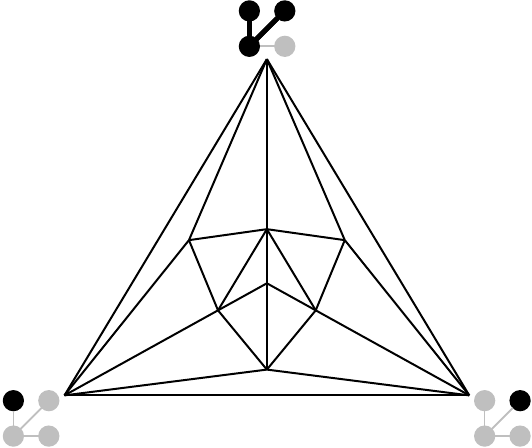} \\
    \rotatebox{90}{\textbf{Star} (stellohedron)} & \includegraphics[width=.37\textwidth]{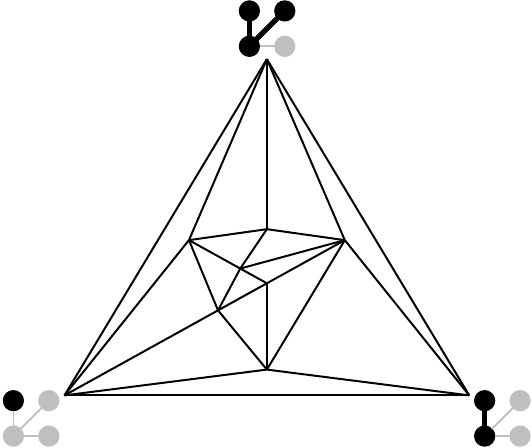} & \includegraphics[width=.37\textwidth]{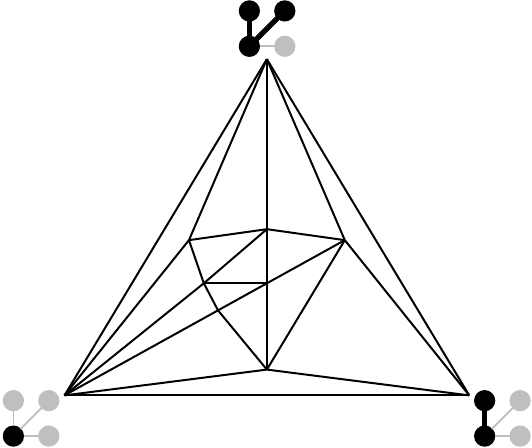} \\
    \hline
    \end{tabular}
  }
    
  \caption{All possible compatibility fans up to linear isomorphism, for all connected graphs on $4$ vertices (see also the end of the picture on page~\pageref{fig:4verticesEnd} for the two remaining graphs). Instead of representing the cones in the $3$-dimensional space, we intersect the compatibility vectors with the unit sphere, make a stereographic projection of the resulting points on the sphere (the pole of the projection is the point of the sphere in direction~$-\b{e}_1-\b{e}_2-\b{e}_3$), and draw the cones on the resulting planar points.}
  \label{fig:4vertices}
\end{figure}

\begin{figure}[p]
  \renewcommand{\arraystretch}{1.3}
  \setlength{\extrarowheight}{3.4cm}
  \centerline{
    \begin{tabular}{@{}|c|ccc|}
    \hline
    & \includegraphics[width=.37\textwidth]{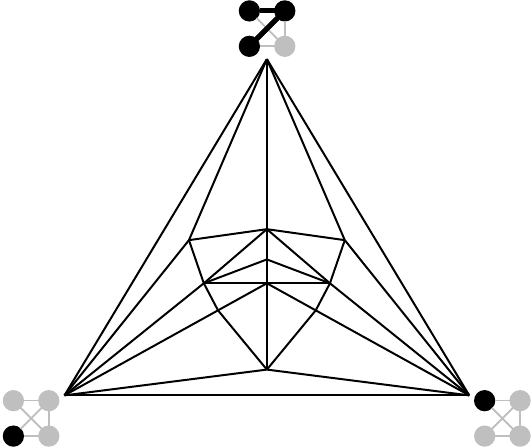} & \includegraphics[width=.37\textwidth]{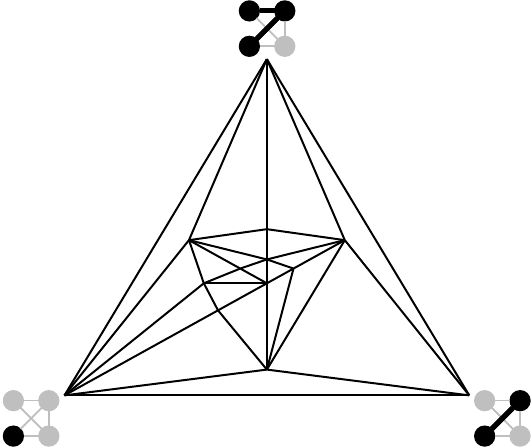} & \includegraphics[width=.37\textwidth]{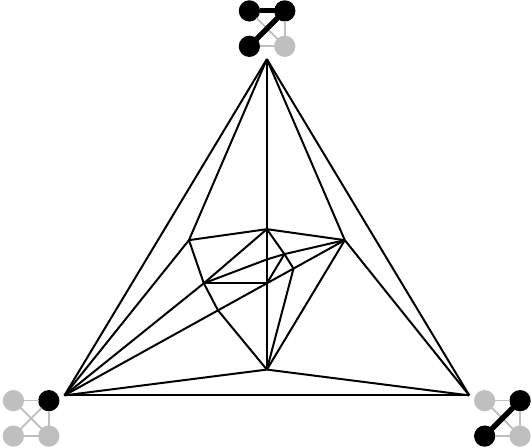} \\
    \rotatebox{90}{\textbf{Complete graph minus one edge} \hspace{-2cm}} & \includegraphics[width=.37\textwidth]{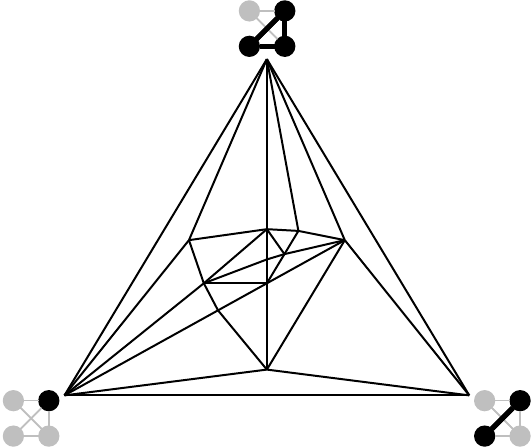} & \includegraphics[width=.37\textwidth]{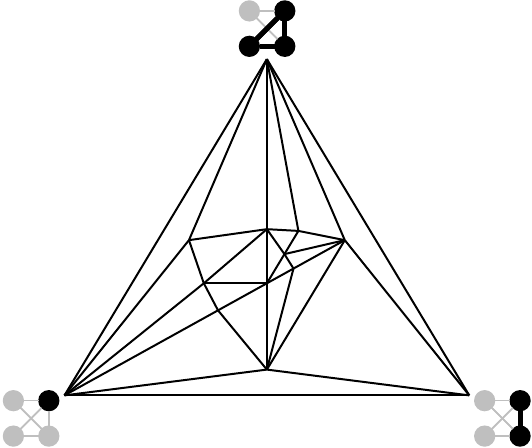} & \includegraphics[width=.37\textwidth]{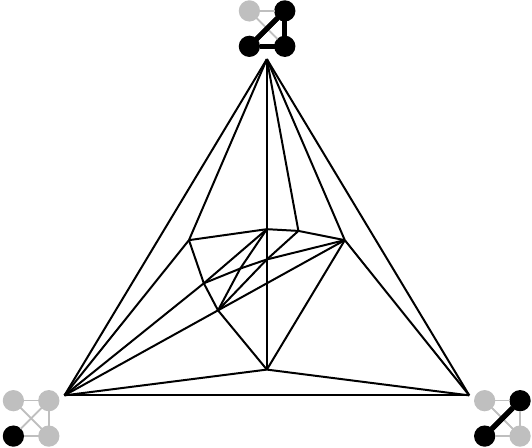} \\
    \hline
    \end{tabular}
  }
  
  \centerline{
    \begin{tabular}{@{}|c|ccc|}
    \hline
    & \includegraphics[width=.37\textwidth]{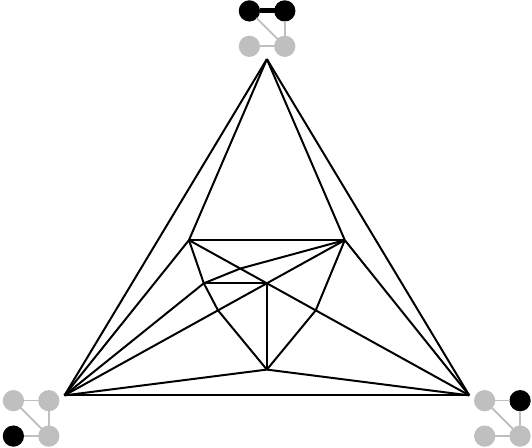} & \includegraphics[width=.37\textwidth]{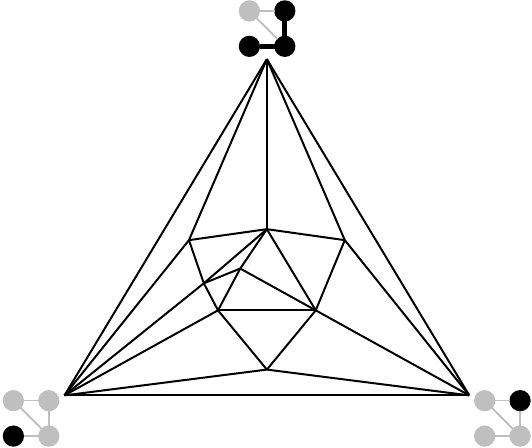} & \includegraphics[width=.37\textwidth]{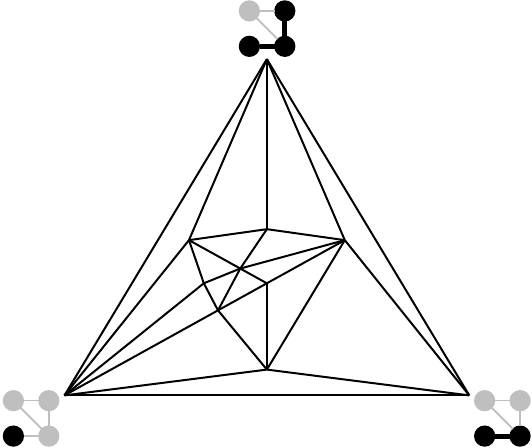} \\
    & \includegraphics[width=.37\textwidth]{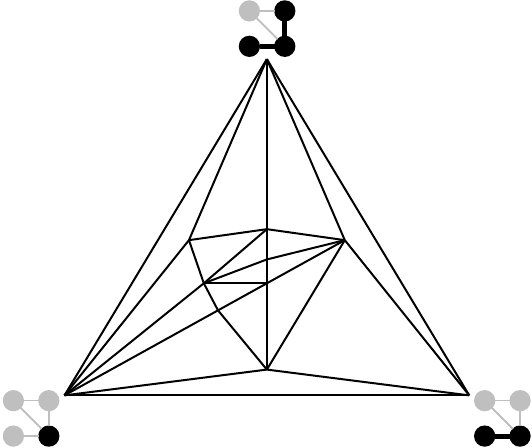} & \includegraphics[width=.37\textwidth]{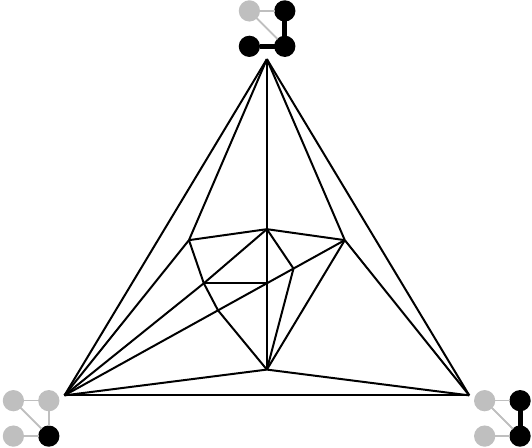} & \includegraphics[width=.37\textwidth]{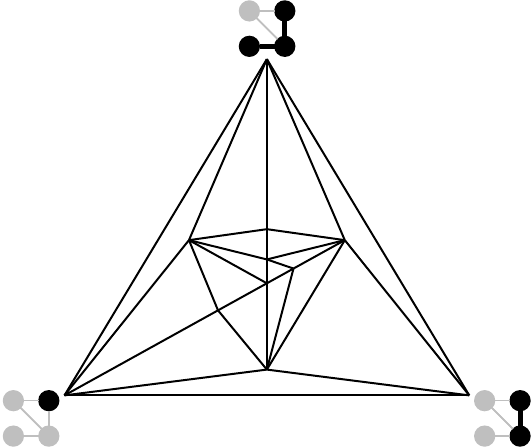} \\
    \rotatebox{90}{\textbf{Complete graph minus two incident edges} \hspace{-4cm}} & \includegraphics[width=.37\textwidth]{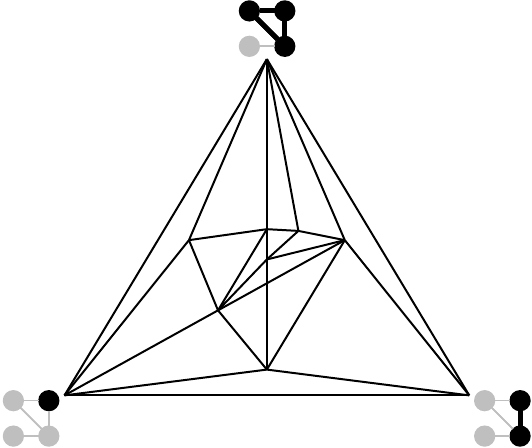} & \includegraphics[width=.37\textwidth]{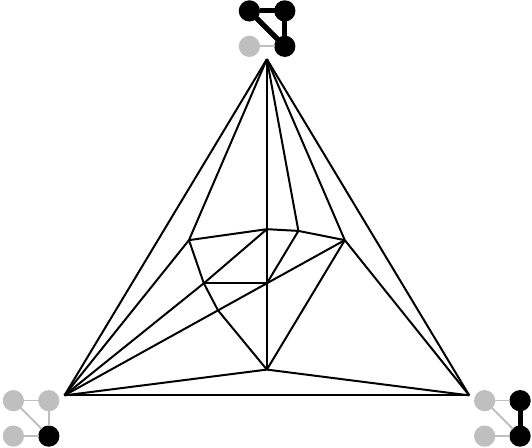} & \includegraphics[width=.37\textwidth]{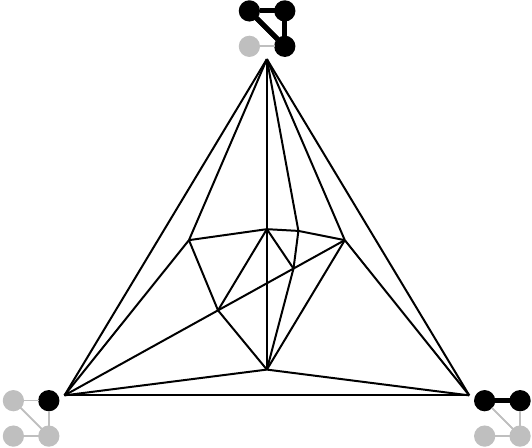}
     \\
    \hline
    \end{tabular}
  }
  \label{fig:4verticesEnd}
\end{figure}

\begin{figure}
  \capstart
  \centerline{\includegraphics[width=.55\textwidth]{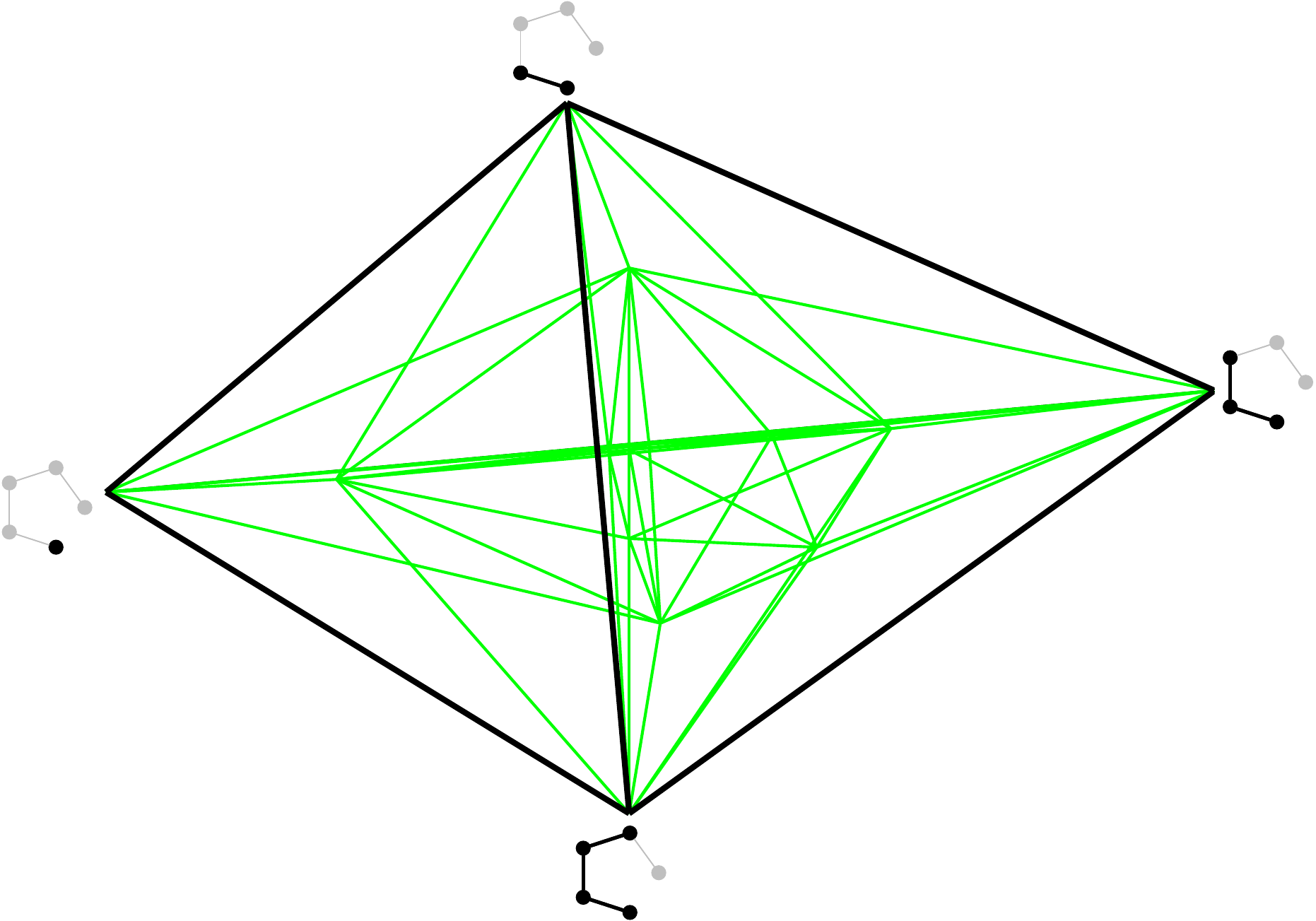} \quad \includegraphics[width=.55\textwidth]{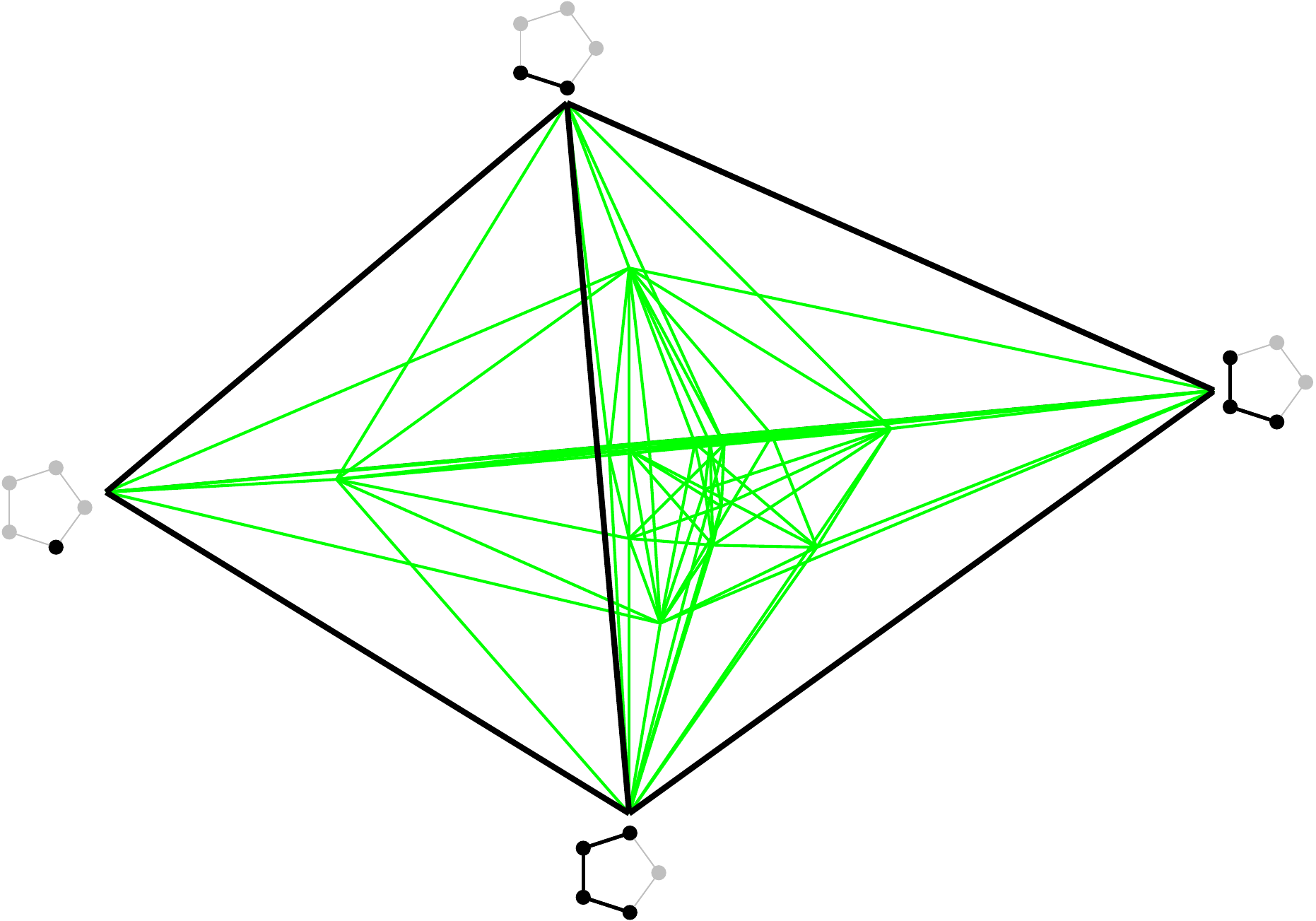}}
  \vspace*{.4cm}
  \centerline{\includegraphics[width=.55\textwidth]{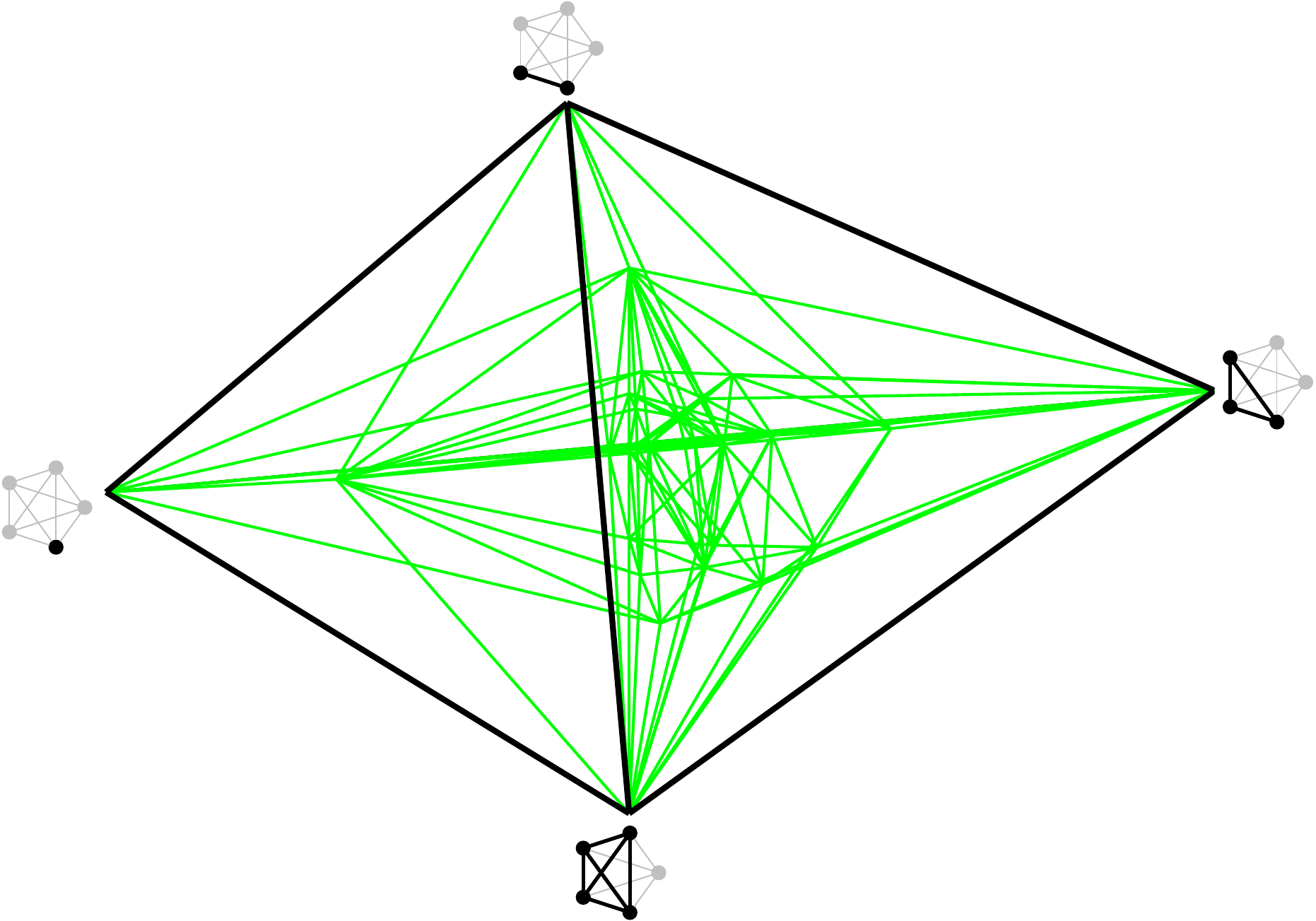} \quad \includegraphics[width=.55\textwidth]{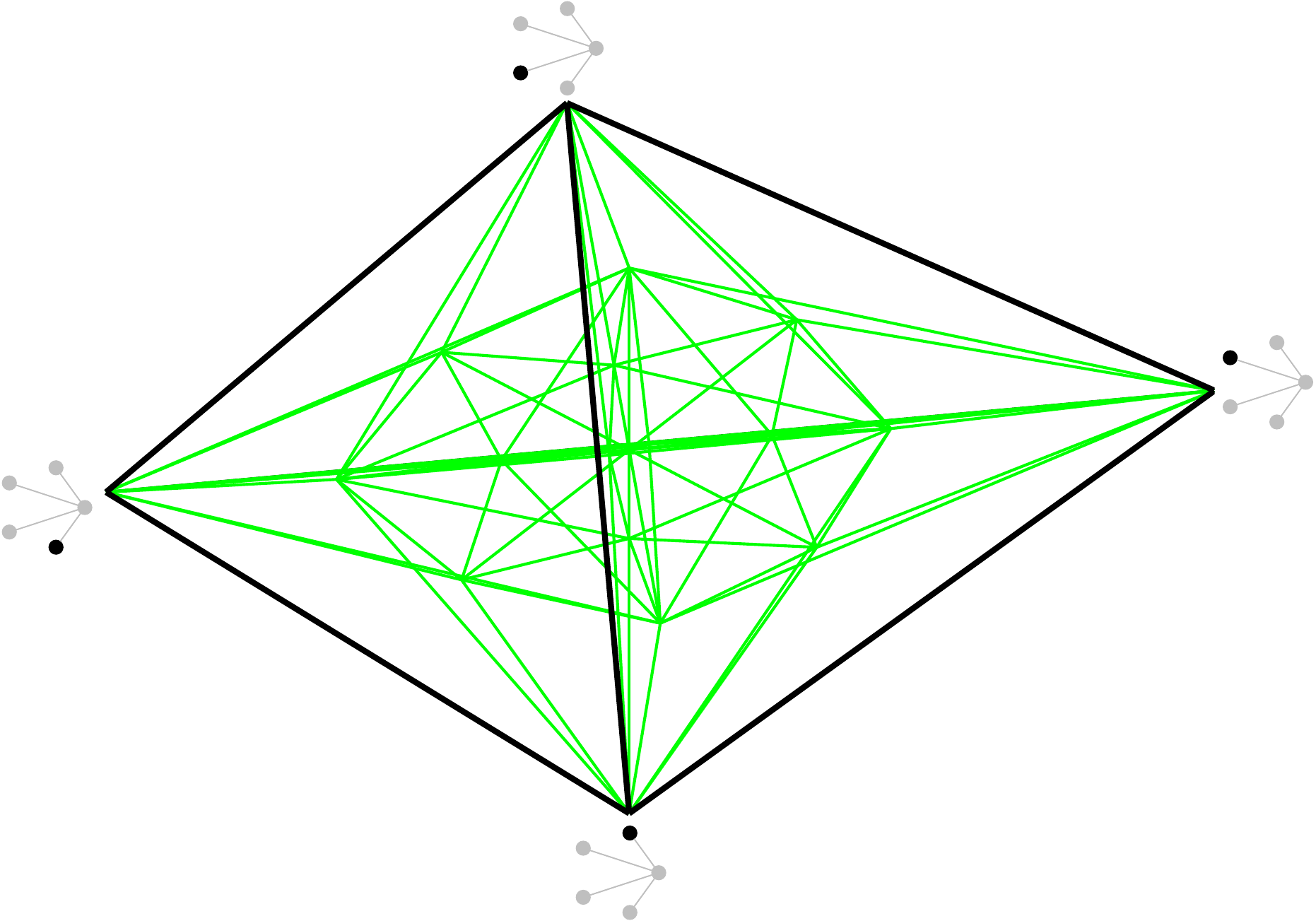}}
  \caption{Stereographic projection of the compatibility fan for particular initial maximal tubings on the path, cycle, complete graph, and star on $5$ vertices.}
  \label{fig:5vertices}
\end{figure}

\subsection{Paths}
\label{subsec:paths}

We now consider the nested complex~$\nestedComplex(\pathG_{n+1})$ and the compatibility fan~$\compatibilityFan{\pathG_{n+1}}{\tubing^\circ}$ for the path~$\pathG_{n+1}$ on~$n+1$ vertices. As already mentioned in the introduction, the nested complex~$\nestedComplex(\pathG_{n+1})$ is isomorphic to the $n$-dimensional \defn{simplicial associahedron}, \ie the simplicial complex of sets of pairwise non-crossing diagonals of an $(n+3)$-gon. It is convenient to present the correspondence as follows. Consider an $(n+3)$-gon~$Q_{n+3}$ with vertices labeled from left to right by~$0, 1, \dots, n+2$ and such that all vertices~$1, \dots, n+1$ are located strictly below the boundary edge~$[0,n+2]$. We can therefore identify the path~$\pathG_{n+1}$ with the path~$1, \dots, n+1$ on the boundary of~$Q_{n+3}$. We then associate to a diagonal~$\delta$ of~$Q_{n+3}$ the tube~$\tube_\delta$ of~$\pathG_{n+1}$ whose vertices are located strictly below~$\delta$, see Figures~\ref{fig:path} and~\ref{fig:associahedron}. Finally, we associate to a set~$\Delta$ of pairwise non-crossing internal diagonals of~$Q_{n+3}$ the set of tubes~$\tubing_\Delta \eqdef \set{\tube_\delta}{\delta \in \Delta}$, see \fref{fig:associahedron}. The reader can check that the map~$\Delta \mapsto \tubing_\Delta$ defines an isomorphism between the simplicial associahedron and the nested complex~$\nestedComplex(\pathG_{n+1})$: two diagonals~$\delta, \delta'$ of~$Q_{n+3}$ are non-crossing if and only if the corresponding tubes~$\tube_\delta, \tube_{\delta'}$ of~$\pathG_{n+1}$ are compatible.

\begin{figure}
  \capstart
  \centerline{\includegraphics[scale=.6]{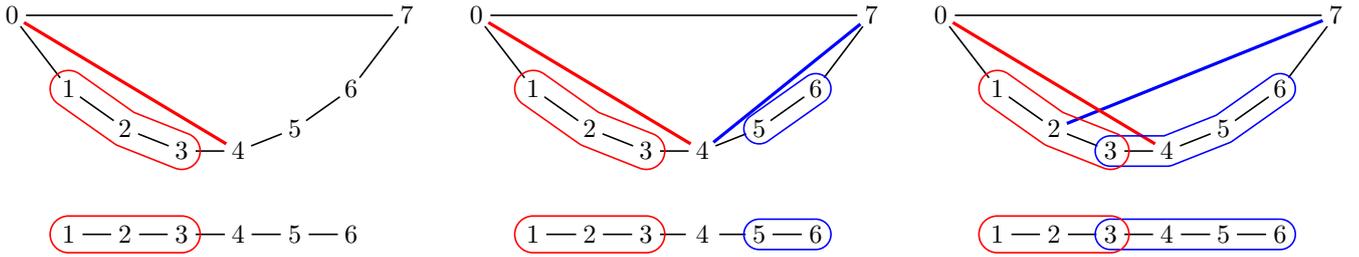}}
  \caption{Isomorphism between the simplicial associahedron and the nested complex of a path: diagonals are sent to tubes (left), preserving the compatibility (middle) and incompatibility (right). See also \fref{fig:associahedron}.}
  \label{fig:path}
\end{figure}

It follows by classical results on the associahedron that the path~$\pathG_{n+1}$ has:
\begin{itemize}
\item $\displaystyle{\frac{n(n+3)}{2}}$ proper tubes~\href{https://oeis.org/A000096}{\cite[A000096]{OEIS}} (internal diagonals of the $(n+3)$-gon),
\item $\displaystyle{\frac{1}{n+2} \binom{2n+2}{n+1}}$ maximal tubings~\href{https://oeis.org/A000108}{\cite[A000108]{OEIS}} (triangulations of the $(n+3)$-gon),
\item $\displaystyle{\frac{1}{k+1}\binom{n}{k}\binom{n+k+2}{k}}$ tubings with~$k$ tubes~\href{https://oeis.org/A033282}{\cite[A033282]{OEIS}} (dissections of the $(n+3)$-gon into~$k$ parts).
\end{itemize}

The following statement, whose proof is left to the reader, describes the behavior of the map~${\delta \mapsto \tube_\delta}$ with respect to compatibility degrees.

\begin{proposition}
\label{prop:typeA}
For any two diagonals~$\delta, \delta'$ of~$Q_{n+3}$, the compatibility degree of the corresponding tubes~$\tube_\delta$ and~$\tube_{\delta'}$ of~$\pathG_{n+1}$ is given by
\[
\compatibilityDegree{\tube_\delta}{\tube_{\delta'}} =
\begin{cases}
-1 & \text{if $\delta = \delta'$,} \\
0 & \text{if $\delta \ne \delta'$ do not cross,} \\
1 & \text{if $\delta \ne \delta'$ cross.}
\end{cases}
\]
\end{proposition}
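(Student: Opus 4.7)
The plan is to carry out a short case analysis using the explicit correspondence. Write each internal diagonal of~$Q_{n+3}$ in the form~$\delta = [i,j]$ with~$0 \le i < j \le n+2$ and~$j-i \ge 2$; then~$\tube_\delta = \{i+1, i+2, \dots, j-1\}$ is the set of path vertices strictly between the endpoints of~$\delta$. I would first dispose of the case~$\delta = \delta'$, where~$\tube_\delta = \tube_{\delta'}$ gives~$\compatibilityDegree{\tube_\delta}{\tube_{\delta'}} = -1$ by definition.

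For distinct non-crossing diagonals, I would invoke Proposition~\ref{prop:compatibilityDegree} and reduce the problem to showing that the tubes~$\tube_\delta$ and~$\tube_{\delta'}$ are compatible. If~$\delta = [i,j]$ and~$\delta' = [k,l]$ are nested, say~$i \le k < l \le j$, then~$\tube_{\delta'} \subseteq \tube_\delta$ directly from the formula. If they are disjoint, up to symmetry~$j \le k$, and the largest vertex of~$\tube_\delta$ is~$j-1$ while the smallest vertex of~$\tube_{\delta'}$ is~$k+1$, so the vertex~$j$ (or any vertex between~$j$ and~$k$) lies in the gap between them, making~$\tube_\delta \cup \tube_{\delta'}$ disconnected in~$\pathG_{n+1}$ and hence not a tube. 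The shared-endpoint sub-cases (e.g.~$k = j$ or~$l = i$) are handled identically. Either way the tubes are compatible, so the compatibility degree vanishes.

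For crossing diagonals, I may assume~$i < k < j < l$. Then~$\tube_\delta \cap \tube_{\delta'} = \{k+1, \dots, j-1\}$, and~$i+1 \in \tube_\delta \ssm \tube_{\delta'}$ (since~$i+1 \le k < k+1$), so~$\tube_\delta \not\subseteq \tube_{\delta'}$. It remains to count the neighbors of~$\tube_\delta$ in~$\tube_{\delta'} \ssm \tube_\delta = \{j, j+1, \dots, l-1\}$. A vertex~$v$ is a neighbor of~$\tube_\delta = \{i+1, \dots, j-1\}$ in the path precisely when~$v \in \{i, j\}$, and of these only~$j$ belongs to~$\tube_{\delta'} \ssm \tube_\delta$. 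Hence~$\compatibilityDegree{\tube_\delta}{\tube_{\delta'}} = 1$, as required.

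There is no real obstacle: because a diagonal of the polygon produces an interval of consecutive path vertices, both crossing/non-crossing and compatibility reduce to elementary inequalities on the endpoints~$i, j, k, l$. The only point requiring a little care is ensuring that the various shared-endpoint configurations among non-crossing diagonals are all covered, which is why I would organize the middle paragraph around the nested/disjoint dichotomy rather than around crossing diagrams.
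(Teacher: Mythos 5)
Your proof is correct; the paper explicitly leaves this verification to the reader, and your reduction via $\tube_{[i,j]} = \{i+1,\dots,j-1\}$ to elementary inequalities on the endpoints is exactly the intended argument. The only point you gloss is the second crossing configuration $k < i < l < j$, which is not literally covered by ``I may assume $i<k<j<l$'' since the compatibility degree is asymmetric in its two arguments; it follows either from the reflection $v \mapsto n+2-v$ of the polygon, which induces a graph automorphism of $\pathG_{n+1}$ and hence preserves compatibility degrees, or from an identical direct count showing that in that configuration $i$ is the unique neighbor of $\tube_\delta$ lying in $\tube_{\delta'} \ssm \tube_\delta$.
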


In other words, our compatibility degree between tubes of~$\pathG_{n+1}$ coincides with the compatibility degree between type~$A$ cluster variables defined by S.~Fomin and A.~Zelevinsky in~\cite{FominZelevinsky-YSystems}, and our graphical compatibility fan coincides with the type~$A$ compatibility fan defined for an acyclic initial cluster in~\cite{FominZelevinsky-ClusterAlgebrasII} and for any initial cluster in~\cite[Section~5]{CeballosSantosZiegler}. We thus obtain an alternative proof of F.~Santos' result~\cite[Section~5]{CeballosSantosZiegler}.

\begin{corollary}
For type~$A$ cluster algebras, the denominator vectors (or compatibility vectors) of all cluster variables with~respect to any initial cluster support a complete simplicial fan which realizes the cluster complex.
\end{corollary}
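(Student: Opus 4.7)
The plan is to derive the corollary directly from Theorem~\ref{theo:compatibilityFan} applied to the path~$\pathG_{n+1}$, by transporting everything across the bijection $\delta \mapsto \tube_\delta$ described just before Proposition~\ref{prop:typeA}. Recall that the type~$A_n$ cluster complex is well known to be isomorphic to the simplicial associahedron on an $(n+3)$-gon~$Q_{n+3}$, with cluster variables corresponding to internal diagonals and clusters to triangulations. Composing this classical identification with $\delta \mapsto \tube_\delta$, I obtain an isomorphism between the type~$A_n$ cluster complex and the nested complex $\nestedComplex(\pathG_{n+1})$, which sends any initial cluster $X^\circ = \{x_1^\circ, \dots, x_n^\circ\}$ to a maximal tubing $\tubing^\circ = \{\tube_1^\circ, \dots, \tube_n^\circ\}$ on~$\pathG_{n+1}$.

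The next step is to check that the cluster-algebraic denominator vector $\b{d}(X^\circ, x)$ of a cluster variable~$x$ is carried by this isomorphism to the graphical compatibility vector $\compatibilityVector{\tubing^\circ}{\tube_x}$. As recalled in the introduction, in finite type the $i$-th coordinate of $\b{d}(X^\circ, x)$ is the cluster-algebraic compatibility degree $\compatibilityDegree{x_i^\circ}{x}$ of Fomin--Zelevinsky~\cite{FominZelevinsky-YSystems}, which in type~$A$ is exactly the crossing indicator between the diagonals associated with~$x_i^\circ$ and~$x$. Proposition~\ref{prop:typeA} computes the graphical compatibility degree $\compatibilityDegree{\tube_i^\circ}{\tube_x}$ by the same crossing rule. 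Hence the two families of vectors agree coordinate by coordinate, and the cluster-algebraic $\b{d}$-vector configuration coincides, up to relabeling, with the graphical configuration $\{\compatibilityVector{\tubing^\circ}{\tube} \, : \, \tube \in \nestedComplex(\pathG_{n+1})\}$.

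The conclusion is then immediate from Theorem~\ref{theo:compatibilityFan}: the cones $\R_{\ge 0} \, \compatibilityVector{\tubing^\circ}{\tubing}$ for~$\tubing$ a tubing on~$\pathG_{n+1}$ form a complete simplicial fan realizing $\nestedComplex(\pathG_{n+1})$, so transporting this fan back through the isomorphism gives a complete simplicial fan whose rays are the denominator vectors of all cluster variables and whose maximal cones are indexed by clusters. Since Theorem~\ref{theo:compatibilityFan} applies to every maximal tubing~$\tubing^\circ$, the statement holds for every initial cluster~$X^\circ$, acyclic or not.

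There is no real obstacle here beyond the identifications above; the substance of the corollary is entirely contained in Theorem~\ref{theo:compatibilityFan} together with the type~$A$ matching of Proposition~\ref{prop:typeA}. The one point that deserves a careful word in the write-up is that the denominator-vector interpretation via compatibility degrees is exactly what allows the cluster-algebraic question to be phrased combinatorially on the path, bypassing any direct manipulation of Laurent expansions; once this is stated, the argument reduces to a two-line corollary of Theorem~\ref{theo:compatibilityFan}.
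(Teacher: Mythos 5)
Your proposal is correct and follows exactly the route the paper takes: the corollary is obtained by composing the classical identification of the type~$A_n$ cluster complex with the simplicial associahedron, the bijection~$\delta \mapsto \tube_\delta$ onto~$\nestedComplex(\pathG_{n+1})$, the matching of compatibility degrees in Proposition~\ref{prop:typeA} (which identifies denominator vectors with compatibility vectors), and then Theorem~\ref{theo:compatibilityFan} applied to an arbitrary initial maximal tubing. Nothing is missing.
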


\begin{remark}[Dual compatibility fan]
\label{rem:primalDualPath}
The compatibility fan~$\compatibilityFan{\pathG_{n+1}}{\tubing^\circ}$ and the dual compatibility fan~$\dualCompatibilityFan{\pathG_{n+1}}{\tubing^\circ}$ coincide since the compatibility degree is symmetric for tubes~of~$\pathG_{n+1}$.
\end{remark}

\begin{remark}[Linear dependences]
In the case of the path~$\pathG_{n+1}$, the linear dependences are explicity described in~\cite{CeballosSantosZiegler}. They are derived from the case of the octagon by edge contraction in the interpretation in terms of triangulations. They can only involve the two flipped tubes and the forced tubes, and the coefficients are either~$1$ or~$2$ for the flipped tubes and~$-1$ or~$0$ for the forced tubes. See Section~\ref{subsec:proofPolytopality} for more details.
\end{remark}

\begin{remark}
\label{rem:naive}
\enlargethispage{1.2cm}
The compatibility degree for tubes of a path takes values in~$\{-1, 0, 1\}$. It is tempting to construct compatibility fans for graphical nestohedra using the naive compatibility degree defined by~$\compatibilityDegree{\tube}{\tube'} = -1$ if~$\tube = \tube'$, $\compatibilityDegree{\tube}{\tube'} = 0$ if $\tube \ne \tube'$ are compatible, and~$\compatibilityDegree{\tube}{\tube'} = 1$ if $\tube \ne \tube'$ are incompatible. This naive approach works for the paths but fails for any other connected graph since two distinct tubes would get the same compatibility vectors. See \fref{fig:ctrexmNaive} for examples on the triangle and on the tripod.

\begin{figure}[h]
  \capstart
  \centerline{\includegraphics[width=\textwidth]{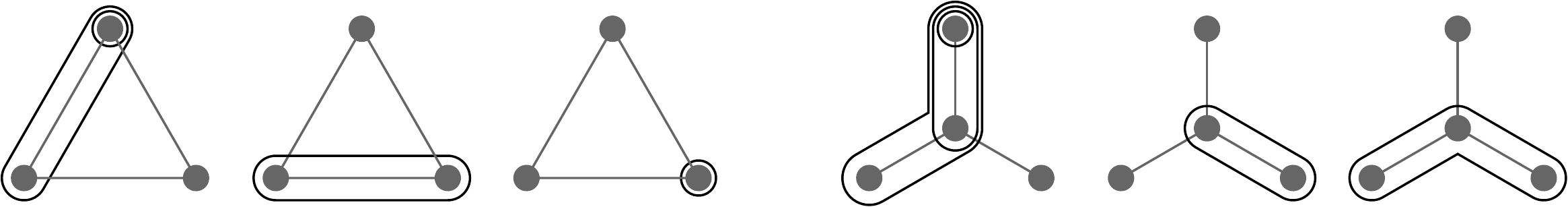}}
  \caption{Counter-examples to the naive definition of compatibility degrees: both on the triangle and on the tripod, all tubes of the initial maximal tubing on the left are incompatible with the two distinct tubes on the right.}
  \label{fig:ctrexmNaive}
\end{figure}
\end{remark}

\subsection{Cycles}
\label{subsec:cycles}

We now consider the nested complex~$\nestedComplex(\cycleG_{n+1})$ and the compatibility fan~$\compatibilityFan{\cycleG_{n+1}}{\tubing^\circ}$ for the cycle~$\cycleG_{n+1}$ on~$n+1$ vertices. As already mentioned in the introduction, the nested complex~$\nestedComplex(\cycleG_{n+1})$ is isomorphic to the $n$-dimensional \defn{simplicial cyclohedron}, \ie the simplicial complex of sets of pairwise non-crossings pairs of centrally symmetric internal diagonals (including duplicated long diagonals) of a regular $(2n+2)$-gon~$R_{2n+2}$. The explicit correspondence works as follows. We label the vertices of~$R_{2n+2}$ cyclically with two copies of~$[n+1]$. We then associate
\begin{itemize}
\item to a duplicated long diagonal~$\delta$ with vertices labeled by~$i$ the tube~$\tube_\delta \eqdef [n+1] \ssm \{i\}$ of~$\cycleG_{n+1}$,
\item to a pair of centrally symmetric diagonals~$\{\delta, \bar\delta\}$ the tube~$\tube_\delta$ of~$\cycleG_{n+1}$ which consists of the labels of the vertices of~$R_{2n+2}$ separated from the center of~$R_{2n+2}$ by~$\delta$ and~$\bar\delta$.
\end{itemize}
Finally, we associate to a set~$\Delta$ of pairwise non-crossing pairs of centrally symmetric internal diagonals of~$R_{2n+2}$ the set of tubes~$\tubing_\Delta \eqdef \set{\tube_\delta}{\delta \in \Delta}$. See Figures~\ref{fig:cycle} and~\ref{fig:cyclohedron}. The reader can check that the map~$\Delta \mapsto \tubing_\Delta$ defines an isomorphism between the simplicial cyclohedron and the nested complex~$\nestedComplex(\cycleG_{n+1})$: two pairs of centrally symmetric diagonals (or duplicated long diagonals)~$\{\delta, \bar\delta\}$ and~$\{\delta', \bar\delta'\}$ of~$R_{2n+2}$ are non-crossing if and only if the corresponding tubes~$\tube_\delta$ and~$\tube_{\delta'}$ of~$\cycleG_{n+1}$ are compatible.

\begin{figure}[h]
  \capstart
  \centerline{\includegraphics[scale=1]{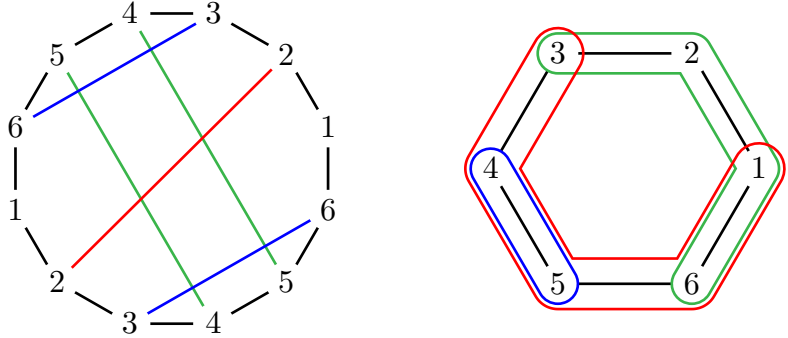}}
  \caption{Isomorphism between the simplicial cyclohedron and the nested complex of a cycle: centrally symmetric pairs of diagonals are sent to tubes, preserving the compatibility and incompatibility. See also \fref{fig:cyclohedron}.}
  \label{fig:cycle}
\end{figure}

It follows by classical results on the cyclohedron that the cycle~$\cycleG_{n+1}$ has:
\begin{itemize}
\item $n(n+1)$ proper tubes~\href{https://oeis.org/A002378}{\cite[A002378]{OEIS}} (centrally symmetric pairs of diagonals),
\item $\displaystyle{\binom{2n}{n}}$ maximal tubings~\href{https://oeis.org/A000984}{\cite[A000984]{OEIS}} (centrally symmetric triangulations),
\item $\displaystyle{\binom{n}{k}\binom{n+k}{k}}$ tubings with~$k$ tubes~\href{https://oeis.org/A063007}{\cite[A063007]{OEIS}} (centrally symmetric dissections).
\end{itemize}

The following statement, whose proof is left to the reader, describes the behavior of the map~$\{\delta,\bar\delta\} \mapsto \tube_\delta$ with respect to compatibility degrees.

\begin{proposition}
\label{prop:typeBC}
For any two pairs of centrally symmetric diagonals (or duplicated long diagonals)~$\{\delta, \bar\delta\}$ and~$\{\delta', \bar\delta'\}$ of~$R_{2n+2}$, the compatibility degree~$\compatibilityDegree{\tube_\delta}{\tube_{\delta'}}$ of the corresponding tubes~$\tube_\delta$ and~$\tube_{\delta'}$ of~$\cycleG_{n+1}$ is the number of crossings between the two diagonals~$\delta$ and~$\bar\delta$ and the diagonal~$\delta'$.
\end{proposition}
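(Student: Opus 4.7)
The plan is to proceed by case analysis on the combinatorial relationship between $\tube_\delta$ and $\tube_{\delta'}$, using the isomorphism with the simplicial cyclohedron recalled just above. First, observe that the case $\tube_\delta = \tube_{\delta'}$ is implicitly excluded from the statement (the compatibility degree is $-1$ while the crossing count is $0$), matching the convention already adopted in Proposition~\ref{prop:typeA} for paths. Henceforth assume $\tube_\delta \neq \tube_{\delta'}$.

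If $\tube_\delta$ and $\tube_{\delta'}$ are compatible, the compatibility degree equals $0$ by Definition~\ref{def:compatibilityDegree}, and the isomorphism $\Delta \mapsto \tubing_\Delta$ between the simplicial cyclohedron and $\nestedComplex(\cycleG_{n+1})$ forces the pairs $\{\delta, \bar\delta\}$ and $\{\delta', \bar{\delta'}\}$ to be non-crossing in $R_{2n+2}$. In particular, $\delta'$ crosses neither $\delta$ nor $\bar\delta$, so both sides of the claimed equality vanish.

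For the incompatible case, I would rely on the observation that a vertex $v \in \tube_{\delta'} \ssm \tube_\delta$ is a neighbor of $\tube_\delta$ in $\cycleG_{n+1}$ if and only if $v$ is an \emph{outer label} of the arc $\tube_\delta$, meaning one of the (at most two, possibly coinciding) vertices of $[n+1] \ssm \tube_\delta$ adjacent to $\tube_\delta$ in the cycle. These outer labels are precisely the labels of the endpoints of $\delta$ (equivalently, of $\bar\delta$) in $R_{2n+2}$. Since $\tube_\delta \not\subseteq \tube_{\delta'}$ for incompatible tubes, Definition~\ref{def:compatibilityDegree} gives $\compatibilityDegree{\tube_\delta}{\tube_{\delta'}} = |\{\text{outer labels of }\tube_\delta\} \cap \tube_{\delta'}|$.

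It then remains to match this count to the number of crossings of $\delta'$ with $\{\delta, \bar\delta\}$. Using the standard fact that two diagonals of $R_{2n+2}$ cross iff their endpoints interleave on the boundary: $\delta'$ crosses $\delta$ iff exactly one endpoint of $\delta$ lies strictly in the short arc of $\delta'$ (whose internal vertices carry labels in $\tube_{\delta'}$), and similarly for $\bar\delta$. Since an endpoint of $\delta$ or $\bar\delta$ labeled $v$ lies in one of the two short arcs bounded by $\{\delta', \bar{\delta'}\}$ iff $v \in \tube_{\delta'}$, a careful bookkeeping of the four endpoints of $\{\delta, \bar\delta\}$ (each outer label of $\tube_\delta$ appearing at two centrally symmetric positions of $R_{2n+2}$) against the two short arcs of $\delta'$ and $\bar{\delta'}$ yields the equality. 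The main obstacle is handling the subtle case where $\delta$ is a long diagonal, so that $\delta = \bar\delta$ and the two outer labels of $\tube_\delta$ coincide: one must verify that the duplicated long diagonal contributes exactly one crossing per intersection point with $\delta'$, matching the unique outer label counted as one.
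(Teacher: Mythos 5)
The paper states that the proof of Proposition~\ref{prop:typeBC} is left to the reader, so there is no official argument to compare against; I am therefore judging your proposal on its own merits. Your reductions are correct and well chosen: for incompatible tubes one indeed has $\compatibilityDegree{\tube_\delta}{\tube_{\delta'}} = |N \cap \tube_{\delta'}|$, where $N$ is the set of (one or two) neighbors of the arc $\tube_\delta$ in $\cycleG_{n+1}$; these neighbors are exactly the labels of the endpoints of $\delta$ (and of $\bar\delta$); the compatible case is correctly dispatched; and the convention you flag for duplicated long diagonals (count the intersection point once, not twice) is the right one --- for instance $\compatibilityDegree{[n+1]\ssm\{i\}}{[n+1]\ssm\{j\}} = 1$ for $i \ne j$, while the two coinciding diameters meet $\delta'$ in a single point.

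However, the step where all the content lies --- ``a careful bookkeeping \dots yields the equality'' --- is not carried out, and as set up it would fail without one further idea. Your crossing criterion (``$\delta'$ crosses $\delta$ iff exactly one endpoint of $\delta$ lies strictly in the short arc of $\delta'$'') is literally false when $\delta$ and $\delta'$ share an endpoint; more importantly, the intended count ``each label of $N \cap \tube_{\delta'}$ contributes one crossing'' breaks down whenever some diagonal of the pair $\{\delta,\bar\delta\}$ has \emph{both} of its endpoints on the closed cap arc of $\delta'$ (both strictly inside, or one strictly inside and one equal to an endpoint of $\delta'$): such a diagonal contributes no crossing while its endpoints still contribute one or two labels to $N \cap \tube_{\delta'}$. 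The missing observation is that these configurations cannot occur for incompatible tubes: the cap region of $\delta'$ is convex and does not contain the center, so a diagonal with both endpoints on its closed boundary arc has its own cap contained in that of $\delta'$, whence $\tube_\delta \subseteq \tube_{\delta'}$ and the tubes are nested, hence compatible. Once this exclusion is in place, each label of $N \cap \tube_{\delta'}$ places exactly one of the (at most four) endpoint positions of $\{\delta,\bar\delta\}$ strictly inside the cap of $\delta'$, on a diagonal whose other endpoint lies strictly outside, and the bijection with crossings follows. You need to add this convexity argument; without it the bookkeeping does not close.
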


In other words, our compatibility degree (resp.~dual compatibility degree) between tubes of~$\cycleG_{n+1}$ coincides with the compatibility degree between type~$C$ (resp.~type~$B$) cluster variables defined by S.~Fomin and A.~Zelevinsky in~\cite{FominZelevinsky-YSystems}. Moreover, our graphical compatibility fan (resp.~dual compatibility fan) coincides with the type~$C$ (resp.~type~$B$) compatibility fan defined for an acyclic initial cluster in~\cite{FominZelevinsky-ClusterAlgebrasII}. This extends for any arbitrary initial cluster to the following corollary.

\begin{corollary}
For type~$B$ and~$C$ cluster algebras, the denominator vectors (or compatibility vectors) of all cluster variables with~respect to any initial cluster support a complete simplicial fan which realizes the cluster complex.
\end{corollary}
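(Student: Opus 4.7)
The proof will be a direct transfer of Theorems~\ref{theo:compatibilityFan} and~\ref{theo:dualCompatibilityFan} through the combinatorial dictionary already set up in Proposition~\ref{prop:typeBC}. The plan is to argue that for type~$C$ (resp.~type~$B$) cluster algebras, the cluster complex, the notion of initial cluster, and the denominator vector of any cluster variable with respect to any initial cluster are matched, under the bijection $\{\delta,\bar\delta\} \mapsto \tube_\delta$, with the nested complex $\nestedComplex(\cycleG_{n+1})$, an initial maximal tubing $\tubing^\circ$ on $\cycleG_{n+1}$, and the compatibility vector $\compatibilityVector{\tubing^\circ}{\tube}$ (resp.~dual compatibility vector $\dualCompatibilityVector{\tube}{\tubing^\circ}$).

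First I would note that the map $\{\delta,\bar\delta\} \mapsto \tube_\delta$ is the standard identification, recalled just before Proposition~\ref{prop:typeBC}, which realizes the type~$B/C$ cluster complex as the simplicial cyclohedron, and hence as the nested complex $\nestedComplex(\cycleG_{n+1})$. Under this identification, cluster variables correspond to tubes, (maximal) clusters correspond to maximal tubings, and any initial cluster $X^\circ$ thus picks out an initial maximal tubing $\tubing^\circ$ on $\cycleG_{n+1}$. Next, by~\cite{FominZelevinsky-YSystems}, the exponent of the initial variable indexed by $\{\delta_i^\circ, \bar\delta_i^\circ\}$ in the denominator of the cluster variable indexed by $\{\delta,\bar\delta\}$ equals the compatibility degree between the corresponding cluster variables, which by Proposition~\ref{prop:typeBC} is precisely the number of crossings of $\delta$ with $\{\delta_i^\circ, \bar\delta_i^\circ\}$, i.e.~the compatibility degree $\compatibilityDegree{\tube_{\delta_i^\circ}}{\tube_\delta}$ in type~$C$ and $\compatibilityDegree{\tube_\delta}{\tube_{\delta_i^\circ}}$ in type~$B$. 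Therefore the $\b{d}$-vector of a cluster variable with respect to $X^\circ$ coincides coordinatewise with $\compatibilityVector{\tubing^\circ}{\tube}$ in type~$C$ and with $\dualCompatibilityVector{\tube}{\tubing^\circ}$ in type~$B$.

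With this dictionary in place the conclusion is immediate: apply Theorem~\ref{theo:compatibilityFan} to $\graphG = \cycleG_{n+1}$ and $\tubing^\circ$ in the type~$C$ case, and Theorem~\ref{theo:dualCompatibilityFan} in the type~$B$ case. Each produces a complete simplicial fan whose rays are the $\b{d}$-vectors of the cluster variables and whose maximal cones are spanned by the $\b{d}$-vectors of the cluster variables in each cluster, thereby realizing the cluster complex. I do not anticipate a genuine obstacle here, since all of the nontrivial geometry is already carried by Theorems~\ref{theo:compatibilityFan} and~\ref{theo:dualCompatibilityFan}; the only point requiring a bit of care is the identification of the type~$B$ denominator vectors with the dual (rather than the primal) compatibility vectors, which is the reason both flavors of fan were introduced, and which is settled by checking that the correspondence in Proposition~\ref{prop:typeBC} counts crossings of $\delta$ with~$\delta'$ (not $\delta'$ with $\delta$) and comparing with the asymmetric conventions of~\cite{FominZelevinsky-YSystems} in type~$B$ versus type~$C$.
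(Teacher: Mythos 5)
Your proposal is correct and follows essentially the same route as the paper: the authors likewise invoke the identification of the type~$B/C$ cluster complex with the simplicial cyclohedron and hence with $\nestedComplex(\cycleG_{n+1})$, use Proposition~\ref{prop:typeBC} to match the Fomin--Zelevinsky compatibility degrees with the primal (type~$C$) and dual (type~$B$) graphical compatibility degrees, and then conclude by Theorems~\ref{theo:compatibilityFan} and~\ref{theo:dualCompatibilityFan}. Your remark that the only delicate point is the primal-versus-dual assignment in type~$B$ matches the paper's stated motivation for introducing the dual compatibility fan.
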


\begin{remark}[Dual compatibility fan]
Since the compatibility degree is not symmetric for tubes of~$\cycleG_{n+1}$, the compatibility fan~$\compatibilityFan{\cycleG_{n+1}}{\tubing^\circ}$ and the dual compatibility fan~$\dualCompatibilityFan{\cycleG_{n+1}}{\tubing^\circ}$ do not coincide. Figures~\ref{fig:dualCompatibilityFanCycle3} and~\ref{fig:dualCompatibilityFanCycle4} show both fans for different initial tubings on the cycles~$\cycleG_3$ and~$\cycleG_4$.
\begin{figure}[h]
  \capstart
  \centerline{\includegraphics[scale=1.05]{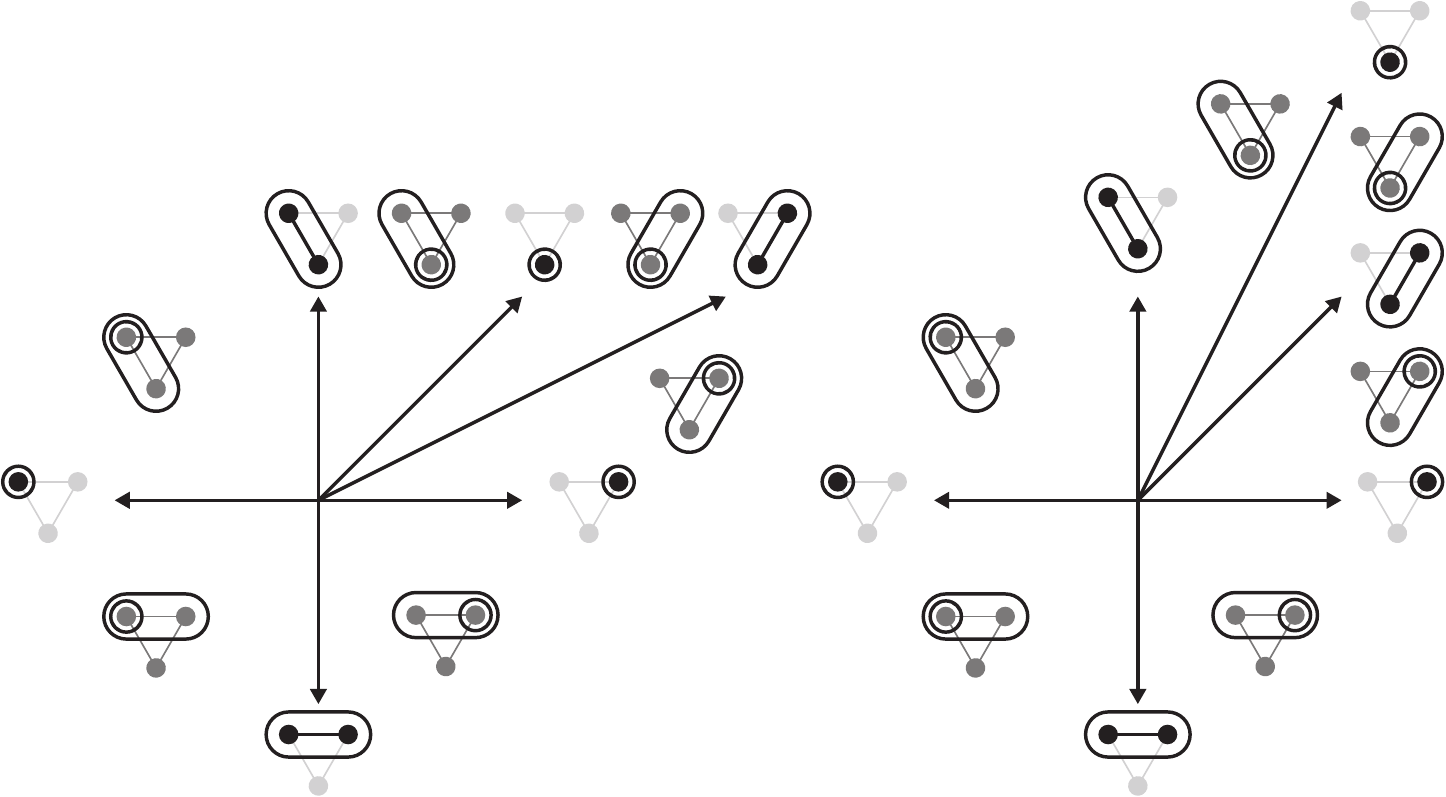}}
  \caption{Compatibility (left) and dual compatibility (right) fans for the triangle.}
  \label{fig:dualCompatibilityFanCycle3}
\end{figure}
\begin{figure}[h]
  \capstart
  \renewcommand{\arraystretch}{1.3}
  \setlength{\extrarowheight}{3.4cm}
  \centerline{  
    \begin{tabular}{@{}|c|ccc|}
    \hline
    \rotatebox{90}{\textbf{Compatibility fan}} & \includegraphics[width=.37\textwidth]{3dimensionalCompatibilityFansLabeled/cycle1} & \includegraphics[width=.37\textwidth]{3dimensionalCompatibilityFansLabeled/cycle2} & \includegraphics[width=.37\textwidth]{3dimensionalCompatibilityFansLabeled/cycle3} \\
	\hline
	\hline
    \rotatebox{90}{\textbf{Dual compatibility fan}} & \includegraphics[width=.37\textwidth]{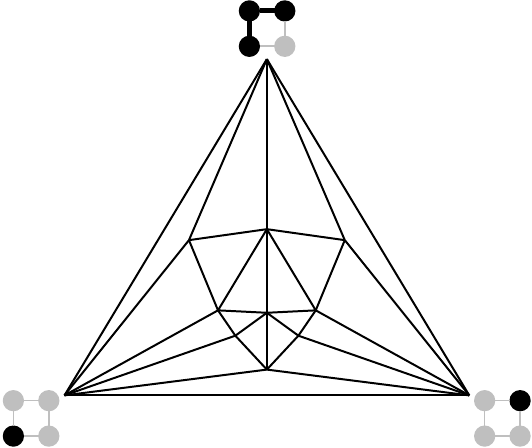} & \includegraphics[width=.37\textwidth]{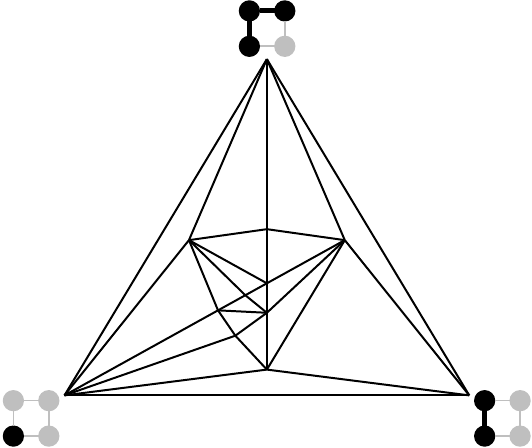} & \includegraphics[width=.37\textwidth]{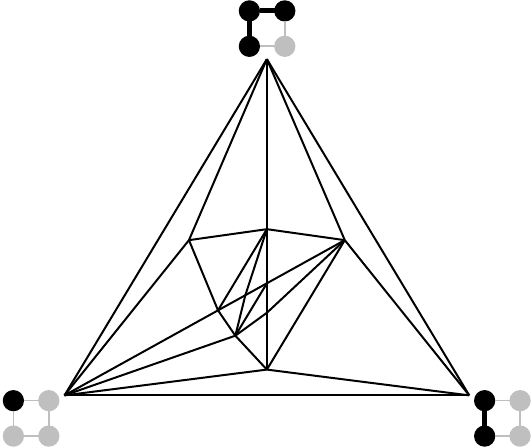} \\
    \hline
    \end{tabular}
  }
  \caption{Compatibility (top) and dual compatibility (bottom) fans for the cycle on $4$ vertices with respect to different initial tubings.}
  \vspace{-1.7cm}
  \label{fig:dualCompatibilityFanCycle4}
\end{figure}
\end{remark}

\newpage
\begin{remark}[Linear dependences]
As for paths, only finitely many linear dependences occur for all cycles~$\cycleG_{n+1}$, both  on compatibility vectors as on dual compatibility vectors. Indeed, with the interpretation of the maximal tubings in terms of centrally symmetric triangulations, the same kind of arguments as in~\cite{CeballosSantosZiegler} ensure that all these dependences can be inferred by checking the cycle~$\cycleG_{8}$ on~$8$ vertices. As for the path, these linear dependences only involve flipped and forced tubes, and the coefficients of the flipped tubes may only be~$1$ or~$2$ and these of the forces tubes may only be~$0,-1$ or~$-2$. See Section~\ref{subsec:proofPolytopality} for more details.

It is easy to find an example of a maximal tubing on the tripod such that one of the linear dependences obtained with respect to this maximal tubing does not only involve forced tubes. It implies in particular that the paths and cycles are the only graphs that have this property. It is then tempting to ask whether it is a coincidence that these graphs also are the only ones whose corresponding associahedra also are generalized associahedra.
\end{remark}

\subsection{Complete graphs}
We now consider the nested complex~$\nestedComplex(\completeG_{n+1})$ and the compatibility fan~$\compatibilityFan{\completeG_{n+1}}{\tubing^\circ}$ for the complete graph~$\completeG_{n+1}$ on~$n+1$ vertices. As already mentioned in the introduction, the nested complex~$\nestedComplex(\completeG_{n+1})$ is isomorphic to the $n$-dimensional \defn{simplicial permutahedron}, \ie the simplicial complex of collections of pairwise nested subsets of~$[n+1]$. See \fref{fig:permutahedron}.

It follows by classical results on the permutahedron that the complete graph~$\completeG_{n+1}$ has:
\begin{itemize}
\item $2^n-2$ proper tubes~\href{https://oeis.org/A000918}{\cite[A000918]{OEIS}} (proper subsets of~$[n]$),
\item $n!$ maximal tubings~\href{https://oeis.org/A000142}{\cite[A000142]{OEIS}} (permutations of~$[n]$),
\item $ k! \, S(n,k)$ tubings with~$k$ tubes, where~$S(n,k)$ is the Stirling number of second kind (\ie the number of ways to partition a set of $n$ elements into $k$ non-empty subsets)~\mbox{\href{https://oeis.org/A008277}{\cite[A008277]{OEIS}}}.
\end{itemize}

For two tubes~$\tube, \tube'$ of~$\completeG_{n+1}$, the compatibility degree of~$\tube$ with~$\tube'$ is $\compatibilityDegree{\tube}{\tube'} = -1$ if~$\tube = \tube'$, $\compatibilityDegree{\tube}{\tube'} = 0$ if~$\tube$ and~$\tube'$ are distinct and nested, and~$\compatibilityDegree{\tube}{\tube'} = |\tube' \ssm \tube|$ otherwise.
This connects the compatibility vector~$\compatibilityVector{\tubing^\circ}{\tube}$ to an alternative combinatorial model for the permutahedron in terms of lattice paths. Since all maximal tubings are equivalent, we can assume that~$\tubing^\circ = \set{[i]}{i \in [n]}$. For any tube~$\tube$ of~$\completeG_{n+1}$, we consider the lattice paths~$\phi(\tube)$ and~$\psi(\tube)$ whose horizontal steps above abscissa~$[i,i+1]$ lie at height~$|\tube \ssm [i]|$ and~$\compatibilityDegree{[i]}{\tube}$ respectively. These lattice paths are illustrated in \fref{fig:latticePaths}, where~$\phi(\tube)$ is the plain path while~$\psi(\tube)$ is dotted until it meets~$\phi(\tube)$. The proof of the following statement is left to the reader.

\begin{proposition}
\begin{enumerate}[(i)]
\item For any tube~$\tube$ of~$\completeG_{n+1}$, the lattice path~$\phi(\tube)$ is decreasing from~$(0,|\tube|)$ to~$(n+1,0)$ with vertical steps of height~$0$ or~$1$. 
\item $\phi$ is surjective on the decreasing paths ending at~$(n+1,0)$ with vertical steps of height~$0$ or~$1$.
\item For any tubes~$\tube, \tube'$ of~$\completeG_{n+1}$, we have~$\tube \subseteq \tube'$ if and only if~$\phi(\tube')$ decreases when~$\phi(\tube)$ decreases. In particular, the paths~$\phi(\tube)$ and~$\phi(\tube')$ are then non-crossing.
\item For a tubing~$\tubing$ on~$\completeG_{n+1}$, the map~$\sigma(\tubing) : i \longmapsto |\set{\tube \in \tubing}{\phi(\tube) \text{ has a descent at abscissa } i}|+1$ is a surjection from~$[n+1]$ to~$[|\tubing|+1]$, and therefore~$\pi(\tubing) \eqdef \bigsqcup_{j \in [|\tubing|+1]} \sigma^{-1}(j)$ is an ordered partition of~$[n+1]$ into~$|\tubing|+1$ parts. The map~$\tubing \mapsto \pi(\tubing)$ defines an isomorphism form the nested complex~$\nestedComplex(\completeG_{n+1})$ to the refinement poset of ordered partitions.
\item For a tube~$\tube$ of~$\completeG_{n+1}$ not in~$\tubing^\circ$, the path~$\psi(\tube)$ is obtained from the path~$\phi(\tube)$ by replacing the initial down stairs by an horizontal path at height~$0$. See \fref{fig:latticePaths}, where~$\psi(\tube)$ is dotted until it meets~$\phi(\tube)$.
\end{enumerate}
\end{proposition}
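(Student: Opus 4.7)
The plan is to work through parts (i)–(v) in order, relying throughout on the \emph{descent identity}: since the height of~$\phi(\tube)$ above~$[i-1,i]$ is~$|\tube \ssm [i-1]|$ and above~$[i,i+1]$ is~$|\tube \ssm [i]|$, the path~$\phi(\tube)$ has a descent at abscissa~$i$ if and only if~$i \in \tube$, and such a descent always has height~$1$. Part (i) then reduces to checking the boundary heights~$|\tube \ssm [0]| = |\tube|$ and $|\tube \ssm [n+1]| = 0$ together with the step bound~$0$ or~$1$ from the identity. For part (ii), given an admissible path~$P$, setting~$\tube_P$ to be the set of abscissas at which~$P$ descends gives a (non-empty) subset of~$[n+1]$, which is automatically a tube of~$\completeG_{n+1}$, and the descent identity yields~$\phi(\tube_P) = P$. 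For part (iii), the "iff" follows immediately from the descent identity applied to both paths, while $\tube \subseteq \tube'$ yields the pointwise inequality $|\tube \ssm [i]| \le |\tube' \ssm [i]|$ for all~$i$, placing~$\phi(\tube)$ weakly below~$\phi(\tube')$.

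The crux of part (iv) is the observation that any tubing on~$\completeG_{n+1}$ is totally ordered by inclusion: two disjoint non-empty subsets are always adjacent in a complete graph, so compatibility forces nestedness. Writing $\tubing = \{\tube_1 \subsetneq \cdots \subsetneq \tube_m\}$ and setting $\tube_0 \eqdef \emptyset$, one computes $\sigma(\tubing)(i) = |\set{k \in [m]}{i \in \tube_k}| + 1$, which is constant on each level set $\tube_k \ssm \tube_{k-1}$ and on $[n+1] \ssm \tube_m$. Since these level sets are non-empty (strict containments), $\sigma(\tubing)$ is surjective onto $[m+1]$, and the resulting ordered partition $\pi(\tubing)$ has parts $[n+1] \ssm \tube_m, \tube_m \ssm \tube_{m-1}, \ldots, \tube_1$. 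The inverse map reconstructs~$\tube_{m-j+1}$ as the union of the last~$j$ parts, showing bijectivity, and inclusion of tubings clearly corresponds to refinement of the resulting ordered partitions.

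For part (v), I split into cases according to the position of~$i$ relative to the initial block of~$\tube$. By Definition~\ref{def:compatibilityDegree}, one has $\compatibilityDegree{[i]}{\tube} = 0$ when $[i] \subsetneq \tube$ (with equality excluded since $\tube \notin \tubing^\circ$), and $\compatibilityDegree{[i]}{\tube} = |\tube \ssm [i]|$ when $[i] \not\subseteq \tube$ (all vertex pairs of~$\completeG_{n+1}$ are adjacent, so every element of~$\tube \ssm [i]$ is a neighbor of~$[i]$). The condition $[i] \subsetneq \tube$ is equivalent to $\{1, \ldots, i\} \subseteq \tube$, which by the descent identity means $\phi(\tube)$ descends consecutively at abscissas~$1, \ldots, i$. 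Hence $\psi(\tube)$ runs along the horizontal axis precisely over the initial descending staircase of~$\phi(\tube)$, and agrees with~$\phi(\tube)$ from that point onward. The main obstacle is really just the setup in~(iv), which hinges on recognizing that graph-theoretic compatibility in a complete graph degenerates to pure set-theoretic nesting; everything else follows cleanly from the descent identity and direct reading of the definitions.
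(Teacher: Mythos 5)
Your argument is correct and complete; note that the paper itself explicitly leaves the proof of this proposition to the reader, so there is nothing to compare against on that side. The two load-bearing observations you isolate are exactly the right ones: the descent identity ($\phi(\tube)$ descends at abscissa~$i$ iff $i \in \tube$, always by one unit), which disposes of (i)--(iii) and (v), and the fact that in a complete graph compatibility degenerates to nestedness, so every tubing is a chain, which is what makes (iv) work. Your computation of $\compatibilityDegree{[i]}{\tube}$ in (v) (zero exactly when $[i] \subsetneq \tube$, i.e.\ on the maximal initial run of descents of $\phi(\tube)$, and $|\tube \ssm [i]|$ otherwise) is the precise content behind the paper's informal ``replace the initial down stairs'' description. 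One trivial slip in (iv): the union of the last $j$ parts of $\pi(\tubing)$ is $\tube_j$, not $\tube_{m-j+1}$ (the last part alone is $\sigma^{-1}(m+1) = \tube_1$); this does not affect the bijectivity argument. You may also wish to state explicitly in (ii) that the image consists of the \emph{non-constant} admissible paths, since the constant path at height $0$ would correspond to the empty set, which is not a tube.
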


\begin{figure}[h]
  \capstart
  \centerline{\includegraphics[scale=1]{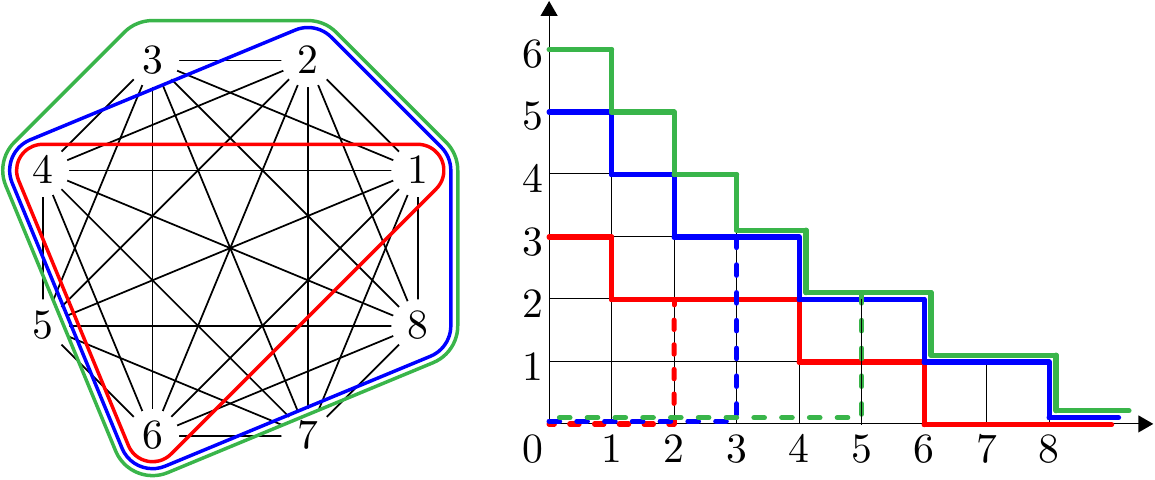}}
  \caption{The tubing~$\{146, 12468, 123468\}$ corresponds to three non-crossing decreasing lattice paths, and to the ordered partition~$57|3|28|146$.}
  \label{fig:latticePaths}
\end{figure}

\begin{remark}[Dual compatibility fan]
\label{rem:dualCompatibilityFanComplete}
As discussed in Section~\ref{subsec:many} below, the complementation~${\tube \mapsto \ground \ssm \tube}$ defines an automorphism of the nested complex~$\nestedComplex(\completeG_{n+1})$, which dualizes the compatibility degree: $\compatibilityDegree{\tube}{\tube'} = \compatibilityDegree{\ground \ssm \tube'}{\ground \ssm \tube}$ for any tubes~$\tube, \tube'$ of~$\completeG_{n+1}$. Therefore, the dual compatibility fans are compatibility fans: for any tubing~$\tubing^\circ$~on~$\completeG_{n+1}$,
\[
\dualCompatibilityFan{\completeG_{n+1}}{\tubing^\circ} = \compatibilityFan{\completeG_{n+1}}{\set{\ground \ssm \tube^\circ}{\tube^\circ \in \tubing^\circ}}.
\]
\end{remark}

\begin{remark}[Linear dependences]
For the complete graph, the linear dependences between compatibility vectors of tubes involved in a flip can already be complicated. However, the coefficients~$(\alpha,\alpha')$ of the flipped tubes in these dependences can only take the following values:
\[
(k,k) \; \text{ with } k > 0, \qquad \text{ or } \qquad (k, kp) \; \text{ with } k,p > 0, \qquad \text{ or } \qquad (kp+p, kp) \; \text{ with } k,p > 0.
\]
\end{remark}

\subsection{Stars}

We finally consider the nested complex~$\nestedComplex(\starG_{n+1})$ and the compatibility fan~$\compatibilityFan{\starG_{n+1}}{\tubing^\circ}$ for the star~$\starG_{n+1}$ with~$n+1$ vertices, \ie the tree with~$n$ leaves~$\ell_1, \dots, \ell_n$ all connected to a central vertex denoted~$*$. The graph associahedron~$\Asso(\starG_{n+1})$ is called \defn{stellohedron}. We have represented in \fref{fig:stellohedron3} two realizations of the $3$-dimensional stellohedron.

\begin{figure}[b]
  \capstart
  \centerline{\includegraphics[scale=1.13]{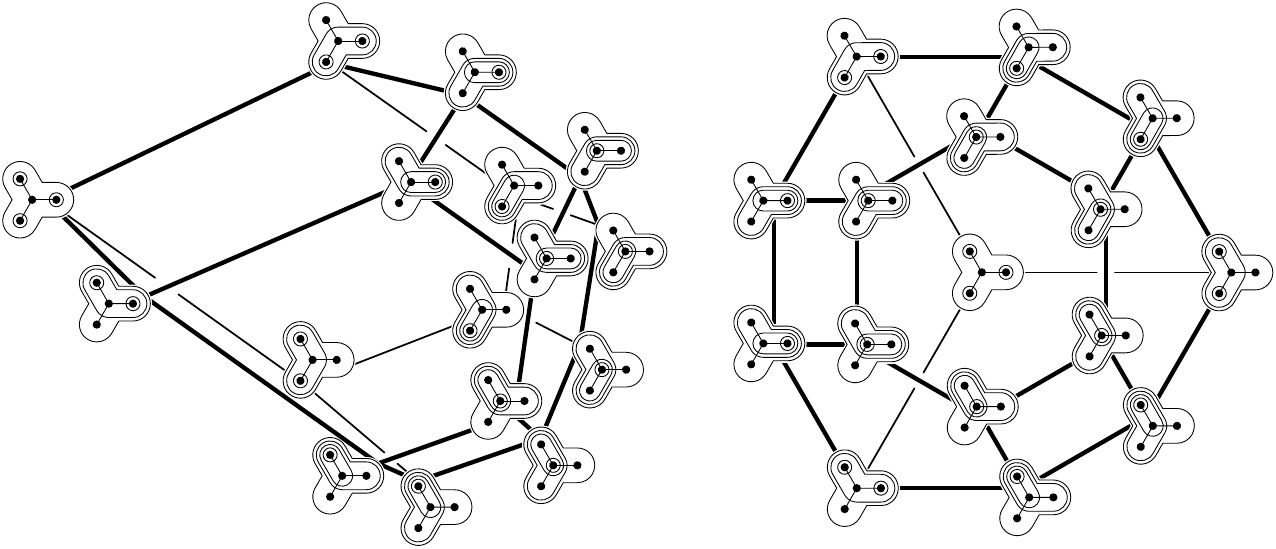}}
  \caption{Two polytopal realizations of the $3$-dimensional stellohedron: their normal fans are the nested fan (left) and a compatibility fan (right).}
  \label{fig:stellohedron3}
\end{figure}

\enlargethispage{.8cm}
One easily checks that the star~$\starG_{n+1}$ has:
\begin{itemize}
\item $2^n + n - 1$ proper tubes~\href{https://oeis.org/A052944}{\cite[A052944]{OEIS}} (distinguish tubes containing~$*$~or~not),
\item $\displaystyle{n! \sum_{i=0}^{n} \frac{1}{i!}}$ maximal tubings~\href{https://oeis.org/A000522}{\cite[A000522]{OEIS}} (consider the minimal tube containing~$*$),
\item $\displaystyle{\sum_{i \in [k]} \binom{n}{k-i} \, (i-1)! \big( i \, S(n-k+i,i) + S(n-k+i,i-1) \big)}$ tubings with $k$ tubes, where $S(m,p)$ denotes the Stirling number of second kind (\ie the number of ways to partition a set of $m$ elements into $p$ non-empty subsets) \href{https://oeis.org/A008277}{\cite[A008277]{OEIS}} (to see it, sum over the number~$i$ of tubes containing~$*$), and
\item $\displaystyle{4 \, n! \sum_{\sum n_i = n} \frac{1}{\prod n_i} - 1 = \sum_{i \ge 1} (i+1)^n/2^i}$ tubings in total (including the empty tubing). This is the number of chains in the boolean lattice on an $n$-element set~\href{https://oeis.org/A007047}{\cite[A007047]{OEIS}} (an immediate bijection is given by the spines of the tubings).
\end{itemize}

We consider the initial maximal tubing~$\tubing^\circ \eqdef \big\{ \{\ell_1\}, \dots, \{\ell_n\} \big\}$ whose tubes are the~$n$ leaves of~$\starG_{n+1}$. The other~$2^n-1$ tubes of~$\starG_{n+1}$ are the tubes containing the central vertex~$*$ and some leaves (but not all). The compatibility degree of such a tube~$\tube$ containing~$*$ with a tube~$\{\ell_i\}$ is~$0$ if~$\ell_i \in \tube$ and~$1$ if~$\ell_i \notin \tube$. The compatibility vector~$\compatibilityVector{\tubing^\circ}{\tube}$ of~$\tube$ with respect to~$\tubing^\circ$ is thus given by the characteristic vector of the leaves of~$\starG_{n+1}$ not contained in~$\tube$. Moreover, two tubes~$\tube, \tube' \notin \tubing^\circ$ are compatible if and only if they are nested (since they both contain the vertex~$*$). Therefore, the compatibility fan~$\compatibilityFan{\starG_{n+1}}{\tubing^\circ}$ is obtained from the coordinate hyperplan fan by a barycentric subdivision of the positive orthant. Examples in dimension~$2$, $3$ and~$4$ are gathered in \fref{fig:star}.

\begin{figure}
  \capstart
  \centerline{\includegraphics[scale=.9]{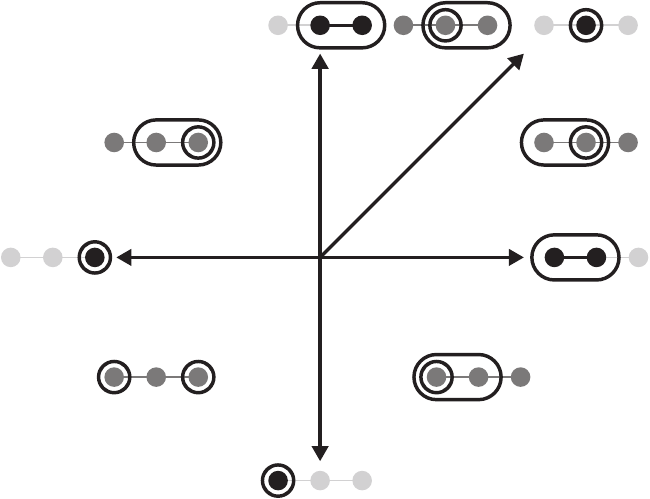}\includegraphics[width=.37\textwidth]{3dimensionalCompatibilityFansLabeled/tripod1}\includegraphics[scale=.35]{4dimensionalCompatibilityFansLabeled/star}}
  \caption{The compatibility fans~$\compatibilityFan{\starG_{n+1}}{\tubing^\circ}$ for the star~$\starG_{n+1}$ and the initial tubing~$\tubing^\circ = \big\{ \{\ell_1\}, \dots, \{\ell_n\} \big\}$ formed by its leaves (for~$n \in \{2,3,4\}$).}
  \label{fig:star}
\end{figure}

\begin{remark}[Dual compatibility fan]
\label{rem:primalDualStar}
Observe that the compatibility degree of any tube of~$\starG_{n+1}$ with any leaf of~$\starG_{n+1}$ belongs to~$\{-1,0,1\}$. Therefore, $\compatibilityVector{\tubing^\circ}{\tube} = \dualCompatibilityVector{\tube}{\tubing^\circ}$ for any tube~$\tube$, so that the compatibility and dual compatibility fans with respect to the initial tubing~$\tubing^\circ$ coincide. This does not hold for arbitrary initial tubings on~$\starG_{n+1}$, see Remark~\ref{rem:naive} and \fref{fig:ctrexmNaive}\,(right).
\end{remark}

\begin{remark}[Linear dependences]
In the special case discussed in this section, all linear dependences between compatibility vectors of tubes involved in a flip are inclusion-exclusion dependences as in the beginning of the proof of Theorem~\ref{theo:compatibilityFan} in Section~\ref{subsec:proofCompatibilityFan}. The coefficients of the flipped tubes thus always equal~$1$ while those of the forced tubes (not in the initial tubing~$\tubing^\circ$) always equal~$-1$.
\end{remark}

\begin{remark}
As pointed out by F.~Santos, the stellohedron~$\Asso(\starG_{n+1})$ coincides with the secondary polytope of two concentric copies of an $(n-1)$-dimensional simplex. See \fref{fig:secondaryPolytopeStellohedron}.
\end{remark}

\begin{figure}[b]
  \capstart
  \centerline{\includegraphics[scale=1.13]{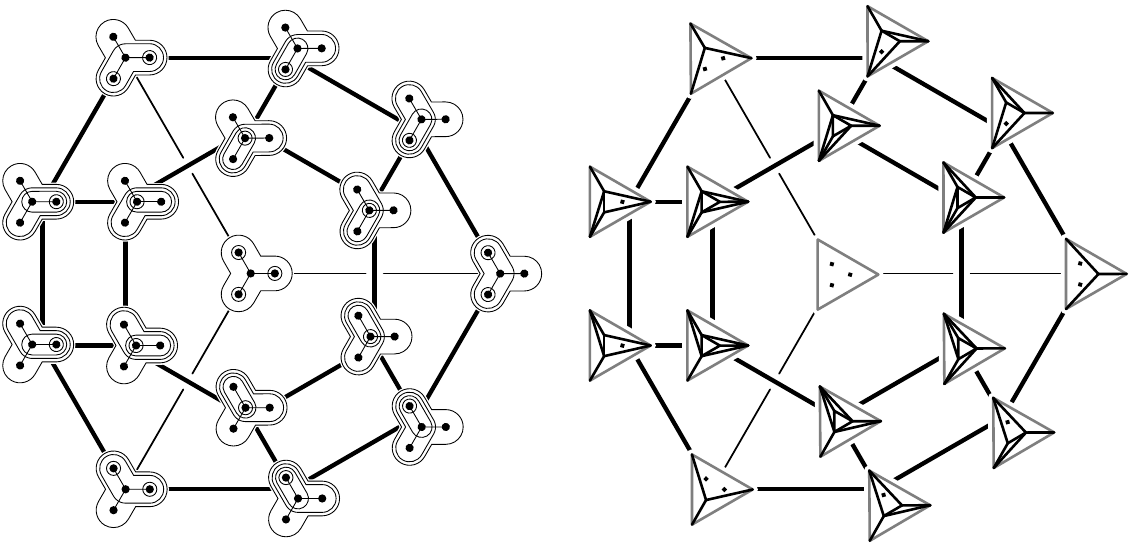}}
  \caption{The stellohedron (left) is a secondary polytope (right).}
  \label{fig:secondaryPolytopeStellohedron}
  \vspace*{-.8cm}
\end{figure}


\section{Further topics}
\label{sec:furtherTopics}

\enlargethispage{.3cm}
In this section, we discuss several further topics in connection to compatibility fans. Note that we only state the results for compatibility fans, but similar statements hold for dual compatibility fans. Section~\ref{subsec:product} studies the behavior of the compatibility fans with respect to products and links. In Section~\ref{subsec:many}, we show that most compatibility fans are not linearly isomorphic, which requires a description of all nested complex isomorphisms. Section~\ref{subsec:polytopality} discusses the question of the realization of our compatibility fans as normal fans of convex polytopes. In Section~\ref{subsec:designNestedComplex}, we extend our construction to design nested complexes~\cite{DevadossHeathVipismakul}. Finally, we discuss in Section~\ref{subsec:LPA} the connection of this paper to Laurent Phenomenon algebras~\cite{LamPylyavskyy-LaurentPhenomenonAlgebras, LamPylyavskyy-LinearLaurentPhenomenonAlgebras}.

\subsection{Products and restrictions}
\label{subsec:product}

In all examples that we discussed earlier, we only considered connected graphs. Compatibility fans for disconnected graphs can be reconstructed from those for connected graphs by the following statement, whose proof is left to the reader.

\begin{proposition}
\label{prop:product}
If~$\graphG$ has connected components~$\graphG_1, \dots, \graphG_k$, then the nested complex~$\nestedComplex(\graphG)$ is the join of the nested complexes~$\nestedComplex(\graphG_1), \dots, \nestedComplex(\graphG_k)$. Moreover, for any maximal tubings~$\tubing_1^\circ, \dots, \tubing_k^\circ$ on~$\graphG_1, \dots, \graphG_k$ respectively, the compatibility fan~$\compatibilityFan{\graphG}{\tubing^\circ}$ with respect to the maximal tubing ${\tubing^\circ \eqdef \tubing_1^\circ \cup \dots \cup \tubing_k^\circ}$ on~$\graphG$ is the product of the compatibility fans~$\compatibilityFan{\graphG_1}{\tubing_1^\circ}, \dots, \compatibilityFan{\graphG_k}{\tubing_k^\circ}$:
\[
\compatibilityFan{\graphG}{\tubing^\circ} = \compatibilityFan{\graphG_1}{\tubing_1^\circ} \times \dots \times \compatibilityFan{\graphG_k}{\tubing_k^\circ} = \set{C_1 \times \dots \times C_k}{C_i \in \compatibilityFan{\graphG_i}{\tubing_i^\circ} \text{ for all } i \in [k]}.
\]
\end{proposition}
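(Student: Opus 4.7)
The plan is to handle the two assertions in sequence, reducing everything to the definitions of tubes, compatibility, and compatibility degree applied to the block structure induced by the connected components.

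First I would establish the join decomposition of $\nestedComplex(\graphG)$. Since a tube is a connected induced subgraph, it must lie entirely in one connected component; hence the tubes of $\graphG$ are precisely the (disjoint) union of the tubes of the $\graphG_i$. Two tubes living in distinct components are automatically disjoint and non-adjacent (no edge of $\graphG$ joins different components), hence compatible in~$\graphG$. Two tubes living in the same component $\graphG_i$ are compatible in $\graphG$ if and only if they are compatible in $\graphG_i$, since the conditions in Section~\ref{subsec:nestedComplex} only depend on the induced subgraph. It follows that every tubing $\tubing$ on $\graphG$ decomposes uniquely as $\tubing = \tubing_1 \sqcup \dots \sqcup \tubing_k$ with $\tubing_i \eqdef \{\tube \in \tubing \mid \tube \subseteq \graphG_i\}$ a tubing on~$\graphG_i$, and conversely any such union is a tubing on~$\graphG$. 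This is exactly the statement that $\nestedComplex(\graphG) = \nestedComplex(\graphG_1) \ast \dots \ast \nestedComplex(\graphG_k)$. In particular, $\tubing^\circ = \tubing_1^\circ \cup \dots \cup \tubing_k^\circ$ is indeed a maximal tubing on~$\graphG$ and $n = n_1 + \dots + n_k$ where $n_i \eqdef |\ground(\graphG_i)| - |\connectedComponents(\graphG_i)|$.

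Next I would verify that the compatibility degree respects this decomposition. For tubes $\tube \subseteq \graphG_i$ and $\tube' \subseteq \graphG_j$ with $i \ne j$, one has $\tube \not\subseteq \tube'$ (they are nonempty and live in different components), and no vertex of $\tube'$ is a neighbor of any vertex of~$\tube$; hence $\compatibilityDegree{\tube}{\tube'} = 0$ directly from Definition~\ref{def:compatibilityDegree}. If instead both tubes lie in the same component $\graphG_i$, the set of neighbors of $\tube$ in $\tube' \ssm \tube$ is unchanged whether computed in $\graphG$ or in $\graphG_i$, so the compatibility degree is the same in both graphs.

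Finally I would assemble the fan statement. Write $\R^n = \R^{n_1} \oplus \dots \oplus \R^{n_k}$ by grouping the coordinates according to the block of $\tubing^\circ$ to which each initial tube belongs. By the previous paragraph, for a tube $\tube \subseteq \graphG_i$ the compatibility vector $\compatibilityVector{\tubing^\circ}{\tube}$ has support entirely in the $i$-th block, where its restriction coincides with the compatibility vector $\compatibilityVector{\tubing_i^\circ}{\tube}$ computed inside $\graphG_i$. Therefore, for any tubing $\tubing = \tubing_1 \sqcup \dots \sqcup \tubing_k$ of~$\graphG$,
\[
\R_{\ge 0} \, \compatibilityMatrix{\tubing^\circ}{\tubing} = \R_{\ge 0} \, \compatibilityMatrix{\tubing_1^\circ}{\tubing_1} \times \dots \times \R_{\ge 0} \, \compatibilityMatrix{\tubing_k^\circ}{\tubing_k},
\]
each factor being viewed inside its own coordinate subspace. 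Since the decomposition $\tubing \leftrightarrow (\tubing_1, \dots, \tubing_k)$ from the first paragraph is a bijection between tubings of $\graphG$ and tuples of tubings of the $\graphG_i$, ranging over all tubings on $\graphG$ exactly produces all product cones, which is the definition of the product fan.

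The proof is essentially bookkeeping: the only point requiring care is the verification that $\compatibilityDegree{\tube}{\tube'} = 0$ across components, which is immediate from the definition once one observes that tubes cannot contain each other across components. No genuine obstacle is expected.
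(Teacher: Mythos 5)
The paper does not actually supply a proof of Proposition~\ref{prop:product} --- it explicitly leaves it to the reader --- and your argument is exactly the intended one: tubes live in single components, cross-component compatibility degrees vanish, so compatibility vectors split into blocks and cones factor. The proof is correct and complete.
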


\fref{fig:productFans}\,(right) illustrates Proposition~\ref{prop:product} with the compatibility fan of a graph formed by two paths. Compatibility fans of paths are discussed in Section~\ref{subsec:paths}. Besides all compatibility vectors, the cones of three different tubings are represented in \fref{fig:productFans}\,(right).

\begin{figure}[h]
  \capstart
  \centerline{\includegraphics[scale=1.05]{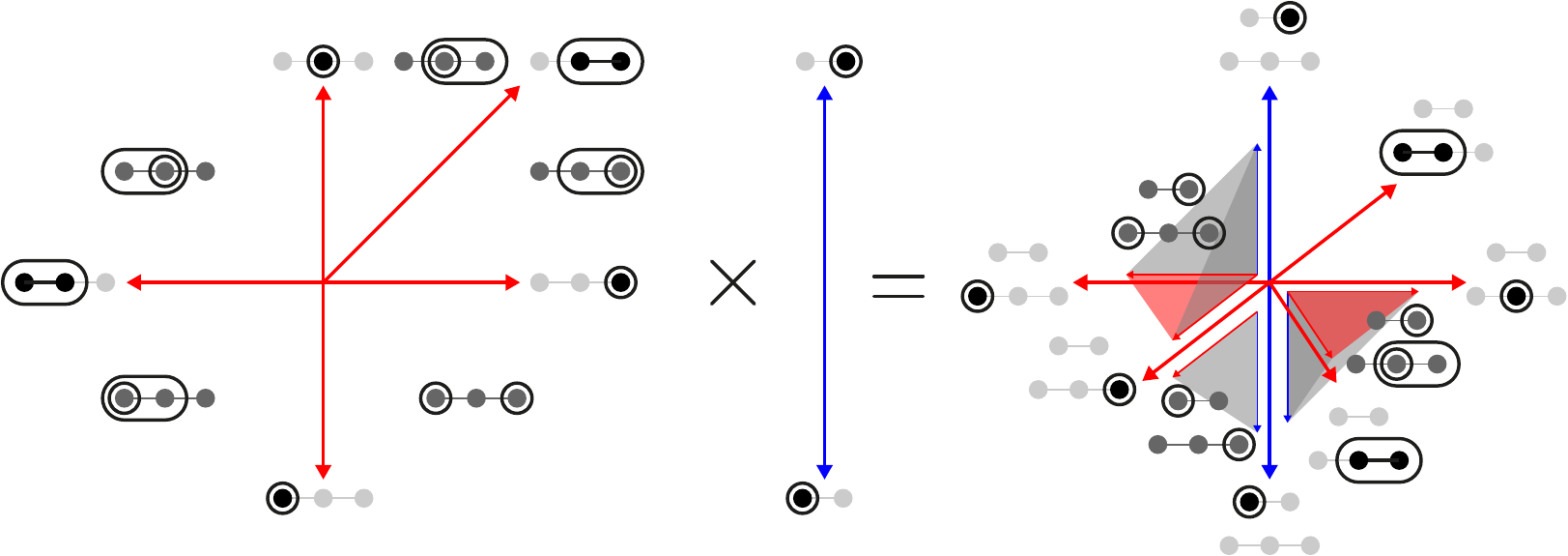}}
  \caption{The compatibility fan of a disconnected graph is the product (right) of the compatibility fans of its connected components (left and~middle).}
  \label{fig:productFans}
\end{figure}

As observed in~\cite{CarrDevadoss}, all links of graphical nested complexes are joins of graphical nested complexes. The following statement asserts that the compatibility fans reflect this property on coordinate hyperplanes. To be more precise, for a tube~$\tube^\circ$ of~$\graphG$, we denote by~$\graphG{}[\tube^\circ]$ the restriction of~$\graphG$ to~$\tube^\circ$ and by~$\graphG{}^\star\tube^\circ$ the~\defn{reconnected complement} of~$\tube^\circ$ in~$\graphG$, \ie the graph with vertex set~$\ground \ssm \tube^\circ$ and edge set~$\bigset{e \in \binom{\ground \ssm \tube^\circ}{2}}{\text{$e$ or~$e \cup \tube^\circ$ is connected in~$\graphG$}}$. A maximal tubing~$\tubing^\circ$ on~$\graphG$ containing~$\tube^\circ$ induces maximal tubings~$\tubing^\circ[\tube^\circ] \eqdef \set{\tube}{\tube \in \tubing^\circ, \tube \subsetneq \tube^\circ}$ on the restriction~$\graphG{}[\tube^\circ]$ and~${\tubing^\circ}^\star\tube^\circ \eqdef \set{\tube \ssm \tube^\circ}{\tube \in \tubing^\circ, \tube \not\subset \tube^\circ}$ on the reconnected complement~$\graphG^\star\tube^\circ$. See \fref{fig:exmReconnectedComplement}.

\begin{figure}
  \capstart
  \centerline{\includegraphics[scale=.9]{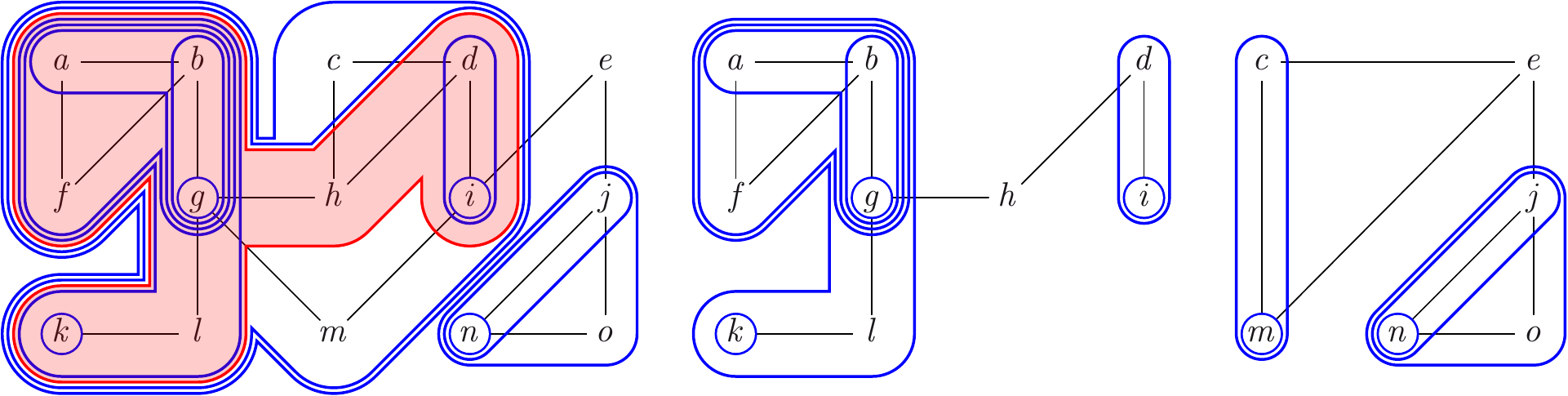}}
  \caption{The red tube~$\tube^\circ\ex = \{a,b,d,f,g,h,i,k,l\}$ in the maximal tubing~$\tubing^\circ$~(left) yields a maximal tubing~$\tubing^\circ\ex[\tube^\circ\ex]$ on the restriction~$\graphG\ex[\tube^\circ\ex]$ (middle) and a maximal tubing~${\tubing^\circ\ex}^\star\tube^\circ\ex$ on the reconnected complement~${\graphG\ex}^\star\tube^\circ\ex$ (right).}
  \label{fig:exmReconnectedComplement}
\end{figure}

\begin{proposition}
\label{prop:restriction}
The link of a tube~$\tube^\circ$ in the nested complex~$\nestedComplex(\graphG)$ is isomorphic to the join of the nested complexes~$\nestedComplex(\graphG{}[\tube^\circ])$ and~$\nestedComplex(\graphG^\star\tube^\circ)$. Moreover, for an initial maximal tubing~$\tubing^\circ$ containing~$\tube^\circ$, the intersection of the compatibility fan~$\compatibilityFan{\graphG}{\tubing^\circ}$ with the coordinate hyperplane orthogonal to~$\b{e}_{\tube^\circ}$ is the product of the compatibility fans~$\compatibilityFan{\graphG{}[\tube^\circ]}{\tubing^\circ[\tube^\circ]}$ and~$\compatibilityFan{\graphG^\star\tube^\circ}{{\tubing^\circ}^\star\tube^\circ}$.
\end{proposition}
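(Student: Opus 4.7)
The combinatorial link statement is classical (see \cite{CarrDevadoss, Postnikov, Zelevinsky}): the map sending a tube $\tube$ of $\graphG$ compatible with $\tube^\circ$ (with $\tube \ne \tube^\circ$) to $\tube$ itself when $\tube \subsetneq \tube^\circ$ and to $\tube \ssm \tube^\circ$ when $\tube \not\subseteq \tube^\circ$ defines an isomorphism from the link of $\tube^\circ$ in $\nestedComplex(\graphG)$ to the join $\nestedComplex(\graphG{}[\tube^\circ]) * \nestedComplex(\graphG^\star\tube^\circ)$. Compatibility of two tubes on the same side of the partition is inherited directly, while compatibility across sides is automatic since the two tubes then live in disjoint vertex sets and the definition of the reconnected complement encodes the adjacency across $\tube^\circ$ correctly.

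For the geometric statement, fix an initial maximal tubing $\tubing^\circ \ni \tube^\circ$, partition $\tubing^\circ \ssm \{\tube^\circ\} = \tubing^\circ[\tube^\circ] \sqcup \tubing^\circ_{\text{out}}$ (where $\tubing^\circ_{\text{out}}$ consists of the initial tubes not contained in $\tube^\circ$), and split the ambient space as $\R^{\tubing^\circ} = \R^{\tubing^\circ[\tube^\circ]} \oplus \R\,\b{e}_{\tube^\circ} \oplus \R^{\tubing^\circ_{\text{out}}}$. The plan is to show that for every tube $\tube \ne \tube^\circ$ of $\graphG$ compatible with $\tube^\circ$, the compatibility vector $\compatibilityVector{\tubing^\circ}{\tube}$ lies in $\R^{\tubing^\circ[\tube^\circ]}$ when $\tube \subsetneq \tube^\circ$ and in $\R^{\tubing^\circ_{\text{out}}}$ when $\tube \not\subseteq \tube^\circ$, and that its restriction agrees with the compatibility vector of the corresponding tube in $\compatibilityFan{\graphG{}[\tube^\circ]}{\tubing^\circ[\tube^\circ]}$, respectively $\compatibilityFan{\graphG^\star\tube^\circ}{{\tubing^\circ}^\star\tube^\circ}$, via the bijection $\tube_i^\circ \leftrightarrow \tube_i^\circ \ssm \tube^\circ$. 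The vanishing clauses follow immediately from Proposition~\ref{prop:compatibilityDegree}: whenever $\tube \subsetneq \tube^\circ$ and $\tube_i^\circ \not\subseteq \tube^\circ$, the pair $(\tube, \tube_i^\circ)$ inherits the compatibility of $(\tube^\circ, \tube_i^\circ)$ (either nested or disjoint non-adjacent), so $\compatibilityDegree{\tube_i^\circ}{\tube} = 0$; the dual case is symmetric.

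The identity on the non-zero coordinates is immediate when $\tube \subsetneq \tube^\circ$ since all relevant adjacencies lie inside $\graphG{}[\tube^\circ]$ and coincide with those of $\graphG$. The heart of the argument is the case $\tube \not\subseteq \tube^\circ$, which reduces to the following lemma: for any tubes $\tube, \tube'$ of $\graphG$ both compatible with $\tube^\circ$ but not contained in $\tube^\circ$, the degree $\compatibilityDegree{\tube'}{\tube}$ computed in $\graphG$ equals the degree $\compatibilityDegree{\tube' \ssm \tube^\circ}{\tube \ssm \tube^\circ}$ computed in $\graphG^\star\tube^\circ$. I would prove this by distinguishing, for each of $\tube$ and $\tube'$, whether it is disjoint and non-adjacent from $\tube^\circ$ or strictly contains $\tube^\circ$. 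In the disjoint subcases, the vertex sets coincide after removing $\tube^\circ$ and no vertex of $\tube$ or $\tube'$ is adjacent to $\tube^\circ$ in $\graphG$, so the extra edges of $\graphG^\star\tube^\circ$ (which run between vertices both adjacent to $\tube^\circ$) never enter the count. When $\tube^\circ \subsetneq \tube'$ and $\tube^\circ \subsetneq \tube$, the vertex sets $\tube \ssm \tube'$ and $(\tube \ssm \tube^\circ) \ssm (\tube' \ssm \tube^\circ)$ agree; a vertex $v$ therein adjacent in $\graphG$ to some $w \in \tube' \cap \tube^\circ$ gets replaced, in $\graphG^\star\tube^\circ$, by an adjacency to some $w' \in \tube' \ssm \tube^\circ$: since $\tube' \supsetneq \tube^\circ$ is connected, some such $w'$ is adjacent to $\tube^\circ$ in $\graphG$, so $\{v, w'\} \cup \tube^\circ$ is connected in $\graphG$ and hence $\{v, w'\}$ is a (possibly new) edge of $\graphG^\star\tube^\circ$. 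The converse direction uses that every edge of $\graphG^\star\tube^\circ$ not already in $\graphG$ comes from a connection through $\tube^\circ$.

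Granted the lemma, the cones of $\compatibilityFan{\graphG}{\tubing^\circ}$ lying in the hyperplane $\{x_{\tube^\circ} = 0\}$ (precisely those whose generating tubes are all compatible with $\tube^\circ$) match ray by ray and cone by cone with the product fan $\compatibilityFan{\graphG{}[\tube^\circ]}{\tubing^\circ[\tube^\circ]} \times \compatibilityFan{\graphG^\star\tube^\circ}{{\tubing^\circ}^\star\tube^\circ}$. Both are complete simplicial fans (by Theorem~\ref{theo:compatibilityFan} applied to the smaller graphs and Proposition~\ref{prop:product}) and both realize the link of $\tube^\circ$ in $\nestedComplex(\graphG)$ by the first part of the statement, so matching rays identifies the two fans. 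The main obstacle is the case analysis in the lemma, especially the subcase where both tubes strictly contain $\tube^\circ$, where the reconnected edges of $\graphG^\star\tube^\circ$ are used in an essential way; the remainder of the argument amounts to routine bookkeeping on the various configurations of $\tube$ and $\tube_i^\circ$ relative to $\tube^\circ$.
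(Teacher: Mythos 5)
Your proposal is correct and follows essentially the same route as the paper: the combinatorial part is the paper's Lemma~\ref{lem:restriction} (quoted from~\cite{CarrDevadoss}), and your key lemma that compatibility degrees are preserved under $\tube \mapsto \tube$ resp.\ $\tube \mapsto \tube \ssm \tube^\circ$ is exactly Lemma~\ref{lem:restrictionCompatibility}, after which both arguments conclude by Theorem~\ref{theo:compatibilityFan} applied to $\graphG{}[\tube^\circ]$ and $\graphG^\star\tube^\circ$. The only difference is that you spell out the case analysis on the positions of the two tubes relative to $\tube^\circ$, which the paper compresses into ``follows immediately from the definitions''; your expanded version is accurate.
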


This statement follows from Lemmas~\ref{lem:restriction} and~\ref{lem:restrictionCompatibility} and Theorem~\ref{theo:compatibilityFan}, proved in Section~\ref{sec:proofs}.

\subsection{Many compatibility fans}
\label{subsec:many}

In this section, we show that we obtained many distinct compatibility fans. Following~\cite{CeballosSantosZiegler}, we classify compatibility fans up to linear isomorphisms: two fans~$\cF, \cF'$ of~$\R^n$ are linearly isomorphic if there exists an invertible linear map which sends the cones of~$\cF$ to the cones of~$\cF'$. Observe already that if two compatibility fans~$\compatibilityFan{\graphG}{\tubing^\circ}$ and~$\compatibilityFan{\graphG'}{\tubing'^\circ}$ are linearly isomorphic, then the two nested complexes~$\nestedComplex(\graphG)$ and~$\nestedComplex(\graphG')$ are (combinatorially) isomorphic, meaning that there is a bijection~$\Phi$ from the tubes of~$\graphG$ to the tubes of~$\graphG'$ which preserves the compatibility. The converse does not always hold: a nested complex isomorphism can preserve compatibility without preserving the compatibility degree. However, we prove below that the nested complex isomorphisms are so constrained that they all either preserve the compatibility degree and thus induce linear isomorphisms between compatibility fans, or exchange the compatibility and dual compatibility degrees and thus induce linear isomorphisms between compatibility and dual compatibility fans.

In the sequel, we describe all nested complex isomorphisms. Observe first that an isomorphism~$\phi$ between two graphs~$\graphG$ and~$\graphG'$ automatically induces an isomorphism~$\Phi$ between the nested complexes~$\nestedComplex(\graphG)$ and~$\nestedComplex(\graphG')$ defined by~$\Phi(\tube) \eqdef \set{\phi(v)}{v \in \tube}$ for all tubes~$\tube$ on~$\graphG$. We say that such a nested complex isomorphism~$\Phi$ is \defn{trivial}. Trivial isomorphisms clearly preserve compatibility degrees: $\compatibilityDegree{\Phi(\tube)}{\Phi(\tube')} = \compatibilityDegree{\tube}{\tube'}$ for any tubes~$\tube, \tube'$ on~$\graphG$. We are interested in non-trivial nested complex isomorphisms. 
We first want to underline two relevant examples.

\begin{example}
\label{exm:nestedComplexIsomorphisms}
The reader can check that:
\begin{enumerate}[(i)]
\item The complementation~$\tube \mapsto \ground \ssm \tube$ is a non-trivial automorphism of the nested complex~$\nestedComplex(\completeG_{n+1})$ of the complete graph~$\completeG_{n+1}$. It dualizes the compatibility degree: $\compatibilityDegree{\ground \ssm \tube}{\ground \ssm \tube'} = \compatibilityDegree{\tube'}{\tube}$ for any tubes~$\tube, \tube'$ of~$\completeG_{n+1}$.

\item The map~$\rot$ defined for~$1 \le j \le k \le n+1$ by
\[
\rot[j,k] \eqdef
\begin{cases}
[k+1, n+1] & \text{if } j = 1, \\
[j-1, k-1] & \text{if } j > 1,
\end{cases}
\]
is a non-trivial automorphism of the nested complex~$\nestedComplex(\pathG_{n+1})$ of the path~$\pathG_{n+1}$. Indeed, up to conjugation by the bijection~$\delta \mapsto \tube_\delta$ of Section~\ref{subsec:paths}, the map~$\rot$ coincides with the (combinatorial) $1$-vertex clockwise rotation of the $(n+3)$-gon~$Q_{n+3}$. See \fref{fig:rotation}. 

\begin{figure}
  \capstart
  \centerline{\includegraphics[scale=.6]{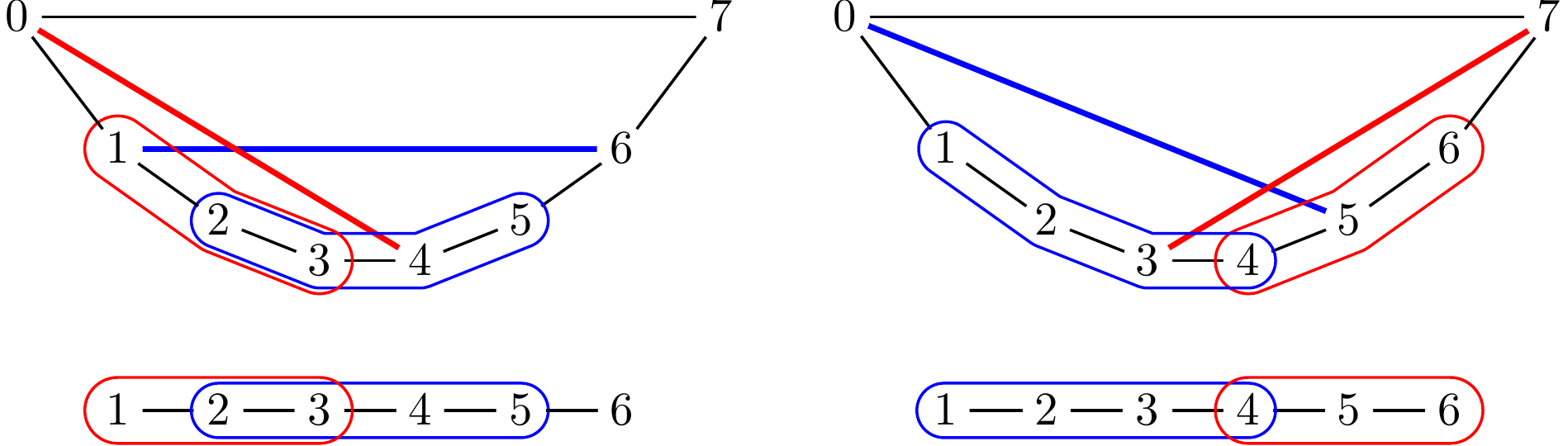}}
  \vspace{.5cm}
  \centerline{
  $\begin{array}{c|c@{\;\,}c@{\;\,}c@{\;\,}c@{\;\,}c@{\;\,}c@{\;\,}c@{\;\,}c@{\;\,}c@{\;\,}c@{\;\,}c@{\;\,}c@{\;\,}c@{\;\,}c@{\;\,}c@{\;\,}c@{\;\,}c@{\;\,}c@{\;\,}c@{\;\,}c}
  \tube & [1] & [2] & [3] & [4] & [5] & [6] & [1,2] & [2,3] & [3,4] & [4,5] & [5,6] & [1,3] & [2,4] & [3,5] & [4,6] & [1,4] & [2,5] & [3,6] & [1,5] & [2,6] \\
  \rot\tube & [2,6] & [1] & [2] & [3] & [4] & [5] & [3,6] & [1,2] & [2,3] & [3,4] & [4,5] & [4,6] & [1,3] & [2,4] & [3,5] & [5,6] & [1,4] & [2,5] & [6] & [1,5]
  \end{array}$
  }
  \vspace{-.1cm}
  \caption{The $1$-vertex clockwise rotation of the~$(n+3)$-gon induces a non-trivial automorphism~$\rot$ of the nested complex~$\nestedComplex(\pathG_{n+1})$.}
  \label{fig:rotation}
\end{figure}

Therefore, $\rot$ has order~$n+3$, and its iterated powers are explicitly described by
\[
\rot^p[j,k] \eqdef
\begin{cases}
[j-p, k-p] & \text{if } 0 \le p < j, \\
[k-p+2, n+j-p+1] & \text{if } j \le p < k+2, \\
[n+p+j-2k-1, n+p-k-1] & \text{if } k+2 \le p < n+3.
\end{cases}
\]
In fact, it is shown in~\cite[Lemma~2.2]{CeballosSantosZiegler} that the automorphism group of the nested complex~$\nestedComplex(\pathG_{n+1})$ is the dihedral group generated by the non-trivial automorphism~$\rot$ (the rotation of the \mbox{$(n+3)$-gon~$Q_{n+3}$}) and the automorphism~$\rev : [j,k] \mapsto [n+2-k, n+2-j]$ induced by the graph automorphism~${j \mapsto n+2-j}$ of~$\pathG_{n+1}$ (the vertical reflection of the $(n+3)$-gon~$Q_{n+3}$). Note that since the compatibility degree on~$\pathG_{n+1}$ is in~$\{-1,0,1\}$, any nested complex automorphism preserves the compatibility degree (and also dualizes it since it is symmetric).
\end{enumerate}
\end{example}

We now describe a generalization of both cases of Example~\ref{exm:nestedComplexIsomorphisms}. For ${\ninf \eqdef \{n_1, \dots, n_\ell\}} \in \N^\ell$ with~$n + 1 = \sum_{i \in [\ell]} (n_i + 1)$, define the \defn{spider}~$\spiderG_{\ninf}$ as the graph with vertices~$\set{v^i_j}{i \in [\ell], 0 \le j \le n_i}$ and edges~${\bigset{\big\{v^i_{j-1}, v^i_j\big\}}{i \in [\ell], j \in [n_i]} \cup \bigset{\big\{v^i_0, v^{i'}_0\big\}}{i \ne i' \in [\ell]}}$. For~$0 \le j \le k \le n_i$, we denote by~$\big[ v^i_j, v^i_k \big]$ the path between~$v^i_j$ and~$v^i_k$ in~$\spiderG_{\ninf}$. Informally, the spider~$\spiderG_{\ninf}$ consists in~$\ell$ paths~$[v^i_1, v^i_{n_i}]$ called \defn{legs} of the spider, each attached to a vertex~$v^i_0$ of a clique called \defn{body} of the spider. See \fref{fig:isomorphismSpiders}. Note that spiders are sometimes called sunlike graphs in the literature.

We now define a non-trivial automorphism~$\Omega$ of the nested complex~$\nestedComplex(\spiderG_{\ninf})$ of the spider~$\spiderG_{\ninf}$. We distinguish two kinds of tubes of~$\nestedComplex(\spiderG_{\ninf})$:
\begin{description}
\item[Leg tubes] A tube~$\tube$ disjoint from the body is included in a leg. The map~$\Omega$ sends~$\tube$ into its image by the transformation which cuts the leg containing~$\tube$ and glues it back to the body by its other endpoint. Formally, for~$i \in [\ell]$ and~$1 \le j \le k \le n_i$,
\[
\Omega\big( \big[ v^i_j, v^i_k \big] \big) \eqdef \big[ v^i_{n_i+1-k}, v^i_{n_i+1-j} \big].
\]
Note that~$\Omega$ sends a leg tube~$\tube$ to a leg tube~$\Omega(\tube)$ with~$|\Omega(\tube)| = |\tube|$.
\item[Body tubes] A tube~$\tube$ intersecting the body is the union of initial segments~$\big[ v^i_0, v^i_{k_i} \big]$, with~$-1 \le k_i \le n_i$ (with the convention that~$[v^i_0, v^i_{-1}] = \varnothing$). We then define
\[
\Omega\big( \bigcup_{i \in [\ell]} \big[ v^i_0, v^i_{k_i} \big] \big) \eqdef \bigcup_{i \in [\ell]} \big[ v^i_0, v^i_{n_i-1-k_i} \big].
\]
Note that~$\Omega$ sends a body tube~$\tube$ to a body tube~$\Omega(\tube)$ with~$|\Omega(\tube)| = |\ground| - |\tube|$.
\end{description}
\fref{fig:isomorphismSpiders} illustrates the map~$\Omega$ on different tubes of the spider~$\spiderG_{\{0,3,2,3,0,3,2,3\}}$.
Observe that~$\Omega$ indeed generalizes both non-trivial nested complex automorphisms of Example~\ref{exm:nestedComplexIsomorphisms}:
\begin{enumerate}[(i)]
\item The complete graph~$\completeG_{n+1}$ is the spider~$\spiderG_{\{0\}^{n+1}}$ whose legs are all empty. The automorphism~$\Omega$ of~$\nestedComplex(\spiderG_{\{0\}^{n+1}})$ specializes to the complementation~$\tube \mapsto \ground \ssm \tube$ on~$\nestedComplex(\completeG_{n+1})$. 
\item The path~$\pathG_{n+1}$ is a degenerate spider whose body can be chosen at different places. Indeed, the path~$\pathG_{n+1}$ coincides with the spider~$\spiderG^1 \eqdef \spiderG_{\{n\}}$ with body~$\{1\}$ and the single leg~$[2,n+1]$, and the automorphism~$\Omega$ of~$\nestedComplex(\spiderG^1)$ is the composition of the rotation automorphism~$\rot$ with the vertical reflection automorphism~$\rev$. Similarly, for any~$2 \le p \le n+1$, the path~$\pathG_{n+1}$ coincides with the spider~$\spiderG^p \eqdef \spiderG_{\{p-2, n+1-p\}}$ with body~$\{p-1,p\}$ and legs~${[1,p-2]}$ and~${[p+1,n+1]}$, and the automorphism~$\Omega$ of~$\nestedComplex(\spiderG^p)$ is the composition of~$\rot^p$ with~$\rev$. Finally, the path~$\pathG_{n+1}$ coincides with the spider~$\spiderG^{n+2} \eqdef \spiderG_{\{n\}}$ with body~$\{n+1\}$ and the single leg~$[n]$, and the automorphism~$\Omega$ of~$\nestedComplex(\spiderG^{n+2})$ is the composition of~$\rot^{n+2}$ with~$\rev$.
\end{enumerate}
This actually suggests an alternative description of~$\Omega$ on arbitrary spiders~$\spiderG_{\ninf}$. Namely, $\Omega$ is equivalently described by the following steps: shift all leg tubes towards the body, complement all body tubes, delete all edges~$\bigset{\big\{v^i_0, v^{i'}_0\big\}}{i \ne i' \in [\ell]}$ of the body, replace them by the clique~${\bigset{\big\{v^i_{n_i}, v^{i'}_{n_{i'}}\big\}}{i \ne i' \in [\ell]}}$ on the feets of the spider, and finally apply the trivial isomorphism from the resulting spider back to the initial spider. Our original presentation of~$\Omega$ will nevertheless be easier to handle in the proofs.
The following statement is proved in Section~\ref{subsec:proofIsomorphisms}.

\begin{proposition}
\label{prop:exmAutomorphism}
The map~$\Omega$ is a non-trivial involutive automorphism of the nested complex~$\nestedComplex(\spiderG_{\ninf})$ of the spider~$\spiderG_{\ninf}$ which dualizes the compatibility degree: $\compatibilityDegree{\Omega(\tube)}{\Omega(\tube')} = \compatibilityDegree{\tube'}{\tube}$.
\end{proposition}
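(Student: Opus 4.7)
\medskip

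The plan is to verify each assertion (well-defined on tubes, involutive, nested-complex automorphism, degree dualization, non-trivial) directly from the definition of $\Omega$, using the two types of tubes (leg and body) as the main case split.

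\medskip

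First I would check that $\Omega$ maps proper tubes to proper tubes. For leg tubes this is immediate since the formula is a mirror reflection within a single leg. For a body tube~$\tube = \bigcup_i [v^i_0, v^i_{k_i}]$ with $-1 \le k_i \le n_i$, note that~$\Omega(\tube) = \bigcup_i [v^i_0, v^i_{n_i-1-k_i}]$ contains a body vertex iff some $k_i < n_i$, which holds precisely when~$\tube \ne \ground$; since the body is a clique, the union of these initial segments is connected as soon as it meets the body. A direct count (splitting the index set of legs into those with~$k_i = -1$, $0 \le k_i \le n_i - 1$, and~$k_i = n_i$) gives~$|\Omega(\tube)| = |\ground| - |\tube|$, so~$\Omega(\tube)$ is nonempty and proper whenever~$\tube$ is. Involutivity then follows from the identities~$n_i + 1 - (n_i + 1 - j) = j$ and~$n_i - 1 - (n_i - 1 - k_i) = k_i$ applied to leg and body tubes respectively.

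\medskip

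Next I would prove that $\Omega$ preserves compatibility and dualizes the compatibility degree by a case analysis on the types of~$\tube$ and~$\tube'$. For two \emph{leg tubes}: if they lie in distinct legs they are automatically compatible and~$\Omega$ keeps them in distinct legs; if they lie in the same leg~$i$, $\Omega$ acts as a mirror involution of that leg, so both compatibility and the (symmetric) compatibility degree on the path are preserved, which is exactly dualization in this case. For two \emph{body tubes}~$\tube, \tube'$ encoded by~$(k_i)$ and~$(k'_i)$, they are compatible iff nested, iff~$k_i \le k'_i$ for all~$i$ (or the reverse); moreover I would show by careful bookkeeping on each leg that $\compatibilityDegree{\tube}{\tube'} = |\{i : k_i < k'_i\}|$ (the only vertex of~$\tube' \ssm \tube$ adjacent to~$\tube$ on leg~$i$ is either $v^i_{k_i+1}$ or, when~$k_i = -1$, the body vertex~$v^i_0$). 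Since~$\Omega$ replaces~$k_i$ by~$n_i - 1 - k_i$, it reverses every comparison~$k_i < k'_i$, so both nesting and degree~get dualized.

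\medskip

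The subtle case is the \emph{mixed} one: a body tube~$\tube$ and a leg tube~$\tube' = [v^i_j, v^i_k]$. A direct check shows that~$\tube, \tube'$ are compatible exactly when~$k_i \ge k$ (nested) or~$k_i \le j - 2$ (disjoint non-adjacent), while under~$\Omega$ the corresponding conditions on~$\tilde k_i = n_i - 1 - k_i$, $\tilde j = n_i + 1 - k$, $\tilde k = n_i + 1 - j$ read $\tilde k_i \le \tilde j - 2 \iff k_i \ge k$ and $\tilde k \le \tilde k_i \iff k_i \le j - 2$, so~$\Omega$ \emph{swaps} the two flavors of compatibility. For incompatible mixed pairs (i.e.\ $j - 1 \le k_i \le k - 1$) a short analysis shows both~$\compatibilityDegree{\tube}{\tube'}$ and $\compatibilityDegree{\tube'}{\tube}$ equal~$1$: only~$v^i_{k_i+1}$ (resp.~$v^i_{\max(j-1,0)}$) contributes, and vertices in other legs cannot be neighbors of~$\tube'$ since~$\tube' \subseteq [v^i_1, v^i_{n_i}]$. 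The same count applies to~$\Omega(\tube), \Omega(\tube')$, giving the dualization. I expect this mixed incompatible case, together with the verification that the adjacencies to the body-clique behave correctly when~$j = 1$ or~$k_i = -1$, to be the main bookkeeping obstacle.

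\medskip

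Finally, non-triviality would follow from the size computation: for a body tube~$\tube$ we have $|\Omega(\tube)| = |\ground| - |\tube|$, whereas any trivial (graph-induced) isomorphism is size-preserving on every tube. Choosing~$\tube = \{v^1_0\}$ gives~$|\Omega(\tube)| = |\ground| - 1$, which differs from~$1$ as soon as~$|\ground| \ge 3$, covering every non-degenerate spider and in particular specializing to the two examples of Example~\ref{exm:nestedComplexIsomorphisms} as claimed just after the definition of~$\Omega$.
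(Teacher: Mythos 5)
Your proof is correct and follows essentially the same route as the paper's: a case analysis on leg versus body tubes with explicit compatibility-degree formulas, from which the dualization identity and (via Proposition~\ref{prop:compatibilityDegree}) the automorphism property follow. You are somewhat more explicit than the paper about well-definedness, involutivity and non-triviality, which the paper treats as immediate, but the substance of the argument is identical.
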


\begin{figure}
  \capstart
  \centerline{\includegraphics[scale=.9]{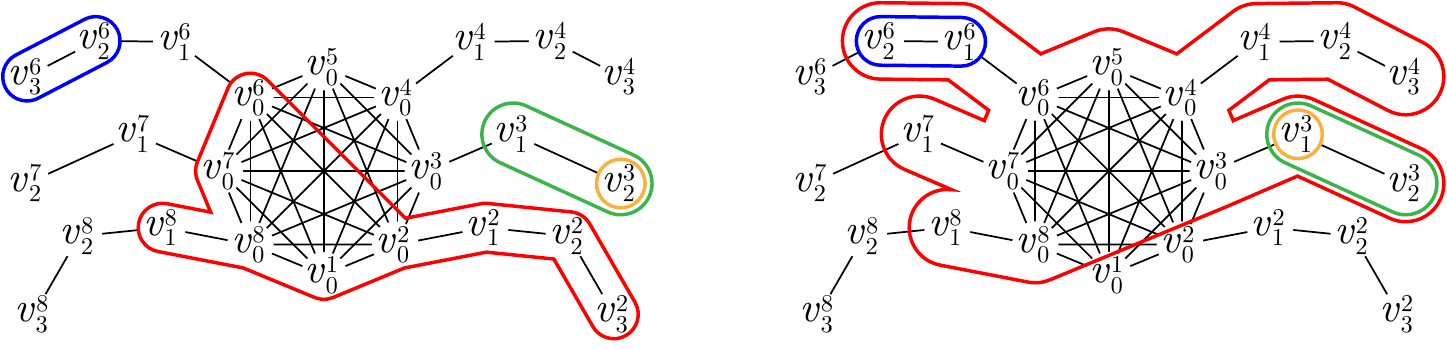}}
  \caption{The spider~$\spiderG_{\{0,3,2,3,0,3,2,3\}}$ and examples of the action of the non-trivial nested complex isomorphism~$\Omega$: the tubing~$\tubing$ on the left is sent to the tubing~$\Omega(\tubing)$ on the right.}
  \label{fig:isomorphismSpiders}
\end{figure}

\begin{remark}
It follows from Proposition~\ref{prop:exmAutomorphism} that all dual compatibility fans of a spider~$\spiderG_{\ninf}$ are also compatibility fans of~$\spiderG_{\ninf}$: we have~$\dualCompatibilityFan{\spiderG_{\ninf}}{\tubing^\circ} = \compatibilityFan{\spiderG_{\ninf}}{\Omega(\tubing^\circ)}$ for any tubing~$\tubing^\circ$ on~$\spiderG_{\ninf}$. Note that we already used this observation for complete graphs in Remark~\ref{rem:dualCompatibilityFanComplete}.
\end{remark}

In fact, these non-trivial automorphisms of the nested complexes of the spiders are essentially the only non-trivial nested complex isomorphisms. The following statements are proved in Section~\ref{subsec:proofIsomorphisms}.

\begin{proposition}
\label{prop:nestedComplexIsomorphismDisconnected}
A nested complex isomorphism~$\Phi : \nestedComplex(\graphG) \to \nestedComplex(\graphG')$ restricts to nested complex isomorphisms~$\nestedComplex(\graphG[H]) \to \nestedComplex(\graphG[H]')$ between maximal connected subgraphs~$\graphG[H]$ of~$\graphG$ and~$\graphG[H]'$ of~$\graphG'$.
\end{proposition}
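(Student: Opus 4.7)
The plan is to recover the partition of $\graphG$ into connected components purely from the compatibility structure of $\nestedComplex(\graphG)$. Define the \emph{incompatibility graph} $I(\graphG)$ whose vertices are the proper tubes of $\graphG$ and whose edges join pairs of incompatible tubes. Since $\Phi$ preserves compatibility, it induces a graph isomorphism $I(\graphG) \to I(\graphG')$, and in particular matches the connected components of $I(\graphG)$ with those of $I(\graphG')$. The whole argument then reduces to identifying these components intrinsically on each side.

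First, any two proper tubes lying in distinct connected components of $\graphG$ are disjoint and non-adjacent, hence compatible; so $I(\graphG)$ splits along the connected components of $\graphG$. Conversely, I claim that for each connected component $\graphG[H]$ of $\graphG$ with at least two vertices, the tubes of $\graphG[H]$ (viewed as proper tubes of $\graphG$) form a single connected component of $I(\graphG)$. The key is to use singletons as universal connectors: every vertex $v$ of $\graphG[H]$ yields a proper singleton tube $\{v\}$, and two singletons $\{v\}, \{w\}$ with $\{v,w\}$ an edge of $\graphG[H]$ are disjoint and adjacent, hence incompatible. Since $\graphG[H]$ is connected, all singletons on $\graphG[H]$ lie in a common component of $I(\graphG)$. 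Moreover, any proper tube $\tube$ of $\graphG[H]$ is incompatible with $\{v\}$ for any neighbor $v$ of $\tube$ in $\graphG[H] \ssm \tube$ (such a $v$ exists since $\graphG[H]$ is connected and $\tube$ is proper), which attaches $\tube$ to this common component.

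Combining the two observations, the connected components of $I(\graphG)$ correspond bijectively to the connected components of $\graphG$ of size at least two. Therefore $\Phi$ matches such components of $\graphG$ with such components of $\graphG'$. For a matched pair $\graphG[H], \graphG[H]'$, the tubes of $\graphG$ contained in the vertex set of $\graphG[H]$ are precisely the tubes of $\graphG[H]$, and their pairwise compatibility in $\graphG$ agrees with their pairwise compatibility in $\graphG[H]$ (and similarly on the $\graphG'$ side, using that the ambient graph plays no role in testing compatibility of tubes confined to one component). Hence $\Phi$ restricts to a nested complex isomorphism $\nestedComplex(\graphG[H]) \to \nestedComplex(\graphG[H]')$, which is the desired statement; components of size one contribute no proper tubes and play no role.

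The main obstacle I anticipate is the connectivity statement for the restriction of $I(\graphG)$ to a single non-trivial component $\graphG[H]$. The singleton-based argument sketched above makes this elementary, but one must simultaneously check that all auxiliary singletons are themselves proper tubes (which uses $|V(\graphG[H])| \geq 2$) and that every proper tube admits an incompatible singleton (which uses connectedness of $\graphG[H]$ together with properness of $\tube$). Once this connectivity is in place, the rest of the proof is essentially bookkeeping.
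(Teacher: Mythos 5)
Your proof is correct and follows essentially the same route as the paper: the paper's Lemma~\ref{lem:isomorphismsPreserveConnectedComponents} establishes exactly your key claim, namely that the connected components of the incompatibility relation on proper tubes coincide with the connected components of~$\graphG$ (of size at least two), and Proposition~\ref{prop:nestedComplexIsomorphismDisconnected} is then deduced from it just as you do. The only cosmetic difference lies in the connectivity argument: you chain through incompatible singletons along edges of~$\graphG[H]$, whereas the paper shows directly that any two compatible tubes in the same component admit a common incompatible third tube (built from a path joining a neighbor of one to a neighbor of the other), so the incompatibility graph restricted to a component even has diameter at most two.
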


\begin{theorem}
\label{theo:nestedComplexIsomorphisms}
Let~$\graphG$ and~$\graphG'$ be two connected graphs and~$\Phi : \nestedComplex(\graphG) \to \nestedComplex(\graphG')$ be a non-trivial nested complex isomorphism. Then~$\graphG$ and~$\graphG'$ are spiders and there exists a graph isomorphism~$\psi : \graphG \to \graphG'$ which induces a nested complex isomorphism~$\Psi : \nestedComplex(\graphG) \to \nestedComplex(\graphG')$ (defined by~$\Psi(\tube) \eqdef \set{\psi(v)}{v \in \tube}$) such that the composition~$\Psi^{-1} \circ \Phi$ coincides with the non-trivial nested complex automorphism~$\Omega$ on~$\nestedComplex(\graphG)$.
\end{theorem}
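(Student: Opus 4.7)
The plan is to classify non-trivial nested complex isomorphisms by tracking the images of singletons, leveraging the link structure given by Proposition~\ref{prop:restriction}. By Proposition~\ref{prop:nestedComplexIsomorphismDisconnected}, it suffices to treat the connected case, so assume $\graphG$ and $\graphG'$ are connected. I would split the argument according to whether $\Phi$ sends every singleton of $\graphG$ to a singleton of~$\graphG'$.

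In the easy case, assume $\Phi$ maps singletons to singletons. Since two distinct singletons $\{u\}, \{v\}$ of $\graphG$ are compatible iff $u$ and $v$ are non-adjacent in $\graphG$, the induced bijection $\psi : \ground \to \ground'$ preserves non-adjacency and is therefore a graph isomorphism. An induction on tube size shows that $\Phi$ agrees with the trivial nested complex isomorphism $\Psi$ induced by $\psi$ (each tube $\tube$ is recoverable from the inclusion pattern of singletons together with the abstract compatibility structure of $\nestedComplex(\graphG)$), contradicting the non-triviality assumption.

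In the hard case, some singleton $\{v\}$ has image $\tube_v \eqdef \Phi(\{v\})$ of size at least~$2$. By Proposition~\ref{prop:restriction}, the link of $\{v\}$ in $\nestedComplex(\graphG)$ equals $\nestedComplex(\graphG^\star\{v\})$, and the graph $\graphG^\star\{v\}$ is connected, since adding the clique on the neighbors of $v$ to $\graphG \setminus v$ reconnects any components. On the other hand, the link of $\tube_v$ in $\nestedComplex(\graphG')$ decomposes as a non-trivial join whenever $|\tube_v| \geq 2$ and $|\ground' \setminus \tube_v| \geq 2$. I would prove as an auxiliary lemma that the nested complex of a connected graph on at least two vertices is \emph{join-irreducible}: in any decomposition as a non-trivial join, every vertex of one factor would be compatible with every vertex of the other, whereas in a connected graph on at least two vertices one can always exhibit two incompatible tubes with disjoint support. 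Granted this lemma, we must have $|\ground' \setminus \tube_v| = 1$, so $\tube_v = \ground' \setminus \{w_v\}$ is a \emph{cosingleton}. Applying the same argument to $\Phi^{-1}$, singletons map in both directions to singletons or cosingletons.

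Finally, I would extract the spider structure. Let $B \eqdef \{v \in \ground : \tube_v \text{ is a cosingleton}\}$ and $L \eqdef \ground \ssm B$. Two cosingletons of distinct vertices of $\graphG'$ are never compatible (neither nested nor disjoint), so preservation of compatibility forces $B$ to be a clique in $\graphG$. For $v \in B$ and $u \in L$, the cosingleton $\ground' \ssm \{w_v\}$ and the singleton $\Phi(\{u\}) = \{\psi(u)\}$ are compatible iff $\psi(u) \neq w_v$, so each body vertex $v \in B$ has at most one neighbor in $L$. Iterating the link analysis on tubes contained in~$L$, I would show that the connected components of $\graphG{}[L]$ are paths, each attached by one endpoint to a distinct body vertex; hence $\graphG$ is a spider~$\spiderG_\ninf$, and symmetrically $\graphG'$ is a spider~$\spiderG_{\ninf'}$. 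Matching the action of $\Phi$ on legs and body against the explicit formula for $\Omega$ then produces a graph isomorphism $\psi : \graphG \to \graphG'$ whose induced trivial isomorphism $\Psi$ satisfies $\Psi^{-1} \circ \Phi = \Omega$. I expect the main obstacle to be this last step, namely carefully propagating the body/leg dichotomy from singletons to arbitrary tubes in order to pin down the spider shape and identify $\Phi$ with $\Omega$ up to a trivial isomorphism.
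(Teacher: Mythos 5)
Your overall strategy --- using the join decomposition of links to force singletons to map to singletons or cosingletons, and then deducing that the swapped vertices form a clique satisfying the one-neighbor conditions --- is sound and matches the first half of the paper's argument (its lemmas on maintained versus swapped tubes and vertices). But there is a genuine gap at the decisive step. The structural facts you establish ($B$ is a clique, each $v \in B$ has at most one neighbor in $L$ and conversely) do not come close to implying that $\graphG$ is a spider: they say nothing about the shape of the connected components of $\graphG{}[L]$, which a priori could be arbitrary connected graphs rather than paths attached to the body by an endpoint. The sentence ``iterating the link analysis on tubes contained in $L$, I would show that the connected components of $\graphG{}[L]$ are paths'' is precisely where the real work lies, and you have not supplied it. The paper closes this gap by a global induction on $|\ground|$: it picks a swapped singleton $\{s\}$, observes that $\Phi$ restricts to a still non-trivial isomorphism between $\nestedComplex(\graphG^\star\{s\})$ and $\nestedComplex(\graphG'[\Phi(\{s\})])$, applies the induction hypothesis to conclude that these smaller graphs are spiders on which the restricted map is $\Omega$ up to a trivial isomorphism, and then reconstructs $\graphG$ from its reconnected complement $\graphG^\star\{s\}$ through a two-case analysis (all neighbors of $s$ swapped, producing a new body vertex, versus $s$ having a maintained neighbor, producing a lengthened leg), finally verifying the explicit formula for $\Phi$ on the tubes adjacent to $s$ but not containing it. Without an induction of this kind, or an equivalent mechanism, neither the spider shape nor the identification $\Psi^{-1} \circ \Phi = \Omega$ follows from what you have proved.

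A secondary weak point: in your ``easy case'' you assert that if every singleton maps to a singleton then $\Phi$ is trivial because each tube is recoverable from its compatibility pattern with singletons. This is not immediate: compatibility of a tube $\tube$ with a singleton $\{w\}$ does not distinguish $w \in \tube$ from $w \notin \tube$ non-adjacent to $\tube$, so the compatibility pattern with singletons alone does not determine $\tube$. The paper instead first proves the cardinality dichotomy $|\Phi(\tube)| \in \{|\tube|, |\ground| - |\tube|\}$ for \emph{all} tubes (via the connected size partitions of links), shows that any swapped tube must contain a swapped singleton, and only then runs an induction on $|\tube|$ that uses this cardinality information at the inductive step to identify $\Phi(\tube)$ with $\Psi(\tube)$. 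You would need to either reproduce that chain or find another way to propagate triviality from singletons to all tubes.
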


\begin{corollary}
For connected graphs~$\graphG$, $\graphG'$, any nested complex isomorphism~$\Phi : \nestedComplex(\graphG) \to \nestedComplex(\graphG')$ either preserves or dualizes the compatibility degree: either~$\compatibilityDegree{\Phi(\tube)}{\Phi(\tube')} = \compatibilityDegree{\tube}{\tube'}$ for all~$\tube, \tube'$ of~$\graphG$, or~$\compatibilityDegree{\Phi(\tube)}{\Phi(\tube')} = \compatibilityDegree{\tube'}{\tube}$ for all~$\tube, \tube'$ of~$\graphG$.
\end{corollary}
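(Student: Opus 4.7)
The plan is to deduce the corollary directly from Theorem~\ref{theo:nestedComplexIsomorphisms} together with Proposition~\ref{prop:exmAutomorphism}. The dichotomy "preserves or dualizes" will correspond exactly to the dichotomy "$\Phi$ is trivial or not". First I would split into the two cases provided by Theorem~\ref{theo:nestedComplexIsomorphisms}: either $\Phi$ is trivial, \ie $\Phi = \Psi$ is induced by a graph isomorphism $\psi : \graphG \to \graphG'$, or $\Phi$ is non-trivial, in which case $\graphG$ and $\graphG'$ are spiders and $\Phi = \Psi \circ \Omega$ for some trivial nested complex isomorphism $\Psi$ induced by a graph isomorphism $\psi : \graphG \to \graphG'$ and the involution $\Omega$ of Proposition~\ref{prop:exmAutomorphism}.

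In the trivial case, the compatibility degree is preserved essentially by definition: since $\compatibilityDegree{\tube}{\tube'}$ is defined purely in terms of the abstract graph structure (equality, inclusion, and counting neighbors in a vertex subset), any graph isomorphism $\psi$ satisfies $\compatibilityDegree{\Psi(\tube)}{\Psi(\tube')} = \compatibilityDegree{\tube}{\tube'}$ for all tubes $\tube, \tube'$ of $\graphG$. This is a short verification from Definition~\ref{def:compatibilityDegree} which I would record as a one-line observation.

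In the non-trivial case, I would simply chain the two properties. For any tubes $\tube, \tube'$ of $\graphG$, write
\[
\compatibilityDegree{\Phi(\tube)}{\Phi(\tube')} = \compatibilityDegree{\Psi(\Omega(\tube))}{\Psi(\Omega(\tube'))} = \compatibilityDegree{\Omega(\tube)}{\Omega(\tube')} = \compatibilityDegree{\tube'}{\tube},
\]
where the middle equality uses the observation of the previous paragraph applied to the trivial isomorphism $\Psi$, and the last equality is exactly the dualization statement from Proposition~\ref{prop:exmAutomorphism}. This shows that $\Phi$ dualizes the compatibility degree.

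There is essentially no obstacle here: once Theorem~\ref{theo:nestedComplexIsomorphisms} and Proposition~\ref{prop:exmAutomorphism} are in hand, the corollary is a two-line composition argument. The only minor point to spell out carefully is that trivial isomorphisms preserve the compatibility degree, which follows immediately from the fact that Definition~\ref{def:compatibilityDegree} only refers to set-theoretic and adjacency data, both of which are transported by any graph isomorphism.
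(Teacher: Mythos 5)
Your proposal is correct and follows exactly the route the paper intends: decompose $\Phi$ via Theorem~\ref{theo:nestedComplexIsomorphisms} into a trivial isomorphism (which preserves compatibility degrees, as the paper notes just before Example~\ref{exm:nestedComplexIsomorphisms}) possibly composed with $\Omega$ (which dualizes them by Proposition~\ref{prop:exmAutomorphism}). Nothing is missing.
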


To finish our classification of primal and dual compatibility fans up to linear isomorphisms, it remains to understand when the primal and the dual compatibility fans of~$\graphG$ with respect to the same initial maximal tubing are linearly isomorphic. For example, we already observed that
\begin{enumerate}[(i)]
\item $\compatibilityFan{\pathG_{n+1}}{\tubing^\circ} = \dualCompatibilityFan{\pathG_{n+1}}{\tubing^\circ}$ for any initial tubing~$\tubing^\circ$ on a path~$\pathG_{n+1}$, see Remark~\ref{rem:primalDualPath},
\item $\compatibilityFan{\starG_{n+1}}{\tubing^\circ} = \dualCompatibilityFan{\starG_{n+1}}{\tubing^\circ}$ for the initial tubing~$\tubing^\circ \eqdef \big\{ \{\ell_1\}, \dots, \{\ell_n\} \big\}$ of the star~$\starG_{n+1}$ whose tubes are the~$n$ leaves, see Remark~\ref{rem:primalDualStar}.
\end{enumerate}
These examples extend to all subdivisions of stars. Namely, for~$\ninf \eqdef \{n_1, \dots, n_\ell\} \in \N^\ell$ with ${n = \sum_{i \in [\ell]} (n_i + 1)}$, define the \defn{octopus}~$\octopusG_{\ninf}$ as the graph with vertices~$\big\{ * \big\} \cup \set{v^i_j}{i \in [\ell], 0 \le j \le n_i}$ and edges~${\bigset{\big\{v^i_{j-1}, v^i_j\big\}}{i \in [\ell], j \in [n_i]} \cup \bigset{\big\{*, v^i_0 \big\}}{i \in [\ell]}}$. Informally, the octopus~$\octopusG_{\ninf}$ consists in~$\ell$ paths~$[v^i_0, v^i_{n_i}]$ called \defn{legs}, each attached to a~\defn{head}~$*$. These graphs are often called starlike graphs in the literature, we use the term octopus to stay in the wildlife lexical field. Note that the path~$\pathG_{n+1}$ is a degenerate octopus where the head can be chosen at any vertex. As for stars, a tube of an octopus~$\octopusG_{\ninf}$ that does not contain its head~$*$ is either compatible or exchangeable with any other tube of~$\octopusG_{\ninf}$. Therefore, if~$\tubing^\circ$ is a maximal tubing on~$\octopusG_{\ninf}$ whose tubes do not contain the head~$*$, then the compatibility fan~$\compatibilityFan{\octopusG_{\ninf}}{\tubing^\circ}$ and the dual compatibility fan~$\dualCompatibilityFan{\octopusG_{\ninf}}{\tubing^\circ}$ coincide. The following lemma states that this only happens in this situation.

\begin{lemma}
\label{lem:comparisonPrimalDual}
Let~$\graphG$ be a connected graph and~$\tubing^\circ$ be an initial maximal tubing on~$\graphG$. If~$\compatibilityFan{\graphG}{\tubing^\circ}$ and~$\dualCompatibilityFan{\graphG}{\tubing^\circ}$ are linearly isomorphic, then~$\graphG$ is an octopus whose head is contained in no~tube~of~$\tubing^\circ$.
\end{lemma}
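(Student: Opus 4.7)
First I would translate the geometric hypothesis combinatorially. A linear isomorphism $L$ between $\compatibilityFan{\graphG}{\tubing^\circ}$ and $\dualCompatibilityFan{\graphG}{\tubing^\circ}$ sends rays to rays and preserves incidences, hence induces a bijection $\Phi$ on the tubes of~$\graphG$ satisfying
\[
L \cdot \compatibilityVector{\tubing^\circ}{\tube} = \mu_\tube \cdot \dualCompatibilityVector{\Phi(\tube)}{\tubing^\circ}
\]
for some positive scalars~$\mu_\tube$. Moreover $\Phi$ preserves compatibility, hence is a nested complex automorphism of~$\nestedComplex(\graphG)$. By Theorem~\ref{theo:nestedComplexIsomorphisms} and the connectedness of~$\graphG$, either (A)~$\Phi$ is induced by a graph automorphism~$\psi$ of~$\graphG$, or (B)~$\graphG$ is a spider~$\spiderG_{\ninf}$ and $\Phi = \Psi \circ \Omega$ for some trivial $\Psi$ and the involution~$\Omega$ of Proposition~\ref{prop:exmAutomorphism}.

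The plan is to handle case~(A) directly and reduce case~(B) to it. In case~(B), since~$\Omega$ dualizes the compatibility degree, the permutation of coordinates induced by~$\Omega$ provides a canonical linear identification of $\compatibilityFan{\spiderG_{\ninf}}{\tubing^\circ}$ with $\dualCompatibilityFan{\spiderG_{\ninf}}{\Omega(\tubing^\circ)}$. Composing with~$L$ thus yields a linear isomorphism between two dual compatibility fans of the spider with respect to different initial tubings, which is of type~(A). For a spider whose body has size at least three, the case~(A) analysis below should force a contradiction, so that in case~(B) the spider must be a path, which is an octopus.

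For case~(A), since $\psi$ preserves compatibility degrees, expanding both sides of the defining relation in the standard basis yields a linear system relating the primal compatibility degrees $\compatibilityDegree{\tube^\circ_i}{\tube}$ with the dual compatibility degrees $\compatibilityDegree{\tube}{\tube^\circ_i}$. When $\psi(\tubing^\circ) = \tubing^\circ$, this system specializes to the pointwise symmetry equation
\[
\mu_i \, \compatibilityDegree{\tube^\circ_i}{\tube} = \mu_\tube \, \compatibilityDegree{\tube}{\tube^\circ_i}
\]
for every tube $\tube$ of~$\graphG$ and every $i \in [n]$, with $\mu_i \eqdef \mu_{\tube^\circ_i}$. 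The case $\psi(\tubing^\circ) \neq \tubing^\circ$ should reduce to the previous one by absorbing~$\psi$ into a relabelling of the initial tubing.

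The heart of the proof is then to deduce from the symmetry equation that~$\graphG$ is an octopus~$\octopusG_{\ninf}$ and that no tube of~$\tubing^\circ$ contains the head. I would proceed by local contradiction: assuming that~$\graphG$ has a cycle, or that there is a vertex~$v$ of degree at least three which is not the head of an octopus or which lies in some tube of~$\tubing^\circ$, I would exhibit short test tubes~$\tube$ lying in branches around~$v$ for which the symmetry equation, tested at two different initial tubes~$\tube^\circ_i$ and~$\tube^\circ_j$ near~$v$, forces inconsistent values of the ratios $\mu_\tube / \mu_i$ and $\mu_\tube / \mu_j$. The main obstacle is this last combinatorial step: identifying in each forbidden configuration the right test tubes and computing enough compatibility degrees uniformly to produce the required inconsistency is the technical crux of the proof.
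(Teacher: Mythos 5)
Your proposal has two genuine gaps, and the second one is fatal to the strategy as you describe it. The first is that the step you yourself defer as ``the technical crux'' --- deducing the octopus structure from the symmetry equation --- is essentially the entire content of the lemma. The paper carries it out by induction on $|\ground|$: writing $u$ for the root of $\tubing^\circ$ (the unique vertex lying in no proper tube of $\tubing^\circ$), the connected components $\graphG_1, \dots, \graphG_k$ of $\graphG{}[\ground \ssm \{u\}]$ inherit maximal tubings from $\tubing^\circ$, so by induction each is an octopus whose head lies in no induced tube; a short explicit case analysis then shows each $\graphG_i$ must in fact be a path attached to $u$ by an endpoint. The test triples are completely concrete: for instance, if $v$ is the head of $\graphG_1$ and the only vertex of $\graphG_1$ adjacent to $u$, take $\tube = \{u,v\}$, $\tube_1^\circ = \graphG_1$ and $\tube_2^\circ$ a leg of $\graphG_1$, so that $\compatibilityDegree{\tube}{\tube_1^\circ} \ge 2$ while $\compatibilityDegree{\tube_1^\circ}{\tube} = \compatibilityDegree{\tube}{\tube_2^\circ} = \compatibilityDegree{\tube_2^\circ}{\tube} = 1$, violating the identity $\compatibilityDegree{\tube_1^\circ}{\tube}\compatibilityDegree{\tube}{\tube_2^\circ} = \compatibilityDegree{\tube}{\tube_1^\circ}\compatibilityDegree{\tube_2^\circ}{\tube}$. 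Your ``local contradiction near a cycle or a trivalent vertex'' sketch points in the right direction, but until such tubes are exhibited and the degrees computed, nothing is proved.

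The second gap is that case (B) does not close the way you claim. For the triangle $\completeG_3 = \spiderG_{\{0,0,0\}}$ with $\tubing^\circ = \{\{1\},\{1,2\}\}$, the primal rays are $(-1,0)$, $(0,-1)$, $(1,0)$, $(0,1)$, $(1,1)$, $(2,1)$ and the dual rays are the same list with $(2,1)$ replaced by $(1,2)$; the coordinate swap $(x,y) \mapsto (y,x)$ carries the primal fan onto the dual fan cone by cone, inducing the nested complex automorphism $\{1\} \leftrightarrow \{1,2\}$, $\{2\} \leftrightarrow \{1,3\}$, $\{3\} \leftrightarrow \{2,3\}$. So for a non-path spider the hypothesis of the lemma can genuinely hold with a non-trivial $\Phi$, and the contradiction you announce for bodies of size at least three simply is not there. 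This also shows that the tube-by-tube collinearity of $\compatibilityVector{\tubing^\circ}{\tube}$ with $\dualCompatibilityVector{\tube}{\tubing^\circ}$ --- which your case (A) uses and which the paper's own opening move (``both fans have exactly $n$ pairs of opposite rays, hence the same rays'') asserts --- does not follow from linear isomorphism alone, since the linear map may permute the non-initial rays. In other words, the case your $\Phi$ is designed to capture is precisely the one that neither your reduction nor the paper's one-line justification handles; the argument becomes airtight only if the hypothesis is read as equality of the two fans (which is how the lemma is actually used, and is consistent with the surrounding corollaries that treat spiders as exceptions).
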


We finally obtain our classification of primal and dual compatibility fans. We first focus on primal compatibility fans and then conclude with both primal and dual compatibility fans together.

\begin{corollary}
\label{coro:many}
The number of linear isomorphism classes of compatibility fans of a connected graph~$\graphG$ is:
\begin{enumerate}[(i)]
\item the number of triangulations of the regular $(n+3)$-gon up to the action of the dihedral group if $\graphG = \pathG_{n+1}$ is a path,
\item the number of orbits of maximal tubings on~$\graphG$ under graph automorphisms of~$\graphG$ otherwise.
\end{enumerate}
\end{corollary}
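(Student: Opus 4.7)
The plan is to derive Corollary~\ref{coro:many} by combining Theorem~\ref{theo:nestedComplexIsomorphisms}, which classifies all nested complex isomorphisms, with Lemma~\ref{lem:comparisonPrimalDual}, which controls when primal and dual compatibility fans are linearly isomorphic. First I would record the general dictionary: any linear isomorphism $L : \compatibilityFan{\graphG}{\tubing_1^\circ} \to \compatibilityFan{\graphG}{\tubing_2^\circ}$ sends rays to rays and, by the injectivity of the compatibility vector map (Corollary~\ref{coro:fullRank}), induces a nested complex automorphism $\Phi$ of $\nestedComplex(\graphG)$ with $\Phi(\tubing_1^\circ) = \tubing_2^\circ$. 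Conversely, since graph automorphisms preserve compatibility degrees, any graph automorphism $\psi$ of $\graphG$ with $\psi(\tubing_1^\circ) = \tubing_2^\circ$ yields such a linear isomorphism in the natural way.

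For case~(i), Proposition~\ref{prop:typeA} says the compatibility degree on a path is symmetric and takes values in~$\{-1,0,1\}$, so every nested complex automorphism of $\nestedComplex(\pathG_{n+1})$ automatically preserves compatibility degrees and realizes a linear isomorphism between the corresponding compatibility fans. By \cite[Lemma~2.2]{CeballosSantosZiegler} and Example~\ref{exm:nestedComplexIsomorphisms}\,(ii), the automorphism group of $\nestedComplex(\pathG_{n+1})$ is the dihedral group of the $(n+3)$-gon acting on triangulations. Hence the linear isomorphism classes of compatibility fans of $\pathG_{n+1}$ are in bijection with the dihedral orbits of triangulations of the $(n+3)$-gon.

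For case~(ii), $\graphG$ is connected but not a path. If $\graphG$ is not a spider, Theorem~\ref{theo:nestedComplexIsomorphisms} reduces every nested complex automorphism to a graph automorphism, and the claim is immediate. If $\graphG$ is a spider, it must have at least three legs, since spiders with at most two legs are paths. The main step is to show that no nested complex automorphism of the form $\Phi = \Psi \circ \Omega$ can give a new linear isomorphism of compatibility fans. The induced $L$ must send initial rays to initial rays, so it takes the form $L(\b{e}_i) = \mu_i \b{e}_{\sigma(i)}$ for a permutation~$\sigma$ and positive scalars~$\mu_i$. Expanding the identity $L(\compatibilityVector{\tubing_1^\circ}{\tube}) = \mu_\tube \compatibilityVector{\tubing_2^\circ}{\Phi(\tube)}$ and using the dualizing relation $\compatibilityDegree{\Phi(\tube)}{\Phi(\tube')} = \compatibilityDegree{\tube'}{\tube}$ from Proposition~\ref{prop:exmAutomorphism} forces the identities
\[
\mu_i \, \compatibilityDegree{\tube_i^\circ}{\tube} = \mu_\tube \, \compatibilityDegree{\tube}{\tube_i^\circ}
\qquad \text{for every tube } \tube \text{ and every } \tube_i^\circ \in \tubing_1^\circ,
\]
which says that the diagonal rescaling $\mathrm{diag}(\mu_1, \dots, \mu_n)$ is a linear isomorphism from $\compatibilityFan{\graphG}{\tubing_1^\circ}$ onto $\dualCompatibilityFan{\graphG}{\tubing_1^\circ}$. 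By Lemma~\ref{lem:comparisonPrimalDual}, $\graphG$ would then have to be an octopus, but a spider with at least three legs contains a triangle in its body clique whereas octopi are trees. This contradiction completes case~(ii).

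The main obstacle is the last reduction: one must carefully track how $\Omega$-twisted automorphisms act on compatibility vectors in order to convert the existence of such an~$L$ into a linear identification between primal and dual compatibility fans of a single initial tubing, so that Lemma~\ref{lem:comparisonPrimalDual} applies, and then verify that a connected spider with at least three legs is indeed never an octopus.
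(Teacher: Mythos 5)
Your argument is correct and is essentially the derivation the paper intends: Corollary~\ref{coro:many} is stated there without a separate proof, precisely as the combination of Theorem~\ref{theo:nestedComplexIsomorphisms}, Proposition~\ref{prop:exmAutomorphism}, Lemma~\ref{lem:comparisonPrimalDual}, and the observation in Example~\ref{exm:nestedComplexIsomorphisms}\,(ii) that on a path every nested complex automorphism preserves the compatibility degree, which is exactly how you assemble it. The one step you assert rather than justify is that a linear isomorphism of compatibility fans must carry initial rays to initial rays — this does not follow from Corollary~\ref{coro:fullRank} alone, but it is true and routine, since for a connected graph with $n\ge 2$ each $-\b{e}_i$ is the unique ray admitting a linear functional negative on it and nonnegative on all other rays (every non-initial ray lies in $\R_{\ge 0}^n$, and every initial tube is incompatible with at least two tubes).
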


\begin{corollary}
The number of linear isomorphism classes of primal and dual compatibility fans of a connected graph~$\graphG$ is:
\begin{enumerate}[(i)]
\item the number of triangulations of the regular $(n+3)$-gon up to the action of the dihedral group if $\graphG = \pathG_{n+1}$ is a path,
\item the number of $\graphG$-automorphism orbits of maximal tubings on~$\graphG$ if~$\graphG$ is a spider but not~a~path,
\item the number of $\graphG$-automorphism orbits of maximal tubings on~$\graphG$, counted twice if not rooted at~$*$, if~$\graphG$ is an octopus with head~$*$ but not a path,
\item twice the number of $\graphG$-automorphism orbits of maximal tubings on~$\graphG$ otherwise.
\end{enumerate}
\end{corollary}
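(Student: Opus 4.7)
The argument is a case analysis built on three tools: the preceding Corollary (counting primal compatibility fan classes), Theorem~\ref{theo:nestedComplexIsomorphisms} (classifying nested complex isomorphisms between connected graphs), and Lemma~\ref{lem:comparisonPrimalDual} (characterizing when $\compatibilityFan{\graphG}{\tubing^\circ}$ and $\dualCompatibilityFan{\graphG}{\tubing^\circ}$ are linearly isomorphic). Case~(i) is immediate: for the path the compatibility degree is symmetric and valued in $\{-1,0,1\}$, so the primal and dual fans of any initial tubing coincide identically, and the count follows from the preceding Corollary. Case~(ii) is also immediate: for a spider which is not a path, the Remark following Proposition~\ref{prop:exmAutomorphism} yields $\dualCompatibilityFan{\spiderG_\ninf}{\tubing^\circ} = \compatibilityFan{\spiderG_\ninf}{\Omega(\tubing^\circ)}$, so every dual compatibility fan is literally a primal one; since $\Omega$ is a bijection on maximal tubings, the set of dual classes equals the set of primal classes, and the total again matches the preceding Corollary.

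In cases~(iii) and~(iv), $\graphG$ is connected, not a path, and not a spider, so Theorem~\ref{theo:nestedComplexIsomorphisms} forces every nested complex automorphism of $\graphG$ to be trivial, hence to preserve the compatibility degree. The key technical step is to show that a linear isomorphism $L : \compatibilityFan{\graphG}{\tubing^\circ_1} \to \dualCompatibilityFan{\graphG}{\tubing^\circ_2}$ can exist only when $\graphG$ is an octopus whose head $*$ lies in no tube of $\tubing^\circ_1$. From $L$ one reads off the induced bijection on tubes, which must be a graph automorphism $\psi$; matching the negative coordinate rays of source and target (which correspond to $\tubing^\circ_1$ and $\tubing^\circ_2$ respectively) forces $\psi(\tubing^\circ_1) = \tubing^\circ_2$; comparing $L$ on the remaining rays yields identities of the form $\mu_{\tube^\circ}\, \compatibilityDegree{\tube^\circ}{\tube} = c_\tube\, \compatibilityDegree{\psi(\tube)}{\psi(\tube^\circ)} = c_\tube\, \compatibilityDegree{\tube}{\tube^\circ}$ with positive scalars $\mu_{\tube^\circ}, c_\tube$, the last equality using that $\psi$ is a graph automorphism. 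These identities say that the compatibility and dual compatibility matrices of $\tubing^\circ_1$ agree up to diagonal rescalings on both sides, so $\compatibilityFan{\graphG}{\tubing^\circ_1}$ and $\dualCompatibilityFan{\graphG}{\tubing^\circ_1}$ are themselves linearly isomorphic, and Lemma~\ref{lem:comparisonPrimalDual} concludes that $\graphG$ must be an octopus with head not in $\tubing^\circ_1$.

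Case~(iv) now follows at once: $\graphG$ is not an octopus, so no primal compatibility fan is linearly isomorphic to any dual compatibility fan, the primal and dual classes are disjoint, and each is in bijection with the graph-automorphism orbits of maximal tubings, giving the factor of~$2$. For case~(iii), $\graphG$ is a non-path octopus with head $*$; since $*$ is the unique vertex of degree $\ell \ge 3$, it is fixed by every graph automorphism, and the graph-automorphism orbits of maximal tubings split cleanly into those rooted at~$*$ (with $*$ in no tube of $\tubing^\circ$) and those not rooted at~$*$. Each orbit rooted at~$*$ yields $\compatibilityFan{\graphG}{\tubing^\circ} = \dualCompatibilityFan{\graphG}{\tubing^\circ}$ directly by Lemma~\ref{lem:comparisonPrimalDual} and contributes a single class; each orbit not rooted at~$*$ contributes two distinct classes, one primal and one dual, by the dichotomy above (which rules out any primal--dual identification since such an orbit is not rooted at~$*$). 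Summing over orbits yields the announced count $N_1 + 2 N_2$. The main technical obstacle is the bookkeeping in the second paragraph: choosing the right bases indexed by $\tubing^\circ_1$ and $\tubing^\circ_2$, tracking how $L$ acts on negative and positive rays, and recognising the resulting matrix identity as the precise hypothesis of Lemma~\ref{lem:comparisonPrimalDual}.
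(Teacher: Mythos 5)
Your overall architecture is exactly the one the paper intends (it gives no explicit proof of this corollary): combine Corollary~\ref{coro:many} and its dual analogue for the separate primal and dual counts, use Theorem~\ref{theo:nestedComplexIsomorphisms} to force the induced nested complex automorphism to be trivial outside the spider case, use the Remark after Proposition~\ref{prop:exmAutomorphism} to absorb dual fans into primal fans for spiders, and use Lemma~\ref{lem:comparisonPrimalDual} together with its converse (tubes avoiding the head of an octopus have symmetric $\{-1,0,1\}$-valued compatibility degrees) to decide when a primal class meets a dual class. Cases (i), (ii) and the final bookkeeping $N_1+2N_2$ in case (iii) are all fine.

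The one step you assert without justification is that a linear isomorphism $L:\compatibilityFan{\graphG}{\tubing^\circ_1}\to\dualCompatibilityFan{\graphG}{\tubing^\circ_2}$ must match the negative coordinate rays of the source with those of the target, whence $\psi(\tubing^\circ_1)=\tubing^\circ_2$. A linear isomorphism only preserves the $n$ \emph{pairs} of opposite rays $\{\pm\b{e}_i\}$; a priori it could send $-\b{e}_i$ (the ray of $\tube_i^{\circ,1}$) to a \emph{positive} multiple of some $\b{e}_j$, i.e.\ send $\tube_i^{\circ,1}$ to the flip of $\tube_j^{\circ,2}$ in $\tubing^\circ_2$ rather than to $\tube_j^{\circ,2}$ itself, in which case your displayed identity $\mu_{\tube^\circ}\compatibilityDegree{\tube^\circ}{\tube}=c_\tube\compatibilityDegree{\tube}{\tube^\circ}$ breaks down. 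The claim is nevertheless true and can be closed as follows: if $L\b{e}_i=-\gamma_i\b{e}_{j}$ with $\gamma_i>0$ for some $i$ (the ``wrong sign''), then $L$ is a signed scaled permutation matrix, and comparing the $j$-th coordinate of $L\,\compatibilityVector{\tubing^\circ_1}{\tube[u]}$ with that of $c_{\tube[u]}\dualCompatibilityVector{\psi(\tube[u])}{\tubing^\circ_2}$ for every tube $\tube[u]$ with $\tube[u]\notin\tubing^\circ_1$ and $\psi(\tube[u])\notin\tubing^\circ_2$ forces a nonpositive quantity to equal a nonnegative one, hence $\compatibilityDegree{\tube_i^{\circ,1}}{\tube[u]}=0$ for all such $\tube[u]$. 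Tracking the remaining tubes shows that $\tube_i^{\circ,1}$ would be incompatible with exactly one proper tube of $\graphG$, which is impossible for a proper tube of a connected graph on at least three vertices (it is always incompatible with at least two, e.g.\ two distinct neighbouring singletons, or a neighbouring singleton and a neighbouring edge). This rules out the wrong sign, gives $\psi(\tubing^\circ_1)=\tubing^\circ_2$, and the rest of your reduction to Lemma~\ref{lem:comparisonPrimalDual} then goes through as written.
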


\begin{remark}
Not only we obtain many non-isomorphic complete simplicial fan realizations for graphical nested complexes (as stated in Corollary~\ref{coro:many}), but these realizations cannot be derived from the existing geometric constructions for graph associahedra. Indeed, all previous polytopal realizations of graph associahedra can be obtained by successive face truncations of a simplex~\cite{CarrDevadoss} or of a cube~\cite{Volodin, DevadossForceyReisdorfShowers}. Not all compatibility fans can be constructed in this way. For example, the leftmost compatibility fan of \fref{fig:allLabels} is not linearly isomorphic to the normal fan of a polytope obtained by face truncations of the $3$-dimensional simplex or cube.
\end{remark}

\subsection{Polytopality}
\label{subsec:polytopality}

In this section, we briefly discuss the polytopality of our compatibility fans for graphical nested complexes. A complete polyhedral fan is said to be \defn{polytopal} (or \defn{regular}) if it is the normal fan of a polytope. It is well known that not all complete polyhedral fans (even simplicial) are polytopal. Examples are easily constructed from non-regular triangulations, see \eg the discussion in~\cite[Chapter~2]{DeLoeraRambauSantos}.

\para{Polytopality of cluster fans}
The polytopality of cluster fans has been studied since the foundations of finite type cluster algebras. For compatibility fans, polytopality was shown for particular initial clusters by F.~Chapoton, S.~Fomin and A.~Zelevinsky~\cite{ChapotonFominZelevinsky} and in type~$A$ by F.~Santos~\cite[Section~5]{CeballosSantosZiegler}.

\begin{theorem}[\cite{ChapotonFominZelevinsky, CeballosSantosZiegler}]
\label{theo:polytopalityCompatibilityFanCluster}
The $\b{d}$-vector fan (or compatibility fan) is polytopal for
\begin{itemize}
\item any initial cluster in any type~$A$ cluster algebra~\cite[Section~5]{CeballosSantosZiegler}, and
\item the bipartite initial cluster in any finite type cluster algebra~\cite{ChapotonFominZelevinsky}.
\end{itemize}
\end{theorem}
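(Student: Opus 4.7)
\medskip\noindent
\textbf{Proof proposal.} The plan is to reduce polytopality, in both cases, to a system of \emph{wall-crossing inequalities}. Concretely, given the complete simplicial $\b{d}$-vector fan, I will look for a height function $h$ assigning a real number $h(x)$ to each cluster variable $x$ such that the cones of the fan are exactly the normal cones of the polytope
\[
P_h \eqdef \bigset{\b{y} \in \R^n}{\dotprod{\b{d}(X^\circ, x)}{\b{y}} \le h(x) \text{ for every cluster variable } x}.
\]
By a standard criterion for simplicial fans (see e.g.\ the analogue in Section~\ref{sec:preliminaries} for nested complexes and~\cite[Chapter~2]{DeLoeraRambauSantos} in general), this is equivalent to the following local condition: for every mutation (flip) between adjacent clusters $C \cup \{x\}$ and $C \cup \{x'\}$, if
\[
\alpha \, \b{d}(X^\circ,x) + \alpha' \, \b{d}(X^\circ,x') + \sum_{y \in C} \beta_y \, \b{d}(X^\circ,y) = 0
\]
is the (unique up to scaling) linear dependence with $\alpha,\alpha' > 0$, then the heights must satisfy the strict convexity inequality $\alpha\,h(x) + \alpha'\,h(x') + \sum_{y\in C} \beta_y \, h(y) > 0$. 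So the game is: first understand the dependences at all flips, then exhibit a single $h$ which satisfies all these inequalities simultaneously.

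For Part~(i), the type~$A$ case with arbitrary initial cluster, I would start from the triangulation model of the associahedron recalled in Section~\ref{subsec:paths}. Fix an initial triangulation $T^\circ$ of the $(n+3)$-gon~$Q_{n+3}$. By Proposition~\ref{prop:typeA}, the compatibility vector of a diagonal~$\delta$ is simply the characteristic vector of the diagonals of~$T^\circ$ crossed by~$\delta$. The crucial structural input is that the linear dependences at flips are extremely rigid: when one flips the diagonal~$\delta$ of a quadrilateral $Q_\delta$ to $\delta'$, only the four boundary diagonals of $Q_\delta$ (the ``forced tubes'') together with $\delta, \delta'$ appear, and the coefficients on $(\delta,\delta')$ are either $(1,1)$ or $(1,2)$ or $(2,1)$, with the coefficients of the four boundary edges being $0$ or $-1$. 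Because this list of dependences is finite (it can be read off from the octagon, as noted in the paper), the convexity inequalities form a finite linear system in the heights. I would then exhibit an explicit family of heights $h(\delta)$ depending polynomially (or piecewise linearly) on the combinatorial data of $\delta$ with respect to $T^\circ$, for instance based on the ``arc lengths'' of the diagonals seen through $T^\circ$, and check case by case that each of the finitely many types of flip inequalities is satisfied.

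For Part~(ii), the bipartite initial cluster in arbitrary finite type, I would rely on the root-theoretic description of Fomin--Zelevinsky. In bipartite framing, cluster variables are indexed by almost positive roots $\alpha \in \Phi_{\ge -1}$, the $\b{d}$-vector of $\alpha$ is the coordinate vector of $\alpha$ in the simple root basis, and Chapoton--Fomin--Zelevinsky's compatibility degree is piecewise linear with respect to the decomposition of the ambient space induced by the Coxeter element of bipartite type. The plan is to take the polytope
\[
P \eqdef \bigset{\b{y}}{\dotprod{\alpha}{\b{y}} \le h(\alpha) \text{ for every } \alpha \in \Phi_{\ge -1}}
\]
with the heights $h(\alpha) = c_{\alpha}$ built from the explicit formulas in~\cite{ChapotonFominZelevinsky} (essentially one constant per orbit of the bipartite Coxeter element, adjusted to make all facet inequalities tight on the right faces). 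The verification splits into two stages: first, check that for the bipartite initial cluster (corresponding to the negative simple roots) the claimed cones are indeed the maximal normal cones of $P$; second, propagate to every cluster by showing that the convexity inequality at each mutation is equivalent to a positivity statement about the Coxeter-theoretic data, which follows from the Y-system relations.

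The main obstacle in both parts is the explicit computation of the linear dependences at flips. In type~$A$ the combinatorics is essentially linear and finite, so Part~(i) is really a bookkeeping exercise once the correct heights are guessed. Part~(ii) is substantially harder: although the bipartite case is the ``nicest'' one, verifying the convexity inequalities requires the exchange relations in the cluster algebra and nontrivial identities in the root system (the Y-system), so the difficulty is to set up the height function in a way that translates these algebraic identities directly into the geometric inequalities.
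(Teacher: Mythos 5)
Your proposal is correct in outline and follows essentially the same route as the sources this (recalled) theorem cites and as the paper's own proof of the closely related Theorem~\ref{theo:polytopalityPathsCycles} in Section~\ref{subsec:proofPolytopality}: reduce polytopality to the wall-crossing inequalities of Proposition~\ref{prop:polytopalityFan}, exploit the finite and rigid list of linear dependences at flips (coefficients $1$ or $2$ on the flipped diagonals, $0$ or $-1$ on the forced ones, readable from the octagon), and feed them a concave increasing height function of the ``size'' of each diagonal together with a large constant on the initial ones. The bipartite part is likewise a faithful sketch of the Chapoton--Fomin--Zelevinsky argument, which the paper only cites; no gap to report.
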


The polytopality of the $\b{g}$-vector fan was studied by C.~Hohlweg, C.~Lange and H.~Thomas~\cite{HohlwegLangeThomas}. See also recent alternative proofs by S.~Stella~\cite{Stella} and V.~Pilaud and C.~Stump~\cite{PilaudStump-brickPolytope}. 

\begin{theorem}[\cite{HohlwegLangeThomas, Stella, PilaudStump-brickPolytope}]
The $\b{g}$-vector fan (or Cambrian fan) is polytopal for any acyclic initial cluster in any finite type cluster algebra.
\end{theorem}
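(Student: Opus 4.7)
The plan is to follow the Hohlweg--Lange--Thomas route, which starts from the permutahedron~$\mathrm{Perm}(W)$ of the underlying Weyl group~$W$ and deforms it into a realization of the generalized associahedron whose normal fan is the $c$-Cambrian fan. An acyclic initial cluster determines a Coxeter element~$c \in W$, and by Reading--Speyer the $\b{g}$-vector fan coincides with the $c$-Cambrian fan~$\cF_c$, a coarsening of the Coxeter arrangement obtained from the $c$-Cambrian congruence on the weak order. Since~$\mathrm{Perm}(W)$ has normal fan equal to the Coxeter fan, the strategy is to remove precisely those facets of~$\mathrm{Perm}(W)$ whose removal would merge two Coxeter chambers glued by the Cambrian congruence, and to show that the resulting intersection of half-spaces is bounded, full-dimensional, and has normal fan equal to~$\cF_c$.

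Concretely, I would fix a standard half-space description~$\set{x \in \HH}{\dotprod{\omega}{x} \le h_\omega}$ of~$\mathrm{Perm}(W)$ indexed by positive roots~$\omega$, and then discard all half-spaces except those corresponding to rays of~$\cF_c$. The key points to verify are: (a)~the resulting intersection~$P_c$ is bounded, (b)~its vertices are in bijection with $c$-sortable elements of~$W$ and hence with clusters of the cluster algebra, and (c)~two vertices are adjacent in~$P_c$ if and only if the corresponding clusters differ by a mutation. Reading--Speyer's combinatorial analysis of~$\cF_c$ provides exactly the incidence data needed for (b) and (c), so most of the work reduces to choosing heights~$h_\omega$ that make (a) hold and that align the vertices of~$P_c$ with $c$-sortable elements via explicit coordinates.

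The main obstacle will be the global consistency of the facet removal: one must tune the heights so that deleting all facets not indexed by rays of~$\cF_c$ does not eliminate any vertex that should survive, and so that each retained facet is still tight at some vertex of~$P_c$. Hohlweg--Lange--Thomas handle this by a specific parametric family of heights attached to~$c$ and a case analysis of the positions of $c$-sortable elements on root hyperplanes. An alternative and arguably more uniform route is the brick polytope approach of Pilaud--Stump: realize the cluster complex as a spherical subword complex for a well-chosen word in~$W$, and take its brick polytope, whose normal fan is then automatically~$\cF_c$ by the general theory of brick polytopes. In that route the obstacle is displaced to the (nontrivial but purely combinatorial) identification of the subword complex with the cluster complex, after which polytopality is a formal consequence. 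Either way, the hard step is the explicit geometric control, and I would first pursue the Hohlweg--Lange--Thomas approach because the heights~$h_\omega$ can be written down uniformly in~$c$.
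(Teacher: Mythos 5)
This statement is not proved in the paper: it is quoted verbatim from the literature and attributed to Hohlweg--Lange--Thomas, Stella, and Pilaud--Stump, so there is no internal proof to compare your argument against. What you have written is an accurate high-level description of the two standard routes in those references --- the Hohlweg--Lange--Thomas construction that deletes from the $W$-permutahedron exactly the facets not indexed by rays of the $c$-Cambrian fan, and the Pilaud--Stump brick-polytope construction via spherical subword complexes --- and you correctly identify Reading--Speyer's identification of the $\b{g}$-vector fan with the Cambrian fan as the bridge between the cluster-algebraic and Coxeter-combinatorial formulations.

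However, as a proof it has a genuine gap: everything substantive is deferred. Your items (a), (b), (c) --- boundedness of the intersection $P_c$, the bijection of its vertices with $c$-sortable elements, and the adjacency/mutation correspondence --- together with the ``tuning of the heights $h_\omega$'' and the ``identification of the subword complex with the cluster complex,'' are precisely the content of the cited papers, and none of it is carried out here. In particular, the claim that one can simply discard half-spaces from the permutahedron and land on a polytope with the prescribed normal fan is false for arbitrary heights; the whole difficulty is exhibiting heights for which every retained inequality remains facet-defining and no spurious vertex appears, which Hohlweg--Lange--Thomas verify by an explicit case analysis of singletons of the Cambrian congruence. A complete argument would also need the general criterion of Proposition~\ref{prop:polytopalityFan} (or an equivalent): a complete simplicial fan is polytopal if and only if there is a positive weight function on its rays satisfying a strict inequality for the unique linear dependence arising at each wall-crossing. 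So your text is a correct road map to the known proofs, but not a proof.
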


To our knowledge, the polytopality of the $\b{d}$- and $\b{g}$-vector fans remains open in all other cases. All these results rely on the following characterization of polytopality for complete simplicial fans, in connection to regular triangulations of vector configurations and to the theory of secondary polytopes~\cite{GelfandKapranovZelevinsky}, see also~\cite{DeLoeraRambauSantos}. Equivalent formulations of this characterization appear \eg in~\cite[Lemma~2.1]{ChapotonFominZelevinsky}, \cite[Proposition~6.3]{Zelevinsky}, \cite[Theorem~4.1]{HohlwegLangeThomas}, or~\cite[Lemma~5.4]{CeballosSantosZiegler}. Here, we follow the presentation of the first two which fits our previous notations.

\begin{proposition}
\label{prop:polytopalityFan}
Let~$\cF$ be a complete simplicial fan in~$\R^n$ and let~$\b{R}$ denote a set of vectors generating its rays ($1$-dimensional cones). Then the following are equivalent:
\begin{enumerate}
\item $\cF$ is the normal fan of a simple polytope in~$(\R^n)^*$;
\item There exists a map~$\omega: \b{R} \to \R_{> 0}$ such that for any two maximal adjacent cones~$\R_{\ge 0} \b{S}$ and~$\R_{\ge 0} \b{S}'$ of~$\cF$ with $\b{S}, \b{S}' \subseteq \b{R}$ and~$\b{S} \ssm \{\b{s}\} = \b{S}' \ssm \{\b{s'}\}$, we have
\[
\alpha \, \omega(\b{s}) + \alpha' \, \omega(\b{s}') + \sum_{\b{r} \in \b{S} \cap \b{S}'} \beta_{\b{r}} \, \omega(\b{r}) > 0,
\]
where
\[
\alpha \, \b{s} + \alpha' \, \b{s}' + \sum_{\b{r} \in \b{S} \cap \b{S}'} \beta_{\b{r}} \, \b{r} = 0
\]
is the unique (up to rescaling) linear dependence with~$\alpha, \alpha' > 0$ between the rays of~$\b{S} \cup \b{S}'$.
\end{enumerate}
Under these conditions, $\cF$ is the normal fan of the polytope defined by
\[
\set{\phi \in (\R^n)^*}{\dotprod{\phi}{\b{r}} \le \omega(\b{r}) \text{ for all } \b{r} \in \b{R}}.
\]
\end{proposition}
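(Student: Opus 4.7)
The plan is to prove both implications of Proposition~\ref{prop:polytopalityFan} via the classical correspondence between a simple polytope and its inequality description, working throughout with $\omega$ as the ``height function'' encoding facet positions.

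For the forward direction, suppose $\cF = \mathcal{N}(P)$ for some simple polytope $P \subseteq (\R^n)^*$. Since $\cF$ is complete, $P$ is bounded; translating, I may place the origin in the interior of $P$, so that each facet of $P$ has a unique inner normal in $\b{R}$ and the corresponding facet inequality reads $\dotprod{\phi}{\b{r}} \le \omega(\b{r})$ with $\omega(\b{r}) > 0$. For each maximal cone $\R_{\ge 0}\b{S}$, simpliciality makes $\b{S}$ a basis, so the vertex $v_{\b{S}}$ of $P$ dual to this cone is the unique solution of $\dotprod{v_{\b{S}}}{\b{r}} = \omega(\b{r})$ for $\b{r} \in \b{S}$. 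Since $v_{\b{S}}$ is a vertex of $P$ and $\b{s}'$ is not in the cone normal to it, the facet inequality $\dotprod{v_{\b{S}}}{\b{s}'} \le \omega(\b{s}')$ must be strict. Pairing the linear dependence $\alpha \b{s} + \alpha' \b{s}' + \sum \beta_{\b{r}} \b{r} = 0$ against~$v_{\b{S}}$ yields
\[
\alpha \, \omega(\b{s}) + \alpha' \dotprod{v_{\b{S}}}{\b{s}'} + \sum_{\b{r} \in \b{S} \cap \b{S}'} \beta_{\b{r}} \, \omega(\b{r}) = 0,
\]
and substituting $\dotprod{v_{\b{S}}}{\b{s}'} < \omega(\b{s}')$ together with~$\alpha' > 0$ gives the required strict inequality.

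For the converse, assume $\omega$ satisfies the wall-crossing inequalities and define $P \eqdef \bigset{\phi}{\dotprod{\phi}{\b{r}} \le \omega(\b{r}) \text{ for all } \b{r} \in \b{R}}$. For each maximal cone~$\R_{\ge 0}\b{S}$, let $v_{\b{S}}$ denote the unique point satisfying $\dotprod{v_{\b{S}}}{\b{r}} = \omega(\b{r})$ for all $\b{r} \in \b{S}$. I aim to prove that $v_{\b{S}} \in P$ and that the normal cone to $v_{\b{S}}$ in $P$ is exactly $\R_{\ge 0}\b{S}$; this amounts to showing $\dotprod{v_{\b{S}}}{\b{r}} < \omega(\b{r})$ for every $\b{r} \notin \b{S}$. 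The key observation is the wall-crossing propagation: if~$\R_{\ge 0}\b{S}'$ is adjacent to~$\R_{\ge 0}\b{S}$ with $\b{S}\ssm\{\b{s}\} = \b{S}'\ssm\{\b{s}'\}$, then pairing the unique linear dependence with $v_{\b{S}}$ and invoking the hypothesis exactly as in the forward direction forces $\dotprod{v_{\b{S}}}{\b{s}'} < \omega(\b{s}')$, and symmetrically $\dotprod{v_{\b{S}'}}{\b{s}} < \omega(\b{s})$. In particular, the segment $[v_{\b{S}}, v_{\b{S}'}]$ crosses only the wall supported by~$\b{S} \cap \b{S}'$ and stays on the correct side of every other facet hyperplane locally.

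The main obstacle is upgrading this local wall-crossing statement into a global statement: I must check $\dotprod{v_{\b{S}}}{\b{r}} \le \omega(\b{r})$ for \emph{every} ray $\b{r} \in \b{R}$, not just for those of a cone neighboring $\b{S}$. For this I use that the dual graph of~$\cF$ is connected (a standard consequence of completeness of a simplicial fan) and argue by induction on the flip-distance from $\b{S}$ to a maximal cone $\b{S}''$ containing a given ray $\b{r}$: the wall-crossing computation above shows that the signed quantity $\omega(\b{r}) - \dotprod{v_{\b{S}}}{\b{r}}$ stays strictly positive along the path until the final cone, where it vanishes because $\b{r} \in \b{S}''$. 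This inductive propagation, supported by the uniqueness (up to rescaling) of the wall-crossing dependence and the strict positivity coming from the hypothesis, is the delicate bookkeeping step. Once it is carried out, each $v_{\b{S}}$ is identified as a vertex of $P$ with normal cone~$\R_{\ge 0}\b{S}$, and since the cones $\R_{\ge 0}\b{S}$ already tile~$\R^n$, one concludes that $\cF = \mathcal{N}(P)$ and that $P$ has the asserted inequality description.
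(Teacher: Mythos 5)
A preliminary remark: the paper does not actually prove Proposition~\ref{prop:polytopalityFan} — it quotes it as a classical fact with pointers to \cite{ChapotonFominZelevinsky}, \cite{Zelevinsky}, \cite{HohlwegLangeThomas} and \cite{CeballosSantosZiegler} — so there is no internal proof to compare against and I am judging your argument on its own. Your forward direction is correct (modulo the slip ``inner normal'' where you mean outer normal, to match the paper's convention of outer normal fans): evaluating the unique wall-crossing dependence at the vertex $v_{\b{S}}$ and using $\dotprod{v_{\b{S}}}{\b{s}'} < \omega(\b{s}')$ together with $\alpha' > 0$ does give the required strict inequality.

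The converse, however, has a genuine gap at exactly the place you flag as ``delicate bookkeeping'': the passage from the local wall-crossing inequality to the global claim $\dotprod{v_{\b{S}}}{\b{r}} < \omega(\b{r})$ for every $\b{r} \notin \b{S}$. Induction on flip-distance along an arbitrary path $\b{S} = \b{S}_0, \dots, \b{S}_k$ with $\b{r} \in \b{S}_k$ does not work as stated. Writing $\omega(\b{r}) - \dotprod{v_{\b{S}}}{\b{r}} = \sum_{i} \dotprod{v_{\b{S}_{i+1}} - v_{\b{S}_i}}{\b{r}}$, the $i$th increment is nonnegative precisely when $\b{r}$ lies on the $\b{S}_{i+1}$-side of the wall hyperplane separating $\b{S}_i$ from $\b{S}_{i+1}$ (since $v_{\b{S}_{i+1}} - v_{\b{S}_i}$ vanishes on that wall and your hypothesis makes it positive on the new ray of $\b{S}_{i+1}$); for an arbitrary, even shortest, path in the dual graph some increments can be negative, so nothing ``stays strictly positive'' — and as written $\omega(\b{r}) - \dotprod{v_{\b{S}}}{\b{r}}$ is a single number, so it cannot vary along the path at all. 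The standard repair is to take the path of maximal cones traversed by a generic straight segment from the interior of $\R_{\ge 0}\b{S}$ to $\b{r}$: a segment meets each wall hyperplane at most once, so $\b{r}$ ends up on the far side of every wall crossed, all increments are $\ge 0$, and at least one is $> 0$ when $\b{r} \notin \b{S}$. Equivalently, one shows that the piecewise-linear function defined on each cone $\R_{\ge 0}\b{S}$ by $x \mapsto \dotprod{v_{\b{S}}}{x}$ is well defined and, being convex across every wall of a complete fan, globally convex; it is then the support function of $P$. With that step supplied, the remainder of your converse (identifying the normal cone of each $v_{\b{S}}$ and concluding because the cones of $\cF$ tile $\R^n$) is sound.
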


\para{Polytopality of compatibility fans}
We have seen in Section~\ref{subsec:graphAssociahedra} that the nested fan is the normal fan of the graph associahedron of~\cite{CarrDevadoss, Devadoss, Postnikov, Zelevinsky}. For the compatibility fan, the question of the polytopality remains open:

\begin{conjecture}
\label{conj:polytopality}
All primal and dual compatibility fans of graphical nested complexes are polytopal.
\end{conjecture}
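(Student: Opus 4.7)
The plan is to apply the polytopality criterion of Proposition~\ref{prop:polytopalityFan}: for each initial maximal tubing~$\tubing^\circ$ on~$\graphG$, one exhibits a weight function $\omega$ from tubes of $\graphG$ to $\R_{>0}$ such that, for every flip $\tubing \symdif \{\tube,\tube'\}$, the unique (up to rescaling) positive linear dependence among the compatibility vectors involved evaluates strictly positively under $\omega$. The corresponding statement for dual compatibility fans would be handled by the same method, since the compatibility and dual compatibility matrices are transposes of one another and Proposition~\ref{prop:polytopalityFan} is insensitive to this transposition.

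The first step is to derive an explicit combinatorial description of the relevant linear dependences. Example~\ref{exm:exmLinearDependence} shows that, unlike in the path and cycle cases, the dependence may involve tubes which are neither flipped nor forced, with coefficients that can be arbitrarily large. I would try to obtain a recursive formula by expressing $\compatibilityVector{\tubing^\circ}{\tube'}$ in the basis $\{\compatibilityVector{\tubing^\circ}{\tube''}\}_{\tube'' \in \tubing}$ through an elimination procedure indexed by the spines of $\tubing$ and $\tubing'$, hoping for a closed form in terms of compatibility degrees with respect to~$\tubing^\circ$ and of the minimal enclosing tube $\tsup = \tube \cup \tube'$. Once such a formula is in hand, natural candidate weights to test are $\omega(\tube) = f(|\tube|)$ for strictly convex $f$; weights of the form $\omega(\tube) = \sum_{v \in \tube} \alpha_v$ with generic positive~$\alpha_v$, mimicking the Minkowski-sum constructions of nestohedra in~\cite{Postnikov, FeichtnerSturmfels}; and weights obtained by F.~Santos's unfolding strategy from an initial simplex along the flip graph, which already settles paths and cycles in the present paper. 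Throughout, I would exploit Proposition~\ref{prop:product} to reduce to the case of connected graphs and Proposition~\ref{prop:restriction} to argue by induction on $|\ground|$: the restriction of a valid $\omega$ to the coordinate hyperplane indexed by any $\tube^\circ \in \tubing^\circ$ should itself certify polytopality of the product $\compatibilityFan{\graphG[\tube^\circ]}{\tubing^\circ[\tube^\circ]} \times \compatibilityFan{\graphG^\star\tube^\circ}{{\tubing^\circ}^\star\tube^\circ}$, giving a strong consistency condition to pin down the right weight function.

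The main obstacle will be twofold. First, the linear dependences involve non-forced tubes in a way that depends globally on~$\tubing^\circ$, not merely on the local data near the flip, so there may be no uniform sufficient inequality that can be checked flip-by-flip independently of the ambient initial tubing. Second, even granted an explicit description, producing a single weight function that certifies positivity at every flip simultaneously is a delicate feasibility problem; the analogous polytopality of $\b{d}$-vector fans in finite type cluster algebras remains open beyond the cases of Theorem~\ref{theo:polytopalityCompatibilityFanCluster}, which suggests that genuinely new ideas may be required in full generality. A pragmatic intermediate milestone would be to prove polytopality for all trees, where the absence of cycles should restrict which non-forced tubes can appear in the linear dependences and provide a more tractable inductive structure via leaf deletion combined with Proposition~\ref{prop:restriction}.
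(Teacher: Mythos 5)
You have not proved anything here, but neither does the paper: the statement you were given is Conjecture~\ref{conj:polytopality}, which the authors explicitly leave open. Your proposal is a research programme rather than a proof, and as such it coincides almost exactly with the strategy the paper itself describes and fails to complete: apply the criterion of Proposition~\ref{prop:polytopalityFan}, which requires (a) an explicit description of the unique linear dependence among the compatibility vectors involved in each flip and (b) a weight function $\omega$ making every such dependence evaluate strictly positively. The paper states verbatim that its ``main issue is that we do not control the details of the linear dependence between the compatibility vectors of the tubes involved in a flip,'' which is precisely the first obstacle you identify. So your diagnosis is accurate, but no step of the plan is carried out, and the two concrete difficulties you name (global dependence of the linear relation on $\tubing^\circ$, and the simultaneous feasibility of the inequalities over all flips) are exactly where a proof would have to do real work.

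What the paper actually establishes is strictly less than the conjecture: polytopality for all primal and dual compatibility fans of paths and cycles (Theorem~\ref{theo:polytopalityPathsCycles}, proved in Section~\ref{subsec:proofPolytopality} by classifying the finitely many shapes of linear dependences in those cases and choosing $\omega(\tube)=f(|\tube|)$ for a strictly \emph{concave} increasing $f$, with a large constant on initial tubes), an explicit polytope for the star with the leaf initial tubing (Proposition~\ref{prop:polytopeStar}), and an empirical check of all $3$-dimensional cases via linear programming. Two small corrections to your sketch: the weight functions that work in the settled cases are concave, not convex, in $|\tube|$; and the inductive use of Proposition~\ref{prop:restriction} only constrains the restriction of the fan to coordinate hyperplanes, which gives a necessary consistency condition on $\omega$ but no mechanism to lift a weight function from the link back to the whole fan. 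Your suggested intermediate milestone of trees is reasonable but is not addressed in the paper and remains open there as well.
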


To settle this conjecture, the hope would be to apply the characterization of polytopality for complete simplicial fans presented in Proposition~\ref{prop:polytopalityFan}. Besides finding an explicit function~$\omega$ on the compatibility vectors of the tubes of a graph, our main issue is that we do not control the details of the linear dependence between the compatibility vectors of the tubes involved in a flip. See the proof of Theorem~\ref{theo:compatibilityFan} in Section~\ref{subsec:proofCompatibilityFan}.

To support Conjecture~\ref{conj:polytopality}, we have studied the polytopality of the compatibility fans of the specific families of graphs discussed in Section~\ref{sec:specificGraphs}. We show in Section~\ref{subsec:proofPolytopality} that Conjecture~\ref{conj:polytopality} holds for paths and cycles.

\begin{theorem}
\label{theo:polytopalityPathsCycles}
All compatibility and dual compatibility fans of paths and cycles are polytopal.
\end{theorem}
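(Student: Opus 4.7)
The plan is to apply Proposition~\ref{prop:polytopalityFan} directly: for each instance, it suffices to exhibit a positive weight function $\omega$ on the tubes of $\graphG$ such that, for every flip between two adjacent maximal tubings~$\tubing, \tubing'$ of~$\graphG$, the weighted version of the unique (up to rescaling) linear dependence between the compatibility vectors of the tubes of~$\tubing \cup \tubing'$ evaluates to a strictly positive number.

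For the path~$\pathG_{n+1}$, the compatibility fan~$\compatibilityFan{\pathG_{n+1}}{\tubing^\circ}$ coincides with F.~Santos' type~$A$ compatibility fan via the diagonal/tube dictionary of Proposition~\ref{prop:typeA}. Hence \cite[Section~5]{CeballosSantosZiegler} provides, for every initial maximal tubing, an explicit positive weight function on diagonals of the $(n+3)$-gon~$Q_{n+3}$ that certifies polytopality. Since $\compatibilityFan{\pathG_{n+1}}{\tubing^\circ}$ and~$\dualCompatibilityFan{\pathG_{n+1}}{\tubing^\circ}$ coincide by Remark~\ref{rem:primalDualPath}, this simultaneously settles both the primal and the dual case for paths.

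For the cycle~$\cycleG_{n+1}$, the plan is to transport Santos' construction through the centrally symmetric triangulation model of Proposition~\ref{prop:typeBC}. Each tube of~$\cycleG_{n+1}$ corresponds to a centrally symmetric pair of diagonals (or a duplicated long diagonal) in the regular $(2n+2)$-gon~$R_{2n+2}$, and each flip between adjacent maximal tubings on~$\cycleG_{n+1}$ is either a simultaneous flip of a centrally symmetric pair of diagonals of~$R_{2n+2}$, or a single flip involving a duplicated long diagonal. The crucial structural input, flagged in the remark after Proposition~\ref{prop:typeBC}, is that the corresponding linear dependences only involve flipped and forced tubes, with coefficients of the flipped tubes in~$\{1,2\}$ and coefficients of the forced tubes in~$\{0,-1,-2\}$, and that the full family of such dependences is already exhibited by the cyclohedron~$\cycleG_8$ after an edge-contraction argument analogous to the one in~\cite{CeballosSantosZiegler}. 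We then define a centrally symmetric positive weight function on the diagonals of~$R_{2n+2}$ in the spirit of Santos', descend it to the tubes of~$\cycleG_{n+1}$ by setting~$\omega(\tube_\delta) \eqdef \omega(\delta) + \omega(\bar\delta)$, and verify the strict inequality of Proposition~\ref{prop:polytopalityFan} by case analysis on the finite catalog of dependence types. The dual cyclohedron case is treated in parallel, applying the same strategy to the dual compatibility vectors using that the centrally symmetric model is self-dual under the swap between a pair of diagonals and its complementary arcs.

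The main obstacle is the construction and verification of the weight function in the cycle case. The bounded-coefficient structure and the reduction to~$\cycleG_8$ reduce the polytopality check to a finite, hand-verifiable list of numerical inequalities, but the subtleties lie in designing a weight rule that respects the central symmetry of~$R_{2n+2}$, in correctly handling the special flips involving duplicated long diagonals (which behave differently from ordinary symmetric pairs in the linear dependences), and in producing one uniform $\omega$ that works simultaneously for every initial tubing~$\tubing^\circ$ rather than one at a time.
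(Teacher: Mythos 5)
Your overall strategy---invoking Proposition~\ref{prop:polytopalityFan} and producing an explicit weight function $\omega$ for each fan---is exactly the paper's, and your treatment of the path case (deferring to \cite[Section~5]{CeballosSantosZiegler} and using Remark~\ref{rem:primalDualPath} for the dual fan) is sound. But for cycles your write-up is a plan rather than a proof, and the plan stops precisely where the work begins: you never write down the catalog of linear dependences, never define the weight function, and never check a single inequality. You yourself flag this as ``the main obstacle.'' The paper's proof does the two things you omit. First, it enumerates the dependences explicitly: for the primal cycle fan these are the four path-type dependences plus the new one $\compatibilityVector{\tubing^\circ}{\tube} + \compatibilityVector{\tubing^\circ}{\tube'} = 2\,\compatibilityVector{\tubing^\circ}{\tinf_1}$ with $\tinf_1 \subseteq \tube \cap \tube'$; for the dual fan there are further types such as $\dualCompatibilityVector{\tube}{\tubing^\circ} + \dualCompatibilityVector{\tube'}{\tubing^\circ} = 2\,\dualCompatibilityVector{\tsup}{\tubing^\circ} + \dualCompatibilityVector{\tinf}{\tubing^\circ}$ when $|\tsup| = n$. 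Second, it takes $\omega(\tube) = f(|\tube|)$ for a strictly concave increasing positive $f$ (with a large constant on initial tubes), \emph{halving} the weight of tubes of size $n$ in the dual case.

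That halving is not cosmetic, and it is where your proposed descent $\omega(\tube_\delta) \eqdef \omega(\delta) + \omega(\bar\delta)$ is most likely to fail or at least to require an argument you have not given. For the dual dependence above one needs $\omega(\tube) + \omega(\tube') > 2\,\omega(\tsup) + \omega(\tinf)$ with $|\tsup| = n$ and $|\tube|, |\tube'| < n$; any weight that is simply an increasing function of tube size gives $\omega(\tube) + \omega(\tube') < 2\,\omega(\tsup)$ and the inequality fails. So the ``special flips involving duplicated long diagonals'' that you correctly identify as a subtlety actually break the naive weight rule, and the proof needs a specific correction for the size-$n$ tubes (the long diagonals). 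Until you either exhibit a concrete $\omega$ on diagonals of $R_{2n+2}$ and verify the full finite list of inequalities, or reproduce the size-based weight with the $\tfrac{1}{2}$ correction, the cycle case---which is the genuinely new content of the theorem, the path case being already in \cite{CeballosSantosZiegler}---remains unproved. (Incidentally, your worry about finding ``one uniform $\omega$ for every initial tubing'' is a non-issue: Proposition~\ref{prop:polytopalityFan} is applied to one fan at a time, so $\omega$ is allowed to depend on $\tubing^\circ$, as the paper's does through the large constant assigned to initial tubes.)
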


Note that the case of paths is covered by the results of~\cite[Section~5]{CeballosSantosZiegler} presented in Theorem~\ref{theo:polytopalityCompatibilityFanCluster}. For cycles, the result was unknown except for the bipartite initial tubing by the results of~\cite{ChapotonFominZelevinsky} on type~$B$ and~$C$ cluster algebras. Via the correspondences given in Propositions~\ref{prop:typeA} and~\ref{prop:typeBC}, Theorem~\ref{theo:polytopalityPathsCycles} translates to the following relevant property of $\b{d}$-vector fans.

\begin{corollary}
In types~$A$, $B$ and~$C$ cluster algebras, the $\b{d}$-vector fan with respect to any initial cluster (acyclic or not) is polytopal.
\end{corollary}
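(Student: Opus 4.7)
The plan is to derive the corollary as an immediate transport of structure from Theorem~\ref{theo:polytopalityPathsCycles}, using the dictionaries already established in Propositions~\ref{prop:typeA} and~\ref{prop:typeBC}. No further geometry is needed: the entire content of the statement lives in the identification of $\b{d}$-vector fans with (primal or dual) graphical compatibility fans of paths and cycles, and the polytopality of the latter is what has just been proved.

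First I would handle type $A_n$. Fix an arbitrary initial cluster~$X^\circ = \{x_1^\circ, \dots, x_n^\circ\}$. Via the bijection of Section~\ref{subsec:paths} between triangulations of~$Q_{n+3}$ and maximal tubings on~$\pathG_{n+1}$, the cluster~$X^\circ$ corresponds to an initial maximal tubing~$\tubing^\circ = \{\tube_1^\circ, \dots, \tube_n^\circ\}$ on~$\pathG_{n+1}$, and each cluster variable~$x$ corresponds to a tube~$\tube_x$. Proposition~\ref{prop:typeA} asserts that the type~$A$ cluster compatibility degree $(x_i^\circ \| x)$ agrees with the graphical compatibility degree $(\tube_i^\circ \| \tube_x)$. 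Consequently, the $\b{d}$-vector~$\compatibilityVector{X^\circ}{x}$ is, coordinate by coordinate, the compatibility vector~$\compatibilityVector{\tubing^\circ}{\tube_x}$, so the $\b{d}$-vector fan of type~$A_n$ relative to~$X^\circ$ is linearly identified with $\compatibilityFan{\pathG_{n+1}}{\tubing^\circ}$. Theorem~\ref{theo:polytopalityPathsCycles} then yields polytopality.

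Next I would repeat the argument for types~$B_n$ and~$C_n$, this time through the bijection of Section~\ref{subsec:cycles} between centrally symmetric triangulations of~$R_{2n+2}$ and maximal tubings on~$\cycleG_{n+1}$. For type~$C_n$, Proposition~\ref{prop:typeBC} identifies the cluster compatibility degree with the primal graphical compatibility degree $(\tube \| \tube')$, so the $\b{d}$-vector fan is the primal compatibility fan $\compatibilityFan{\cycleG_{n+1}}{\tubing^\circ}$. For type~$B_n$, the same proposition identifies the cluster compatibility degree with the dual graphical one $(\tube' \| \tube)$, so the $\b{d}$-vector fan is the dual compatibility fan $\dualCompatibilityFan{\cycleG_{n+1}}{\tubing^\circ}$. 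In both cases Theorem~\ref{theo:polytopalityPathsCycles} concludes.

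There is no hard step here, because all the real work has been deferred to Theorem~\ref{theo:polytopalityPathsCycles}: the only verification needed is the trivial observation that an arbitrary initial cluster in a type~$A$, $B$, or $C$ cluster algebra corresponds, under the classical combinatorial models, to an arbitrary initial maximal tubing on the corresponding path or cycle. This covers the statement for every initial cluster, acyclic or not, and in particular extends to cyclic initial clusters in types~$B$ and~$C$ which were not previously known to yield polytopal $\b{d}$-vector fans.
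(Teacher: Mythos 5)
Your proposal is correct and follows exactly the paper's route: the corollary is obtained by translating Theorem~\ref{theo:polytopalityPathsCycles} through the dictionaries of Proposition~\ref{prop:typeA} (type~$A$ $\leftrightarrow$ primal compatibility fan of the path) and Proposition~\ref{prop:typeBC} (type~$C$ $\leftrightarrow$ primal, type~$B$ $\leftrightarrow$ dual compatibility fan of the cycle), with arbitrary initial clusters corresponding to arbitrary initial maximal tubings. Your assignment of type~$C$ to the primal and type~$B$ to the dual fan matches the paper's convention, so nothing is missing.
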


We were not able to settle Conjecture~\ref{conj:polytopality} for arbitrary graphs. We believe that this question is worth investigating. As already mentioned, it requires a better understanding of all linear dependences between the compatibility vectors of the tubes involved in a flip.

\medskip
In another direction, we checked empirically that all $3$-dimensional compatibility and dual compatibility fans of Section~\ref{subsec:fewVertices} and \fref{fig:4vertices} are polytopal. Using the characterization given in Proposition~\ref{prop:polytopalityFan}, it boils down to check the feasibility of (many) linear programs.

\medskip
Finally, as a curiosity and to conclude this polytopality section on a recreative note, we provide a polytopal realization of the compatibility fan for the star~$\starG_{n+1}$ with respect to the initial tubing~$\tubing^\circ \eqdef \big\{ \{\ell_1\}, \dots, \{\ell_n\} \big\}$ whose tubes are the~$n$ leaves~of~$\starG_{n+1}$. We first observe that this fan is linearly isomophic to the fan~$\cG(\graphG)$ of Theorem~\ref{theo:gFan}. Therefore, it can be realized by an affine transformation of the graph associahedra constructed in Theorem~\ref{theo:graphAssociahedron}.

Here, we prefer to give a direct construction with integer coordinates. We provide both the vertex and the facet descriptions of this realization. On the one hand, for each maximal tubing~$\tubing$ on~$\starG_{n+1}$ we define a point~$\b{x}(\tubing) \in \R^n$ whose~$i$th coordinate is the cardinality of the inclusion minimal tube of~$\tubing \cup \{\ground\}$ containing the leaf~$\ell_i$ minus~$1$. The set~$\set{\b{x}(\tubing)}{\tubing \text{ maximal tubing on } \starG_{n+1}}$ is the orbit under permutation coordinates of the set~$\set{\sum_{i > k} i \, \b{e}_i}{0 \le k \le n}$. On the other hand, for a tube~$\tube$ of~$\starG_{n+1}$ containing the central vertex~$*$, we observed earlier that the compatibility vector of~$\tube$ with respect to~$\tubing^\circ$ is the characteristic vector of the leaves of~$\starG_{n+1}$ not contained in~$\tube$. Let~${f(k) \eqdef \sum_{j = k}^{n} j = \frac{1}{2}(n+k)(n+1-k)}$ and define a half-space~$\HS(\tube)$ of~$\R^n$ by
\[
\HS(\tube) \eqdef \biggset{\b{x} \in \R^n}{\dotprod{\compatibilityVector{\tubing^\circ}{\tube}}{\b{x}} \le f(|\tube|)} = \biggset{\b{x} \in \R^n}{\sum_{\substack{i \in [n] \\ \ell_i \in \tube}} x_i \le f(|\tube|)}.
\]
Finally, for the tubes of the initial tubing~$\tubing^\circ$, we define
\[
\HS(\{\ell_i\}) \eqdef \set{\b{x} \in \R^n}{\dotprod{\compatibilityVector{\tubing^\circ}{\{\ell_i\}}}{\b{x}} \le 0} = \set{\b{x} \in \R^n}{x_i \ge 0}.
\]

\begin{proposition}
\label{prop:polytopeStar}
The compatibility fan~$\compatibilityFan{\starG_{n+1}}{\tubing^\circ}$ for the initial tubing~$\tubing^\circ \eqdef \big\{ \{\ell_1\}, \dots, \{\ell_n\} \big\}$ whose tubes are the $n$ leaves~of~$\starG_{n+1}$ is the normal fan of the $n$-dimensional simple polytope defined equivalently as
\begin{itemize}
\item the convex hull of the points~$\b{x}(\tubing)$ for all maximal tubings~$\tubing$ on~$\starG_{n+1}$, or
\item the intersection of the half-spaces~$\HS(\tube)$ for all tubes~$\tube$ of~$\starG_{n+1}$.
\end{itemize}
\end{proposition}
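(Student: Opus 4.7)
The plan is to apply Theorem~\ref{theo:compatibilityFan}, which already guarantees that~$\compatibilityFan{\starG_{n+1}}{\tubing^\circ}$ is a complete simplicial fan realizing~$\nestedComplex(\starG_{n+1})$. To conclude that this fan is the normal fan of the polytope defined either by the vertices~$\b{x}(\tubing)$ or by the half-spaces~$\HS(\tube)$, it suffices to show that each point~$\b{x}(\tubing)$ lies in the facet-defined polytope and saturates precisely the~$n$ defining inequalities corresponding to the tubes of~$\tubing$, because the corresponding facet normals then form a basis of~$\R^n$ and the compatibility cone of~$\tubing$ is exactly the vertex's normal cone.

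First, I would describe maximal tubings on~$\starG_{n+1}$ explicitly. Any such tubing either equals~$\tubing^\circ$, or decomposes as a chain~$\tube_0 \subsetneq \tube_1 \subsetneq \cdots \subsetneq \tube_s$ of $*$-containing tubes with~$|\tube_s| = n$ and~$|\tube_{j+1}| = |\tube_j| + 1$, together with the singletons~$\{\ell_i\}$ for~$\ell_i \in \tube_0 \ssm \{*\}$. Setting~$A \eqdef \tube_0 \ssm \{*\}$ with~$a \eqdef |A|$, and writing~$\tube_j = \tube_0 \cup \{\ell_{\sigma(1)}, \dots, \ell_{\sigma(j)}\}$ for a bijection~$\sigma$ from~$[n-a]$ to~$\{i \in [n] : \ell_i \notin A\}$, a direct computation of the minimal tube of~$\tubing \cup \{\ground\}$ containing each leaf gives~$x_{\sigma(j)}(\tubing) = a+j$ for~$j \in [n-a]$ (with the last leaf~$\ell_{\sigma(n-a)}$ contributing~$n$ via~$\ground$) and~$x_i(\tubing) = 0$ for~$\ell_i \in A$. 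In particular, $\b{x}(\tubing)$ is a coordinate-permutation of~$(0,\dots,0,a+1,a+2,\dots,n)$ with~$a$ zeros, matching the orbit description with~$k = a$ (and~$k = n$ for~$\tubing^\circ$).

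The key step is to verify that for every tube~$\tube$ of~$\starG_{n+1}$, the inequality defining~$\HS(\tube)$ holds at~$\b{x}(\tubing)$, with equality if and only if~$\tube \in \tubing$. For~$\tube = \{\ell_i\}$ this reduces to~$x_i \ge 0$, saturated precisely when~$\ell_i \in A$, \ie when~$\{\ell_i\} \in \tubing$. For a $*$-containing tube~$\tube$ of size~$m$, a rearrangement argument bounds~$\sum_{\ell_i \notin \tube} x_i(\tubing) \le f(m)$: since the nonzero coordinates of~$\b{x}(\tubing)$ are exactly~$\{a+1,a+2,\dots,n\}$, the complementary sum is maximized when the~$m-1$ leaves of~$\tube \ssm \{*\}$ correspond to the~$m-1$ smallest coordinates, giving the upper bound~$\sum_{j = \max(m,a+1)}^{n} j$, which equals~$f(m) = \sum_{j=m}^n j$ if~$m \ge a+1$ and is strictly less than~$f(m)$ if~$m \le a$. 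Moreover, equality forces~$m \ge a+1$ and the leaves of~$\tube \ssm \{*\}$ to be exactly~$A \cup \{\ell_{\sigma(1)}, \dots, \ell_{\sigma(m-a-1)}\}$, \ie $\tube = \tube_{m-a-1} \in \tubing$.

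Since~$\b{x}(\tubing)$ saturates exactly the~$n$ facet inequalities~$\HS(\tube)$ for~$\tube \in \tubing$, and since these normals form a basis of~$\R^n$ by Corollary~\ref{coro:fullRank}, $\b{x}(\tubing)$ is a vertex of the facet-defined polytope whose normal cone equals~$\R_{\ge 0} \, \compatibilityVector{\tubing^\circ}{\tubing}$. As~$\tubing$ ranges over all maximal tubings and the compatibility fan is complete by Theorem~\ref{theo:compatibilityFan}, these points exhaust the vertices of the polytope and its normal fan is~$\compatibilityFan{\starG_{n+1}}{\tubing^\circ}$, so the two descriptions define the same polytope. The main obstacle is the rearrangement step: once the coordinates of~$\b{x}(\tubing)$ are identified, one must separately handle small tubes~$m \le a$ (where strict inequality holds for every tube of that size, none of which belong to~$\tubing$) and large tubes~$m \ge a+1$ (where equality characterizes exactly~$\tube = \tube_{m-a-1}$).
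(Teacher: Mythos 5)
Your proposal is correct and follows essentially the same route as the paper: identify the coordinates of~$\b{x}(\tubing)$ from the chain structure of a maximal tubing, then check that each~$\b{x}(\tubing)$ satisfies every inequality defining~$\HS(\tube)$ with equality exactly when~$\tube \in \tubing$, and conclude via the completeness of the compatibility fan (Theorem~\ref{theo:compatibilityFan}). Your write-up is somewhat more explicit than the paper's (in particular the rearrangement bound and the case split~$m \le a$ versus~$m \ge a+1$), but the underlying argument is the same.
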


\begin{figure}
  \capstart
  \centerline{\includegraphics[scale=.36]{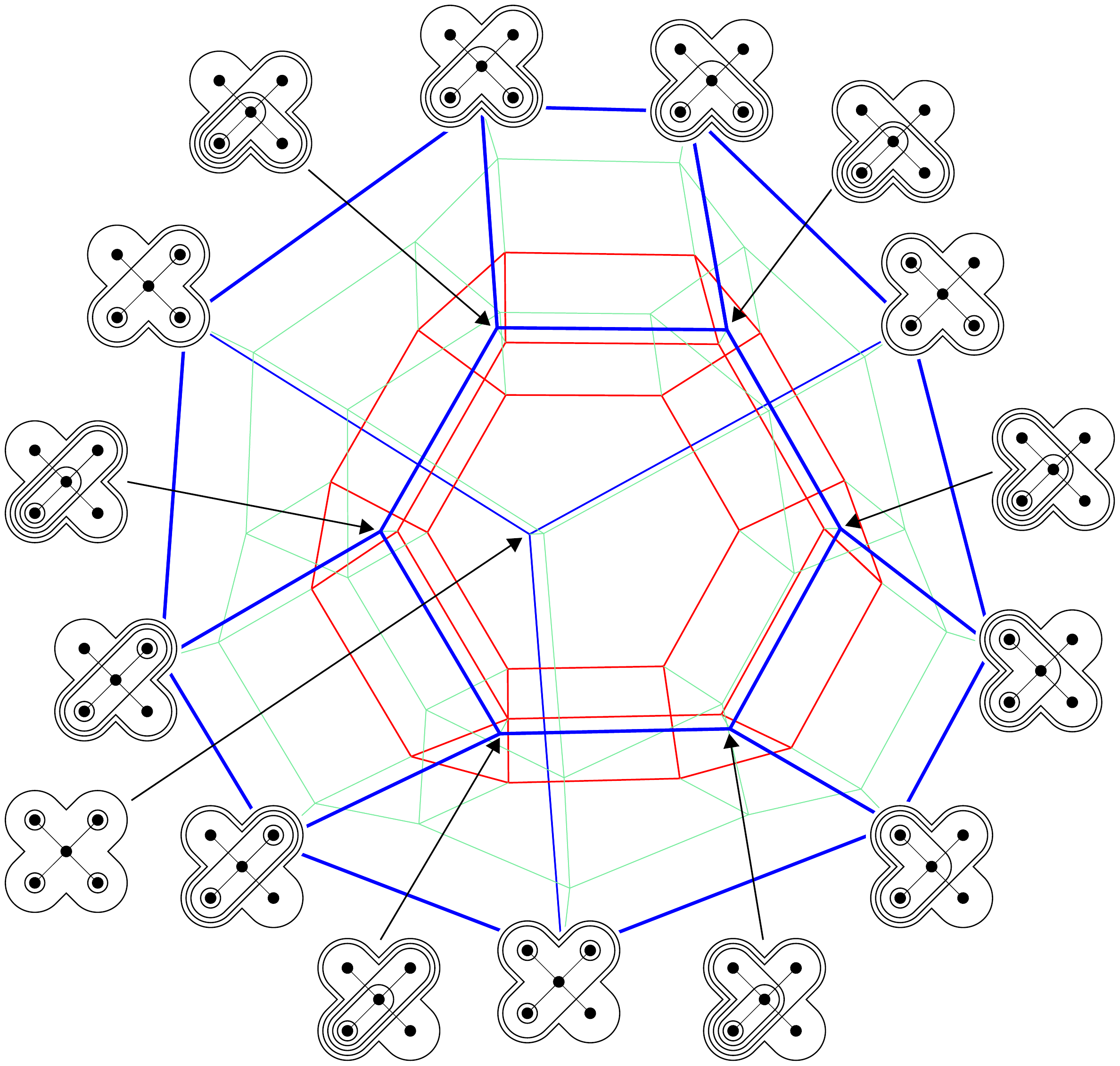}\quad\includegraphics[scale=.36]{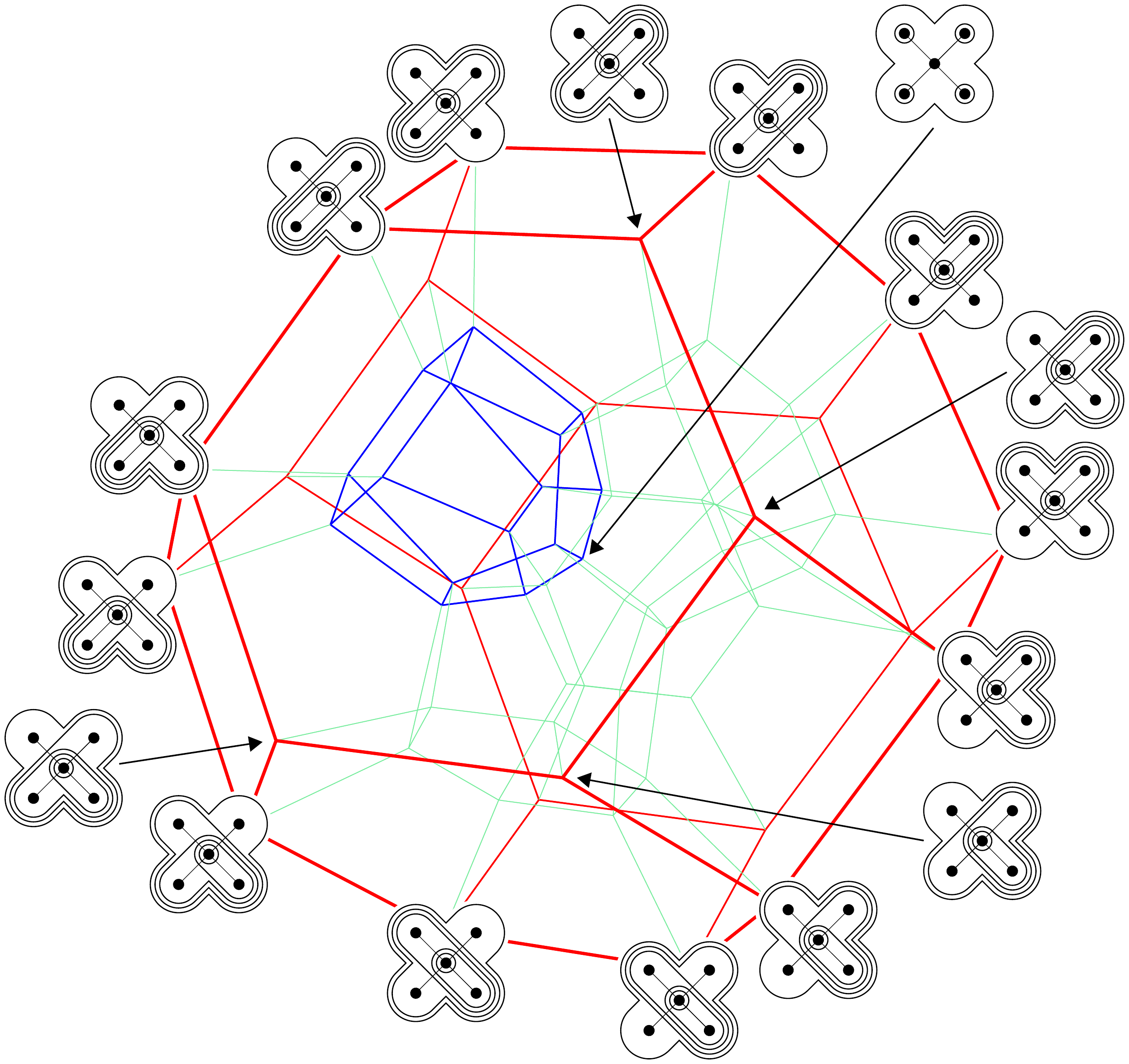}}
  \caption{Two Schlegel diagrams for the $4$-dimensional stellohedron defined in Proposition~\ref{prop:polytopeStar}. The red facet corresponds to all tubings containing the tube~$\{*\}$ while the blue facet corresponds to all tubings containing the tube~$\{\ell\}$, where~$\ell$ is the bottom left leaf of~$\starG_5$.}
  \label{fig:stellohedron4}
\end{figure}

The proof of this statement is given in Section~\ref{subsec:proofPolytopality}. As an illustration, the $3$-dimensional stellohedron defined in Proposition~\ref{prop:polytopeStar} is represented in \fref{fig:stellohedron3}\,(right). \fref{fig:stellohedron4} represents two Schlegel diagrams (see~\cite[Lecture~5]{Ziegler} for definition) for the $4$-dimensional stellohedron defined in Proposition~\ref{prop:polytopeStar}. In both pictures, we have distinguished two particular facets:
\begin{itemize}
\item The red facet corresponds to all tubings containing the tube~$\{*\}$. Since the reconnected complement of~$\{*\}$ in~$\starG_5$ is the complete graph~$\completeG_4$, it has the combinatorics of the permutahedron. In fact, by definition of our polytopal realization, this facet is the classical permutahedron, obtained as the convex hull of the orbit of~$\sum_{i \in [4]} i \, \b{e}_i$ under permutation of the coordinates.
\item The blue facet corresponds to all tubings containing the tube~$\{\ell\}$, where~$\ell$ is the bottom left leaf of~$\starG_5$. This facet contains the initial tubing~$\tubing^\circ$ at the back. Note that there are $4$ isometric facets to this blue facet, corresponding to the four leaves of~$\starG_5$. This is visible in \fref{fig:stellohedron4}\,(right).
\end{itemize}
The blue (resp.~red) facet is the projection facet on the left (resp.~right) picture.

\begin{remark}
To conclude, observe that we could have replaced the function~$f$ in the definition of the half-spaces~$\HS(\tube)$ by any concave function. This follows from Proposition~\ref{prop:polytopalityFan} since the linear dependence between the compatibility vectors of the tubes of~$\tubing \cup \tubing'$ is given for any adjacent maximal tubings~$\tubing, \tubing'$ on~$\starG_{n+1}$ distinct from~$\tubing^\circ$ such that~$\tubing \ssm \{\tube\} = \tubing' \ssm \{\tube'\}$ by
\[
\compatibilityVector{\tubing^\circ}{\tube} + \compatibilityVector{\tubing^\circ}{\tube'} = \compatibilityVector{\tubing^\circ}{\tinf} + \compatibilityVector{\tubing^\circ}{\tsup}
\]
where~$\tsup \eqdef \tube \cap \tube'$ and~$\tinf \eqdef \tube \cap \tube'$ (which are tubes of~$\starG_{n+1}$). Details are left to the reader.
\end{remark}

\subsection{Design nested complex}
\label{subsec:designNestedComplex}

Generalizing graphical nested complexes, S.~Devadoss, T.~Heath and C.~Vipismakul introduced design nested complexes in~\cite[Section~5]{DevadossHeathVipismakul}. To define these complexes, one considers \defn{design tubes} of~$\graphG$, which are of two types: 
\begin{itemize}
\item the \defn{round tubes} are usual tubes of~$\graphG$ (including the connecting components of~$\graphG$),
\item the \defn{square tubes} are just single nodes of~$\graphG$.
\end{itemize}
We denote by~$\squareTube{v}$ the square tube containing~$v$, and still denote round tubes as sets. Two design tubes are \defn{compatible} if 
\begin{itemize}
\item they are both round tubes and they are either nested, or disjoint and non-adjacent,
\item or at least one of them is a square tube and they are not nested.
\end{itemize}
The \defn{design nested complex} of~$\graphG$ is the simplicial complex~$\designNestedComplex(\graphG)$ of sets of pairwise compatible design tubes of~$\graphG$. Examples are given in \fref{fig:designGraphAssociahedra}. By definition, the nested complex~$\nestedComplex(\graphG)$ is (isomorphic to) the subcomplex of the design nested complex~$\designNestedComplex(\graphG)$ involving none of the square tubes, or equivalently containing all improper round tubes.

For a design tube~$\tube$ of~$\graphG$, set
\[
\b{g}\design(\tube) \eqdef
\begin{cases}
\sum_{v \in \tube} \b{e}_v & \text{if } \tube \text{ is a round tube,} \\
-\b{e}_v & \text{if } \tube \text{ is the square tube } \{v\}.
\end{cases}
\]
For a tubing~$\tubing$ on~$\graphG$, define~$\b{g}\design(\tubing) \eqdef \set{\b{g}\design(\tube)}{\tube \in \tubing}$. These vectors again support a complete simplicial fan realization of the design nested complex~$\designNestedComplex(\graphG)$.

\begin{theorem}[\cite{DevadossHeathVipismakul}]
For any graph~$\graphG$, the collection of cones
\[\cG\design(\graphG) \eqdef \set{\R_{\ge 0} \, \b{g}\design(\tubing)}{\tubing \text{ tubing on } \graphG}\]
is a complete simplicial fan of~$\R^\ground$, called \defn{design nested fan} of~$\graphG$, which realizes~$\designNestedComplex(\graphG)$.
\end{theorem}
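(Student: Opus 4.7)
My plan is to apply the fan characterization of Proposition~\ref{prop:characterizationFan} to the simplicial sphere $\designNestedComplex(\graphG)$ (whose sphericity is established in~\cite{DevadossHeathVipismakul}) equipped with the vectors $\b{g}\design(\tube)$. For condition~(1), I take the all-square maximal tubing $\tubing_0 \eqdef \set{\squareTube{v}}{v \in \ground}$, which is the unique all-square maximal tubing since distinct square tubes are always compatible. Its vectors form the negative canonical basis of $\R^\ground$, whose open cone is the open negative orthant. Any other maximal tubing $\tubing'$ must contain at least one round tube $\tube$; for every $v \in \tube$, the square tube $\squareTube{v}$ is nested in $\tube$ hence incompatible with it, so $\squareTube{v} \notin \tubing'$, and thus the $v$-th coordinate of every vector in $\R_{>0}\b{g}\design(\tubing')$ is a positive combination of $+1$ contributions from round tubes of $\tubing'$ containing $v$, hence strictly positive. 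So $\R_{>0}\b{g}\design(\tubing')$ does not meet the open negative orthant.

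For condition~(2), I first rule out square-square flips: if $\tubing \ssm \{\squareTube{v}\} = \tubing' \ssm \{\squareTube{w}\}$ with $v \ne w$, then $(\tubing \cap \tubing') \cup \{\squareTube{v}, \squareTube{w}\}$ would be a strictly larger tubing, contradicting maximality. Thus every flip is round-round or round-square. For a round-round flip between round tubes $\tube, \tube'$, the identity
\[
\b{g}\design(\tube) + \b{g}\design(\tube') = \b{g}\design(\tube \cup \tube') + \sum_{u \in \tube \cap \tube'} \b{e}_u
\]
yields the linear dependence: the tube $\tube \cup \tube' = \tsup$ belongs to $\tubing \cap \tubing'$ (either as an already present proper tube or as a connected component of $\graphG$, whose presence is forced by maximality whenever no square tube of $\tubing$ blocks it, which is automatic under the flip assumption), and the connected components of $\tube \cap \tube'$ all belong to $\tubing \cap \tubing'$ by the standard maximality argument extended to the design setting. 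For a round-square flip $\tube \leftrightarrow \squareTube{v}$, incompatibility forces $v \in \tube$, and
\[
\b{g}\design(\tube) + \b{g}\design(\squareTube{v}) = \sum_{u \in \tube \ssm \{v\}} \b{e}_u = \sum_i \b{g}\design(C_i),
\]
where the $C_i$ are the connected components of $\graphG[\tube \ssm \{v\}]$. Each $C_i$ is compatible with every member of $\tubing \cap \tubing'$ (a short case analysis on nested versus disjoint configurations), so $C_i \in \tubing \cap \tubing'$ by maximality. In both cases the coefficients of the two flipped tubes are $+1$, positive and of the same sign.

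The main obstacle is the maximality argument ensuring that the decomposing tubes on the right-hand sides actually lie in $\tubing \cap \tubing'$. This is a careful extension of Proposition~\ref{prop:flip} to the design setting, where one must check that square tubes in $\tubing \cap \tubing'$ do not obstruct the inclusion of the required round pieces; the argument parallels the proof for the classical nested fan in Theorem~\ref{theo:gFan} and introduces no new conceptual ingredients, only additional bookkeeping between round and square tubes.
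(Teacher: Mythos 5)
Your argument is correct, but it is worth being precise about what it is being compared to: the paper does not prove this statement at all, it cites it from~\cite{DevadossHeathVipismakul}, where the design nested fan is obtained as the normal fan of a polytope built by iterated face truncations of a cube. The paper's own machinery does re-derive the statement, but by a much heavier route: Theorem~\ref{theo:compatibilityFanDesign} establishes the design compatibility fan for an \emph{arbitrary} initial maximal design tubing (with the lengthy case analysis of Section~\ref{subsec:proofDesignCompatibilityFan}), and Remark~\ref{rem:connectionConstructions} then observes that the special case of the all-square initial tubing~$\tubing\design$ is exactly~$\cG\design(\graphG)$. What you have written is, in effect, that specialization carried out directly: with~$\tubing\design$ as the base facet, every ray is a signed characteristic vector, condition~(1) of Proposition~\ref{prop:characterizationFan} reduces to the negative-orthant observation you make, and the linear dependences in condition~(2) collapse to inclusion--exclusion identities supported on forced tubes with coefficients~$+1$ on the two flipped tubes (this is precisely what the paper means when it says the dependences for~$\tubing\design$ are "simply described"). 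Your verification that the decomposing tubes lie in~$\tubing \cap \tubing'$ is sound: any square tube of~$\tubing \cap \tubing'$ has its vertex outside~$\tube \cup \tube'$ (round--round case) or outside~$\tube$ (round--square case), so square tubes never obstruct the forced round pieces, and the round-tube compatibility checks are the classical ones. So your approach buys a short, self-contained proof of the fan statement that avoids both the polytopal construction of~\cite{DevadossHeathVipismakul} and the general-initial-tubing analysis of Section~\ref{subsec:proofDesignCompatibilityFan}. The one point to be careful about is your appeal to~\cite{DevadossHeathVipismakul} for the sphericity of~$\designNestedComplex(\graphG)$: there, sphericity comes from the very polytope whose normal fan is~$\cG\design(\graphG)$, so to keep your argument independent you should either invoke only the combinatorial input that Proposition~\ref{prop:characterizationFan} actually needs (that the complex is pure, that every ridge lies in exactly two facets, and that the dual flip graph is connected — all provable directly from the flip description of design tubings) or cite a combinatorial proof of sphericity.
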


By definition, the $\b{g}\design$ vector of a round tube~$\tube$ is just the characteristic vector of~$\tube$. Therefore, the non-negative part of the design nested fan~$\cG\design(\graphG)$ projects to the nested fan~$\cG(\graphG)$ defined in Section~\ref{subsec:graphAssociahedra}. Using ideas similar to the construction of the graph associahedra~$\Asso(\graphG)$ realizing~$\cG(\graphG)$, S.~Devadoss, T.~Heath and C.~Vipismakul prove that~$\cG\design(\graphG)$ is as well polytopal.

\begin{theorem}[\cite{DevadossHeathVipismakul}]
The design nested fan is the normal fan of a polytope obtained from the cube by iterated face truncations.
\end{theorem}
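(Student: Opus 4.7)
The plan is to mimic the Carr--Devadoss construction of graph associahedra from the simplex~\cite{CarrDevadoss}, replacing the simplex by a cube. Identify $\R^\ground$ with the ambient space of the design nested fan, and start from the cube $Q \eqdef [-1,1]^\ground$. Its $2|\ground|$ facets have outer normals~$\pm \b{e}_v$ for~$v \in \ground$, which match exactly the rays of~$\cG\design(\graphG)$ corresponding to singleton round tubes ($\b{g}\design(\{v\}) = \b{e}_v$) and square tubes ($\b{g}\design(\squareTube{v}) = -\b{e}_v$). A direct verification shows that the normal fan of~$Q$ coincides with the subfan of~$\cG\design(\graphG)$ supported on these rays: two such design tubes are always compatible (two square tubes are never nested; a square tube~$\squareTube{v}$ and a singleton~$\{w\}$ are nested only if~$v = w$, in which case their outer normals~$\pm \b{e}_v$ are opposite and their facets of~$Q$ are parallel, hence share no face).

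Next, enumerate the remaining rays of~$\cG\design(\graphG)$ — the round tubes~$\tube$ of~$\graphG$ with~$|\tube| \ge 2$ — and truncate iteratively the corresponding faces of~$Q$ in any order compatible with inclusion (\ie truncate~$\tube$ before~$\tube'$ whenever~$\tube \subsetneq \tube'$). At each step, the face of the current polytope to be truncated is the one defined by saturating the inequalities associated to the singleton round tubes $\{v\}$ for $v \in \tube$ and, by the inductive invariant, to all previously truncated tubes~$\tube' \subsetneq \tube$. A generic face truncation introduces a new facet with outer normal equal to the sum of the outer normals of the truncated face's defining facets, which here yields~$\b{g}\design(\tube) = \sum_{v \in \tube} \b{e}_v$, as desired. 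The induction invariant one must maintain is: at each stage, the facets of the current polytope are indexed by the square tubes, the singleton round tubes, and the round tubes truncated so far; and its normal fan coincides with the subfan of~$\cG\design(\graphG)$ supported on the corresponding rays.

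Finally, one argues that the face lattice of the resulting polytope is isomorphic to the design nested complex. For round--round compatibility this is the content of~\cite{CarrDevadoss}: nested or disjoint non-adjacent tubes yield facets meeting in a codimension-$2$ face, while adjacent disjoint or improperly crossing tubes do not. For square--square compatibility, the two opposite cube facets~$\pm\b{e}_v$ are the only incompatible pair, and face truncations never connect them. For square--round compatibility, the facet~$-\b{e}_v$ of~$Q$ persists through all truncations and meets the facet created by truncating~$\tube$ if and only if $v \notin \tube$, \ie $\squareTube{v}$ and~$\tube$ are not nested. The main obstacle is the careful bookkeeping in this third paragraph: one must check that the truncations of faces corresponding to nested non-singleton tubes genuinely produce adjacent facets in the truncated polytope (not just in the fan), which requires tracking how the combinatorial type of each truncated face survives subsequent truncations, exactly as in~\cite{CarrDevadoss}. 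Once these incidences are verified, polytopality and the normal fan identification follow from the standard fact that iterated face truncations of a simple polytope produce a simple polytope whose normal fan is the stellar subdivision encoding the chosen truncations.
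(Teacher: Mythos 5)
Your overall strategy --- truncating faces of the cube $[-1,1]^\ground$ indexed by the round tubes of size at least~$2$ so that the new facet normals are the vectors $\b{g}\design(\tube) = \sum_{v\in\tube}\b{e}_v$ --- is indeed the construction of~\cite{DevadossHeathVipismakul} that this theorem records, but your truncation order is backwards and the induction breaks at the second step. If $\tube \subsetneq \tube'$ are two non-singleton round tubes, the face of the cube associated to~$\tube'$, namely $\bigcap_{v\in\tube'}\{x_v=1\}$, is \emph{contained} in the face $\bigcap_{v\in\tube}\{x_v=1\}$ associated to~$\tube$, so truncating the latter first removes the former entirely. Concretely, for $\graphG = \pathG_3$ with $\tube = \{1,2\}$ and $\tube' = \{1,2,3\}$, cutting the edge $\{x_1=x_2=1\}$ of the $3$-cube by $x_1+x_2\le 2-\varepsilon$ deletes the vertex $(1,1,1)$, and the set you propose to truncate next --- saturating $x_1=x_2=x_3=1$ together with the new equality $x_1+x_2=2-\varepsilon$ --- is empty (in a simple polytope no nonempty face can lie on facets whose outer normals are linearly dependent, such as $\b{e}_1$, $\b{e}_2$ and $\b{e}_1+\b{e}_2$). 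The correct order is the opposite one, \ie increasing dimension of the truncated faces, equivalently decreasing $|\tube|$: one first truncates the vertex for the largest tube, then the edges, and so on, so that when the turn of~$\tube$ comes the face $\bigcap_{v\in\tube}\{x_v=1\}$ of the current polytope is still a nonempty face of codimension~$|\tube|$ with normal cone $\cone\set{\b{e}_v}{v\in\tube}$.

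A second, related error is your base-case ``direct verification'': the normal fan of the cube is \emph{not} the subfan of $\cG\design(\graphG)$ supported on the rays $\pm\b{e}_v$. For adjacent vertices $v,w$ of~$\graphG$, the round singletons $\{v\}$ and $\{w\}$ are disjoint and adjacent, hence incompatible, yet the cube facets $\{x_v=1\}$ and $\{x_w=1\}$ share a codimension-$2$ face. The cube's normal fan is only a coarsening of the design nested fan (these offending cones are precisely the ones subdivided by the subsequent truncations of the faces for the round tubes containing both $v$ and $w$), so your inductive invariant fails already for $Q$ itself and must be reformulated as ``the current normal fan is the coarsening of~$\cG\design(\graphG)$ obtained by ignoring the rays not yet introduced''. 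With the reversed order and a corrected invariant, the bookkeeping in your last paragraph goes through essentially as in~\cite{CarrDevadoss}, provided you also treat the round--round singleton case, which your appeal to the simplex construction does not cover.
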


\begin{figure}
  \capstart
  \centerline{\includegraphics[scale=1.3]{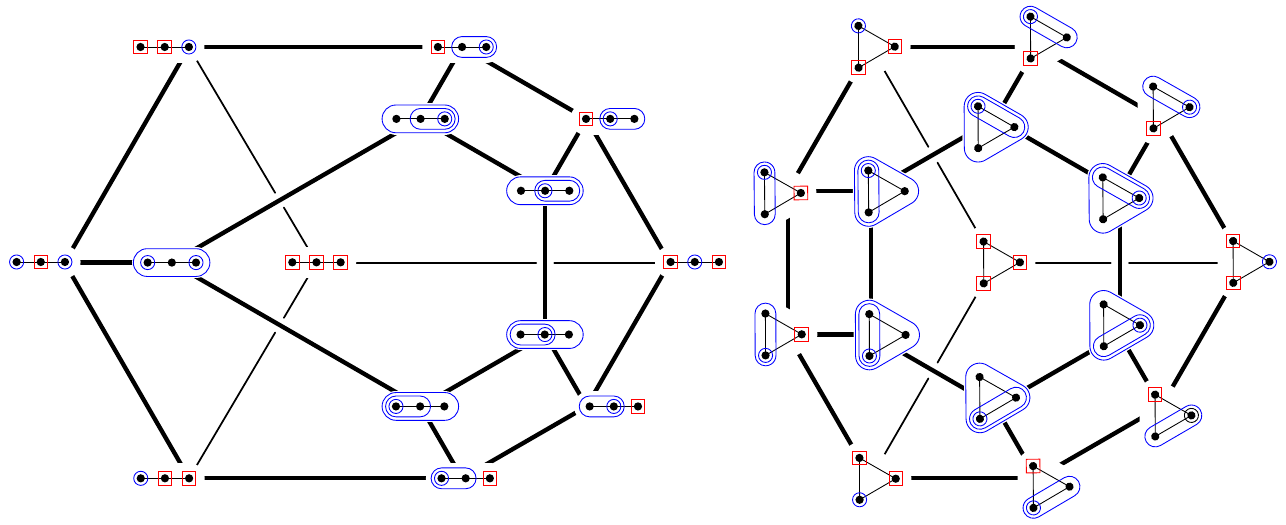}}
  \caption{Two design graph associahedra: the design $\pathG_3$-associahedron (left) and the design $\completeG_3$-associahedron (right). For readability, round tubes are colored blue while square tubes are colored red.}
  \label{fig:designGraphAssociahedra}
\end{figure}

We denote this polytope by~$\Asso\design(\graphG)$ and call it \defn{design graph associahedron} (although it is called \emph{graph cubeahedron} in~\cite{DevadossHeathVipismakul}). Observe that the face of~$\Asso\design(\graphG)$ corresponding to the tubing formed by the connected components of~$\graphG$ coincides with the graph associahedron~$\Asso(\graphG)$. \fref{fig:designGraphAssociahedra} illustrates the design $\pathG_3$-associahedron and the design $\completeG_3$-associahedron. In this figure, the attentive reader should recognize different standard graph associahedra: on the one hand, the front $2$-dimensional faces of~$\Asso\design(\pathG_3)$ and~$\Asso\design(\completeG_3)$ are respectively~$\Asso(\pathG_3)$ and~$\Asso(\completeG_3)$, and on the other hand, the design graph associahedra~$\Asso\design(\pathG_3)$ and~$\Asso\design(\completeG_3)$ themselves turn out to coincide respectively with~$\Asso(\pathG_4)$ and~$\Asso(\starG_4)$ (see Example~\ref{exm:isomorphismDesignNormal}). Starting from dimension~$4$, most design graph associahedra are not standard graph associahedra (see Proposition~\ref{prop:isomorphismDesignNormal}). Figures~\ref{fig:designCycleAssociahedron} and~\ref{fig:designStarAssociahedron} represent two Schlegel diagrams (see~\cite[Lecture~5]{Ziegler} for definition) for the $4$-dimensional design cycle associahedron~$\Asso\design(\cycleG_4)$ and design star associahedron~$\Asso\design(\starG_4)$.

\begin{figure}[p]
  \capstart
  \centerline{\includegraphics[scale=.36]{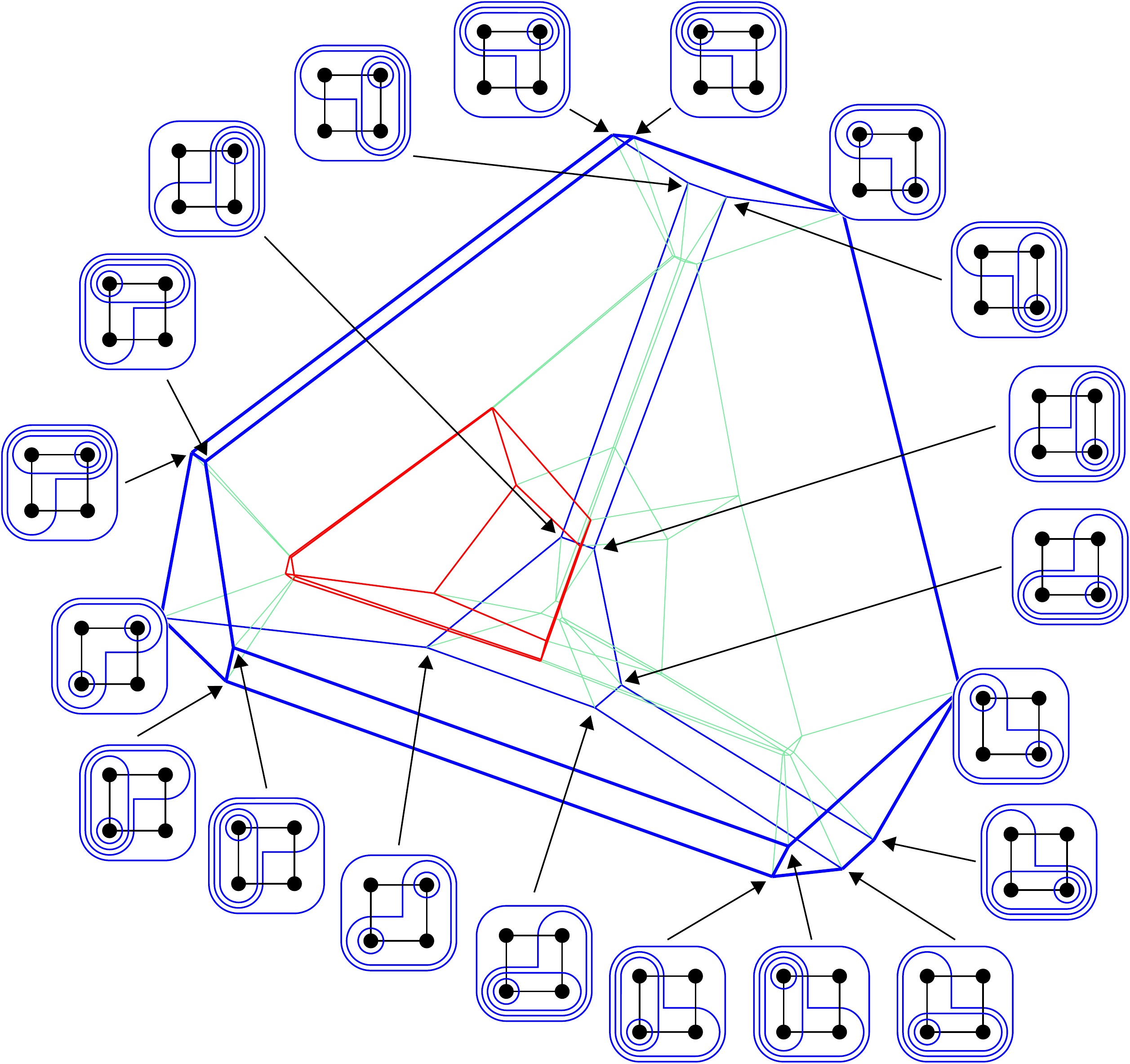}\quad\raisebox{.3cm}{\includegraphics[scale=.36]{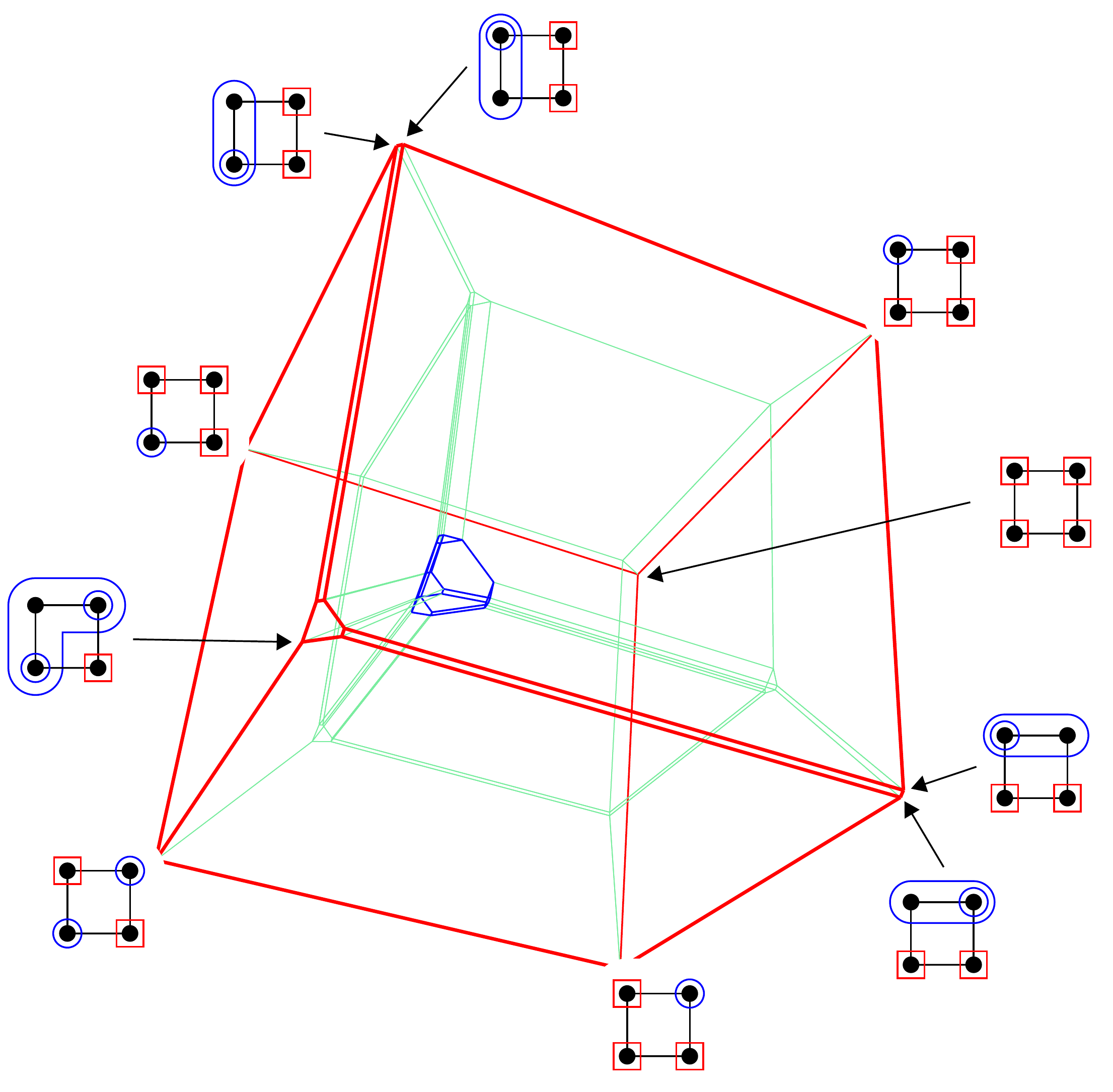}}}
  \caption{Two Schlegel diagrams for the $4$-dimensional design cycle associahedron. The blue facet corresponds to all round tubings while the red facet corresponds to all tubings containing the bottom right square tube of~$\cycleG_4$.}
  \label{fig:designCycleAssociahedron}
  \vspace{.5cm}
\end{figure}

\begin{figure}[p]
  \capstart
  \centerline{\raisebox{.4cm}{\includegraphics[scale=.36]{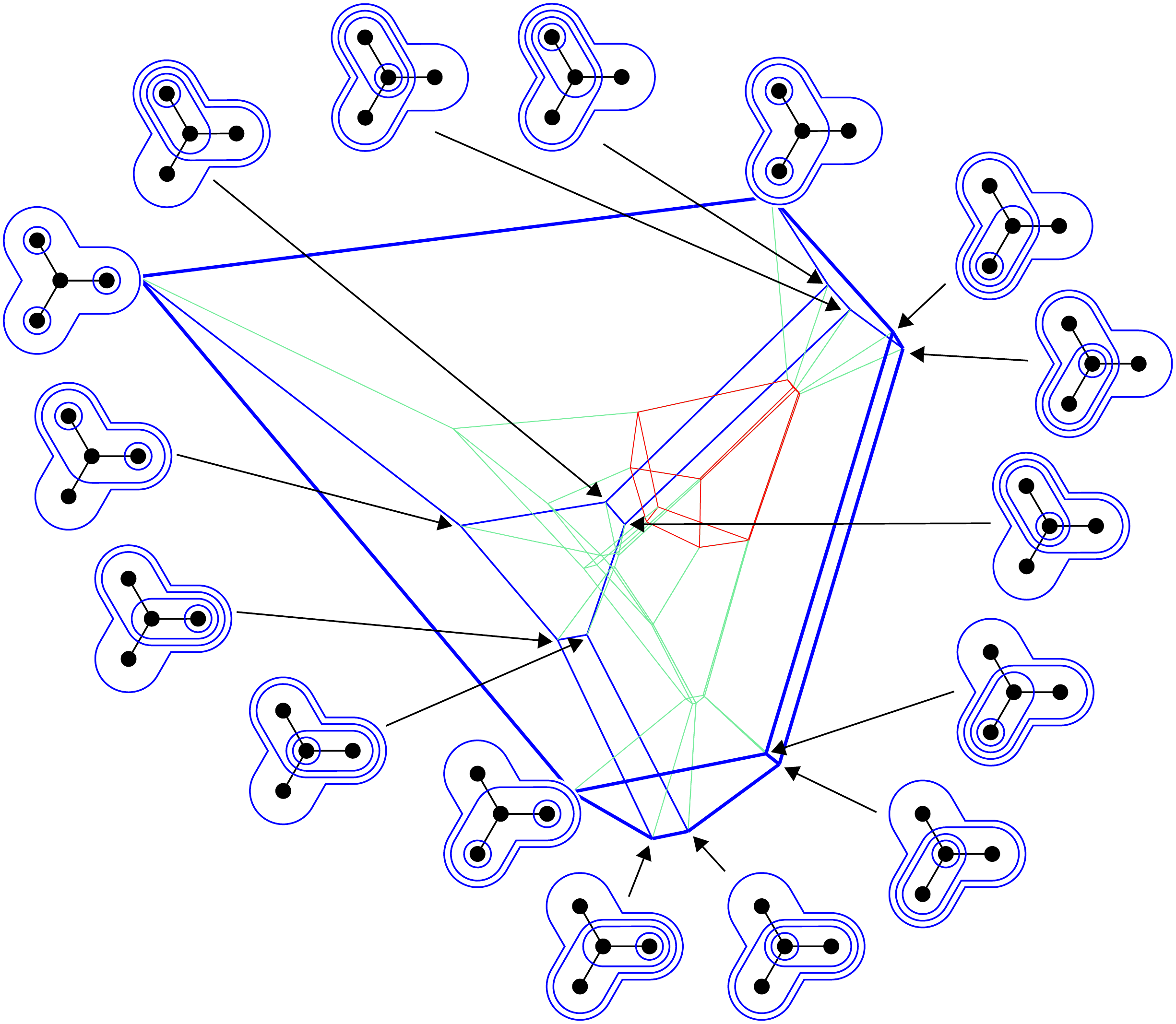}}\quad\includegraphics[scale=.36]{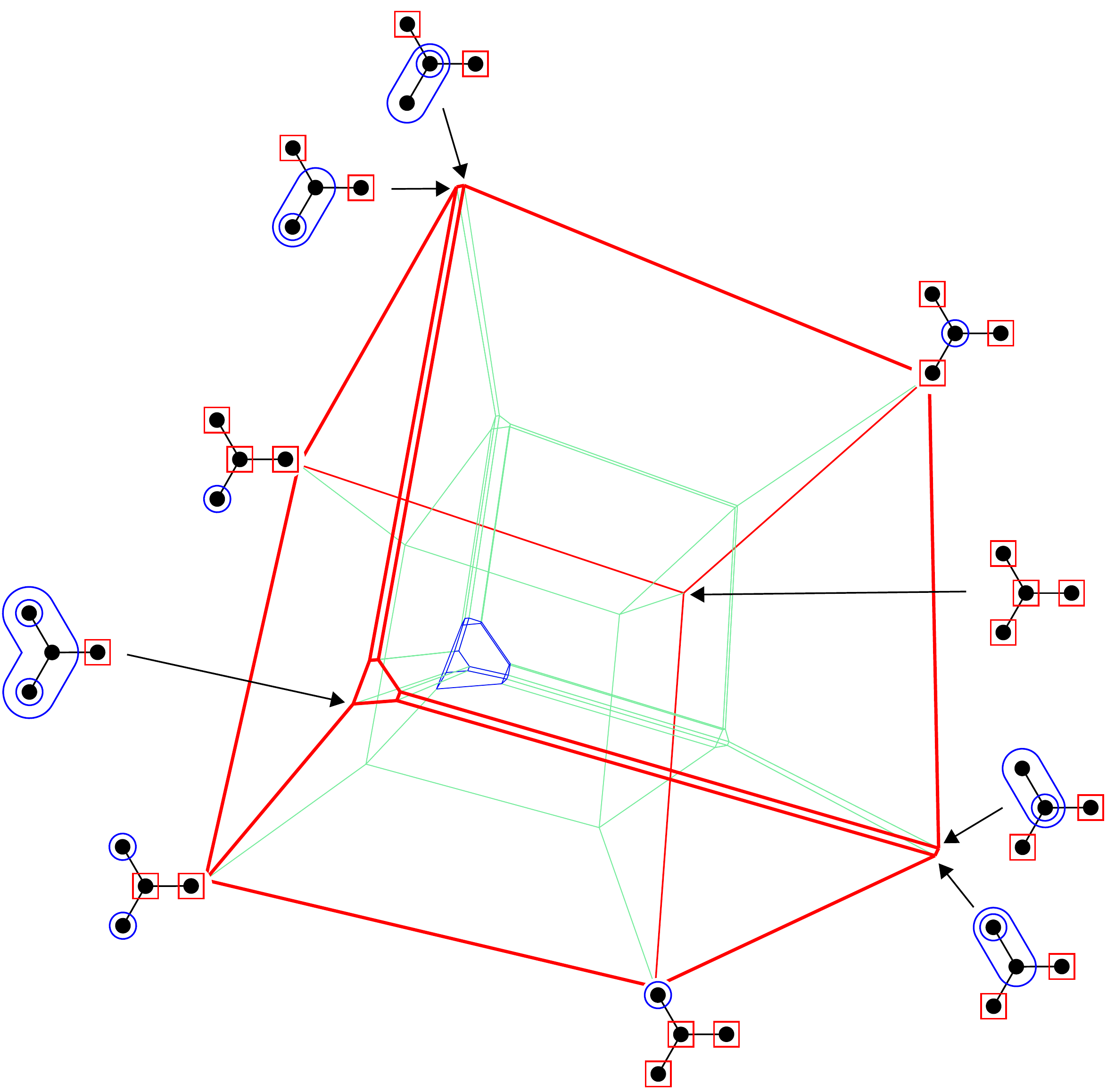}}
  \caption{Two Schlegel diagrams for the $4$-dimensional design star associahedron. The blue facet corresponds to all round tubings while the red facet corresponds to all tubings containing the right square tube of~$\starG_4$.}
  \label{fig:designStarAssociahedron}
\end{figure}

This section aims at showing that our compatibility fan construction extends to the design nested complex. That is, we produce a complete simplicial fan realizing the design nested complex from any initial maximal design tubing. Interestingly, we will see in Remark~\ref{rem:connectionConstructions} that the compatibility fan associated to the specific maximal design tubing consisting of all square tubes coincides with the design nested fan. This provides a relevant connection between our construction and the classical constructions~\cite{CarrDevadoss, FeichtnerSturmfels, Devadoss, Zelevinsky} for graph associahedra.

Observe that a square tube is compatible with all design tubes not containing it and exchangeable with all design tubes containing it. Definition~\ref{def:compatibilityDegree} of compatibility degree thus naturally extends on all pairs of design tubes as follows.

\begin{definition}
For two design tubes~$\tube, \tube'$ of~$\graphG$, the \defn{compatibility degree} of~$\tube$ with~$\tube'$ is
\[
\compatibilityDegree{\tube}{\tube'} =
\begin{cases}
-1 & \text{if } \tube = \tube', \\
1 & \text{if } \tube \text{ and } \tube' \text{ are nested and exactly one of them is square,} \\
|\{\text{neighbors of $\tube$ in } \tube' \ssm \tube\}| & \text{if } \tube \text{ and } \tube' \text{ are round and } \tube \not\subseteq \tube', \\
0 & \text{otherwise.}
\end{cases}
\]
\end{definition}

By construction, this compatibility degree still satisfies the conclusions of Proposition~\ref{prop:compatibilityDegree} and thus measures the incompatibility between design tubes. We then define as usual the \defn{compatibility vector} of a design tube~$\tube$ of~$\graphG$ with respect to an initial maximal design tubing~$\tubing^\circ \eqdef \{\tube_1^\circ, \dots, \tube_n^\circ\}$ as the integer vector $\compatibilityVector{\tubing^\circ}{\tube} \eqdef [\compatibilityDegree{\tube_1^\circ}{\tube}, \dots, \compatibilityDegree{\tube_n^\circ}{\tube}]$ and the \defn{compatibility matrix} of a design tubing~$\tubing \eqdef \{\tube_1, \dots, \tube_m\}$ on~$\graphG$ with respect to~$\tubing^\circ$ as the matrix $\compatibilityVector{\tubing^\circ}{\tubing} \eqdef [\compatibilityDegree{\tube_i^\circ}{\tube_j}]_{i \in [n], j \in [m]}$. We extend Theorem~\ref{theo:compatibilityFan} in the following statement, whose proof is sketched in Section~\ref{subsec:proofDesignCompatibilityFan}.

\begin{theorem}
\label{theo:compatibilityFanDesign}
For any graph~$\graphG$ and any maximal design tubing~$\tubing^\circ$ on~$\graphG$, the collection of cones
\[
\designCompatibilityFan{\graphG}{\tubing^\circ} \eqdef \set{\R_{\ge 0} \, \compatibilityVector{\tubing^\circ}{\tubing}}{\tubing \text{ design tubing on } \graphG}
\]
is a complete simplicial fan which realizes the design nested complex~$\designNestedComplex(\graphG)$. We call it the \defn{design compatibility fan} of~$\graphG$ with respect to~$\tubing^\circ$.
\end{theorem}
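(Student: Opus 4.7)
The plan is to follow the same scheme as the proof of Theorem~\ref{theo:compatibilityFan}, applying Proposition~\ref{prop:characterizationFan} to the collection $\designCompatibilityFan{\graphG}{\tubing^\circ}$ in~$\R^n$ where $n \eqdef |\tubing^\circ|$. Condition~(1) is immediate: taking $\tubing^\circ$ itself as the reference facet, inspection of the four cases of the compatibility degree gives $\compatibilityDegree{\tube_i^\circ}{\tube_j^\circ} = -\delta_{i,j}$ (distinct tubes of a design tubing are compatible, and none of the first three cases applies to a pair of distinct initial tubes), so $\compatibilityVector{\tubing^\circ}{\tubing^\circ} = -\b{I}_n$ is a basis whose open span is the open negative orthant; since $\compatibilityDegree{\tube_i^\circ}{\tube} \ge 0$ for every design tube $\tube \notin \tubing^\circ$ and every $i \in [n]$, all the remaining open cones lie in the closed non-negative orthant and are therefore disjoint from $\R_{>0} \, \compatibilityVector{\tubing^\circ}{\tubing^\circ}$.

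Condition~(2), the flip condition, is the heart of the argument, and I would attack it by case analysis on the types of the two flipped design tubes $\tube,\tube'$, reducing each case either to Theorem~\ref{theo:compatibilityFan} or to a smaller instance of Theorem~\ref{theo:compatibilityFanDesign} via induction on $|\ground|$. For flips involving only round tubes, the key observation is that any square initial tube $\squareTube{v} \in \tubing^\circ$ contributes a single coordinate to every compatibility vector, equal to the indicator $[v \in \tube]$; the linear dependence provided by Theorem~\ref{theo:compatibilityFan} for the round subproblem then extends coordinate-wise to these additional coordinates by an inclusion--exclusion identity involving the vertex $v$ and the flipped and forced tubes, without altering the sign pattern on the flipped pair.

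For flips that involve a square tube $\squareTube{v}$, I would use the link structure of the design nested complex: the link of $\squareTube{v}$ in $\designNestedComplex(\graphG)$ is isomorphic to the design nested complex of the subgraph of $\graphG$ induced by $\ground \ssm \{v\}$ (a design tube of $\graphG$ is compatible with $\squareTube{v}$ if and only if it avoids $v$), and restricting the candidate fan to the coordinate hyperplane orthogonal to $\b{e}_{\squareTube{v}}$ yields, up to the evident identification of coordinates, the design compatibility fan of this smaller graph with respect to $\tubing^\circ \ssm \{\squareTube{v}\}$, which is a complete simplicial fan by the induction hypothesis. The remaining case is a flip exchanging $\squareTube{v}$ itself with a round tube~$\tube'$; an extension of Proposition~\ref{prop:flip} to the design setting describes $\tube'$ and the forced tubes of the exchangeable pair $\{\squareTube{v},\tube'\}$ explicitly, so that the linear dependence between their compatibility vectors can be computed directly and shown to carry matching positive coefficients on the flipped pair.

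The main obstacle, inherited from Theorem~\ref{theo:compatibilityFan}, is that for round--round flips the linear dependence may involve tubes that are neither flipped nor forced (see Example~\ref{exm:exmLinearDependence}), so the sign condition on the flipped pair cannot be obtained by a closed-form computation and must be deduced from the same structural argument used in Section~\ref{subsec:proofCompatibilityFan}; the new work is to verify that this structural argument survives the addition of the extra coordinates contributed by square initial tubes, which reduces to checking that these indicator-like coordinates respect the same inclusion--exclusion pattern as the genuine compatibility degrees on round tubes.
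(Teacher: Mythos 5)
Your overall architecture is right, and your treatment of round--round flips is essentially the paper's: one reruns the construction of the linear dependences from the proof of Theorem~\ref{theo:compatibilityFan} with a design initial tubing and checks the new coordinates, which for a square initial tube $\squareTube{v}$ are just the indicators of $v\in\tube$. Two points in the square part of your plan do not go through as stated, and they concern exactly the new content of Theorem~\ref{theo:compatibilityFanDesign}. First, restricting the candidate fan ``to the coordinate hyperplane orthogonal to $\b{e}_{\squareTube{v}}$'' presupposes $\squareTube{v}\in\tubing^\circ$, since the coordinates are indexed by the initial tubes; a square tube sitting in $\tubing\cap\tubing'$ but not in $\tubing^\circ$ gives you no hyperplane to restrict to, so this reduction only covers the analogue of the paper's Case~(D), not all flips whose common part contains a square tube.

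Second, and more seriously, for the flip exchanging $\squareTube{v}$ with a round tube $\tube$ you assert that the \emph{primal} linear dependence can be ``computed directly'' from the forced tubes. This is precisely the computation the paper avoids. It instead observes that any two maximal design tubings are connected by square flips, invokes the determinant-sign transfer between primal and dual compatibility matrices from the proof of Theorem~\ref{theo:dualCompatibilityFan}, and thereby reduces the \textbf{Separating Flip Property} for square flips to the \emph{dual} compatibility vectors; for those the dependence is indeed supported on the forced tubes and is written out explicitly, with recursively defined coefficients $\beta_i$ and a three-way case analysis according to whether $\tube$ is contained in an initial round tube and where the root of that tube lies. The primal and dual dependences attached to the same flip are dependences among different vector configurations (the transpose identity relates $\dualCompatibilityMatrix{\tubing}{\tubing^\circ}$ to the compatibility matrix with the roles of $\tubing$ and $\tubing^\circ$ exchanged), so there is no a priori reason the primal dependence of a square flip is supported on the forced tubes, and your closed-form computation is not available as claimed. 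To complete the argument you must either carry out that primal computation in all cases or import the duality device; as written, the square-flip case --- the heart of the theorem --- rests on an unproved assertion.
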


\enlargethispage{-.5cm}
Using the same duality trick as in the proof of Theorem~\ref{theo:dualCompatibilityFan}, the reader can obtain as well dual design compatibility fans.

\bigskip
Concerning isomorphisms, we have similar results as in Section~\ref{subsec:many}. Notice first that the conclusions of Proposition~\ref{prop:nestedComplexIsomorphismDisconnected} still hold for design nested complexes. We can thus restrict our discussion to design nested complexes of connected graphs. We first compare design and standard nested complexes, starting with the following examples.

\begin{example}
\label{exm:isomorphismDesignNormal}
The reader can check that:
\begin{enumerate}[(i)]
\item The design nested complex~$\designNestedComplex(\completeG_n)$ is isomorphic to the standard nested complex~$\nestedComplex(\starG_{n+1})$. A natural isomorphism sends a square tube~$\squareTube{v}$ of~$\completeG_n$ to the tube~$\{v\}$ of~$\starG_{n+1}$, and a round tube~$\tube$ of~$\completeG_n$ to the tube~$\{*\} \cup ([n] \ssm \tube)$ of~$\starG_{n+1}$ (where~$*$ is the central vertex). See \fref{fig:isomorphismDesignCompleteStar}.

\begin{figure}
  \capstart
  \centerline{\includegraphics[width=\textwidth]{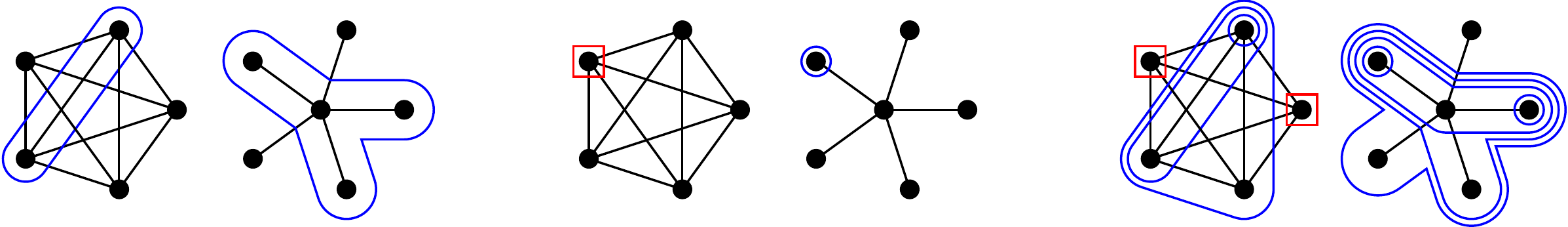}}
  \caption{An isomorphism from the design nested complex of a complete graph with~$5$ vertices (left) to the nested complex of a star with~$6$ vertices~(right).}
  \label{fig:isomorphismDesignCompleteStar}
\end{figure}

\item The design nested complex~$\designNestedComplex(\pathG_n)$ is isomorphic to the standard nested complex~$\nestedComplex(\pathG_{n+1})$. A natural isomorphism sends a square tube~$\squareTube{v}$ of~$\pathG_n$ to the tube~$\{v+1, \dots, n+1\}$ of~$\pathG_{n+1}$, and a round tube~$\tube$ of~$\pathG_n$ to the tube~$\tube$ of~$\pathG_{n+1}$. See \fref{fig:isomorphismDesignPath}. We denote this isomorphism by~$\Pi : \designNestedComplex(\pathG_n) \to \nestedComplex(\pathG_{n+1})$.

\begin{figure}
  \capstart
  \centerline{\includegraphics[width=\textwidth]{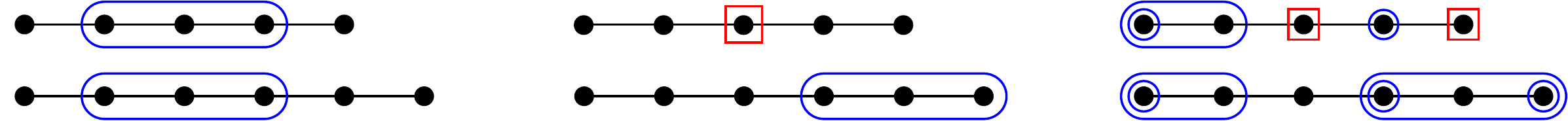}}
  \caption{An isomorphism from the design nested complex of a path with~$5$ vertices (top) to the nested complex of a path with~$6$ vertices (bottom).}
  \label{fig:isomorphismDesignPath}
\end{figure}

\end{enumerate}
\end{example}

We now describe a generalization of both cases of Example~\ref{exm:isomorphismDesignNormal}. We consider the spiders and octopuses defined in Section~\ref{subsec:many}: for~${\ninf \eqdef \{n_1, \dots, n_\ell\}} \in \N^\ell$ with~$n = \sum_{i \in [\ell]} (n_i + 1)$,
\begin{itemize}
\item the spider~$\spiderG_{\ninf}$ has~$\ell$ legs~$[v_1^i, v_{n_i}^i]$ attached to its body formed by a complete graph~on~$\{v_0^i\}_{i \in [\ell]}$,
\item the octopus~$\octopusG_{\ninf}$ has~$\ell$ legs~$[v_0^i, v_{n_i}^i]$ attached to its head formed by a single vertex~$*$.
\end{itemize}

We now define an isomorphism~$\bar\Omega$ from the design nested complex~$\designNestedComplex(\spiderG_{\ninf})$ of the spider~$\spiderG_{\ninf}$ to the nested complex~$\nestedComplex(\octopusG_{\ninf})$ of the octopus~$\octopusG_{\ninf}$. We distinguish three kinds of tubes of~$\designNestedComplex(\spiderG_{\ninf})$:
\begin{description}
\item[Square tubes] for~$ i \in [\ell]$ and~$j \in[0,n_i]$, \;\;\;\, $\bar\Omega(\squareTube{v^i_j}) \eqdef \big[ v^i_0, v^i_{n_i-j} \big]$,
\item[Leg tubes] for~$i \in [\ell]$ and~$1 \le j \le k \le n_i$, \; $\bar\Omega\big( \big[ v^i_j, v^i_k \big] \big) \eqdef \big[ v^i_{n_i+1-k}, v^i_{n_i+1-j} \big]$,
\item[Body tubes] for~$-1 \le k_i \le n_i$ ($i \in [\ell]$), \quad\; $\bar\Omega\big( \bigcup\limits_{i \in [\ell]} \big[ v^i_0, v^i_{k_i} \big] \big) \eqdef \{\ast\} \cup \bigcup\limits_{i \in [\ell]} \big[ v^i_0, v^i_{n_i-1-k_i} \big]$.
\end{description}
\fref{fig:isomorphismDesignSpiderOctopus} illustrates the map~$\bar\Omega$ on different tubes of the spider~$\spiderG_{\{0,3,2,3,0,3,2,3\}}$.
Observe that~$\bar\Omega$ indeed generalizes both isomorphisms of Example~\ref{exm:isomorphismDesignNormal}:
\begin{enumerate}[(i)]
\item We have~$\completeG_n = \spiderG_{\{0\}^n}$ while~$\starG_{n+1} = \octopusG_{\{0\}^n}$, and the isomorphism~$\bar\Omega : \designNestedComplex(\spiderG_{\{0\}^n}) \to \nestedComplex(\octopusG_{\{0\}^n})$ coincides with the isomorphism of Example~\ref{exm:isomorphismDesignNormal}\,(i). 
\item We have~$\pathG_n = \spiderG_{\{n\}}$ while~$\pathG_{n+1} = \octopusG_{\{n\}}$, and the isomorphism~$\bar\Omega : \designNestedComplex(\spiderG_{\{n\}}) \to \nestedComplex(\octopusG_{\{n\}})$ coincides with the isomorphism~$\Pi$ of Example~\ref{exm:isomorphismDesignNormal}\,(ii) up to the automorphisms~$\rot$ and~$\rev$ of the nested complex~$\nestedComplex(\octopusG_{\{n\}})$ described in Example~\ref{exm:nestedComplexIsomorphisms}\,(ii). More precisely if we consider the leftmost vertex of~$\pathG_n$ as the body of~$\spiderG_{\{n\}}$ and the leftmost vertex of~$\pathG_{n+1}$ as the head of~$\octopusG_{\{n\}}$, then one can check that~${\rot} \circ {\bar\Omega} = {\rev} \circ {\Pi}$.
\end{enumerate}
\begin{figure}
  \capstart
  \centerline{\includegraphics[width=\textwidth]{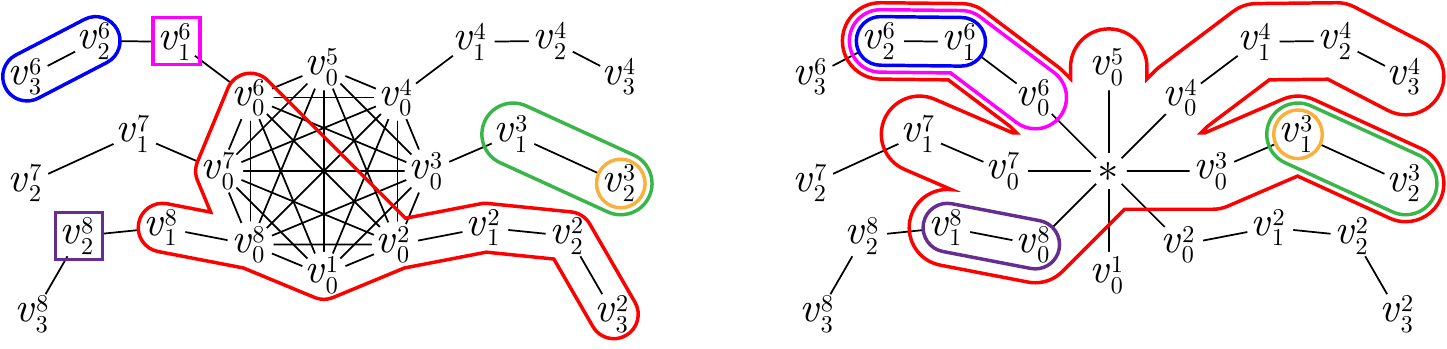}}
  \caption{An isomorphism from the design nested complex of the spider $\spiderG_{\{0,3,2,3,0,3,2,3\}}$ (left) to the nested complex of the octopus~$\octopusG_{\{0,3,2,3,0,3,2,3\}}$~(right).}
  \label{fig:isomorphismDesignSpiderOctopus}
\end{figure}
The following statement is left to the reader. The proof is similar to that of Proposition~\ref{prop:exmAutomorphism}.

\begin{proposition}
The map~$\bar\Omega$ is an isomorphism from the design nested complex~$\nestedComplex(\spiderG_{\ninf})$ of the spider~$\spiderG_{\ninf}$ to the nested complex~$\nestedComplex(\octopusG_{\ninf})$ of the octopus~$\octopusG_{\ninf}$ which dualizes the compatibility degree: $\compatibilityDegree{\bar\Omega(\tube)}{\bar\Omega(\tube')} = \compatibilityDegree{\tube'}{\tube}$.
\end{proposition}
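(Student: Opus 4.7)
The plan is to follow the strategy of the proof of Proposition~\ref{prop:exmAutomorphism}, establishing the statement in four steps (well-definedness, bijectivity, preservation of compatibility, and dualization of the compatibility degree) by case analysis on the three types of design tubes of~$\spiderG_{\ninf}$ (square, leg, body) matched to the three types of proper tubes of~$\octopusG_{\ninf}$ (initial leg segments containing~$v^i_0$, leg sub-paths disjoint from~$v^i_0$, and tubes containing~$*$).

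Well-definedness is direct: the image~$[v^i_0, v^i_{n_i-j}]$ of a square tube~$\squareTube{v^i_j}$ is an initial leg segment, the image~$[v^i_{n_i+1-k}, v^i_{n_i+1-j}]$ of a leg tube~$[v^i_j, v^i_k]$ (with~$j \ge 1$) is a leg sub-path disjoint from~$v^i_0$, and the image~$\{*\} \cup \bigcup_i [v^i_0, v^i_{n_i-1-k_i}]$ of a body tube (with at least one~$k_i \ge 0$) is a connected subgraph of the octopus containing~$*$. Bijectivity then follows because $\bar\Omega$ restricts to an involutive affine bijection on each class, given respectively by~$j \mapsto n_i-j$, by~$(j,k) \mapsto (n_i+1-k, n_i+1-j)$, and by~$(k_i) \mapsto (n_i-1-k_i)$ (with convention~$[v^i_0, v^i_{-1}] = \varnothing$); the improper round tube of~$\spiderG_{\ninf}$ (all~$k_i = n_i$) is sent to the proper octopus tube~$\{*\}$, whereas the whole octopus has no preimage, since it would require the empty spider tube. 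For compatibility preservation, one proceeds by pair of types: two distinct square tubes of~$\spiderG_{\ninf}$ are always compatible and their images are either nested (same leg) or disjoint and non-adjacent (distinct legs, since the only octopus path between the two legs passes through~$*$ which lies in neither image); a square-round pair~$(\squareTube{v^i_j}, \tube)$ is compatible iff~$v^i_j \notin \tube$, which the index reversal translates exactly into the compatibility of the corresponding octopus pair; and the round-round cases (leg--leg, leg--body, body--body) reduce to elementary interval arithmetic together with the identification of body-clique adjacencies in the spider with~$*$-mediated adjacencies in the octopus.

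For dualization of the compatibility degree, the previous steps already force $\compatibilityDegree{\bar\Omega(\tube)}{\bar\Omega(\tube')}$ to equal $\compatibilityDegree{\tube'}{\tube}$ whenever either value lies in~$\{-1, 0\}$. Only two positive regimes remain to match. In the nested square-round case both spider degrees equal~$1$ by the \emph{nested and exactly one square} clause of the compatibility degree definition, and a direct inspection shows that the corresponding round-round octopus pair also has both degrees equal to~$1$. The incompatible round-round case is the heart of the argument: one exhibits an explicit bijection between the neighbors of~$\tube'$ in~$\tube \ssm \tube'$ (counting~$\compatibilityDegree{\tube'}{\tube}$ in the spider) and the neighbors of~$\bar\Omega(\tube)$ in~$\bar\Omega(\tube') \ssm \bar\Omega(\tube)$ (counting~$\compatibilityDegree{\bar\Omega(\tube)}{\bar\Omega(\tube')}$ in the octopus), using that~$j \mapsto n_i+1-j$ swaps the two endpoints of every leg interval and that each body-body adjacency in the spider clique corresponds, after contraction of the body into~$*$, to a pair of~$*$-adjacencies in the octopus. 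The main obstacle is precisely this body-body round-round case: the clique structure of the spider's body and the star structure around~$*$ must be carefully tracked on both sides, which requires splitting~$\tube$ and~$\tube'$ further according to their intersection pattern with the body, in the same spirit as the proof of Proposition~\ref{prop:exmAutomorphism}.
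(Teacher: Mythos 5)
Your proof is correct and follows exactly the route the paper intends for this statement (which it leaves to the reader as being ``similar to that of Proposition~\ref{prop:exmAutomorphism}''): a case analysis over the three types of design tubes of~$\spiderG_{\ninf}$, an explicit verification that~$\bar\Omega$ dualizes the compatibility degree on each pair of types, and an appeal to the design analogue of Proposition~\ref{prop:compatibilityDegree}. The one point that genuinely differs from the involutive case of~$\Omega$ is the bijectivity bookkeeping --- in particular that the improper round tube of~$\spiderG_{\ninf}$ is sent to the proper octopus tube~$\{*\}$ while the full octopus has no preimage --- and you handle it correctly.
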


The following proposition, proved in Section~\ref{subsec:proofDesignIsomorphisms}, states that~$\bar\Omega$ is essentially the only isomorphism between design and standard nested complexes.

\begin{proposition}
\label{prop:isomorphismDesignNormal}
Let~$\bar\graphG$ and~$\graphG$ be two connected graphs and~$\Phi : \designNestedComplex(\bar\graphG) \to \nestedComplex(\graphG)$ be a simplicial complex isomorphism. Then~$\bar\graphG$ is a spider~$\spiderG_{\ninf}$ while~$\graphG$ is an octopus~$\octopusG_{\ninf}$ and~$\Phi$ coincides with~$\bar\Omega$ up to composition with a nested complex automorphism of~$\nestedComplex(\graphG)$ (described in Theorem~\ref{theo:nestedComplexIsomorphisms}).
\end{proposition}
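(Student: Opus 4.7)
The plan proceeds in three steps: dimension matching, analysis of the link of the distinguished round tube~$\bar\graphG$, and identification of the remaining structure via the images of the square tubes. Since~$\designNestedComplex(\bar\graphG)$ is a simplicial $(|V(\bar\graphG)|-1)$-sphere and~$\nestedComplex(\graphG)$ is a simplicial $(|V(\graphG)|-2)$-sphere (both graphs being connected), matching dimensions first yields~$|V(\graphG)| = |V(\bar\graphG)|+1$. The key observation is that the whole graph~$\bar\graphG$ is itself a round tube of~$\designNestedComplex(\bar\graphG)$ (being its unique connected component): it is compatible with every other round tube (round--round nested) and incompatible with every square tube~$\squareTube{v}$ (nested, at least one square). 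Its link in~$\designNestedComplex(\bar\graphG)$ therefore coincides with~$\nestedComplex(\bar\graphG)$. Setting~$\tube_0 \eqdef \Phi(\bar\graphG)$ and using the join formula for links in graphical nested complexes (Proposition~\ref{prop:restriction}), we obtain a simplicial isomorphism
\[
\nestedComplex(\bar\graphG) \cong \nestedComplex(\graphG[\tube_0]) * \nestedComplex(\graphG^\star\tube_0) = \nestedComplex(\graphG[\tube_0] \sqcup \graphG^\star\tube_0).
\]

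Since~$\bar\graphG$ is connected while~$\graphG[\tube_0] \sqcup \graphG^\star\tube_0$ has two components, Proposition~\ref{prop:nestedComplexIsomorphismDisconnected} forces one of these components to have trivial nested complex, which happens exactly when it is reduced to a single vertex. This splits the analysis into two cases: either~$\tube_0 = \{*\}$ for a distinguished vertex~$* \in V(\graphG)$ (as in the case~$\completeG_n \to \starG_{n+1}$), or~$|\tube_0| = |V(\graphG)|-1$ (as in~$\pathG_n \to \pathG_{n+1}$). In either case, the restriction of~$\Phi$ to the link of~$\bar\graphG$ is a nested complex isomorphism between two connected graphs, and Theorem~\ref{theo:nestedComplexIsomorphisms} classifies this restriction as either graph-induced or a twist of the spider automorphism~$\Omega$.

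To identify the global structure, we exploit the image of the maximal design tubing~$\tubing^\square \eqdef \{\squareTube{v} : v \in V(\bar\graphG)\}$: it maps to a maximal tubing~$\{\tube_v \eqdef \Phi(\squareTube{v})\}_{v \in V(\bar\graphG)}$ of~$\graphG$ in which each~$\tube_v$ is incompatible with~$\tube_0$ (since~$\squareTube{v}$ is incompatible with~$\bar\graphG$). Moreover the only admissible flip of~$\squareTube{v}$ in~$\tubing^\square$ replaces it by the round singleton~$\{v\}$, so~$\Phi$ sends this flip to a flip~$\tube_v \leftrightarrow \tube_v'$ with~$\tube_v' \eqdef \Phi(\{v\})$. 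Since~$\{v\}$ and~$\{w\}$ are compatible (as round tubes) in~$\bar\graphG$ if and only if~$\{v,w\}$ is not an edge, the edges of~$\bar\graphG$ can be read off from the compatibility pattern of the tubes~$\tube_v'$. Combining this local flip and compatibility data with the case split above, one checks that the only configurations consistent with all compatibilities are those in which~$\bar\graphG$ is the spider~$\spiderG_\ninf$ and~$\graphG$ is the corresponding octopus~$\octopusG_\ninf$, with~$*$ (in the first case) or the unique vertex of~$V(\graphG) \setminus \tube_0$ (in the second) playing the role of the octopus head. Once this structural identification is made, the composition~$\nu \eqdef \Phi \circ \bar\Omega^{-1}$ is an automorphism of~$\nestedComplex(\graphG)$, yielding the claimed decomposition~$\Phi = \nu \circ \bar\Omega$.

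The main obstacle is this final combinatorial bookkeeping: cross-referencing the case split from the join decomposition with the images of the square tubes and the flip pattern around~$\tubing^\square$, and verifying that configurations outside the spider/octopus family are incompatible with the existence of~$\Phi$. This requires analyzing both cases of the join decomposition in parallel with the constraints on the~$\tube_v$ arising from the full flip and compatibility structure in~$\designNestedComplex(\bar\graphG)$, in the same spirit as the proof of Theorem~\ref{theo:nestedComplexIsomorphisms}.
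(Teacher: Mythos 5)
Your opening reduction is sound and matches the paper's: the round tube~$\bar\ground$ has link~$\nestedComplex(\bar\graphG)$ in~$\designNestedComplex(\bar\graphG)$, the join decomposition of the link of~$\tube_0 = \Phi(\bar\ground)$ together with connectedness of~$\bar\graphG$ forces~$\tube_0$ to be a singleton or the complement of one, and the dimension count~$|V(\graphG)| = |V(\bar\graphG)|+1$ is correct. The observations about the all-squares tubing~$\tubing^\square$ (its image is a maximal tubing of tubes incompatible with~$\tube_0$, and the unique flip of~$\squareTube{v}$ in~$\tubing^\square$ produces the round singleton~$\{v\}$) are also correct.

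The problem is that everything after that is asserted rather than proved. The entire substance of the proposition --- that~$\bar\graphG$ must be a spider, that~$\graphG$ must be an octopus, and that~$\Phi$ is~$\bar\Omega$ up to a trivial isomorphism --- is compressed into ``one checks that the only configurations consistent with all compatibilities are those in which\ldots'', and you yourself flag this bookkeeping as ``the main obstacle.'' The constraints you have extracted do not obviously suffice: for instance, in the case~$\tube_0 = \{*\}$, the requirement that the~$|V(\graphG)|-1$ tubes~$\tube_v$ form a maximal tubing all incompatible with~$\{*\}$ forces every tube of that tubing to avoid and be adjacent to~$*$, but this alone does not exclude, say, a triangle attached to~$*$ by a single vertex; ruling such configurations out needs the finer flip and edge-reading data, and that analysis is exactly what is missing. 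Note also that invoking Theorem~\ref{theo:nestedComplexIsomorphisms} on the restriction of~$\Phi$ to the link of~$\bar\ground$ does not by itself show~$\bar\graphG$ is a spider, since the restriction may well be a trivial (graph-induced) isomorphism. The paper closes this gap by a different mechanism: induction on~$|\bar\ground|$, choosing a non-disconnecting vertex~$\bar w$, showing~$\Phi(\squareTube{\bar w})$ is a singleton adjacent to~$\Phi(\bar\ground)$, passing to~$\designNestedComplex(\bar\graphG[\bar\ground\ssm\{\bar w\}]) \to \nestedComplex(\graphG^\star\{w\})$, and applying the induction hypothesis --- with Lemma~\ref{lem:groundImage} disposing of the degenerate cases where~$\Phi(\bar\ground)$ is a co-singleton or a singleton of degree one (both forcing paths). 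Either your direct route must be carried out in full, or you should adopt such an inductive reduction; as written the proof is incomplete at its central step.
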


We now classify combinatorial isomorphisms of design nested complexes. As for nested complexes, a graph isomorphism~$\phi : \graphG \to \graphG'$ induces a \defn{trivial} design nested complex isomorphism ${\Phi : \designNestedComplex(\graphG) \to \designNestedComplex(\graphG')}$, defined by~$\Psi(\tube) \eqdef \set{\psi(v)}{v \in \tube}$ and~$\Psi(\squareTube{v}) \eqdef \squareTube{\psi(v)}$, which preserves compatibility degrees. We again focus on non-trivial design nested complex isomorphisms. We first underline two examples.

\begin{example}
\label{exm:isomorphismDesignNestedComplexes}
The reader can check that:
\begin{enumerate}[(i)]
\item For the star~$\starG_n$ with central vertex~$*$, the map which preserves the square tube~$\squareTube{*}$, exchanges the square tube~$\squareTube{v}$ with the round tube~$\{v\}$, and exchanges any other round tube~$\tube$ with the round tube~$([n] \ssm \tube) \cup \{*\}$, is a non-trivial design nested complex isomorphism of~$\designNestedComplex(\starG_n)$.
\item For~$p \in [n+3]$, the conjugation~${\Pi \, \circ \! \rot^p \! \circ \, \Pi^{-1}}$ of the automorphism ${\rot^p \; : \nestedComplex(\pathG_{n+1}) \to \nestedComplex(\pathG_{n+1})}$ of Example~\ref{exm:nestedComplexIsomorphisms}\,(ii) by the isomorphism~$\Pi : \designNestedComplex(\pathG_n) \to \nestedComplex(\pathG_{n+1})$ of Example~\ref{exm:isomorphismDesignNormal}\,(ii) is a non-trivial design nested complex isomorphism of~$\designNestedComplex(\pathG_n)$.
\end{enumerate}
Note that these two isomorphisms send some square tubes to round tubes and thus are non-trivial.
\end{example}

We now describe a generalization of both cases of Example~\ref{exm:isomorphismDesignNestedComplexes}. Namely, we define an automorphism~$\Omega\design$ of the design nested complex~$\designNestedComplex(\octopusG_{\ninf})$ of the octopus~$\octopusG_{\ninf}$ for any~$\ninf \eqdef \{n1, \dots, n_\ell\} \in \N^\ell$ with~$n = \sum_{i \in [\ell]} (n_i + 1)$ as follows:
\begin{description}
\item[Square tubes] for~$i \in [\ell]$ and~$j \in [0, n_i]$, \qquad $\Omega\design(\squareTube{v^i_j}) \eqdef \big[ v^i_0, v^i_{n_i-j} \big]$ \quad and \quad $\Omega\design(\squareTube{\ast}) \eqdef \squareTube{\ast}$,
\item[Leg tubes] 
\begin{tabular}[t]{@{}ll}
for~$i \in [\ell]$ and~$0 \le k \le n_i$, & \; $\Omega\design \big( \big[ v^i_0, v^i_k \big] \big) \eqdef \squareTube{v^i_{n_i-k}}$, \\
for~$i \in [\ell]$ and~$1 \le j \le k \le n_i$, & \; $\Omega\design \big( \big[ v^i_j, v^i_k \big] \big) \eqdef \big[ v^i_{n_i+1-k}, v^i_{n_i+1-j} \big]$,
\end{tabular}
\item[Head tubes] for~$-1 \le k_i \le n_i$ ($i \in [\ell]$), \quad $\Omega\design\big( \{\ast\} \cup \bigcup\limits_{i \in [\ell]} \big[ v^i_0, v^i_{k_i} \big] \big) \eqdef \{\ast\} \cup \bigcup\limits_{i \in [\ell]} \big[ v^i_0, v^i_{n_i-1-k_i} \big]$.
\end{description}
\fref{fig:isomorphismDesignOctopus} illustrates the map~$\Omega\design$ on different tubes of the octopus~$\octopusG_{\{0,3,2,3,0,3,2,3\}}$.
\begin{figure}[t]
  \capstart
  \centerline{\includegraphics[scale=.9]{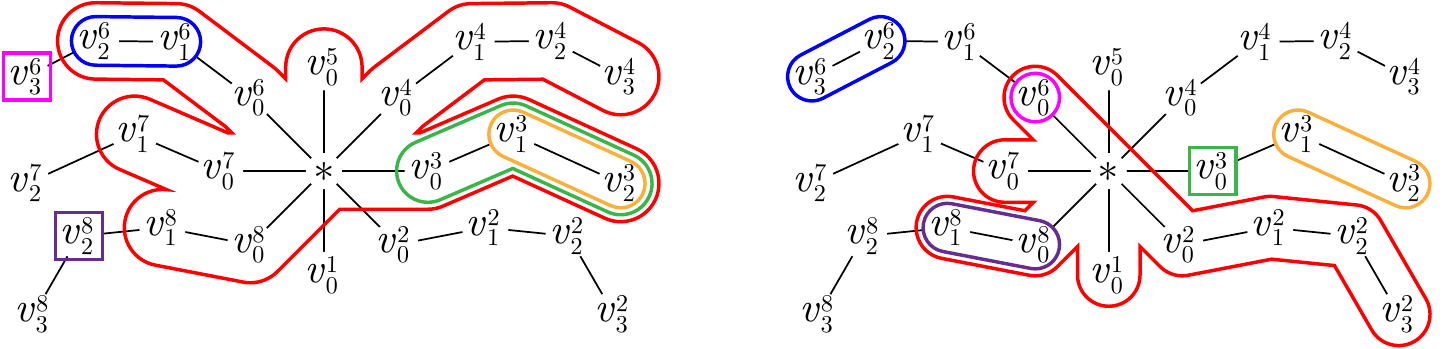}}
  \caption{The octopus~$\spiderG_{\{0,3,2,3,0,3,2,3\}}$ and examples of the action of the non-trivial design nested complex isomorphism~$\Omega\design$: the design tubing~$\tubing$ on the left is sent to the design tubing~$\Omega\design(\tubing)$ on the right.}
  \label{fig:isomorphismDesignOctopus}
\end{figure}
The reader is invited to check that~$\Omega\design$ indeed generalizes the non-trivial automorphisms of Example~\ref{exm:isomorphismDesignNestedComplexes}. The following statement is also left to the reader. The proof is similar to that of Proposition~\ref{prop:exmAutomorphism}.

\begin{proposition}
\label{prop:exmAutomorphismDesign}
The map~$\Omega\design$ is a non-trivial involutive automorphism of the design nested complex~$\nestedComplex(\octopusG_{\ninf})$ of the octopus~$\octopusG_{\ninf}$ which dualizes the compatibility degree: $\compatibilityDegree{\Omega\design(\tube)}{\Omega\design(\tube')} = \compatibilityDegree{\tube'}{\tube}$.
\end{proposition}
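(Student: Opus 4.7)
The plan is to verify in turn three properties of the map~$\Omega\design$: that it is a well-defined involution on design tubes of~$\octopusG_{\ninf}$, that it preserves compatibility of design tubes (hence induces an automorphism of~$\designNestedComplex(\octopusG_{\ninf})$), and that it dualizes the compatibility degree; non-triviality will then be immediate. Well-definedness amounts to checking, in each of the defining cases, that a square tube is mapped to a leg round tube or to~$\squareTube{\ast}$, that a leg round tube~$[v^i_0, v^i_k]$ is mapped to a square tube, that a leg round tube~$[v^i_j, v^i_k]$ with~$j \ge 1$ is mapped to another leg round tube of the same shape, and that a head tube is mapped to a head tube. The involution property then follows by direct substitution, using that the three index maps~$j \mapsto n_i - j$ on~$[0, n_i]$, $(j, k) \mapsto (n_i + 1 - k, n_i + 1 - j)$ on pairs with~$1 \le j \le k \le n_i$, and~$k_i \mapsto n_i - 1 - k_i$ on~$[-1, n_i]$ are themselves involutions, and that the formulas for square tubes and for leg round tubes of the form~$[v^i_0, v^i_k]$ exchange these two families.

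For the automorphism and dualization properties, I would proceed by a case analysis on the types (square or round) and positions (leg or head) of the two design tubes~$\tube$ and~$\tube'$ involved. Since the values~$-1, 0, 1$ of the compatibility degree are determined by the combinatorial compatibility alone, it suffices to handle four kinds of pairs separately: (i)~pairs of design tubes none of which is a head tube, treated by a direct case analysis (same-leg pairs, different-leg pairs, square-square, and square-leg) analogous in spirit to the verification for the spider automorphism~$\Omega$ in Proposition~\ref{prop:exmAutomorphism}; (ii)~a square tube paired with a head tube, treated by enumerating whether the square vertex equals~$\ast$ or some~$v^i_j$ and whether it lies in the head tube; (iii)~a leg round tube paired with a head tube, where the equality of compatibility degrees reduces, after unwinding the definitions, to a leg-by-leg count of boundary neighbors that is preserved under the simultaneous shifts~$m \mapsto n_i + 1 - m$ on leg indices and~$k_i \mapsto n_i - 1 - k_i$ on head parameters; and (iv)~two head tubes~$H, H'$ with leg parameters~$(k_i)$ and~$(k'_i)$, which are incompatible exactly when the sets~$\{i : k_i > k'_i\}$ and~$\{i : k_i < k'_i\}$ are both non-empty, in which case a direct computation shows that both~$\compatibilityDegree{H}{H'}$ and~$\compatibilityDegree{\Omega\design(H')}{\Omega\design(H)}$ equal~$|\{i : k'_i > k_i\}|$, the shift~$k_i \mapsto n_i - 1 - k_i$ reversing the inequalities leg by leg.

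Non-triviality is clear because~$\Omega\design$ sends some square tubes to leg round tubes, which no isomorphism induced by a graph automorphism of~$\octopusG_{\ninf}$ can do. The main obstacle throughout is case~(iii), the head-leg interaction: head tubes couple all legs simultaneously through the central vertex~$\ast$, whereas the compatibility degree is a local count near the boundary of one tube, so the equality must be verified leg by leg after isolating the contribution of~$\ast$ (which appears as a neighbor of a leg round tube precisely when~$v^i_0$ lies in it). Once this bookkeeping is set up correctly, all the remaining verifications are routine.
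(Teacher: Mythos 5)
Your proposal is correct and follows essentially the route the paper intends: the statement is left to the reader with the indication that the proof is ``similar to that of Proposition~\ref{prop:exmAutomorphism}'', namely a direct case analysis establishing explicit formulas for the compatibility degrees between the various types of design tubes (square, leg, head) and checking that the index shifts reverse them, exactly as you outline. The only remark is that your separate verification that~$\Omega\design$ preserves compatibility is redundant: once the dualization identity~$\compatibilityDegree{\Omega\design(\tube)}{\Omega\design(\tube')} = \compatibilityDegree{\tube'}{\tube}$ is established, the automorphism property follows immediately from the design analogue of Proposition~\ref{prop:compatibilityDegree}, since two design tubes are compatible if and only if their compatibility degree vanishes.
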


\enlargethispage{.5cm}
The following theorem, proved in Section~\ref{subsec:proofDesignIsomorphisms}, states that~$\Omega\design$ is essentially the only non-trivial design nested complex isomorphism.

\begin{theorem}
\label{theo:designNestedComplexIsomorphisms}
Let~$\graphG$ and~$\graphG'$ be two connected graphs and~$\Phi : \designNestedComplex(\graphG) \to \designNestedComplex(\graphG')$ be a non-trivial design nested complex isomorphism. Then~$\graphG$ and~$\graphG'$ are octopuses and there exists a graph isomorphism~$\psi : \graphG \to \graphG'$ which induces a design nested complex isomorphism~${\Psi : \designNestedComplex(\graphG) \to \designNestedComplex(\graphG')}$ (defined by~$\Psi(\tube) \eqdef \set{\psi(v)}{v \in \tube}$ and~$\Psi(\squareTube{v}) \eqdef \squareTube{\psi(v)}$) such that the composition~$\Psi^{-1} \circ \Phi$ coincides with the non-trivial design nested complex automorphism~$\Omega\design$ on~$\designNestedComplex(\graphG)$.
\end{theorem}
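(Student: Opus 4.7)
The plan is to mimic the strategy of Theorem~\ref{theo:nestedComplexIsomorphisms} but exploit the interplay between design and standard nested complexes provided by Proposition~\ref{prop:isomorphismDesignNormal}. Let $\Phi : \designNestedComplex(\graphG) \to \designNestedComplex(\graphG')$ be a non-trivial isomorphism with $\graphG, \graphG'$ connected. The starting observation is that $\nestedComplex(\graphG)$ sits canonically inside $\designNestedComplex(\graphG)$ as a vertex link: since $\graphG$ is connected, $\ground$ is a round tube, and a direct inspection of the compatibility rules shows that the link of $\ground$ in $\designNestedComplex(\graphG)$ equals $\nestedComplex(\graphG)$ (every other round tube is compatible with $\ground$ by nesting, while every square tube is incompatible because it is nested with~$\ground$ and one of the pair is square). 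In parallel, the link of any square tube $\squareTube{v}$ in $\designNestedComplex(\graphG)$ equals $\designNestedComplex(\graphG[\ground \ssm \{v\}])$ by the same kind of case check.

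I then analyze $\Phi(\ground)$, since the link of $\Phi(\ground)$ in $\designNestedComplex(\graphG')$ must be isomorphic to $\nestedComplex(\graphG)$. \textbf{Case (a):} $\Phi(\ground) = \ground'$, so $\Phi$ restricts to an isomorphism $\nestedComplex(\graphG) \to \nestedComplex(\graphG')$. By Theorem~\ref{theo:nestedComplexIsomorphisms} this restriction is either induced by a graph isomorphism $\psi : \graphG \to \graphG'$, or witnesses that $\graphG, \graphG'$ are spiders. In the graph-isomorphism subcase, comparing $\Phi$ with the trivial extension $\Psi$ of $\psi$ on each square tube via the link characterization $\designNestedComplex(\graphG[\ground \ssm \{v\}])$ and inducting on $|\ground|$, either $\Phi = \Psi$ (contradicting non-triviality) or a mismatch forces $\graphG$ to be an octopus and $\Psi^{-1}\circ\Phi = \Omega\design$. \textbf{Case (b):} $\Phi(\ground) = \squareTube{v'}$, so $\designNestedComplex(\graphG'[\ground' \ssm \{v'\}]) \cong \nestedComplex(\graphG)$. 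Proposition~\ref{prop:isomorphismDesignNormal} then forces $\graphG'[\ground' \ssm \{v'\}] = \spiderG_{\ninf}$ and $\graphG = \octopusG_{\ninf}$ for some $\ninf$; connectedness of $\graphG'$ plus identification of the residual data (namely where $v'$ attaches to the spider, tracked by compatibility with the remaining round tubes) force $\graphG' = \octopusG_{\ninf}$, and $\Phi$ matches $\Omega\design$ up to a trivial isomorphism. \textbf{Case (c):} $\Phi(\ground)$ is a proper round tube $\tube$, which I rule out by noting that the link of $\tube$ in $\designNestedComplex(\graphG')$ decomposes as a nontrivial join (one side contributed by $\designNestedComplex(\graphG'[\tube])$ and the other by a design-type complex on the reconnected complement $\graphG'{}^\star\tube$), while $\nestedComplex(\graphG)$ with $\graphG$ connected is a sphere and thus not such a join.

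The main obstacle is Case~(a), specifically showing that any single square tube mismatch propagates to the entire octopus structure. The difficulty is that $\graphG[\ground \ssm \{v\}]$ may be disconnected even when $\graphG$ is connected, so the inductive analysis of $\Phi(\squareTube{v})$ must be carried out in parallel across the connected components via a design analogue of Proposition~\ref{prop:nestedComplexIsomorphismDisconnected}, and one must carefully track how the exceptional behaviour of $\Omega\design$ on octopuses is forced by combining Theorem~\ref{theo:nestedComplexIsomorphisms} on each link with the compatibility constraints between square tubes and the surrounding round tubes.
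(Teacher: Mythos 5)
Your overall skeleton --- analyzing $\Phi(\ground)$ through its link and feeding the three possible images into Theorem~\ref{theo:nestedComplexIsomorphisms} and Proposition~\ref{prop:isomorphismDesignNormal} --- is the same as the paper's, but the conclusions are attached to the wrong branches of the trichotomy, and the branch that actually carries the theorem is the one you rule out. The computation you are missing is that $\Omega\design$ sends the improper round tube $\ground$ to the \emph{singleton round tube} $\{\ast\}$ at the head (apply the head-tube rule with $k_i = n_i$). Hence any isomorphism of the form $\Psi \circ \Omega\design$ with $\Psi$ trivial sends $\ground$ to a singleton round tube, so the conclusion of the theorem can only arise when $\Phi(\ground)$ is a singleton round tube --- precisely the subcase of your Case~(c) that you discard. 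Your argument there fails for singletons: by Lemma~\ref{lem:linksDesign} the link of $\{v'\}$ is the join of $\nestedComplex(\graphG'[\{v'\}])$, which is trivial, with $\designNestedComplex({\graphG'}^\star\{v'\})$, so there is no non-trivial join to contradict (and in any case ``a sphere is not a join'' is not a valid obstruction, since joins of spheres are spheres; the correct argument for non-singleton round tubes is the connected-components argument of Lemma~\ref{lem:isomorphismsPreserveConnectedComponents}). The paper runs Proposition~\ref{prop:isomorphismDesignNormal} exactly on this singleton case, applied to the induced isomorphism $\nestedComplex(\graphG) \simeq \designNestedComplex({\graphG'}^\star\{v'\})$ and then symmetrically to $\Phi^{-1}$, to obtain the octopus structure on both sides and the identification of $\Phi$ with $\Omega\design$.

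Your Cases~(a) and~(b) are correspondingly misassigned. In Case~(a), if the restriction of $\Phi$ to round tubes is trivial then $\Phi$ itself is trivial: a square tube $\squareTube{v}$ is incompatible with exactly the round tubes containing $v$, in particular with $\ground$, so its image must be a square tube with the prescribed incompatibility pattern, namely $\squareTube{\psi(v)}$; there is no ``mismatch'' scenario leading to $\Omega\design$, which cannot fix $\ground$ anyway. You also leave the spider subcase of~(a) untreated, and this is where the real work of that case lies: one must show that the automorphism $\Omega$ of $\nestedComplex(\spiderG_{\ninf})$ does not extend to square tubes (a singleton $\{v\}$ swapped by $\Omega$ is incompatible with the single square tube $\squareTube{v}$, while its image, of cardinality $|\ground|-1$, is incompatible with more than one square tube). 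Thus Case~(a) is impossible for non-trivial $\Phi$, not a source of the conclusion. In Case~(b), the tubes of $\graphG'$ incompatible with $\squareTube{v'}$ are the round tubes containing $v'$, and Lemma~\ref{lem:groundImage} then forces both graphs to be paths, a degenerate situation the paper disposes of separately at the outset; your claim that this case directly yields $\Phi = \Psi\circ\Omega\design$ again contradicts the fact that $\Omega\design(\ground)$ is round.
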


We conclude that we get many design compatibility fans (up to linear isomorphism).

\begin{corollary}
\label{coro:classificationDesignPrimal}
If a connected graph~$\graphG$ is not an octopus, then the number of linear isomorphism classes of primal (resp.~dual) design compatibility fans of~$\graphG$ is the number of orbits of maximal design tubings on~$\graphG$ under graph automorphisms of~$\graphG$.
\end{corollary}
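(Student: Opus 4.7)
The plan is to adapt the proof of Corollary~\ref{coro:many}(ii) to the design setting, using Theorem~\ref{theo:designNestedComplexIsomorphisms} as a replacement for Theorem~\ref{theo:nestedComplexIsomorphisms}.

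First, I would handle the easy direction. Any graph automorphism $\psi$ of $\graphG$ induces a trivial design nested complex automorphism $\Psi$ preserving compatibility degrees, namely $\compatibilityDegree{\Psi(\tube)}{\Psi(\tube')} = \compatibilityDegree{\tube}{\tube'}$ for all design tubes $\tube, \tube'$. The bijection $\tube_i^\circ \mapsto \Psi(\tube_i^\circ)$ on initial tubes then translates into a coordinate permutation of $\R^n$ which realizes a linear isomorphism from $\designCompatibilityFan{\graphG}{\tubing^\circ}$ to $\designCompatibilityFan{\graphG}{\Psi(\tubing^\circ)}$. Hence two maximal design tubings in the same orbit under graph automorphisms give rise to linearly isomorphic design compatibility fans.

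For the converse, suppose $L : \R^n \to \R^n$ is a linear isomorphism with $L(\designCompatibilityFan{\graphG}{\tubing^\circ}) = \designCompatibilityFan{\graphG}{\tubing'^\circ}$. By Theorem~\ref{theo:compatibilityFanDesign} both collections realize $\designNestedComplex(\graphG)$, so $L$ permutes rays and maximal cones, and the induced bijection $\Phi$ on design tubes preserves compatibility. Thus $\Phi$ is a design nested complex automorphism of $\designNestedComplex(\graphG)$. Since $\graphG$ is not an octopus, Theorem~\ref{theo:designNestedComplexIsomorphisms} forces $\Phi$ to be trivial: there is a graph automorphism $\psi$ of $\graphG$ with $\Phi(\tube) = \set{\psi(v)}{v \in \tube}$ on round tubes and $\Phi(\squareTube{v}) = \squareTube{\psi(v)}$ on square tubes.

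It remains to show that $\psi(\tubing^\circ) = \tubing'^\circ$. For this I would use the intrinsic characterization of the initial tubing within the fan: the design tubes of $\tubing^\circ$ are exactly those whose compatibility vector lies in $-\R_{\ge 0}^n$, while every other design tube has compatibility vector in $\R_{\ge 0}^n \ssm \{0\}$. Equivalently, the cone $C^\circ \eqdef \R_{\ge 0}\compatibilityVector{\tubing^\circ}{\tubing^\circ}$ is the unique maximal cone of $\designCompatibilityFan{\graphG}{\tubing^\circ}$ such that $-C^\circ$ contains every ray of the fan not lying in $C^\circ$. This property is preserved by $L$ and holds also for the initial cone of the second fan, so $L(C^\circ)$ must be the initial cone of $\designCompatibilityFan{\graphG}{\tubing'^\circ}$. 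This yields $\Phi(\tubing^\circ) = \tubing'^\circ$ and hence $\psi(\tubing^\circ) = \tubing'^\circ$, as required. The case of dual compatibility fans is treated identically, applying the dual analogue of Theorem~\ref{theo:compatibilityFanDesign} obtained by the duality trick used for Theorem~\ref{theo:dualCompatibilityFan}.

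The main obstacle is the uniqueness of $C^\circ$ with the stated containment property: it amounts to checking that the flips of all the initial tubes cannot form a design tubing when $\graphG$ is not an octopus, so that $\R_{\ge 0}^n$ is not itself a cone of the fan. A direct case analysis based on the flip description in Proposition~\ref{prop:flip} and the structural constraints extracted from Theorem~\ref{theo:designNestedComplexIsomorphisms} should settle this point.
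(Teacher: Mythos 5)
Your architecture is the one the paper intends: the easy direction via the coordinate permutation induced by a graph automorphism, and the converse via the combinatorial isomorphism $\Phi$ induced by the linear map, which Theorem~\ref{theo:designNestedComplexIsomorphisms} forces to be trivial since $\graphG$ is not an octopus. The gap is in the last step, where you must match the initial tubings. Your intrinsic characterization of $C^\circ = -\R_{\ge 0}^n$ as the unique maximal cone $C$ such that $-C$ contains every ray outside $C$ is plausible, but its uniqueness is exactly the hard point, and your reduction of uniqueness to ``$\R_{\ge 0}^n$ is not a cone of the fan'' is wrong: that only addresses a competing cone $C = \R_{\ge 0}\,\compatibilityVector{\tubing^\circ}{\tubing}$ with $\tubing \cap \tubing^\circ = \varnothing$ (and even that case is more easily killed by counting: there are more than $2n$ design tubes, so some ray lies in $\R_{\ge 0}^n \ssm \{0\}$ outside $C = \R_{\ge 0}^n$ and cannot lie in $-C$). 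It says nothing about cones with $\varnothing \ne \tubing \cap \tubing^\circ \ne \tubing^\circ$; for those, the containment condition translates into the statement that every design tube outside $\tubing \cup \tubing^\circ$ is compatible with every $\tube_k^\circ \notin \tubing$, and ruling this out is a genuine structural argument (path-like and octopus-like configurations come close to violating it). Deferring this to an unspecified ``direct case analysis'' leaves the proof incomplete.

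A cleaner finish, in the spirit of the paper's proof of Lemma~\ref{lem:comparisonPrimalDual}, avoids the uniqueness claim altogether. Each design compatibility fan has exactly $n$ pairs of opposite rays, namely $\{\pm\b{e}_i\}$: each $-\b{e}_i$ is the ray of $\tube_i^\circ$, each $+\b{e}_i$ is the ray of its flip partner in $\tubing^\circ$, and no two non-initial rays are opposite since they all lie in $\R_{\ge 0}^n \ssm \{0\}$. A linear isomorphism permutes opposite-ray pairs, so after composing with the permutation induced by $\psi$ you may assume $\Phi = \mathrm{id}$ and $L(\b{e}_k) = \mu_k\,\b{e}_{\tau(k)}$ for a permutation $\tau$ and nonzero $\mu_k$. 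If $\tube_i^\circ \notin \tubing'^\circ$ then $L(-\b{e}_i)$ is a positive multiple of some $\b{e}_j$, so $\mu_i < 0$; then for any design tube $\tube \notin \tubing^\circ \cup \tubing'^\circ$ the $\tau(i)$-th coordinate of $L(\compatibilityVector{\tubing^\circ}{\tube})$ equals $\mu_i\compatibilityDegree{\tube_i^\circ}{\tube} \le 0$ while it must be nonnegative, forcing $\compatibilityDegree{\tube_i^\circ}{\tube} = 0$. Chasing the remaining tubes shows $\tube_i^\circ$ would be incompatible with exactly one design tube of $\graphG$, which is impossible for a connected graph on at least two vertices: a square tube $\squareTube{v}$ is incompatible with both $\{v\}$ and $\ground$, and a round tube is incompatible with $\squareTube{v}$ for each of its vertices $v$ and, if it is a singleton, with the singleton of any neighbor. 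Hence $\psi(\tubing^\circ) = \tubing'^\circ$, and the dual case follows by the same argument.
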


Finally, as for the compatibility fan, we do not know in general whether the design compatibility fan is or not polytopal. Nevertheless, we know it for the following initial tubing.

\begin{remark}
\label{rem:connectionConstructions}
Consider the maximal tubing~$\tubing\design$ of a graph~$\graphG$ consisting of all square tubes of~$\graphG$. By definition of the compatibility degree with square tubes, the compatibility vector of a square tube~$\{v\}$ is just~$-\b{e}_v$ while the compatibility vector of a round tube~$\tube$ with respect to~$\tubing\design$ is just the characteristic vector of~$\tube$. Therefore, the design compatibility fan~$\designCompatibilityFan{\graphG}{\tubing\design}$ coincides with the design nested fan~$\cG\design(\graphG)$. It follows that~$\designCompatibilityFan{\graphG}{\tubing\design}$ is the normal fan of the design graph associahedron~$\Asso\design(\graphG)$ for any graph~$\graphG$. In fact, as the linear dependencies among the compatibility vectors with respect to the initial tubing~$\tubing\design$ are simply described, a direct computation shows that the map~$\omega$ defined by~$\omega(\tube) = 3^{|\ground|}|\tube| - 3^{|\tube|}$ for a round tube and~$\omega(\squareTube{v}) = C$ for a sufficiently large constant~$C$ provides a suitable weight function for Condition~(2) of Proposition~\ref{prop:polytopalityFan}.
\end{remark}

\subsection{Laurent Phenomenon algebras}
\label{subsec:LPA}

To conclude, we discuss our construction with respect to the framework of \defn{Laurent Phenomenon algebras} developed by T.~Lam and P.~Pylyavskyy in~\cite{LamPylyavskyy-LaurentPhenomenonAlgebras, LamPylyavskyy-LinearLaurentPhenomenonAlgebras}. The definition of these algebras is similar to that of cluster algebras but allows more flexibility for the variable mutation. Namely, the cluster variables of these algebras are still obtained by a mutation process from an initial cluster~$X^\circ$, but the mutated variable depends upon some mutation polynomials which are themselves updated along the mutations. We refer to~\cite{LamPylyavskyy-LaurentPhenomenonAlgebras} for the precise rules directing these variable and polynomial mutations. 

Since the construction is more flexible, it yields more general algebras and cluster complexes. To our knowledge, there is no classification of the finite type Laurent Phenomenon algebras, in contrast to the case of classical cluster algebras. However, an important subclass is the class of linear Laurent Phenomenon algebras discussed in~\cite{LamPylyavskyy-LinearLaurentPhenomenonAlgebras}, where the mutation polynomials of a given cluster are all linear. A striking fact is that all graphical nested complexes appear as cluster complexes for some linear Laurent Phenomenon algebras. Namely, T.~Lam and P.~Pylyavskyy associate to each graph~$\graphG$ a Laurent Phenomenon algebra~$\cA(\graphG)$ whose cluster complex is isomorphic to the design nested complex~$\designNestedComplex(\graphG)$ discussed in Section~\ref{subsec:designNestedComplex}. The nested complex~$\nestedComplex(\graphG)$ is thus isomorphic to the cluster complex of the linear Laurent Phenomenon algebra obtained by freezing in~$\cA(\graphG)$ the variables corresponding to square tubes.

As the name suggests, Laurent Phenomenon algebras still exhibits the Laurent Phenomenon. Each cluster variable~$x$ can be expressed as a Laurent polynomial in terms of the cluster variables~$x_1^\circ, \dots, x_n^\circ$ of the initial cluster~$X^\circ$. Define the \defn{$\b{d}$-vector} of~$x$ with respect to~$X^\circ$ as the vector~$\b{d}(X^\circ, x)$ whose $i$th coordinate is the exponent of the initial variable~$x_i^\circ$ in the denominator of~$x$. It is then tempting to extend the construction of the $\b{d}$-vector fan of S.~Fomin and A.~Zelevinsky~\cite{FominZelevinsky-ClusterAlgebrasII} to all Laurent Phenomenon algebras: 

\begin{question}
\label{qu:LPA}
Given a Laurent Phenomenon algebra~$\cA$ and an initial cluster seed~$X^\circ$ of~$\cA$, do the cones generated by the $\b{d}$-vectors with respect to~$X^\circ$ of all collections of compatible cluster variables in~$\cA$ always form a complete simplicial fan realizing the cluster complex of~$\cA$?
\end{question}

When studying this question, we encountered several difficulties that we want to underline here:
\begin{itemize}
\item Although similar to the problem addressed in this paper, we have not been able to prove or disprove Question~\ref{qu:LPA} even for the specific case of linear Laurent Phenomenon algebras arising from nested complexes of graphs. In contrast to our combinatorial definition of compatibility degree among tubes, it is difficult to track the denominator vectors along the mutation process. In particular, expressing explicitly  the linear dependence among the denominator vectors of the cluster variables involved in a mutation seems out of reach at the moment.
\item One of the main difficulties is that, even in some finite cases, an entry of the denominator vector of a variable can depend on the whole initial cluster, not only on the variable corresponding to that entry. Namely, in certain linear Laurent Phenomenon algebras, one can find cluster variables~$x,x^\circ$ and initial clusters~$X^\circ, X'^\circ$ both containing~$x^\circ$ such that the exponent of~$x^\circ$ in the denominator of~$x$ is different when computed with respect to~$X^\circ$ or with respect to~$X'^\circ$. In this sense, the denominator vectors are not anymore compatibility vectors arising from a compatibility degree between cluster variables.
\item Even worst, given an initial cluster~$X^\circ$, an initial variable~$x^\circ$ and a cluster variable~$x$ not in~$X^\circ$ but compatible with~$x^\circ$, the exponent of~$x^\circ$ in the denominator~$x$, when expanded in the variables of~$X^\circ$ can be nonzero.
\item In particular, the denominator vectors in the linear Laurent Phenomenon algebra~$\cA(\graphG)$ do not always coincide with our compatibility vectors on tubes of~$\graphG$.
\end{itemize}

Despite all these difficulties, we hope that the study of the linear Laurent Phenomenon algebras arising from (design) nested complexes of graphs can help to answer Question~\ref{qu:LPA}. Since non-isomorphic Laurent Phenomenon algebra may have isomorphic cluster complexes (the cyclohedron is an example), another simpler question would be to look for other Laurent Phenomenon algebras whose denominators interpret our compatibility degrees.

\begin{question}
For which graph~$\graphG$ does there exist a Laurent Phenomenon algebra~$\cA(\graphG)$ whose cluster complex is isomorphic to the nested complex~$\Asso(\graphG)$ (resp.~to the design nested complex~$\Asso\design(\graphG)$) and whose denominators are given by the (primal or dual) compatibility degrees defined in this paper?
\end{question}


\section{Proofs}
\label{sec:proofs}

This section contains all proofs of our results. Some of them require additional technical steps, which motivated us to separate them from the rest of the paper. We also hope that the many examples treated in Section~\ref{sec:specificGraphs} help the reader's intuition throughout these proofs.

\subsection{Compatibility degree (Proposition~\ref{prop:compatibilityDegree})}
\label{subsec:proofCompatibilityDegree}

We start with the proof that our graphical compatibility degree encodes compatibility and exchangeability between tubes. We show the three points of Proposition~\ref{prop:compatibilityDegree}:

\medskip
\noindent$\bullet$ $\compatibilityDegree{\tube}{\tube'} < 0 \iff \compatibilityDegree{\tube'}{\tube} < 0 \iff \tube = \tube'$. \\
This is immediate from the definition since the compatibility degree between two distinct tubes is either a cardinal or~$0$.

\medskip
\noindent$\bullet$ $\compatibilityDegree{\tube}{\tube'} = 0 \iff \compatibilityDegree{\tube'}{\tube} = 0 \iff \tube$ and~$\tube'$ are compatible. \\
Consider two distinct tubes~$\tube, \tube'$ of~$\graphG$. If they are compatible, then either~$\tube \subseteq \tube'$, or~$\tube' \subseteq \tube$, or $\tube$ and~$\tube'$ are non-adjacent. In the first case,~$\compatibilityDegree{\tube}{\tube'} = 0$ by the last line of Definition~\ref{def:compatibilityDegree} of the compatibility degree. In the last two cases, $\compatibilityDegree{\tube}{\tube'} = |\{\text{neighbors of $\tube$ in } \tube' \ssm \tube\}| = |\varnothing| = 0$. Conversely, if~$\compatibilityDegree{\tube}{\tube'} = 0$, then either~$\tube$ has no neighbor in~$\tube'$, or~$\tube' \subseteq \tube$, or~$\tube \subseteq \tube'$, so that the two tubes are compatible.

\medskip
\noindent$\bullet$ $\compatibilityDegree{\tube}{\tube'} = 1 = \compatibilityDegree{\tube'}{\tube} \iff \tube$ and~$\tube'$ are exchangeable. \\
The $\Leftarrow$ part follows from the explicit flip description in Proposition~\ref{prop:flip}. Indeed, assume that~$\tube$ and~$\tube'$ are exchangeable, let~$\tsup \eqdef \tube \cup \tube'$, and let~$\tubing, \tubing'$ be two adjacent maximal tubings on~$\graphG$ such that~${\tubing \ssm \{\tube\} = \tubing' \ssm \{\tube'\}}$. Since $\tube'$ is the connected component of~$\graphG{}[\tsup \ssm \lab(\tube, \tubing)]$ containing~$\lab(\tsup, \tubing)$, the root~$\lab(\tsup, \tubing)$ is the unique neighbor of~$\tube$ in~$\tube' \ssm \tube$. Therefore, $\compatibilityDegree{\tube}{\tube'} = 1$, and~$\compatibilityDegree{\tube'}{\tube} = 1$ by symmetry. 

Assume conversely that~$\compatibilityDegree{\tube}{\tube'} = 1 = \compatibilityDegree{\tube'}{\tube}$. Since~$\compatibilityDegree{\tube'}{\tube} = 1$, there exists a unique neighbor~$r$ of~$\tube'$ in~$\tube \ssm \tube'$. Similarly, there exists a unique neighbor~$r'$ of~$\tube$ in~$\tube' \ssm \tube$. We want to find two adjacent maximal tubings~$\tubing, \tubing'$ on~$\graphG$ such that~$\tubing \ssm \{\tube\} = \tubing' \ssm \{\tube'\}$. We start with the forced tubes (see the end of Section~\ref{subsec:nestedComplex}): we define~$\tsup \eqdef \tube \cup \tube'$ and we let~$\tube[s]_1, \dots, \tube[s]_\ell$ be the connected components of~$\tsup \ssm \{r,r'\}$. We choose an arbitrary maximal tubing~$\tubing[S]_i$ on~$\graphG{}[\tube[s]_i]$ for each~$i$, and an arbitrary maximal tubing~$\tubing[S]$ on~$\graphG$ containing~$\tsup$. The set of tubes
\[
\tubing[R] \eqdef \{\tsup, \tube[s]_1, \dots, \tube[s]_\ell\} \; \sqcup \; \tubing[S]_1 \! \sqcup \dots \sqcup \! \tubing[S]_\ell \; \sqcup \; \set{\tube[s]}{\tube[s] \in \tubing[S], \tube[s] \not\subseteq \tsup}.
\]
is clearly a tubing, and is compatible with both~$\tube$ and~$\tube'$. We now compute the cardinality of~$\tubing[R]$. Observe first that $|\tubing[S]| = |\ground|-|\connectedComponents(\graphG)|$ so that~$|\set{\tube[s]}{\tube[s] \in \tubing[S], \tube[s] \not\subseteq \tsup}| = |\ground|-|\connectedComponents(\graphG)|-|\tsup|$ since~$\tsup$ is a tube of~$\graphG$. Moreover, $|\tubing[S]_i| = |\tube[s]_i| - 1$ since~$\tube[s]_i$ is a tube, and~$\sum_i |\tube[s]_i| = |\bigcup_i \tube[s]_i| = |\tsup \ssm \{r,r'\}| = |\tsup| - 2$. We conclude that
\[
|\tubing[R]| = (1 + \ell) + (|\tsup| - 2 - \ell) + (|\ground|-|\connectedComponents(\graphG)|-|\tsup|) = |\ground| - |\connectedComponents(\graphG)| - 1
\]
Therefore,~$\tubing[R]$ is a ridge of the nested complex~$\nestedComplex(\graphG)$, so that~$\tubing \eqdef \tubing[R] \cup \{\tube\}$ and~$\tubing' \eqdef \tubing[R] \cup \{\tube'\}$ are maximal tubings related by the flip of~$\tube$ into~$\tube'$. \qed 

\subsection{Restriction on coordinate hyperplanes (Proposition~\ref{prop:restriction})}
\label{subsec:proofLemRestriction}

We now state two lemmas needed in the proofs of Theorem~\ref{theo:compatibilityFan}. They have essentially the same content as Proposition~\ref{prop:restriction}, except that they focus on compatibility vectors (rays) and not on the other cones of the compatibility fan since Theorem~\ref{theo:compatibilityFan} is not proved yet. In particular, they will imply Proposition~\ref{prop:restriction} once Theorem~\ref{theo:compatibilityFan} will be established.

Remember that for a tube~$\tube^\circ$ of~$\graphG$, we denote by~$\graphG{}[\tube^\circ]$ the restriction of~$\graphG$ to~$\tube^\circ$ and by~$\graphG{}^\star\tube^\circ$ the reconnected complement of~$\tube^\circ$ in~$\graphG$, defined as the graph with vertex set~$\ground \ssm \tube^\circ$ and edge set ${\bigset{e \in \binom{\ground \ssm \tube^\circ}{2}}{\text{$e$ or~$e \cup \tube^\circ$ is connected in~$\graphG$}}}$.

\begin{lemma}[\cite{CarrDevadoss}]
\label{lem:restriction}
For a tube~$\tube^\circ$ of~$\graphG$, the map
\[
\tube[s] \longmapsto \widetilde{\tube[s]} \eqdef
\begin{cases}
\tube[s] & \text{if $\tube[s] \subsetneq \tube^\circ$} \\
\tube[s] \ssm \tube^\circ & \text{if $\tube[s] \supsetneq \tube^\circ$ or~$\tube[s] \cap \tube^\circ = \varnothing$}
\end{cases}
\]
between the tubes of~$\graphG$ compatible with~$\tube^\circ$ and the tubes of~$\widetilde\graphG \eqdef \graphG{}[\tube^\circ] \sqcup \graphG^\star\tube^\circ$ defines an isomorphism between the link of~$\tube^\circ$ in the nested complex~$\nestedComplex(\graphG)$ and the nested complex~$\nestedComplex(\widetilde\graphG)$.
\end{lemma}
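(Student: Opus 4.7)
The plan is to verify three things in turn: that $\tube[s] \mapsto \widetilde{\tube[s]}$ is well-defined on the set of tubes of $\graphG$ compatible with $\tube^\circ$ and distinct from $\tube^\circ$, that it is bijective onto the tubes of $\widetilde\graphG$, and that it preserves compatibility in both directions. (The underlying correspondence on tubings then follows, since a collection of tubes of $\graphG$ all containing or compatible with $\tube^\circ$ corresponds under $\widetilde{\phantom{\tube}}$ to a collection of tubes of $\widetilde\graphG$.)

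First, well-definedness follows from a case analysis matching the three ways $\tube[s]$ can be compatible with $\tube^\circ$ without equaling it. If $\tube[s] \subsetneq \tube^\circ$, then $\widetilde{\tube[s]} = \tube[s]$ is clearly a tube of $\graphG{}[\tube^\circ]$. If $\tube[s]$ is disjoint and non-adjacent to $\tube^\circ$, then $\widetilde{\tube[s]} = \tube[s]$ remains a tube of $\graphG^\star\tube^\circ$, since the reconnected complement retains all edges of $\graphG$ with both endpoints in $\ground \ssm \tube^\circ$. The delicate case is $\tube[s] \supsetneq \tube^\circ$: I would show that $\tube[s] \ssm \tube^\circ$ is connected in $\graphG^\star\tube^\circ$ by choosing, for any two vertices $u, v \in \tube[s] \ssm \tube^\circ$, a path between them in the connected graph $\graphG{}[\tube[s]]$, and replacing each maximal subpath through $\tube^\circ$ by a single edge of $\graphG^\star\tube^\circ$ joining the two vertices of $\tube[s] \ssm \tube^\circ$ bordering this subpath; these are edges of $\graphG^\star\tube^\circ$ by the very definition of the reconnected complement.

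Next I would construct the inverse. A tube $\tube[t]$ of $\graphG{}[\tube^\circ]$ simply pulls back to $\tube[t]$ itself, which is compatible with $\tube^\circ$. For a tube $\tube[t]$ of $\graphG^\star\tube^\circ$, I would define its preimage to be $\tube[t]$ if $\tube[t]$ is already a tube of $\graphG$ non-adjacent to $\tube^\circ$, and $\tube[t] \cup \tube^\circ$ otherwise. These two alternatives are mutually exclusive, since in the second $\tube[t]$ must be adjacent to $\tube^\circ$ in $\graphG$, which contradicts the first. Conversely, if the first does not hold, then transforming any path of $\tube[t]$ in $\graphG^\star\tube^\circ$ into a walk of $\graphG$ lying in $\tube[t] \cup \tube^\circ$ (by replacing each edge of $\graphG^\star\tube^\circ$ not already in $\graphG$ by a path through $\tube^\circ$) shows that $\tube[t] \cup \tube^\circ$ is indeed a tube of $\graphG$. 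It is then routine to check that this assignment is a two-sided inverse.

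Finally, preservation of compatibility reduces to a case analysis on the positions of $\tube[s]$ and $\tube[s]'$ relative to $\tube^\circ$. Images lying in different connected components of $\widetilde\graphG$ are trivially compatible, and correspondingly the originals are strictly nested on opposite sides of $\tube^\circ$ (or disjoint and non-adjacent to it), hence compatible in $\graphG$. In the remaining subcases, nestedness and disjoint non-adjacency transfer directly between $\graphG$ and $\widetilde\graphG$, the only delicate verification being once again that adjacency in $\graphG^\star\tube^\circ$ matches adjacency in $\graphG$ modulo $\tube^\circ$. This is the main obstacle in the argument, but is precisely what the edge set of $\graphG^\star\tube^\circ$ is designed to encode.
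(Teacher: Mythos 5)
Your proposal is correct. The paper gives no proof of this lemma at all --- it is simply quoted from \cite{CarrDevadoss} --- so there is nothing to compare against, but your argument is the standard one and all the key points check out: the subpath-replacement trick both shows that $\tube[s] \ssm \tube^\circ$ is connected in $\graphG^\star\tube^\circ$ when $\tube[s] \supsetneq \tube^\circ$ and handles the delicate incompatibility cases, and your two-case inverse is genuinely well defined since a set that is non-adjacent to $\tube^\circ$ cannot have connected union with it. The only point worth making explicit is that both complexes are flag (faces are exactly the sets of pairwise compatible tubes), which is what lets your pairwise compatibility check upgrade to an isomorphism of simplicial complexes, as you implicitly assume in your opening paragraph.
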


We denote by~$\widetilde\tubing \eqdef \set{\widetilde\tube}{\tube \in \tubing}$ the image of a tubing~$\tubing$ on~$\graphG$. This map actually preserves the compatibility degrees between tubes and the compatibility vectors.

\begin{lemma}
\label{lem:restrictionCompatibility}
Let~$\tube^\circ$ be a tube of~$\graphG$. The map~$\tube[s] \mapsto \widetilde{\tube[s]}$ between the link of~$\tube^\circ$ in the nested complex~$\nestedComplex(\graphG)$ and the nested complex~$\nestedComplex(\widetilde\graphG)$ of the graph~$\widetilde\graphG \eqdef \graphG{}[\tube^\circ] \sqcup \graphG^\star\tube^\circ$ defined in Lemma~\ref{lem:restriction} preserves the compatibility degree: $\compatibilityDegree{\tube}{\tube'} = \compatibilityDegree{\,\widetilde\tube}{\widetilde\tube'}$ for any tubes~$\tube, \tube'$ of~$\graphG$ compatible with~$\tube^\circ$. Therefore, for any maximal tubing~$\tubing^\circ$ on~$\graphG$ containing~$\tube^\circ$ and any tube~$\tube$ of~$\graphG$ compatible with~$\tube^\circ$, the compatibility vector~$\compatibilityVector{\widetilde{\tubing}^\circ}{\widetilde\tube}$ is obtained from the compatibility vector~$\compatibilityVector{\tubing^\circ}{\tube}$ by deletion of its vanishing $\tube^\circ$-coordinate.
\end{lemma}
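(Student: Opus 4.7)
The plan is to verify the degree equality $\compatibilityDegree{\tube}{\tube'} = \compatibilityDegree{\widetilde\tube}{\widetilde{\tube'}}$ by a case analysis on how $\tube$ and $\tube'$ meet $\tube^\circ$, and then to deduce the vector statement from Proposition~\ref{prop:compatibilityDegree}. Any tube of $\graphG$ compatible with and distinct from $\tube^\circ$ is of exactly one of three types: (A) strictly contained in $\tube^\circ$, (B) strictly containing $\tube^\circ$, or (C) disjoint from and non-adjacent to $\tube^\circ$. Type-A tubes become tubes of $\graphG{}[\tube^\circ]$, while type-B and type-C tubes become tubes of $\graphG{}^\star\tube^\circ$ (by removing $\tube^\circ$, or unchanged respectively).

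First, for mixed-type pairs (namely (A,B), (A,C), (B,A), (C,A)), the images $\widetilde\tube$ and $\widetilde{\tube'}$ lie in different components of the disjoint union $\widetilde\graphG$, so $\compatibilityDegree{\widetilde\tube}{\widetilde{\tube'}} = 0$; one checks directly in each subcase that $\tube$ and $\tube'$ are also compatible in $\graphG$ (either nested with $\tube^\circ$ sitting between them, or disjoint and non-adjacent via $\tube^\circ$), so $\compatibilityDegree{\tube}{\tube'} = 0$ as well. Pairs of type (A,A) are immediate, since $\graphG{}[\tube^\circ]$ is an induced subgraph of $\graphG$ and all relevant vertices and edges lie inside $\tube^\circ$.

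The main obstacle is the remaining same-side cases (B,B), (B,C), (C,B), (C,C), where $\widetilde\tube$ and $\widetilde{\tube'}$ both lie in $\graphG{}^\star\tube^\circ$. In these cases the inclusion test $\widetilde\tube \subseteq \widetilde{\tube'} \Leftrightarrow \tube \subseteq \tube'$ is a bookkeeping verification (using that the symmetric difference $\tube \, \triangle \, \tube'$ is disjoint from $\tube^\circ$ in each subcase), but the neighbor count is delicate because $\graphG{}^\star\tube^\circ$ has extra edges $\{v,w\}$ recording that $\{v,w\} \cup \tube^\circ$ is connected in $\graphG$. The key lemma to prove is that a vertex $v \in \widetilde{\tube'} \ssm \widetilde\tube$ is adjacent to $\widetilde\tube$ in $\graphG{}^\star\tube^\circ$ if and only if $v$ is a neighbor of $\tube$ in $\graphG$. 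The forward direction uses that any such edge of $\graphG{}^\star\tube^\circ$ is either already an edge of $\graphG$, or witnesses an adjacency of $v$ to some vertex of $\tube^\circ \cup \widetilde\tube \subseteq \tube$. The reverse direction is the crucial point: when $v$ is adjacent to some $u \in \tube^\circ \subseteq \tube$ (a case that occurs only when $\tube$ is of type B), I exploit that $\tube$ being connected and strictly containing $\tube^\circ$ forces some $w \in \tube \ssm \tube^\circ = \widetilde\tube$ to be adjacent to $\tube^\circ$, so that $\{v,w\} \cup \tube^\circ$ is connected in $\graphG$ and $\{v,w\}$ is an edge of $\graphG{}^\star\tube^\circ$. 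When $\tube$ is of type C, a short direct argument shows that no new edges from $\tube$ to $\tube' \ssm \tube$ arise in $\graphG{}^\star\tube^\circ$, since no vertex of $\tube$ is adjacent to $\tube^\circ$.

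Once the degree equality is established, the vector statement follows at once: Proposition~\ref{prop:compatibilityDegree} gives $\compatibilityDegree{\tube^\circ}{\tube} = 0$ for any tube $\tube$ compatible with and distinct from $\tube^\circ$, so the $\tube^\circ$-coordinate of $\compatibilityVector{\tubing^\circ}{\tube}$ vanishes, while the remaining $n-1$ coordinates match those of $\compatibilityVector{\widetilde\tubing^\circ}{\widetilde\tube}$ by applying the degree preservation to each $\tube^\circ_i \ne \tube^\circ$.
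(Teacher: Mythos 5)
Your proof is correct and follows essentially the same route as the paper's: the paper dispatches the compatible case in one line via Lemma~\ref{lem:restriction} and Proposition~\ref{prop:compatibilityDegree} (both degrees vanish), and for incompatible pairs asserts as ``immediate'' the neighbor bijection that your key lemma spells out. Your explicit treatment of the reconnected edges --- in particular the reverse direction, where connectivity of a tube strictly containing $\tube^\circ$ produces a witness $w \in \tube \ssm \tube^\circ$ adjacent to $\tube^\circ$, and the observation that no new edges touch a tube non-adjacent to $\tube^\circ$ --- is exactly the verification the paper leaves to the reader, and it is sound.
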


\begin{proof}
If~$\tube$ and~$\tube'$ are compatible, so are~$\widetilde\tube$ and~$\widetilde\tube'$ by Lemma~\ref{lem:restriction}, thus the result follows from Proposition~\ref{prop:compatibilityDegree}. We can therefore assume that~$\tube$ and~$\tube'$ are incompatible, so as~$\widetilde\tube$ and~$\widetilde\tube'$. Therefore, the compatibility degrees~$\compatibilityDegree{\tube}{\tube'}$ and~$\compatibilityDegree{\,\widetilde\tube}{\widetilde\tube'}$ actually count neighbors. However, it follows immediately from the definitions of the graph~$\widetilde\graphG$ and of the map~$\tube[s] \mapsto \widetilde{\tube[s]}$ that the neighbors of~$\tube$ in~$\tube' \ssm \tube$ are precisely the neighbors of~$\widetilde\tube$ in~$\widetilde\tube' \ssm \widetilde\tube$. This proves the equality between the compatibility degrees. The equality between the compatibility vectors follows coordinate by coordinate.
\end{proof}

\subsection{Compatibility fan (Theorem~\ref{theo:compatibilityFan})}
\label{subsec:proofCompatibilityFan}

In order to show that the cones of the compatibility matrices of all tubings on~$\graphG$ form a complete simplicial fan, we need the following refinement.

\begin{theorem}
\label{theo:compatibilityFanRefined}
For any graph~$\graphG$ and any maximal tubing~$\tubing^\circ$ on~$\graphG$, the compatibility vectors with respect to~$\tubing^\circ$ have the following properties.
\begin{description}
\item[Span Property] For any tube~$\tube[u]$ of~$\graphG$, the span of~$\set{\compatibilityVector{\tubing^\circ}{\tube[s]}}{\tube[s] \in \tubing, \tube[s] \subseteq \tube[u]}$, for a maximal tubing~$\tubing$ on~$\graphG$ containing~$\tube[u]$, is independent of~$\tubing$. \\[-.3cm]
\item[Flip Property] For any two adjacent maximal tubings~$\tubing, \tubing'$ on~$\graphG$ with~$\tubing \ssm \{\tube\} = \tubing' \ssm \{\tube'\}$, there exists a linear dependence
\[
\alpha \, \compatibilityVector{\tubing^\circ}{\tube} + \alpha' \, \compatibilityVector{\tubing^\circ}{\tube'} + \sum_{\tube[s] \in \tubing \cap \tubing'} \beta_{\tube[s]} \, \compatibilityVector{\tubing^\circ}{\tube[s]} = 0
\]
between the compatibility vectors of~$\tubing \cup \tubing'$ with respect to~$\tubing^\circ$ which is:
\begin{description}
\item[Separating] the hyperplane spanned by~$\set{\compatibilityVector{\tubing^\circ}{\tube[s]}}{\tube[s] \in \tubing \cap \tubing'}$ separates~$\compatibilityVector{\tubing^\circ}{\tube}$ and $\compatibilityVector{\tubing^\circ}{\tube'}$, \ie the coefficients~$\alpha$ and~$\alpha'$ have the same sign different from~$0$.
\item[Local] the dependence is supported by tubes included in~$\tsup$, \ie $\beta_{\tube[s]} = 0$ for all~$\tube[s] \not\subseteq \tsup$.
\end{description}
\end{description}
\end{theorem}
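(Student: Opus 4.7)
The plan is to prove both properties of Theorem~\ref{theo:compatibilityFanRefined} simultaneously by induction on $|\ground|$. First, the Span Property can be derived from the Local part of the Flip Property: any two maximal tubings of~$\graphG$ both containing a tube~$\tube[u]$ are joined by a sequence of flips in~$\nestedComplex(\graphG)$ which, by Proposition~\ref{prop:flip}, can be chosen to preserve~$\tube[u]$, hence to exchange only tubes~$\tube[s] \subseteq \tube[u]$ with companion~$\tube[s]' \subseteq \tube[u]$ satisfying $\tube[s] \cup \tube[s]' \subseteq \tube[u]$. The Local part then presents each such flip as a linear relation supported in~$\mathrm{span}\bigset{\compatibilityVector{\tubing^\circ}{\tube[r]}}{\tube[r] \in \tubing, \tube[r] \subseteq \tube[u]}$, so this span is preserved along the sequence and is independent of the chosen~$\tubing$.

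For the Flip Property, the first move is a reduction via Lemmas~\ref{lem:restriction} and~\ref{lem:restrictionCompatibility} whenever there exists an initial tube~$\tube^\circ \in \tubing^\circ$ compatible with every tube of~$\tubing \cup \tubing'$. In that situation, Lemma~\ref{lem:restrictionCompatibility} forces the $\tube^\circ$-coordinate of every involved compatibility vector to vanish, and the flip descends to a flip in the nested complex of~$\widetilde\graphG = \graphG[\tube^\circ] \sqcup \graphG{}^\star\tube^\circ$, which has strictly fewer vertices; the induction hypothesis then yields a dependence on~$\widetilde\graphG$ that lifts verbatim to~$\graphG$ preserving both Separating and Local. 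When no such reducing~$\tube^\circ$ exists, the dependence must be constructed directly. A natural seed is an inclusion--exclusion identity combining~$\compatibilityVector{\tubing^\circ}{\tube}$, $\compatibilityVector{\tubing^\circ}{\tube'}$, $\compatibilityVector{\tubing^\circ}{\tsup}$ and the compatibility vectors of the forced tubes (connected components of~$\tsup \ssm (\lab(\tube,\tubing) \cup \lab(\tube',\tubing'))$), which already vanishes on coordinates indexed by initial tubes that do not cross~$\tsup$. One then corrects this seed by adding terms supported on common tubes of~$\tubing \cap \tubing'$ contained in~$\tsup$; by the Span Property applied to~$\tube[u] = \tsup$ (which is available from the inductive step, or bootstrapped from smaller links), these vectors span exactly the ambient subspace needed to cancel the residual crossing contributions. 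Locality is automatic by construction, and Separating follows from the fact that~$\compatibilityVector{\tubing^\circ}{\tube}$ and~$\compatibilityVector{\tubing^\circ}{\tube'}$ enter the seed with coefficient~$+1$ and the subsequent corrections, being supported on~$\tubing \cap \tubing'$, never touch these two coefficients.

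The main obstacle is the irreducible case. Making the correction process rigorous requires controlling a combinatorial recursion that, as Example~\ref{exm:exmLinearDependence} warns, can drag in common tubes far from the forced-tube skeleton (such as the tube~$\{k,l\}$ appearing with coefficient~$4$) and introduce large integer coefficients; showing that no adverse cancellation drives~$\alpha$ or~$\alpha'$ to zero is the delicate point. Once the Flip Property is established, Theorem~\ref{theo:compatibilityFan} is a direct application of Proposition~\ref{prop:characterizationFan} with base facet~$\tubing^\circ$: its compatibility matrix~$\compatibilityMatrix{\tubing^\circ}{\tubing^\circ} = -\b{I}_n$ is a basis of~$\R^n$, and all other compatibility vectors lie in the nonnegative orthant, so that the open cone~$\R_{>0} \, \compatibilityMatrix{\tubing^\circ}{\tubing^\circ} = \R_{<0}^n$ is automatically disjoint from every other open maximal cone; Corollary~\ref{coro:fullRank} then follows at once.
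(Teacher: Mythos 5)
Your overall architecture (induction on the size of the graph, reduction to links via Lemmas~\ref{lem:restriction} and~\ref{lem:restrictionCompatibility}, the Span Property deduced from locality of the flip dependences, and the final appeal to Proposition~\ref{prop:characterizationFan} with base facet~$\tubing^\circ$) matches the paper, but the construction of the dependence in the irreducible case --- which you correctly flag as the main obstacle --- is wrong as stated, not merely delicate. The inclusion--exclusion seed $\compatibilityVector{\tubing^\circ}{\tube} + \compatibilityVector{\tubing^\circ}{\tube'} - \compatibilityVector{\tubing^\circ}{\tsup} - \compatibilityVector{\tubing^\circ}{\tinf}$ vanishes on the coordinate of an initial tube~$\tube^\circ$ only when~$\tube^\circ \not\subseteq \tsup$, or when~$\tube^\circ \subseteq \tsup$ contains exactly one of the two roots~$r, r'$ of the flip and is neither containing nor adjacent to the other. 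As soon as some~$\tube^\circ \in \tubing^\circ$ contained in~$\tsup$ contains~$r$ and is adjacent to (or contains)~$r'$, the residual on the~$\tube^\circ$-coordinate cannot be cancelled by corrections supported on~$\tubing \cap \tubing'$ while keeping both flipped coefficients equal to~$1$: once the support of the dependence is fixed, the~$\tube^\circ$-coordinate pins down the ratio~$\alpha/\alpha'$, and the paper is forced to take~$\alpha$ and~$\alpha'$ to be explicit, generally unequal, combinations of compatibility degrees determined by the inclusion-maximal such~$\tube^\circ$, with the remaining coefficients defined by a descending recursion along the chain of initial tubes inside~$\tube^\circ$. Example~\ref{exm:exmLinearDependence}, which you cite only as a warning about the support, already refutes your claim outright: there~$\alpha = 2$ and~$\alpha' = 1$.

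There are two further structural problems. First, you invoke the Span Property for~$\tube[u] = \tsup$ \emph{inside} the construction of the Flip Property while deriving the Span Property \emph{from} the Local Flip Property; this is circular. The paper breaks the circle by first proving the separating and local dependence for \emph{one well-chosen} pair of adjacent tubings realizing the exchange of~$\tube$ and~$\tube'$ (a pair whose common part contains all auxiliary tubes appearing in the explicit formula), then deducing the Span Property by induction on~$|\tube[u]|$ from that existence statement, and only then transporting the dependence to an arbitrary adjacent pair. Your reduction step suffers from the same rigidity: demanding an initial tube compatible with \emph{every} tube of~$\tubing \cup \tubing'$ is stronger than needed and may fail for the given pair, whereas the paper only needs~$\tube^\circ$ compatible with~$\tube$ and~$\tube'$ precisely because it is free to re-choose~$\tubing, \tubing'$. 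Second, the case where~$\tube$ or~$\tube'$ belongs to~$\tubing^\circ$ is not covered by any inclusion--exclusion seed (the relevant coordinate equals~$-1$); the paper handles it separately by a rank argument in the hyperplane orthogonal to~$\b{e}_{\tube^\circ}$, using Corollary~\ref{coro:fullRank} for the smaller graph to rule out~$\alpha = 0$, and then restricts to a subgraph to recover locality.
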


Theorem~\ref{theo:compatibilityFan} follows from the \textbf{Separating Flip Property} and the characterization of complete simplicial fans in Proposition~\ref{prop:characterizationFan}. The \textbf{Span Property} and \textbf{Local Flip Property} are not required to get Theorem~\ref{theo:compatibilityFan} but we use them to obtain the proof of the \textbf{Separating Flip Property}. Observe also that we do not need to prove that the linear dependence between the compatibility vectors of~$\tubing \cup \tubing'$ is unique: it is a consequence of Proposition~\ref{prop:characterizationFan} once we know the \textbf{Separating Flip Property}. 

\medskip
Before entering details, let us sketch the general idea of the proof of Theorem~\ref{theo:compatibilityFanRefined}. We seek for a linear dependence between the compatibility vectors of the tubes of~$\tubing \cup \tubing'$ with respect to the initial tubing~$\tubing^\circ$, that is, for a linear relation satisfied by their compatibility degrees with any tube of~$\tubing^\circ$. There are simple combinatorial relations between the compatibility degrees of the tubes of~$\tubing \cup \tubing'$ with all tubes of~$\tubing^\circ$ not contained in~$\tsup$. Our strategy is to start from such a relation and adapt it iteratively such that it holds for the other tubes of~$\tubing^\circ$ as well. This transformation is done in two steps:
\begin{itemize}
\item We first deal with the tubes of~$\tubing^\circ$ contained in~$\tsup$ and maximal for this property. They determine the coefficients of the linear dependence on the forced tubes
\item For the remaining tubes of~$\tubing^\circ$, we need to make successive corrections to the linear dependence. We first get an explicit linear dependence assuming that~$\tubing \cap \tubing'$ contains certain suitable tubes included in~$\tsup$. We then use inductively the \textbf{Span Property} and the \textbf{Local Flip Property} to get an implicit linear dependence in general.
\end{itemize}
The key of the proof is that our transformation increases the set of tubes of~$\tubing^\circ$ for which the relation between the compatibility degrees of the tubes of~$\tubing \cup \tubing'$ is valid.

\medskip
We now start the formal proof. We proceed by induction on the dimension of the nested complex~$\nestedComplex(\graphG)$. It is immediate when this dimension is~$0$. We now consider an arbitrary graph~$\graphG$ and assume that we have shown Theorem~\ref{theo:compatibilityFanRefined} and thus Corollary~\ref{coro:fullRank} for any graph~$\graphG[H]$ such that~$\dim(\nestedComplex(\graphG[H])) < \dim(\nestedComplex(\graphG))$.
Given a exchangeable pair of tubes~$\tube, \tube'$ of~$\graphG$, our first objective is to exhibit \textbf{separating} and \textbf{local} linear dependences for some adjacent maximal tubings~$\tubing, \tubing'$ on~$\graphG$ such that~$\tubing \ssm \{\tube\} = \tubing' \ssm \{\tube'\}$. We will show later the \textbf{Span Property} and use it to prove that the linear dependence is \textbf{separating} and \textbf{local} for all adjacent maximal tubings~$\tubing, \tubing'$ on~$\graphG$ such that~$\tubing \ssm \{\tube\} = \tubing' \ssm \{\tube'\}$.

\begin{lemma}
\label{lem:flipProperty}
For any two exchangeable tubes~$\tube, \tube'$ of~$\graphG$, there exists adjacent maximal tubings~$\tubing, \tubing'$ on~$\graphG$ such that~$\tubing \ssm \{\tube\} = \tubing' \ssm \{\tube'\}$ and a linear dependence between the compatibility vectors of the tubes of~$\tubing \cup \tubing'$ which is both \textbf{separating} and \textbf{local}.
\end{lemma}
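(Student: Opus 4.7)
I prove Lemma~\ref{lem:flipProperty} by induction on $\dim\nestedComplex(\graphG)$, in tandem with the inductive statements of Theorem~\ref{theo:compatibilityFanRefined}. Fix an exchangeable pair $\{\tube,\tube'\}$, set $\tsup=\tube\cup\tube'$, let $r,r'$ be the unique cross-neighbors provided by Proposition~\ref{prop:compatibilityDegree}, and let $\tube[s]_1,\dots,\tube[s]_\ell$ be the connected components of $\graphG{}[\tsup\ssm\{r,r'\}]$. Together with $\tsup$, these are the forced tubes of the flip. The tubing $\tubing\cap\tubing'$ is then built by combining the forced tubes with auxiliary maximal tubings on each restriction $\graphG{}[\tube[s]_j]$ and on the reconnected complement $\graphG^\star\tsup$; I will choose these auxiliary tubings so that the induction hypothesis applies cleanly.

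\textbf{Baseline identity.} A direct neighbor-counting argument, based on inclusion--exclusion applied to the vertex sets of $\tube$, $\tube'$ and their intersection, yields a linear relation between the compatibility degrees $\compatibilityDegree{\tube[u]}{\tube}$, $\compatibilityDegree{\tube[u]}{\tube'}$, $\compatibilityDegree{\tube[u]}{\tsup}$ and the $\compatibilityDegree{\tube[u]}{\tube[s]_j}$, valid for any tube $\tube[u]$ whose interaction with $\{r,r'\}$ is controlled. In particular, this provides a baseline linear combination of the compatibility vectors of $\tube$, $\tube'$, $\tsup$ and the $\tube[s]_j$ whose $\tube_i^\circ$-coordinate vanishes for every initial tube $\tube_i^\circ\in\tubing^\circ$ that is not strictly contained in the interior of $\tsup$. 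The coefficients of $\compatibilityVector{\tubing^\circ}{\tube}$ and $\compatibilityVector{\tubing^\circ}{\tube'}$ in this baseline are both positive, so the Separating property already holds on these coordinates.

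\textbf{Inductive correction.} For an initial tube $\tube_i^\circ\in\tubing^\circ$ strictly contained in some $\tube[s]_j$, the baseline may still exhibit a discrepancy in the $\tube_i^\circ$-coordinate. By Lemma~\ref{lem:restrictionCompatibility}, the compatibility degree between $\tube_i^\circ$ and any tube inside $\tube[s]_j$ coincides with the corresponding compatibility degree in the strictly smaller graph $\graphG{}[\tube[s]_j]$. The induction hypothesis supplies both the \textbf{Span Property} and the \textbf{Local Flip Property} for $\graphG{}[\tube[s]_j]$: the compatibility vectors of the chosen maximal tubing on $\graphG{}[\tube[s]_j]$ span the relevant hyperplane, and any internal linear dependence is supported on tubes contained in $\tube[s]_j$. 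Hence the discrepancy can be cancelled by an explicit linear combination of compatibility vectors of tubes inside $\tube[s]_j$, and Lemma~\ref{lem:restrictionCompatibility} ensures that this correction is invisible in every coordinate indexed by an initial tube not contained in $\tube[s]_j$. The corrections for distinct $\tube[s]_j$ are therefore independent and can be applied in parallel; a symmetric argument using the induction on $\graphG{}[\tube]$ and $\graphG{}[\tube']$ handles initial tubes strictly contained in $\tube$ or $\tube'$ but not in any $\tube[s]_j$. The coefficients of $\compatibilityVector{\tubing^\circ}{\tube}$ and $\compatibilityVector{\tubing^\circ}{\tube'}$ are unchanged by the corrections, so Separating is preserved; and all correction vectors are supported on tubes contained in $\tsup$, so Local is preserved as well.

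\textbf{Main obstacle.} The delicate point is the inductive correction step: one must ensure that corrections on one component $\tube[s]_j$ do not reintroduce discrepancies in coordinates already balanced, either elsewhere inside $\tsup$ or in the baseline. This is exactly what Lemma~\ref{lem:restrictionCompatibility} guarantees by decoupling the problem on each $\tube[s]_j$ into an independent instance of the inductive statement. A secondary subtlety is that when $\tube\ssm\tube'$ or $\tube'\ssm\tube$ contains vertices other than $r$ or $r'$ (as in Example~\ref{exm:exmLinearDependence}), the baseline identity is already non-trivial and the inductive corrections may involve large integer coefficients, which explains the complicated linear dependence observed in that example.
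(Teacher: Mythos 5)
There is a genuine gap, and it is located exactly where the lemma is hardest. You place the residual discrepancies of the baseline identity at the coordinates indexed by initial tubes strictly contained in some forced component $\tube[s]_j$ of $\tsup \ssm \{r,r'\}$. But any such tube is compatible with both $\tube$ and $\tube'$ (a tube inside a connected component of $\tube\ssm(\tube'\cup\{r\})$ is disjoint from and non-adjacent to $\tube'$, since $r$ is the only neighbor of $\tube'$ in $\tube\ssm\tube'$), and these coordinates are handled by the easy restriction argument via Lemma~\ref{lem:restrictionCompatibility}. The coordinates where the inclusion--exclusion identity actually fails are those indexed by initial tubes contained in $\tsup$ that \emph{contain} $r$ (or $r'$) and contain or are adjacent to the other cross vertex; such tubes are contained in no $\tube[s]_j$, so no amount of induction on the restricted graphs $\graphG{}[\tube[s]_j]$ sees them. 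The correction vectors one can use are supported on tubes lying inside the forced components (they must be compatible with both $\tube$ and $\tube'$), and they are emphatically \emph{not} ``invisible'' at the problematic coordinates: the whole mechanism of the paper's proof is that the components $\tube^\star_i$ of $\tube^\circ_{i-1}\ssm\{r\}$ satisfy $\compatibilityDegree{\tube^\circ_i}{\tube^\star_i}=1$ and $\compatibilityDegree{\tube^\circ_i}{\tube^\star_j}=0$ for $j>i$ along the nested chain of initial tubes containing $r$, giving a triangular system that determines the correction coefficients $\beta_i$ recursively. Lemma~\ref{lem:restrictionCompatibility} provides no such decoupling.

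The second, fatal, point is your claim that the coefficients of $\compatibilityVector{\tubing^\circ}{\tube}$ and $\compatibilityVector{\tubing^\circ}{\tube'}$ keep their (equal) baseline values throughout the corrections. Example~\ref{exm:exmLinearDependence} already refutes this: there the unique (up to scaling) linear dependence has coefficients $2$ and $1$ on the two flipped tubes, so no valid dependence with equal coefficients on $\tube$ and $\tube'$ exists for that flip. In the paper's argument the coefficients $\alpha,\alpha'$ must be rescaled to values such as $\compatibilityDegree{\tube^\circ}{\tube[a]}+\compatibilityDegree{\tube^\circ}{\tube'}$ and $\compatibilityDegree{\tube^\circ}{\tinf}+\compatibilityDegree{\tube^\circ}{\tube[a]}$, dictated by the maximal initial tube $\tube^\circ$ containing $r$ and adjacent to $r'$, before the $\beta_i$ can be solved for; proving that these rescaled $\alpha,\alpha'$ are still both positive is part of the content of the \textbf{Separating} property and cannot be read off a symmetric baseline. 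Your scheme as written would produce, in Example~\ref{exm:exmLinearDependence}, a vector that is not a linear dependence at all.
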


\begin{proof}
We fix some notations for the forced tubes of the exchangeable pair~$\{\tube,\tube'\}$. First recall that since~$\tube$ and~$\tube'$ are exchangeable, the tube~$\tube$ (resp.~$\tube'$) has a single neighbor in~$\tube'\ssm\tube$ (resp. in~$\tube\ssm\tube'$) that we denote by~$r$ (resp.~$r'$). We set~${\tsup \eqdef \tube \cup \tube'}$, and we denote by~$\tinf_1, \dots, \tinf_k$ the connected components of~$\graphG{}[\tube \cap \tube']$, by~$\tube[a]_1, \dots, \tube[a]_\ell$ the connected components of~$\graphG{}[\tube \ssm (\tube' \cup \{r\})]$, and by~$\tube[a]'_1, \dots, \tube[a]'_{\ell'}$ the connected components of~$\graphG{}[\tube' \ssm (\tube \cup \{r'\})]$. Although it is not a tube, we set~$\tinf \eqdef \bigsqcup_{i \in [k]} \tinf_i$ and we abuse notation to write~$\compatibilityDegree{\tube^\circ}{\tinf}$ for~$\sum_{i \in [k]} \compatibilityDegree{\tube^\circ}{\tinf_i}$ and similarly~$\compatibilityVector{\tubing^\circ}{\tinf}$ for~$\sum_{i \in [k]} \compatibilityVector{\tubing^\circ}{\tinf_i}$. We will use in the same way the notations~$\tube[a]$ and~$\tube[a]'$.

We need to distinguish different cases, for which the linear dependences are slightly different, while the proofs are essentially identical. To simplify the discussion, we assume in Cases~(A), (B) and~(C) below that~$\tube,\tube' \notin \tubing^\circ$ and that no tube of~$\tubing^\circ$ is compatible with both~$\tube$ and~$\tube'$. At the end of the proof, Cases~(D) and~(E) show how to restrict to these hypotheses.

\para{(A) A first relation}
Consider a tube~$\tube^\circ$ of~$\tubing^\circ$ not contained in~$\tsup$. We claim that
\[
\compatibilityDegree{\tube^\circ}{\tube} + \compatibilityDegree{\tube^\circ}{\tube'} = \compatibilityDegree{\tube^\circ}{\tsup} + \compatibilityDegree{\tube^\circ}{\tinf}.
\]
Indeed, since~$\tube^\circ \not\subseteq \tsup$, these four compatibility degrees actually count neighbors of~$\tube^\circ$. The formula thus follows from inclusion-exclusion principle since~$\tsup = \tube \cup \tube'$ and~$\tinf = \tube \cap \tube'$. 

There are other tubes of~$\tubing^\circ$ satisfying this relation. Indeed, consider a tube~$\tube^\circ$ in~$\tubing^\circ$ included in~$\tsup$ which contains~$r$ but does not contain nor is adjacent to~$r'$. Then~$\tube^\circ \subsetneq \tube \subsetneq \tsup$ so that~$\compatibilityDegree{\tube^\circ}{\tube} = \compatibilityDegree{\tube^\circ}{\tsup} = 0$. Moreover, $\tube^\circ$ is incompatible with~$\tube'$ and all~$\tinf_1, \dots, \tinf_k$. Since $r'$ is not adjacent to~$\tube^\circ$, all neighbors of~$\tube^\circ$ in~$\tube' \ssm \tube^\circ$ are in~$\tinf \ssm \tube^\circ$. Therefore~$\compatibilityDegree{\tube^\circ}{\tube'} = \compatibilityDegree{\tube^\circ}{\tinf}$. The relation follows. Similarly, the relation follows if~$\tube^\circ$ contains~$r'$ and does not contain nor is adjacent to~$r$.

If all tubes of~$\tubing^\circ$ included in~$\tsup$ satisfy the previous conditions, we have obtained a \textbf{separating} and \textbf{local} linear dependence:
\begin{equation}
\label{eq:linearDependence1}
\compatibilityVector{\tubing^\circ}{\tube} + \compatibilityVector{\tubing^\circ}{\tube'} = \compatibilityVector{\tubing^\circ}{\tinf} + \compatibilityVector{\tubing^\circ}{\tsup}.
\end{equation}
This linear dependence would be valid for any adjacent maximal tubings~$\tubing, \tubing'$ on~$\graphG$ such that ${\tubing \ssm \{\tube\} = \tubing' \ssm \{\tube'\}}$ since the tube~$\tsup$ and the tubes~$\tinf$ are forced in any such pair. Unfortunately, these conditions do not always hold for all tubes of~$\tubing^\circ$. In this case, we will therefore adapt the linear dependence~\eqref{eq:linearDependence1} to cover all possible configurations for the tubes of~$\tubing^\circ$.

\para{(B) No tube of~$\tubing^\circ$ contained in~$\tsup$ contains both~$r$ and~$r'$}
Except if the linear dependence~\eqref{eq:linearDependence1} is valid, there must exist w.l.o.g.~a tube~$\tube^\circ \in \tubing^\circ$ contained in~$\tsup$, containing~$r$ and adjacent to~$r'$. Choose~$\tube^\circ$ maximal for these properties. Since we have assumed that no tube of~$\tubing^\circ$ is compatible with both~$\tube$ and~$\tube'$, all tubes of~$\tubing^\circ$ included in~$\tube^\circ$ contain~$r$. These tubes thus form a nested chain~$\tube^\circ = \tube^\circ_0 \supsetneq  \tube^\circ_1 \supsetneq \dots \supsetneq \tube^\circ_p = \{r\}$. For~$i \in [p]$, define~$\tube^\star_i$ to be the connected component of~$\graphG{}[\tube^\circ_{i-1} \ssm \{r\}]$ containing the singleton~$\tube^\circ_{i-1} \ssm \tube^\circ_{i}$.

Set
\[
\alpha \eqdef \compatibilityDegree{\tube^\circ}{\tube[a]} + \compatibilityDegree{\tube^\circ}{\tube'}
\qquad\text{and}\qquad
\alpha' \eqdef \compatibilityDegree{\tube^\circ}{\tinf} + \compatibilityDegree{\tube^\circ}{\tube[a]},
\]
and define inductively~$\beta_1, \dots, \beta_p$ by
\[
\beta_i = \alpha' \, \compatibilityDegree{\tube^\circ_i}{\tube'} - \alpha \, \compatibilityDegree{\tube^\circ_i}{\tinf} - (\alpha - \alpha')\compatibilityDegree{\tube^\circ_i}{\tube[a]} - \sum_{j \in [i-1]} \beta_j \, \compatibilityDegree{\tube^\circ_i}{\tube^\star_j}.
\]

We claim that
\begin{equation}
\label{eq:linearDependence2}
\alpha \, \compatibilityVector{\tubing^\circ}{\tube} + \alpha' \, \compatibilityVector{\tubing^\circ}{\tube'} = \alpha \, \compatibilityVector{\tubing^\circ}{\tinf} + \alpha' \, \compatibilityVector{\tubing^\circ}{\tsup} + (\alpha - \alpha') \, \compatibilityVector{\tubing^\circ}{\tube[a]} + \sum_{i \in [p]} \beta_i \, \compatibilityVector{\tubing^\circ}{\tube^\star_i}.
\end{equation}
To prove it, we check this linear dependence coordinate by coordinate.

Observe first that~$\compatibilityDegree{\tube^\circ_i}{\tube} = \compatibilityDegree{\tube^\circ_i}{\tsup} = 0$ for all~$0 \le i \le p$ since~$\tube^\circ_i \subsetneq \tube \subseteq \tsup$. Moreover, for all~$i < j$, we have by definition~$\tube^\star_j \subsetneq \tube^\circ_i$, so that~$\compatibilityDegree{\tube^\circ_i}{\tube^\star_j} = 0$. Finally, we have~$\compatibilityDegree{\tube^\circ_i}{\tube^\star_i} = 1$ since $\tube^\circ_i$ and~$\tube^\star_i$ are incompatible and the only neighbor of~$\tube^\circ_i$ in~$\tube^\star_i \ssm \tube^\circ_i$ is the singleton~$\tube^\circ_{i-1} \ssm \tube^\circ_{i}$. Therefore, Relation~\eqref{eq:linearDependence2} holds for~$\tube^\circ$ by definition of~$\alpha$ and~$\alpha'$ and for~$\tube^\circ_i$ by definition of~$\beta_i$.

Consider now a tube~$\tube[s]^\circ$ of~$\tubing^\circ$ not included in~$\tube^\circ$. Suppose that~$\tube[s]^\circ$ is included in~$\tsup$. Then~$\tube[s]^\circ$ contains precisely one of~$r$ and~$r'$ (it cannot contain both by assumption~(B), and it cannot avoid both as it would be compatible with both~$\tube$ and~$\tube'$). If $\tube[s]^\circ$ contains~$r$, it contains~$\tube^\circ$ and therefore equals~$\tube^\circ$ by maximality of the latter. Otherwise, $\tube[s]^\circ$ contains~$r'$, thus is adjacent to~$\tube^\circ$, thus contains it (by compatibility), and thus contains~$r$, a contradiction. We therefore obtained that~$\tube[s]^\circ$ is not included in~$\tsup$, so that all compatibility degrees~$\compatibilityDegree{\tube[s]^\circ}{\tube}$,  $\compatibilityDegree{\tube[s]^\circ}{\tsup}$, $\compatibilityDegree{\tube[s]^\circ}{\tinf}$ and~$\compatibilityDegree{\tube[s]^\circ}{\tube[a]}$ actually count neighbors of~$\tube[s]^\circ$.
Observe now that~$r$ cannot be adjacent to~$\tube[s]^\circ$ (except if it belongs to~$\tube[s]^\circ$), since $r$ belongs to~$\tube^\circ$ which is compatible with~$\tube[s]^\circ$. Therefore, we~have
\[
\compatibilityDegree{\tube[s]^\circ}{\tube} = \compatibilityDegree{\tube[s]^\circ}{\tinf} + \compatibilityDegree{\tube[s]^\circ}{\tube[a]}
\qquad\text{and}\qquad
\compatibilityDegree{\tube[s]^\circ}{\tube'} = \compatibilityDegree{\tube[s]^\circ}{\tsup} - \compatibilityDegree{\tube[s]^\circ}{\tube[a]},
\]
since~$\tube = \tinf \sqcup \tube[a] \sqcup \{r\}$ and~$\tube' = \tsup \ssm \tube[a] \ssm \{r\}$. Finally, since any~$\tube^\star_i$ is contained in~$\tube^\circ$, it is compatible with~$\tube[s]^\circ$, so that~$\compatibilityDegree{\tube[s]^\circ}{\tube^\star_i} = 0$ by Proposition~\ref{prop:compatibilityDegree}. Combining these equalities, we obtain that Relation~\eqref{eq:linearDependence2} holds for any~$\tube[s]^\circ \in \tubing^\circ$.

We found a linear dependence between the compatibility vectors of~${\{\tube, \tube', \tsup\} \cup \tinf \cup \tube[a] \cup \set{\tube^\star_i}{i \in [p]}}$ with respect to the initial tubing~$\tubing^\circ$. Any two of these tubes except~$\tube, \tube'$ are compatible:
\begin{itemize}
\item the forced tubes are pairwise compatible;
\item each~$\tube^\star_i$ is a connected component of~$\tube^\circ_i \ssm \{r\}$, thus is contained in~$\tsup \ssm \{r,r'\}$, and thus is compatible with the connected components of~$\tsup \ssm \{r,r'\}$ (\ie all forced tubes);
\item for~$\tube^\circ_i \supseteq \tube^\circ_j$, any connected component of~$\tube^\circ_j \ssm \{r\}$ is contained in a connected component of~$\tube^\circ_i \ssm \{r\}$ and thus is compatible with all connected components of~$\tube^\circ_i \ssm \{r\}$. In particular~$\tube^\star_i$ and~$\tube^\star_j$ are compatible.
\end{itemize}
Therefore, there exists adjacent maximal tubings~$\tubing, \tubing'$ on~$\graphG$ such that~$\tubing \ssm \{\tube\} = \tubing' \ssm \{\tube'\}$ and $\tubing \cup \tubing' \supseteq \{\tube, \tube', \tsup\} \cup \tinf \cup \tube[a] \cup \set{\tube^\star_i}{i \in [p]}$. For this choice of~$\tubing, \tubing'$, we thus obtained a \textbf{separating} and \textbf{local} linear dependence~\eqref{eq:linearDependence2} between the compatibility vectors of the tubes of~$\tubing \cup \tubing'$.

\para{(C) A tube of~$\tubing^\circ$ contained in~$\tsup$ contains both~$r$ and~$r'$}
We distinguish again two cases.

\para{\quad (C.1) No tube of $\tubing^\circ$ contained in~$\tsup$ contains~$r$ and is adjacent to~$r'$ or conversely}
There must exist a tube~$\tube^\circ \in \tubing^\circ$ included in~$\tube$ and containing~$r$. Choose~$\tube^\circ$ maximal for these properties. With the same arguments as in Case~(B), the tubes of~$\tubing^\circ$ included in~$\tube^\circ$ form a nested chain ${\tube^\circ = \tube^\circ_0 \supsetneq \tube^\circ_1 \supsetneq \dots \supsetneq \tube^\circ_p = \{r\}}$. For~$i \in [p]$, define~$\tube^\star_i$ to be the connected component of~${\graphG{}[\tube^\circ_{i-1} \ssm \{r\}]}$ containing the singleton~$\tube^\circ_{i-1} \ssm \tube^\circ_{i}$. We define the tube~$\tube'^\circ$, the chain~${\tube'^\circ = \tube'^\circ_0 \supsetneq \tube'^\circ_1 \supsetneq \dots \supsetneq \tube'^\circ_{p'} = \{r'\}}$ and the tubes~$\tube'^\star_i$ for~$i \in [p']$ similarly.

Consider now the inclusion minimal tube~$\tsup^\circ$ of~$\tubing^\circ$ contained in~$\tsup$ and containing both~$r$ and~$r'$. Since we assumed that no tube of~$\tubing^\circ$ is compatible with both~$\tube$ and~$\tube'$, we have~$\tsup^\circ = \tube^\circ \sqcup \tube'^\circ \sqcup \{\rsup\}$ where~$\rsup \in \tinf$. Let~$\tsup^\star$ be the connected component of~$\graphG{}[\tsup^\circ \ssm \{r, r'\}]$ containing~$\rsup$.

Set
\[
\alpha \eqdef \compatibilityDegree{\tube'^\circ}{\tinf} + \compatibilityDegree{\tube'^\circ}{\tube[a]'}
\qquad\text{and}\qquad
\alpha' \eqdef \compatibilityDegree{\tube^\circ}{\tinf} + \compatibilityDegree{\tube^\circ}{\tube[a]},
\]
and define inductively~$\beta_1, \dots, \beta_p$ and~$\beta'_1, \dots, \beta'_{p'}$ by
\begin{align*}
\beta_i & = \alpha' \, \compatibilityDegree{\tube^\circ_i}{\tube'} - (\alpha + \alpha') \, \compatibilityDegree{\tube^\circ_i}{\tinf} - \alpha \, \compatibilityDegree{\tube^\circ_i}{\tube[a]} + \alpha\alpha' \, \compatibilityDegree{\tube^\circ_i}{\tsup^\star} - \sum_{j \in [i-1]} \beta_j \, \compatibilityDegree{\tube^\circ_i}{\tube^\star_j}, \\
\beta'_i & = \alpha \, \compatibilityDegree{\tube'^\circ_i}{\tube} - (\alpha + \alpha') \, \compatibilityDegree{\tube'^\circ_i}{\tinf} - \alpha' \, \compatibilityDegree{\tube'^\circ_i}{\tube[a]'} + \alpha\alpha' \, \compatibilityDegree{\tube'^\circ_i}{\tsup^\star} - \sum_{j \in [i-1]} \beta'_j \, \compatibilityDegree{\tube'^\circ_i}{\tube'^\star_j}.
\end{align*}

We claim that
\begin{align}
\label{eq:linearDependence3}
\alpha \, \compatibilityVector{\tubing^\circ}{\tube} + \alpha' \, \compatibilityVector{\tubing^\circ}{\tube'} = & \; (\alpha + \alpha') \, \compatibilityVector{\tubing^\circ}{\tinf} + \alpha \, \compatibilityVector{\tubing^\circ}{\tube[a]} + \alpha' \, \compatibilityVector{\tubing^\circ}{\tube[a]'} - \alpha\alpha' \, \compatibilityVector{\tubing^\circ}{\tsup^\star} \\
\nonumber & + \sum_{i \in [p]} \beta_i \, \compatibilityVector{\tubing^\circ}{\tube^\star_i} + \sum_{i \in [p']} \beta'_i \, \compatibilityVector{\tubing^\circ}{\tube'^\star_i}.
\end{align}
To prove it, we check this linear dependence coordinate by coordinate.

We start with~$\tube^\circ$ and~$\tube'^\circ$. We have~$\compatibilityDegree{\tube^\circ}{\tube} = 0$ since $\tube^\circ \subsetneq \tube$. Moreover, $\compatibilityDegree{\tube^\circ}{\tsup^\star} = |\{\rsup\}| = 1$. Finally, we have by definition~$\tube^\star_j \subsetneq \tube^\circ$ so that~$\compatibilityDegree{\tube^\circ}{\tube^\star_j} = 0$ for all~$j \in [p]$. Combining these equalities, Relation~\eqref{eq:linearDependence3} follows for~$\tube^\circ$ from the definition of~$\alpha$ and~$\alpha'$. The argument is identical~for~$\tube'^\circ$.

We now consider the tubes~$\tube^\circ_i$ and~$\tube'^\circ_i$. Observe first that~$\compatibilityDegree{\tube^\circ_i}{\tube} = 0$ for all~$0 \le i \le p$ since~$\tube^\circ_i \subsetneq \tube$. Moreover, for all~$i < j$, we have by definition~$\tube^\star_j \subsetneq \tube^\circ_i$, so that~$\compatibilityDegree{\tube^\circ_i}{\tube^\star_j} = 0$. In addition, we have~$\compatibilityDegree{\tube^\circ_i}{\tube'^\star_j} = 0$ for all~$i \in [p]$ and~$j \in [p']$ since~$\tube^\circ_i$ and~$\tube'^\star_j$ are compatible. Finally, we have~$\compatibilityDegree{\tube^\circ_i}{\tube^\star_i} = 1$ since $\tube^\circ_i$ and~$\tube^\star_i$ are incompatible and the only neighbor of~$\tube^\circ_i$ in~$\tube^\star_i \ssm \tube^\circ_i$ is the singleton~$\tube^\circ_{i-1} \ssm \tube^\circ_{i}$. Therefore, Relation~\eqref{eq:linearDependence3} holds for~$\tube^\circ_i$ by definition of~$\beta_i$. The argument is identical for~$\tube'^\circ_i$.

Finally, we consider a tube~$\tube[s]^\circ$ of~$\tubing^\circ$ not strictly contained in~$\tsup^\circ$. With similar arguments as in Case~(B), the compatibility degrees~$\compatibilityDegree{\tube[s]^\circ}{\tube}$,  $\compatibilityDegree{\tube[s]^\circ}{\tube'}$, $\compatibilityDegree{\tube[s]^\circ}{\tinf}$, $\compatibilityDegree{\tube[s]^\circ}{\tube[a]}$ and~$\compatibilityDegree{\tube[s]^\circ}{\tube[a]'}$ actually count neighbors of~$\tube[s]^\circ$, and (by assumption~$C$) satisfy
\[
\compatibilityDegree{\tube[s]^\circ}{\tube} = \compatibilityDegree{\tube[s]^\circ}{\tinf} + \compatibilityDegree{\tube[s]^\circ}{\tube[a]}
\qquad\text{and}\qquad
\compatibilityDegree{\tube[s]^\circ}{\tube'} = \compatibilityDegree{\tube[s]^\circ}{\tinf} + \compatibilityDegree{\tube[s]^\circ}{\tube[a]'}.
\]
Moreover, since all~$\tube^\star_i$, $\tube'^\star_i$ and~$\tsup^\star$ are contained in~$\tsup^\circ$, they are all compatible with~$\tube[s]^\circ$, so that $\compatibilityDegree{\tube[s]^\circ}{\tube^\star_i} = \compatibilityDegree{\tube[s]^\circ}{\tube'^\star_i} = \compatibilityDegree{\tube[s]^\circ}{\tsup^\star} = 0$ by Proposition~\ref{prop:compatibilityDegree}. Combining these equalities, we obtain that Relation~\eqref{eq:linearDependence3} holds for any~$\tube[s]^\circ \in \tubing^\circ$.

With the same arguments as in Case~(B), there exists adjacent maximal tubings~$\tubing, \tubing'$ on~$\graphG$ such that~$\tubing \ssm \{\tube\} = \tubing' \ssm \{\tube'\}$ and~$\tubing \cup \tubing' \supseteq \{\tube, \tube', \tsup^\star\} \cup \tinf \cup \tube[a] \cup \tube[a]' \cup \set{\tube^\star_i}{i \in [p]} \cup \set{\tube'^\star_i}{i \in [p]}$. For this choice of~$\tubing, \tubing'$, we thus obtained a \textbf{separating} and \textbf{local} linear dependence~\eqref{eq:linearDependence3} between the compatibility vectors of the tubes of~$\tubing \cup \tubing'$.

\para{\quad (C.2) A tube of $\tubing^\circ$ contained in~$\tsup$ contains~$r$ and is adjacent to~$r'$ or conversely}
W.l.o.g., consider an inclusion maximal tube~$\tube^\circ \in \tubing^\circ$ contained in~$\tsup$, containing~$r$ and adjacent to~$r'$. Since we have assumed that no tube of~$\tubing^\circ$ is compatible with both~$\tube$ and~$\tube'$, all tubes of~$\tubing^\circ$ included in~$\tube^\circ$ contain~$r$. These tubes thus form a nested chain~$\tube^\circ = \tube^\circ_0 \supsetneq  \tube^\circ_1 \supsetneq \dots \supsetneq \tube^\circ_p = \{r\}$. For~$i \in [p]$, define~$\tube^\star_i$ to be the connected component of~$\graphG{}[\tube^\circ_{i-1} \ssm \{r\}]$ containing the singleton~$\tube^\circ_{i-1} \ssm \tube^\circ_{i}$.

Set
\[
\alpha \eqdef \compatibilityDegree{\tube^\circ}{\tube'} - \compatibilityDegree{\tube^\circ}{\tinf} = |\{r'\}| = 1
\qquad\text{and}\qquad
\alpha' \eqdef \compatibilityDegree{\tube^\circ}{\tinf} + \compatibilityDegree{\tube^\circ}{\tube[a]},
\]
and define inductively~$\beta_1, \dots, \beta_p$ by
\[
\beta_i = \alpha' \, \compatibilityDegree{\tube^\circ_i}{\tube'} - (1 + \alpha') \, \compatibilityDegree{\tube^\circ_i}{\tinf} - \compatibilityDegree{\tube^\circ_i}{\tube[a]} - \sum_{j \in [i-1]} \beta_j \, \compatibilityDegree{\tube^\circ_i}{\tube^\star_j}.
\]

We claim that
\begin{equation}
\label{eq:linearDependence4}
\compatibilityVector{\tubing^\circ}{\tube} + \alpha' \, \compatibilityVector{\tubing^\circ}{\tube'} = (1 + \alpha') \, \compatibilityVector{\tubing^\circ}{\tinf} + \compatibilityVector{\tubing^\circ}{\tube[a]} + \alpha' \, \compatibilityVector{\tubing^\circ}{\tube[a]'} + \sum_{i \in [p]} \beta_i \, \compatibilityVector{\tubing^\circ}{\tube^\star_i}.
\end{equation}
To prove it, we check this linear dependence coordinate by coordinate.

Observe first that~$\compatibilityDegree{\tube^\circ_i}{\tube} = 0$ for all~$0 \le i \le p$ since~$\tube^\circ_i \subsetneq \tube$. Moreover, for all~$i < j$, we have by definition~$\tube^\star_j \subsetneq \tube^\circ_i$, so that~$\compatibilityDegree{\tube^\circ_i}{\tube^\star_j} = 0$. Finally, we have~$\compatibilityDegree{\tube^\circ_i}{\tube^\star_i} = 1$ since $\tube^\circ_i$ and~$\tube^\star_i$ are incompatible and the only neighbor of~$\tube^\circ_i$ in~$\tube^\star_i \ssm \tube^\circ_i$ is the singleton~$\tube^\circ_{i-1} \ssm \tube^\circ_{i}$. Therefore, Relation~\eqref{eq:linearDependence2} holds for~$\tube^\circ$ by definition of~$\alpha$ and~$\alpha'$ and for~$\tube^\circ_i$ by definition of~$\beta_i$.

Consider now a tube~$\tube[s]^\circ$ of~$\tubing^\circ$ not strictly contained in~$\tube^\circ$. With similar arguments as in Case~(B), the compatibility degrees~$\compatibilityDegree{\tube[s]^\circ}{\tube}$,  $\compatibilityDegree{\tube[s]^\circ}{\tube'}$, $\compatibilityDegree{\tube[s]^\circ}{\tinf}$, $\compatibilityDegree{\tube[s]^\circ}{\tube[a]}$ and~$\compatibilityDegree{\tube[s]^\circ}{\tube[a]'}$ actually count neighbors of~$\tube[s]^\circ$, and (by assumption~$C$) satisfy
\[
\compatibilityDegree{\tube[s]^\circ}{\tube} = \compatibilityDegree{\tube[s]^\circ}{\tinf} + \compatibilityDegree{\tube[s]^\circ}{\tube[a]}
\qquad\text{and}\qquad
\compatibilityDegree{\tube[s]^\circ}{\tube'} = \compatibilityDegree{\tube[s]^\circ}{\tinf} + \compatibilityDegree{\tube[s]^\circ}{\tube[a]'}.
\]
Moreover, since all~$\tube^\star_i$ are contained in~$\tube^\circ$, they are all compatible with~$\tube[s]^\circ$, so that $\compatibilityDegree{\tube[s]^\circ}{\tube^\star_i} = 0$ by Proposition~\ref{prop:compatibilityDegree}. Combining these equalities, we obtain that Relation~\eqref{eq:linearDependence3} holds for any~$\tube[s]^\circ \in \tubing^\circ$.

With the same arguments as in Case~(B), there exists adjacent maximal tubings~$\tubing, \tubing'$ on~$\graphG$ such that~$\tubing \ssm \{\tube\} = \tubing' \ssm \{\tube'\}$ and~$\tubing \cup \tubing' \supseteq \{\tube, \tube'\} \cup \tinf \cup \tube[a] \cup \tube[a]' \cup \set{\tube^\star_i}{i \in [p]}$. For this choice of~$\tubing, \tubing'$, we thus obtained a \textbf{separating} and \textbf{local} linear dependence~\eqref{eq:linearDependence4} between the compatibility vectors of the tubes of~$\tubing \cup \tubing'$.

\medskip
\enlargethispage{.2cm}
We assumed in Cases~(A), (B) and~(C) above that~$\tube,\tube' \notin \tubing^\circ$ and that no tube of~$\tubing^\circ$ is compatible with both~$\tube$ and~$\tube'$. The remaining two cases show how to force this assumption.

\para{(D) A tube~$\tube^\circ$ of~$\tubing^\circ$ is compatible with both~$\tube$ and~$\tube'$}
We treat this case by induction on the number of tubes of~$\tubing^\circ$ compatible with both~$\tube$ and~$\tube'$. By Lemma~\ref{lem:restrictionCompatibility}, the compatibility vectors with respect to~$\tubing^\circ$ of the tubes of~$\graphG$ compatible with~$\tube^\circ$ correspond to the compatibility vectors of the tubes of~$\widetilde\graphG \eqdef \graphG{}[\tube^\circ] \sqcup \graphG^\star\tube^\circ$ with respect to the maximal tubing~$\widetilde\tubing^\circ$. Since there are strictly less tubes of~$\widetilde\tubing^\circ$ compatible with both~$\widetilde\tube$ and~$\widetilde\tube'$ than tubes of~$\tubing^\circ$ compatible with both~$\tube$ and~$\tube'$, the induction hypothesis ensures that there exists adjacent maximal tubings~$\widetilde\tubing, \widetilde\tubing'$ on~$\widetilde\graphG$ such that~${\widetilde\tubing \ssm \{\widetilde\tube\} = \widetilde\tubing' \ssm \{\widetilde\tube'\}}$ and a \textbf{separating} and \textbf{local} linear dependence between the compatibility vectors of~${\widetilde\tubing \cup \widetilde\tubing'}$ with respect to~$\widetilde\tubing^\circ$. By Lemma~\ref{lem:restriction}, the sets~${\tubing \eqdef \{\tube^\circ\} \cup \bigset{\tube[s]}{\widetilde{\tube[s]} \in \widetilde\tubing}}$ and ${\tubing' \eqdef \{\tube^\circ\} \cup \bigset{\tube[s]'}{\widetilde{\tube[s]}' \in \widetilde\tubing'}}$ are tubings on~$\graphG$ (since~$\tube^\circ$ is compatible with all preimages of tubes of~$\widetilde\graphG$) and they are maximal by cardinality. Moreover, Lemma~\ref{lem:restrictionCompatibility} ensures that the linear dependence between the compatibility vectors of the tubes of~$\tubing \cup \tubing'$ with respect to~$\tubing^\circ$ coincides with the linear dependence between the compatibility vectors of the tubes of~$\widetilde\tubing \cup \widetilde\tubing'$ with respect to~$\widetilde\tubing^\circ$. The linear dependence clearly remains \textbf{separating} and \textbf{local}, which concludes when there is a tube~$\tube^\circ \in \tubing^\circ$ compatible with both~$\tube$ and~$\tube'$.

\para{(E) $\tube$ or~$\tube'$ belongs to~$\tubing^\circ$}
We can assume w.l.o.g.~that~$\tube = \tube^\circ$ belongs to~$\tubing^\circ$. Consider any two adjacent maximal tubings~$\tubing, \tubing'$ on~$\graphG$ such that~${\tubing \ssm \{\tube\} = \tubing' \ssm \{\tube'\}}$. Since any tube~$\tube[s]$ in~$\tubing \cap \tubing'$ is compatible with~$\tube = \tube^\circ$, the $\tube^\circ$-coordinate of the compatibility vector~$\compatibilityVector{\tubing^\circ}{\tube[s]}$ vanishes. The same happens for the vector~${\b{v} \eqdef \compatibilityVector{\tubing^\circ}{\tube} + \compatibilityVector{\tubing^\circ}{\tube'}}$ since $\compatibilityDegree{\tube^\circ}{\tube} = -1$ (as~$\tube = \tube^\circ$) while $\compatibilityDegree{\tube^\circ}{\tube'} = 1$ (by Proposition~\ref{prop:compatibilityDegree}). The set of vectors~$\{\b{v}\} \cup \set{\compatibilityVector{\tubing^\circ}{\tube[s]}}{\tube[s] \in \tubing \cap \tubing'}$ has cardinality~$|\tubing^\circ|$ but is contained in the hyperplane of~$\R^{\tubing^\circ}$ orthogonal to~$\b{e}_{\tube^\circ}$. Therefore, there is a linear dependence between these vectors, which translates into a linear dependence between the compatibility vectors of the tubes of~$\tubing \cup \tubing'$ with the same coefficient on~$\compatibilityVector{\tubing^\circ}{\tube}$ and~$\compatibilityVector{\tubing^\circ}{\tube'}$. This coefficient cannot vanish: otherwise, we would have a linear dependence on the compatibility vectors of~$\widetilde\tubing$ with respect to~$\widetilde\tubing^\circ$. Since~$\widetilde\tubing$ is a maximal tubing on~$\widetilde\graphG$, and~$\dim(\nestedComplex(\widetilde\graphG)) = \dim(\nestedComplex(\graphG)) - 1$, this would contradict Corollary~\ref{coro:fullRank} and thus the induction hypothesis for~$\widetilde\graphG$. This linear dependence is therefore \textbf{separating}. It is automatically \textbf{local} if~$\tsup = \ground$ since all tubes are then subsets of~$\tsup$. Otherwise, we prove that it is \textbf{local} by restriction. We distinguish two cases.

\para{\quad (E.1) A tube~$\tsup^\circ$ of~$\tubing^\circ$ contains~$\tube \cup \{r'\}$ and is contained in~$\tsup$}
Consider the restricted graph~${\hat\graphG = \graphG{}[\tube \cup \{r'\}]}$. Any tube~$\tube[s]$ of~$\graphG$ included in~$\tube$ is also a tube of~$\hat\graphG$. Therefore, the set~$\hat\tubing \eqdef \set{\tube[s]}{\tube[s] \in \tubing, \tube[s] \subseteq \tube}$ is a tubing on~$\hat\graphG$ for any tubing~$\tubing$ containing~$\tube$. Define also the tube~$\hat\tube' \eqdef \tube' \ssm \tube[a]'$ of~$\hat\graphG$. The existence of~$\tsup^\circ$ implies that $\compatibilityDegree{\tube[s]^\circ}{\tube'} = \compatibilityDegree{\tube[s]^\circ}{\tube[a]'}$ for any~$\tube[s]^\circ \in \tubing^\circ$ not included in~$\tube$. It follows that the compatibility vector~$\compatibilityVector{\hat\tubing^\circ}{\hat\tube'}$ is the restriction of~$\compatibilityVector{\tubing^\circ}{\tube'}-\compatibilityVector{\tubing^\circ}{\tube[a]'}$ to the coordinates indexed by~$\hat\tubing^\circ$. Similarly, for any tube~$\tube[s] \in \tubing$ contained in~$\tube$, the compatibility vector~$\compatibilityVector{\hat\tubing^\circ}{\tube[s]}$ is the restriction of~$\compatibilityVector{\tubing^\circ}{\tube[s]}$ to the coordinates indexed by~$\hat\tubing^\circ$. This shows that the linear dependence on~$\bigset{\compatibilityVector{\hat\tubing^\circ}{\tube[s]}}{\tube[s] \in \hat\tubing \cup \hat\tubing'}$ provides a linear dependence on~$\set{\compatibilityVector{\tubing^\circ}{\tube[s]}}{\tube[s] \in \tubing \cup \tubing', \tube[s] \subseteq \tube} \cup \{\compatibilityVector{\tubing^\circ}{\tube'}-\compatibilityVector{\tubing^\circ}{\tube[a]'}\}$. The resulting linear dependence on~$\set{\compatibilityVector{\tubing^\circ}{\tube[s]}}{\tube[s] \in \tubing \cup \tubing'}$ is~\textbf{local}.

\para{\quad (E.2) No tube of~$\tubing^\circ$ contains~$\tube \cup \{r'\}$ and is contained in~$\tsup$}
The proof is identical to Case~(E.1), replacing~$\compatibilityVector{\tubing^\circ}{\tube'}-\compatibilityVector{\tubing^\circ}{\tube[a]'}$ by~$\compatibilityVector{\tubing^\circ}{\tube'}-\compatibilityVector{\tubing^\circ}{\tsup}$.  Details are left to the reader.
\end{proof}

We can now prove the \textbf{Span Property}.

\begin{lemma}
\label{lem:spanProperty}
For any tube~$\tube[u]$ of~$\graphG$, the span of~$\set{\compatibilityVector{\tubing^\circ}{\tube[s]}}{\tube[s] \in \tubing, \tube[s] \subseteq \tube[u]}$, for a maximal tubing~$\tubing$ on~$\graphG$ containing~$\tube[u]$, is independent of~$\tubing$.
\end{lemma}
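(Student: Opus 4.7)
The plan is to proceed by induction on $|\tube[u]|$, nested within the ongoing outer induction on $\dim\nestedComplex(\graphG)$. The base case $|\tube[u]|=1$ is immediate since $\set{\tube[s]\in\tubing}{\tube[s]\subseteq\tube[u]}$ then reduces to the singleton $\{\tube[u]\}$ and the span is a fixed one-dimensional line. For the inductive step, I first reduce to comparing two \emph{adjacent} maximal tubings $\tubing, \tubing'$ both containing $\tube[u]$: the link of $\tube[u]$ in the simplicial sphere $\nestedComplex(\graphG)$ is itself a sphere, so its flip graph is connected and a telescoping argument applies.

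Given such an adjacent pair $\tubing \ssm \{\tube\} = \tubing' \ssm \{\tube'\}$, the compatibility of each of $\tube, \tube'$ with $\tube[u]$ together with the mutual incompatibility of $\tube$ and $\tube'$ forces by elementary case analysis that either both flipped tubes lie strictly inside $\tube[u]$ or neither is contained in $\tube[u]$. In the latter case the sets $\set{\tube[s]\in\tubing}{\tube[s]\subseteq\tube[u]}$ and $\set{\tube[s]\in\tubing'}{\tube[s]\subseteq\tube[u]}$ coincide, so I focus on the former case $\tube, \tube' \subsetneq \tube[u]$.

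I then invoke Lemma~\ref{lem:flipProperty} on the exchangeable pair $\{\tube, \tube'\}$ to produce \emph{some} adjacent maximal tubings $\hat\tubing, \hat\tubing'$ with $\hat\tubing \ssm \{\tube\} = \hat\tubing' \ssm \{\tube'\}$ together with a \textbf{separating, local} linear dependence among the compatibility vectors of the tubes of $\hat\tubing\cup\hat\tubing'$ with respect to $\tubing^\circ$. By the \textbf{Local} property, every tube appearing with a nonzero coefficient is contained in $\tsup \eqdef \tube \cup \tube' \subseteq \tube[u]$, and by the \textbf{Separating} property the coefficients on $\compatibilityVector{\tubing^\circ}{\tube}$ and $\compatibilityVector{\tubing^\circ}{\tube'}$ are both nonzero, which yields the equality $\text{span}\set{\compatibilityVector{\tubing^\circ}{\tube[s]}}{\tube[s]\in\hat\tubing,\ \tube[s]\subseteq\tube[u]} = \text{span}\set{\compatibilityVector{\tubing^\circ}{\tube[s]}}{\tube[s]\in\hat\tubing',\ \tube[s]\subseteq\tube[u]}$.

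The final step is to transfer this equality from $(\hat\tubing, \hat\tubing')$ to the given pair $(\tubing, \tubing')$, which I plan to carry out by connecting $\tubing$ to $\hat\tubing$ (and likewise $\tubing'$ to $\hat\tubing'$) through a sequence of flips that all preserve $\tube[u]$, the flipped tube $\tube$, and every forced sub-tube of the exchangeable pair $\{\tube,\tube'\}$ lying below $\tube$. Any such intermediate flip is either outside $\tube[u]$ (span trivially preserved) or confined strictly inside one of the preserved forced sub-tubes, hence supported on a tube of size strictly less than $|\tube[u]|$, so the strict induction on $|\tube[u]|$ applies. The main obstacle is to justify that such a preserving path actually exists and that every inside-$\tube[u]$ intermediate flip really is confined to a proper sub-tube of $\tube[u]$; this hinges on a careful structural use of the flip description (Proposition~\ref{prop:flip}) together with the explicit list of forced tubes recalled in Section~\ref{subsec:nestedComplex}, which act as a rigid scaffold ruling out any subsequent flip that could again have $\tsup=\tube[u]$.
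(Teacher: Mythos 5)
Your proposal matches the paper's proof up to and including the invocation of Lemma~\ref{lem:flipProperty}: induction on~$|\tube[u]|$, reduction to adjacent pairs via connectedness of the flip graph of the link of~$\tube[u]$, the dichotomy on whether the flipped tubes lie in~$\tube[u]$, and the use of the \textbf{separating} and \textbf{local} dependence for the special pair~$\hat\tubing, \hat\tubing'$. The genuine gap is in your final transfer step from~$(\hat\tubing,\hat\tubing')$ back to~$(\tubing,\tubing')$. Your plan is to connect~$\tubing$ to~$\hat\tubing$ by a flip path preserving a scaffold ($\tube[u]$, $\tube$, and the forced tubes below~$\tube$) and to argue that every intermediate flip inside~$\tube[u]$ is confined to a proper sub-tube of~$\tube[u]$. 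That structural claim is false as stated: take~$\graphG$ the path~$a\!-\!b\!-\!c\!-\!d\!-\!e$, $\tube[u]=\{a,b,c,d\}$, $\tube=\{a\}$, $\tube'=\{b\}$, so the scaffold is~$\{\tube[u],\{a\},\{a,b\}\}$; the adjacent maximal tubings~$\{\{a\},\{a,b\},\{a,b,c\},\tube[u]\}$ and~$\{\{a\},\{a,b\},\{d\},\tube[u]\}$ both contain the scaffold, yet their flip exchanges~$\{a,b,c\}\leftrightarrow\{d\}$ and is supported on~$\{a,b,c\}\cup\{d\}=\tube[u]$, not on any proper sub-tube. Such full-support flips are exactly the hard instances of the lemma you are trying to prove, so routing the transfer through an arbitrary scaffold-preserving path is circular unless you can show such flips are always avoidable, which you have not done and which is not obvious.

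The paper's transfer avoids flip paths entirely. The forced tubes of the exchangeable pair~$\{\tube,\tube'\}$ (namely~$\tsup$ and the connected components of~$\tsup\ssm\{r,r'\}$, i.e.\ the~$\tinf_i$, $\tube[a]_j$, $\tube[a]'_k$) belong to \emph{every} adjacent pair realizing this exchange, hence to both~$\hat\tubing\cap\hat\tubing'$ and~$\tubing\cap\tubing'$; moreover every tube of the common part strictly contained in~$\tsup$ is contained in one of these forced components, each of which has size strictly less than~$|\tube[u]|$. Applying the induction hypothesis to each forced component directly gives
\[
\vect\big(\set{\compatibilityVector{\tubing^\circ}{\tube[s]}}{\tube[s]\in\hat\tubing\cap\hat\tubing',\ \tube[s]\subseteq\tsup}\big)
=
\vect\big(\set{\compatibilityVector{\tubing^\circ}{\tube[s]}}{\tube[s]\in\tubing\cap\tubing',\ \tube[s]\subseteq\tsup}\big),
\]
so the vector~$\alpha\,\compatibilityVector{\tubing^\circ}{\tube}+\alpha'\,\compatibilityVector{\tubing^\circ}{\tube'}$ already lies in the span attached to~$\tubing\cap\tubing'$, and the desired equality of spans below~$\tube[u]$ follows since~$\alpha,\alpha'\neq 0$. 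You should replace your flip-path argument by this direct use of the induction hypothesis on the forced tubes.
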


\begin{proof}
We proceed by induction on the size of~$\tube[u]$. The result is immediate if~$\tube[u]$ is a singleton. Consider now two adjacent maximal tubings~$\tubing, \tubing'$ on~$\graphG$ containing~$\tube[u]$ such that~$\tubing \ssm \{\tube\} = \tubing' \ssm \{\tube'\}$. Assume first that~$\tube$ and~$\tube'$ are contained in~$\tube[u]$. By Lemma~\ref{lem:flipProperty}, there exists adjacent maximal tubings~$\tubing[S], \tubing[S]'$ on~$\graphG$ containing~$\tube[u]$ such that~$\tubing[S] \ssm \{\tube\} = \tubing[S]' \ssm \{\tube'\}$ and a linear dependence between the compatibility vectors of the tubes of~$\tubing[S] \cup \tubing[S]'$ with respect to~$\tubing^\circ$ which is both \textbf{separating} and \textbf{local}. By definition, this implies that there exists~$\alpha > 0$ and~$\alpha' > 0$ such that the vector~${\alpha \, \compatibilityVector{\tubing^\circ}{\tube} + \alpha' \, \compatibilityVector{\tubing^\circ}{\tube'}}$ belongs to~$\vect(\set{\compatibilityVector{\tubing^\circ}{\tube[s]}}{\tube[s] \in \tubing[S] \cap \tubing[S]', \tube[s] \subseteq \tsup})$. However,
\[
\set{\tube[s] \in \tubing[S] \cap \tubing[S]'}{\tube[s] \subseteq \tsup} = \{\tsup\} \, \cup \, \set{\tube[s] \in \tubing[S] \cap \tubing[S]'}{\tube[s] \subseteq \tinf} \, \cup \, \set{\tube[s] \in \tubing[S] \cap \tubing[S]'}{\tube[s] \subseteq \tube[a]} \, \cup \, \set{\tube[s] \in \tubing[S] \cap \tubing[S]'}{\tube[s] \subseteq \tube[a]'}.
\]
By induction hypothesis applied to each tube of~$\tinf$, we have
\[
\begin{array}{@{}c@{\;}c@{\;}l@{\quad}r}
\vect(\set{\compatibilityVector{\tubing^\circ}{\tube[s]}}{\tube[s] \in \tubing[S] \cap \tubing[S]', \tube[s] \subseteq \tinf}) & = &\vect(\set{\compatibilityVector{\tubing^\circ}{\tube[s]}}{\tube[s] \in \tubing[S], \tube[s] \subseteq \tinf}) & \text{(as~$\tubing[S] \ssm (\tubing[S] \cap \tubing[S]') = \tube \not\subseteq \tinf$)} \\
& = & \vect(\set{\compatibilityVector{\tubing^\circ}{\tube[s]}}{\tube[s] \in \tubing, \tube[s] \subseteq \tinf}) & \text{(induction hypothesis)} \\
& = & \vect(\set{\compatibilityVector{\tubing^\circ}{\tube[s]}}{\tube[s] \in \tubing \cap \tubing', \tube[s] \subseteq \tinf}) & \text{(as~$\tubing \ssm (\tubing \cap \tubing') = \tube \not\subseteq \tinf$)}
\end{array}
\]
and similarly replacing~$\tinf$ by~$\tube[a]$ or~$\tube[a]'$. It follows that the vector~${\alpha \, \compatibilityVector{\tubing^\circ}{\tube} + \alpha' \, \compatibilityVector{\tubing^\circ}{\tube'}}$ also belongs to~$\vect(\set{\compatibilityVector{\tubing^\circ}{\tube[s]}}{\tube[s] \in \tubing \cap \tubing', \tube[s] \subseteq \tube[u]})$. Since~$\alpha \ne 0$, this implies that~$\compatibilityVector{\tubing^\circ}{\tube}$ belongs to~$\vect(\set{\compatibilityVector{\tubing^\circ}{\tube[s]}}{\tube[s] \in \tubing', \tube[s] \subseteq \tube[u]})$. Similarly, $\compatibilityVector{\tubing^\circ}{\tube'}$ belongs to~$\vect(\set{\compatibilityVector{\tubing^\circ}{\tube[s]}}{\tube[s] \in \tubing, \tube[s] \subseteq \tube[u]})$. We therefore obtained that
\[
\vect(\set{\compatibilityVector{\tubing^\circ}{\tube[s]}}{\tube[s] \in \tubing, \tube[s] \subseteq \tube[u]}) = \vect(\set{\compatibilityVector{\tubing^\circ}{\tube[s]}}{\tube[s] \in \tubing', \tube[s] \subseteq \tube[u]}).
\]
This also clearly holds when~$\tube$ and~$\tube'$ are not contained in~$\tube[u]$. This concludes the proof since the graph of flips on the maximal tubings on~$\graphG$ containing~$\tube[u]$ is connected.
\end{proof}

We can finally conclude the proof of Theorem~\ref{theo:compatibilityFanRefined} using both Lemmas~\ref{lem:flipProperty} and~\ref{lem:spanProperty}.

\begin{proof}[Proof of Theorem~\ref{theo:compatibilityFanRefined}]
The \textbf{Span Property} is proved in Lemma~\ref{lem:spanProperty}. It only remains to show the \textbf{Flip Property} for arbitrary adjacent maximal tubings. Consider two adjacent maximal tubings~$\tubing, \tubing'$ on~$\graphG$ with~$\tubing \ssm \{\tube\} = \tubing' \ssm \{\tube'\}$. By Lemma~\ref{lem:flipProperty}, there exists adjacent maximal tubings~$\tubing[S], \tubing[S]'$ on~$\graphG$ such that~$\tubing[S] \ssm \{\tube\} = \tubing[S]' \ssm \{\tube'\}$ and a linear dependence between the compatibility vectors of the tubes of~$\tubing[S] \cup \tubing[S]'$ with respect to~$\tubing^\circ$ which is both \textbf{separating} and \textbf{local}. By definition, this implies that there exists~$\alpha > 0$ and~$\alpha' > 0$ such that the vector~${\alpha \, \compatibilityVector{\tubing^\circ}{\tube} + \alpha' \, \compatibilityVector{\tubing^\circ}{\tube'}}$ belongs to~$\vect(\set{\compatibilityVector{\tubing^\circ}{\tube[s]}}{\tube[s] \in \tubing[S] \cap \tubing[S]', \tube[s] \subseteq \tsup})$. Lemma~\ref{lem:spanProperty} applied to~$\tinf$ ensures that
\[
\vect(\set{\compatibilityVector{\tubing^\circ}{\tube[s]}}{\tube[s] \in \tubing \cap \tubing', \tube[s] \subseteq \tinf}) = \vect(\set{\compatibilityVector{\tubing^\circ}{\tube[s]}}{\tube[s] \in \tubing[S] \cap \tubing[S]', \tube[s] \subseteq \tinf}).
\]
and similarly replacing~$\tinf$ by~$\tube[a]$ or~$\tube[a]'$. We thus conclude that
\[
\vect(\set{\compatibilityVector{\tubing^\circ}{\tube[s]}}{\tube[s] \in \tubing \cap \tubing', \tube[s] \subseteq \tsup}) = \vect(\set{\compatibilityVector{\tubing^\circ}{\tube[s]}}{\tube[s] \in \tubing[S] \cap \tubing[S]', \tube[s] \subseteq \tsup})
\]
contains the vector~${\alpha \, \compatibilityVector{\tubing^\circ}{\tube} + \alpha' \, \compatibilityVector{\tubing^\circ}{\tube'}}$ with~$\alpha > 0$ and~$\alpha' > 0$. In other words, we obtained a \textbf{separating} and \textbf{local} linear dependence on~$\set{\compatibilityVector{\tubing^\circ}{\tube[s]}}{\tube[s] \in \tubing \cup \tubing'}$.
\end{proof}

\subsection{Dual compatibility fan (Theorem~\ref{theo:dualCompatibilityFan})}
\label{subsec:proofDualCompatibilityFan}

The fact that dual compatibility vectors support a complete simplicial fan realizing the nested complex is a direct consequence of Theorem~\ref{theo:compatibilityFan}, using the following duality trick. Observe first that given $n+1$ vectors~$\{u, v, w_1, \dots, w_{n-1}\}$ in~$\R^n$, the hyperplane spanned by~$\{w_1, \dots, w_{n-1}\}$ separates~$u$ and~$v$ if and only if
\[
\det([u | w_1 | \dots | w_{n-1}]) \cdot \det([v | w_1 | \dots | w_{n-1}]) < 0.
\]
Theorem~\ref{theo:compatibilityFan} shows this condition for primal compatibility matrices and we need to show it for dual compatibility matrices. For this, we notice that the primal and dual compatibility matrices are related by
\begin{equation}
\label{eq:dualityTrick}
\dualCompatibilityMatrix{\tubing}{\tubing^\circ} \eqdef [\compatibilityDegree{\tube_j}{\tube_i^\circ}]_{i, j \in [n]} = \transpose{[\compatibilityDegree{\tube_i}{\tube_j^\circ}]_{i,j \in [n]}} \defeq \transpose{\compatibilityMatrix{\tubing}{\tubing^\circ}},
\end{equation}
where~$\transpose{M}$ denotes the transpose of the matrix~$M$.
Consider now two pairs of adjacent maximal tubings~$\tubing^\circ, {\tubing^\circ}'$ and~$\tubing, \tubing'$ on~$\graphG$. From the \textbf{Separation Flip Property} of Theorem~\ref{theo:compatibilityFan} applied to the initial maximal tubing~$\tubing$, we obtain that ${\det(\compatibilityMatrix{\tubing}{\tubing^\circ}) \cdot \det(\compatibilityMatrix{\tubing}{{\tubing^\circ}'}) < 0}$. Similarly, for the initial maximal tubing~$\tubing'$, we obtain that ${\det(\compatibilityMatrix{\tubing'}{\tubing^\circ}) \cdot \det(\compatibilityMatrix{\tubing'}{{\tubing^\circ}'}) < 0}$. Multiplying these two inequalities, we get
\[
\bigg( \det(\compatibilityMatrix{\tubing}{\tubing^\circ}) \cdot \det(\compatibilityMatrix{\tubing'}{\tubing^\circ}) \bigg) \cdot \bigg( \det(\compatibilityMatrix{\tubing}{{\tubing^\circ}'}) \cdot \det(\compatibilityMatrix{\tubing'}{{\tubing^\circ}'}) \bigg) > 0.
\]
Since the transposition preserves the determinant, we obtain by Equation~\ref{eq:dualityTrick} that
\[
\bigg( \det(\dualCompatibilityMatrix{\tubing}{\tubing^\circ}) \cdot \det(\dualCompatibilityMatrix{\tubing'}{\tubing^\circ}) \bigg) \cdot \bigg( \det(\dualCompatibilityMatrix{\tubing}{{\tubing^\circ}'}) \cdot \det(\dualCompatibilityMatrix{\tubing'}{{\tubing^\circ}'}) \bigg) > 0.
\]
This implies that the products~$\det(\dualCompatibilityMatrix{\tubing}{\tubing^\circ}) \cdot \det(\dualCompatibilityMatrix{\tubing'}{\tubing^\circ})$ and~$\det(\dualCompatibilityMatrix{\tubing}{{\tubing^\circ}'}) \cdot \det(\dualCompatibilityMatrix{\tubing'}{{\tubing^\circ}'})$ have the same sign. Since we know that~$\det(\dualCompatibilityMatrix{\tubing}{{\tubing^\circ}}) \cdot \det(\dualCompatibilityMatrix{\tubing'}{{\tubing^\circ}}) < 0$ for the tubing~$\tubing^\circ = \tubing$, we obtain by repeated flips in~$\tubing^\circ$ that~$\det(\dualCompatibilityMatrix{\tubing}{{\tubing^\circ}}) \cdot \det(\dualCompatibilityMatrix{\tubing'}{{\tubing^\circ}}) < 0$ for any initial tubing~$\tubing^\circ$ and pair of adjacent maximal tubings~$\tubing, \tubing'$ on~$\graphG$. This shows the \textbf{Separation Flip Property} for dual compatibility vectors. We conclude again by Proposition~\ref{prop:characterizationFan}. \qed

\begin{remark}
Observe that this proof does not provide us with any explicit linear dependence between dual compatibility vectors. In particular, we do not know whether the following analogue properties of Theorem~\ref{theo:compatibilityFanRefined} hold:
\begin{description}
\item[Dual Span Property] For any tube~$\tube[u]$ of~$\graphG$, the span of~$\set{\dualCompatibilityVector{\tube}{\tubing^\circ}}{\tube \in \tubing, \tube \not\subset \tube[u]}$, for a maximal tubing~$\tubing$ on~$\graphG{}$ containing~$\tube[u]$, is independent of~$\tubing$. \\[-.3cm]
\item[Dual Local Flip Property] For any two maximal tubings~$\tubing, \tubing'$ with~${\tubing \ssm \{\tube\} = \tubing' \ssm \{\tube'\}}$, the unique linear dependence between the dual compatibility vectors of~$\tubing \cup \tubing'$ with respect to~$\tubing^\circ$ is supported by tubes not strictly included in a connected component of~$\tinf = \tube \cap \tube'$.
\end{description}
\end{remark}

\subsection{Nested complex isomorphisms (Proposition~\ref{prop:exmAutomorphism}, Proposition~\ref{prop:nestedComplexIsomorphismDisconnected} and Theorem~\ref{theo:nestedComplexIsomorphisms})}
\label{subsec:proofIsomorphisms}

We now prove various results on nested complex isomorphisms presented in Section~\ref{subsec:many}. We first show that the map~$\Omega$ on the tubes of a spider~$\spiderG_{\ninf}$ defines a nested complex automorphism that dualizes the compatibility degree. 

\begin{proof}[Proof of Proposition~\ref{prop:exmAutomorphism}]
First, $\Omega$ clearly sends tubes of~$\spiderG_{\ninf}$ to tubes of~$\spiderG_{\ninf}$. We just have to show that~$\compatibilityDegree{\Omega(\tube)}{\Omega(\tube')} = \compatibilityDegree{\tube'}{\tube}$ for any two tubes~$\tube, \tube'$ of~$\spiderG_{\ninf}$, and Proposition~\ref{prop:compatibilityDegree} will imply that~$\Omega$ is a nested complex automorphism. This follows from the definition of~$\Omega$ and the fact that
\begin{gather*}
\biggCompatibilityDegree{\big[ v^i_j, v^i_k \big]}{\big[ v^{i'}_{j'}, v^{i'}_{k'} \big]} = \delta_{i = i'} \cdot \big( \delta_{j < j' \le k+1 < k'+1} + \delta_{j' < j \le k'+1 < k+1} \big), \\
\biggCompatibilityDegree{\big[ v^i_j, v^i_k \big]}{\bigcup_{h \in \ell} \big[ v^h_0, v^h_{k_h} \big]} = \delta_{j \le k_i + 1 \le k}, \\
\text{and} \qquad \biggCompatibilityDegree{\bigcup_{i \in \ell} \big[ v^i_0, v^i_{k_i} \big]}{\bigcup_{i \in \ell} \big[ v^i_0, v^i_{k'_i} \big]} = |\set{i \in [\ell]}{k_i < k'_i}| \cdot \delta_{\exists i \in [\ell], \, k_i > k'_i}. \qedhere
\end{gather*}
\end{proof}

Our objective is to show that these maps~$\Omega$ on spiders are essentially the only non-trivial nested complex isomorphisms. We fix an isomorphism~$\Phi$ between two nested complexes~$\nestedComplex(\graphG)$ and~$\nestedComplex(\graphG')$. We first show that~$\Phi$ preserves connected components in the following sense.

\begin{lemma}
\label{lem:isomorphismsPreserveConnectedComponents}
Two tubes~$\tube$ and~$\tube'$ of~$\graphG$ belong to the same connected component of~$\graphG$ if and only if their images~$\Phi(\tube)$ and~$\Phi(\tube')$ belong to the same connected component of~$\graphG'$.
\end{lemma}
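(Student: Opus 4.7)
The plan is to characterize the relation ``$\tube$ and $\tube'$ belong to the same connected component of $\graphG$'' purely in terms of the compatibility relation on tubes. Once such a combinatorial characterization is in hand, it is automatically preserved by the nested complex isomorphism~$\Phi$.

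Specifically, I claim that two distinct tubes~$\tube, \tube'$ of~$\graphG$ lie in a common connected component of~$\graphG$ if and only if either $\tube$ and~$\tube'$ are incompatible, or there exists a tube~$\tube''$ of~$\graphG$ incompatible with both~$\tube$ and~$\tube'$. The backward direction is straightforward: any two incompatible tubes share a vertex or are joined by an edge, so incompatibility pins both tubes into the same component. If instead such a witness~$\tube''$ exists, then $\tube''$ is connected and incompatible with both $\tube$ and $\tube'$, so each of $\tube, \tube'$ shares a vertex with or is adjacent to~$\tube''$, placing all three in the same component.

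For the forward direction, I need to build~$\tube''$ when $\tube, \tube'$ are compatible and both contained in a common connected component~$C$. I split into the two possibilities for compatibility. If $\tube$ and~$\tube'$ are disjoint and non-adjacent, a shortest path~$v_0 \in \tube, v_1, \dots, v_k \in \tube'$ in~$\graphG{}[C]$ has length $k \ge 2$ with $v_1, \dots, v_{k-1}$ lying outside~$\tube \cup \tube'$, so $\tube'' \eqdef \{v_1, \dots, v_{k-1}\}$ is a proper tube simultaneously disjoint from and adjacent to both~$\tube$ and~$\tube'$. If $\tube \subsetneq \tube'$, I pick a shortest path~$v_0 \in \tube, v_1, \dots, v_k \in C \ssm \tube'$ (which exists because $\tube'$ is proper) and take~$\tube''$ to be either $\{v_1, \dots, v_k\}$ when $k \ge 2$ (minimality forces $v_1, \dots, v_{k-1} \in \tube' \ssm \tube$ and $v_k \in C \ssm \tube'$, so $\tube''$ is disjoint from and adjacent to~$\tube$, and intersects but is not nested in~$\tube'$) or just the singleton~$\{v_1\}$ when $k=1$ (then $v_1 \in C \ssm \tube'$ is adjacent to $v_0 \in \tube$, so $\tube''$ is disjoint from and adjacent to both~$\tube$ and~$\tube'$). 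A routine verification in each subcase confirms that $\tube''$ is a proper tube incompatible with both~$\tube$ and~$\tube'$.

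Once this characterization is established, the lemma is immediate: both conditions on the right-hand side involve only the compatibility relation between tubes, which is encoded in the simplicial structure of~$\nestedComplex(\graphG)$ and therefore preserved by~$\Phi$. I expect no genuine obstacle: the delicate point is the case analysis in the forward direction of the characterization, particularly to ensure that the auxiliary tube constructed in the nested case is proper, but this follows automatically since $\tube''$ is contained in~$C$ while $\tube \subseteq C$ is nonempty and disjoint from~$\tube''$.
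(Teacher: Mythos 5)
Your proposal is correct and follows essentially the same route as the paper: both reduce the statement to the fact that membership in a common connected component is characterized by the compatibility relation alone (incompatibility of the pair, or existence of a common incompatible witness tube built from a shortest path), which is then transported by~$\Phi$. Your treatment of the nested case is a slightly more detailed variant of the paper's path construction, but the idea is identical.
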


\begin{proof}
Observe first that two tubes from distinct connected components are automatically compatible. Assume now that~$\tube$ and~$\tube'$ are in the same connected component of~$\graphG$. If~$\tube$ and~$\tube'$ are incompatible, then~$\Phi(\tube)$ and~$\Phi(\tube')$ are also incompatible and therefore in the same connected component of~$\graphG'$. If~$\tube$ and~$\tube'$ are compatible, then there exists a tube~$\tube''$ incompatible with both~$\tube$~and~$\tube'$: 
\begin{itemize}
\item if~$\tube \cap \tube' = \varnothing$, consider a path connecting a neighbor of~$\tube$ to a neighbor of~$\tube'$ in~$\graphG{}[\ground \ssm (\tube \cup \tube')]$;
\item if~$\tube \subseteq \tube'$, consider a path connecting a neighbor of~$\tube$ to a neighbor of~$\tube'$ in~$\graphG{}[\ground \ssm \tube]$.
\end{itemize}
We obtain that~$\Phi(\tube'')$ is incompatible with both~$\Phi(\tube)$ and~$\Phi(\tube')$, so that they all belong to the same connected component of~$\graphG'$. This proves one direction. For the other direction, consider~$\Phi^{-1}$.
\end{proof}

Consequently, the nested complex~$\nestedComplex(\graphG)$ records the sizes of the connected components of the graph~$\graphG$. We call \defn{connected size partition} of~$\graphG$ the partition~$\lambda(\graphG) \eqdef |\ground_1|, |\ground_2|, \dots, |\ground_{\connectedComponents}|$ of~$|\ground|$, where~$\ground_1, \dots, \ground_\connectedComponents$ are the connected components of~$\graphG$ ordered such that~$|\ground_i| \ge |\ground_{i+1}|$.

\begin{corollary}
\label{coro:isomorphismsPreserveConnectedSizePartition}
Two graphs whose nested complexes are isomorphic have the same connected size partitions: $\nestedComplex(\graphG) \simeq \nestedComplex(\graphG') \Longrightarrow \lambda(\graphG) = \lambda(\graphG')$. In particular, they have the same number of vertices.
\end{corollary}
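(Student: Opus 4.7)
The plan is to derive this directly from Lemma~\ref{lem:isomorphismsPreserveConnectedComponents}. Fix an isomorphism $\Phi : \nestedComplex(\graphG) \to \nestedComplex(\graphG')$. By that lemma, the equivalence relation ``lying in the same connected component'' on proper tubes is preserved by~$\Phi$. Since the proper tubes of~$\graphG$ split into equivalence classes indexed by those connected components $\ground_i$ of~$\graphG$ with $|\ground_i| \ge 2$ (singleton components carry no proper tubes), $\Phi$ induces a bijection~$\sigma$ between such classes for $\graphG$ and their analogues for~$\graphG'$.

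Restricting $\Phi$ to one class yields a simplicial isomorphism between the subcomplex of $\nestedComplex(\graphG)$ spanned by tubes in~$\ground_i$ and the subcomplex of $\nestedComplex(\graphG')$ spanned by tubes in~$\ground'_{\sigma(i)}$; these subcomplexes are exactly $\nestedComplex(\graphG[\ground_i])$ and $\nestedComplex(\graphG'[\ground'_{\sigma(i)}])$, which are simplicial spheres of dimensions~$|\ground_i|-2$ and~$|\ground'_{\sigma(i)}|-2$ respectively. Matching dimensions forces $|\ground_i|=|\ground'_{\sigma(i)}|$ for every~$i$. Thus the multisets of sizes of non-singleton connected components of $\graphG$ and $\graphG'$ coincide, which gives $\lambda(\graphG)=\lambda(\graphG')$; summing the parts yields $|\ground| = |\ground'|$ as the final ``in particular'' clause.

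No substantive obstacle is expected: once Lemma~\ref{lem:isomorphismsPreserveConnectedComponents} is in hand, the corollary reduces to a dimension count on each component. The only bookkeeping is to check that~$\sigma$ is well-defined and exhaustive, both of which follow by applying the lemma to~$\Phi$ and to~$\Phi^{-1}$. Isolated vertices are invisible to the nested complex, so they play no role beyond the implicit convention that~$\lambda$ records only the sizes of components carrying proper tubes.
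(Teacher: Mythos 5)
Your proof is correct and follows essentially the same route as the paper: both arguments reduce the corollary to Lemma~\ref{lem:isomorphismsPreserveConnectedComponents} and then extract the component sizes by a count, the paper by decomposing a maximal tubing into its restrictions to components (each of cardinality $|\ground_i|-1$), you by noting that the restricted isomorphism identifies the full subcomplexes $\nestedComplex(\graphG[\ground_i])$ and $\nestedComplex(\graphG'[\ground'_{\sigma(i)}])$, which are spheres of dimension $|\ground_i|-2$; these two counts are the same fact. The only caveat is that isolated vertices are genuinely invisible to the nested complex, so your final step (and, implicitly, the paper's) recovers $\lambda$ and $|\ground|$ only up to singleton components; you flag this honestly, and the paper's own proof glosses over the same degenerate case.
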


\begin{proof}
Consider a maximal tubing~$\tubing$ on~$\graphG$ and decompose it into subtubings~$\tubing_1, \dots, \tubing_\connectedComponents$ on the connected components~$\ground_1, \dots, \ground_\connectedComponents$ of~$\graphG$. Their images~$\Phi(\tubing_1), \dots, \Phi(\tubing_\connectedComponents)$ decompose the maximal tubing~$\Phi(\tubing)$. Moreover, Lemma~\ref{lem:isomorphismsPreserveConnectedComponents} ensures that two tubes~$\Phi(\tube) \in \Phi(\tubing_i)$ and~$\Phi(\tube') \in \Phi(\tubing_{i'})$ belong  to the same connected component of~$\graphG'$ if and only if~$i = i'$. We therefore obtain that ${\lambda(\graphG) = \{|\ground_1|, \dots, |\ground_{\connectedComponents}|\} = \{|\tubing_1| + 1, \dots, |\tubing_{\connectedComponents}| + 1\} = \{|\Phi(\tubing_1)| + 1, \dots, |\Phi(\tubing_{\connectedComponents})| + 1\} = \lambda(\graphG')}$.
\end{proof}

Proposition~\ref{prop:nestedComplexIsomorphismDisconnected} is another immediate consequence of Lemma~\ref{lem:isomorphismsPreserveConnectedComponents}: since it sends all tubes in a connected component~$\graphG[H]$ of~$\graphG$ to tubes in the same connected component~$\graphG[H]'$ of~$\graphG'$, the map~$\Phi$ induces a nested complex isomorphism between~$\nestedComplex(\graphG[H])$ and~$\nestedComplex(\graphG[H]')$. From now on, we assume without loss of generality that~$\graphG$ is connected. Our next step is a crucial structural property of~$\Phi$.

\begin{lemma}
For any nested complex isomorphism~$\Phi:\nestedComplex(\graphG) \to \nestedComplex(\graphG')$ and any tube~$\tube$ of~$\graphG$, either~$|\Phi(\tube)| = |\tube|$ or~$|\Phi(\tube)| = |\ground| - |\tube|$.
\end{lemma}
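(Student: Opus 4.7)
The plan is to exploit the link of a tube as a rigid invariant that encodes its size up to complementation, combining Lemma~\ref{lem:restriction} with Corollary~\ref{coro:isomorphismsPreserveConnectedSizePartition}.

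First I would note that $\Phi$ induces a simplicial isomorphism between the link of $\tube$ in $\nestedComplex(\graphG)$ and the link of $\Phi(\tube)$ in $\nestedComplex(\graphG')$. By Lemma~\ref{lem:restriction}, these links are respectively isomorphic to the nested complexes $\nestedComplex(\graphG[\tube] \sqcup \graphG^\star\tube)$ and $\nestedComplex(\graphG'[\Phi(\tube)] \sqcup \graphG'^\star\Phi(\tube))$. Applying Corollary~\ref{coro:isomorphismsPreserveConnectedSizePartition} to the induced isomorphism between the links, the two graphs $\graphG[\tube] \sqcup \graphG^\star\tube$ and $\graphG'[\Phi(\tube)] \sqcup \graphG'^\star\Phi(\tube)$ must therefore share the same connected size partition.

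The next step is to compute this partition explicitly. Since $\tube$ is a tube, $\graphG[\tube]$ is connected with $|\tube|$ vertices. The key observation is that for connected $\graphG$ (as we have assumed) and proper $\tube$, the reconnected complement $\graphG^\star\tube$ is also connected, with $|\ground|-|\tube|$ vertices: indeed, any path in $\graphG$ between two vertices of $\ground \ssm \tube$ can be converted into a path in $\graphG^\star\tube$ by replacing each maximal subpath through $\tube$ by a single edge between its endpoints, which both lie in $\ground \ssm \tube$ and are adjacent to $\tube$, so that their union with $\tube$ is connected in $\graphG$ and thus they are joined by an edge of $\graphG^\star\tube$. Consequently, the connected size partition of $\graphG[\tube] \sqcup \graphG^\star\tube$ is exactly $\{|\tube|, |\ground|-|\tube|\}$.

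The same analysis applies on the $\graphG'$ side, using that $\graphG'$ is also connected by Corollary~\ref{coro:isomorphismsPreserveConnectedSizePartition} applied to $\Phi$ itself (its connected size partition matches that of $\graphG$). One thus obtains the partition $\{|\Phi(\tube)|, |\ground'|-|\Phi(\tube)|\}$, and combining with $|\ground| = |\ground'|$ (again from the same corollary) the equality of partitions forces $|\Phi(\tube)| \in \{|\tube|, |\ground|-|\tube|\}$. There is no genuine obstacle here: the entire argument reduces to a direct application of Lemma~\ref{lem:restriction} and Corollary~\ref{coro:isomorphismsPreserveConnectedSizePartition}, and the only small verification needed is that $\graphG^\star\tube$ is connected whenever $\graphG$ is, which falls out immediately from the reconnection rule in the definition of the reconnected complement.
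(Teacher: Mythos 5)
Your proof is correct and follows essentially the same route as the paper: identify the link of $\tube$ with $\nestedComplex(\graphG[\tube] \sqcup \graphG^\star\tube)$ via Lemma~\ref{lem:restriction} and apply Corollary~\ref{coro:isomorphismsPreserveConnectedSizePartition} to the induced isomorphism of links. Your explicit check that the reconnected complement of a tube in a connected graph is connected is a detail the paper leaves implicit, but it does not change the argument.
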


\begin{proof}
By Lemma~\ref{lem:restriction}, the join of a tube~$\tube$ in~$\nestedComplex(\graphG)$ is isomorphic to the nested complex~${\nestedComplex(\graphG{}[\tube] \sqcup \graphG^\star\tube)}$ of the union of the restriction~$\graphG{}[\tube]$ with the reconnected complement~$\graphG^\star\tube$. The former has~$|\tube|$ vertices while the latter has~$|\ground| - |\tube|$ vertices. Since~$\Phi$ induces an isomorphism from the join of~$\tube$ in~$\nestedComplex(\graphG)$ to the join of~$\Phi(\tube)$ in~$\nestedComplex(\graphG')$, the result follows from Corollary~\ref{coro:isomorphismsPreserveConnectedSizePartition}.
\end{proof}

We say that~$\Phi$ \defn{maintains} the tube~$\tube$ if~$|\Phi(\tube)| = |\tube|$ and that~$\Phi$ \defn{swaps} the tube~$\tube$ if~${|\Phi(\tube)| = |\ground| - |\tube|}$. 

\begin{proposition}
\label{prop:nonReversing}
If it maintains all tubes of~$\graphG$, then~$\Phi$ is the trivial nested complex isomorphism induced by the graph isomorphism~$\psi : \graphG \to \graphG'$ defined by~$\Phi(\{v\}) = \{\psi(v)\}$.
\end{proposition}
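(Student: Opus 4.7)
The plan is to first construct the candidate graph isomorphism $\psi$ from the action of $\Phi$ on singletons, and then to show by backward induction on tube size that $\Phi$ coincides with the trivial nested complex isomorphism induced by $\psi$. For each $v\in\ground$ the image $\Phi(\{v\})$ has size one by hypothesis, so equals $\{\psi(v)\}$ for some $\psi(v)\in\ground'$; the map $\psi$ is a bijection because $\Phi$ is, and it is a graph isomorphism because two distinct singletons $\{u\},\{v\}$ are incompatible if and only if $uv$ is an edge of $\graphG$, and incompatibility is preserved by $\Phi$. Composing $\Phi$ with the inverse of the trivial nested complex isomorphism $\Psi$ induced by $\psi$ reduces the problem to showing that any automorphism of $\nestedComplex(\graphG)$ fixing every singleton and preserving the cardinality of every tube is the identity, so I may assume from now on that $\graphG'=\graphG$, $\psi=\mathrm{id}$ and work with this reduced $\Phi$.

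I would then proceed by backward induction on $k=|\tube|$ from $k=n-1$ down to $k=2$ (the case $k=1$ being the hypothesis). For the base case $k=n-1$ the tube $\tube$ has the form $\ground\ssm\{w\}$ for some non-cut vertex $w$, and is characterized among tubes of this size as the unique one incompatible with the singleton $\{w\}$; the result follows since $\Phi$ fixes singletons and preserves compatibility. For the inductive step at $2\le k\le n-2$, I assume that every tube of size greater than $k$ is fixed. For each $w\in\partial\tube$ the tube $\tube\cup\{w\}$ has size $k+1$ and is therefore fixed, so the compatibility of $\Phi(\tube)$ with $\tube\cup\{w\}$ together with the size bound $|\Phi(\tube)|=k<k+1$ yields a dichotomy: either $\Phi(\tube)\subseteq\tube\cup\{w\}$, or $\Phi(\tube)$ is disjoint from and non-adjacent to $\tube\cup\{w\}$. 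Mixing the two options across different $w\in\partial\tube$ would force $\Phi(\tube)\subseteq\{w\}$ for a single $w$, contradicting $k\ge 2$. Moreover, the equality $\partial\tube=\partial\Phi(\tube)$ (a direct consequence of $\Phi$ fixing singletons, since $\{v\}$ is compatible with $\tube$ if and only if $v\notin\partial\tube$) is nonempty because $\graphG$ is connected and $\tube$ is proper, and it rules out the uniform disjoint non-adjacent case: any vertex of $\partial\Phi(\tube)=\partial\tube$ is by definition adjacent to $\Phi(\tube)$, contradicting the alleged non-adjacency. Hence the containment case holds uniformly and $\Phi(\tube)\subseteq\bigcap_{w\in\partial\tube}(\tube\cup\{w\})$; when $|\partial\tube|\ge 2$ this intersection equals $\tube$ and the size constraint closes the case.

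The main obstacle is the remaining case $|\partial\tube|=1$, where the unique boundary vertex $w$ is a cut vertex separating $\tube$ from the rest of $\graphG$ and the intersection collapses to $\tube\cup\{w\}$, so one only deduces $\Phi(\tube)=(\tube\cup\{w\})\ssm\{x\}$ for some $x\in\tube\cup\{w\}$. The subcase $x=w$ immediately gives $\Phi(\tube)=\tube$. To exclude the swap subcase $x\in\tube$, I would use that $k\le n-2$ forces $\tube\cup\{w\}\subsetneq\ground$, so by connectivity of $\graphG$ and $\partial\tube=\{w\}$ (which prevents edges from $\tube$ to $\ground\ssm(\tube\cup\{w\})$) the vertex $w$ must have a neighbor $u\in\ground\ssm(\tube\cup\{w\})$. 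The singleton $\{u\}$ is then compatible with $\tube$ (since $u\notin\tube$ and $u\notin\partial\tube=\{w\}$) but incompatible with $\Phi(\tube)=(\tube\ssm\{x\})\cup\{w\}$ (since $u\notin\Phi(\tube)$ while $u$ is adjacent to $w\in\Phi(\tube)$), contradicting that $\Phi$ fixes $\{u\}$ and preserves compatibility.
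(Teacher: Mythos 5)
Your proof is correct. It takes a genuinely different route from the paper's: both arguments induct on $|\tube|$ after reducing to an automorphism fixing all singletons, but the paper inducts \emph{upward}, picking a vertex $w\in\tube$ whose image must lie in $\Phi(\tube)$ and then showing that $\Phi(\tube)$ contains the (already-fixed) images of the connected components of $\graphG{}[\tube\ssm\{w\}]$, so that $\Phi(\tube)\supseteq\Psi(\tube)$ and equality follows from cardinality; you instead induct \emph{downward}, trapping $\Phi(\tube)$ inside the already-fixed larger tubes $\tube\cup\{w\}$ for $w$ in the external neighborhood of $\tube$, and using the observation that $\Phi$ preserves external neighborhoods (since these are read off from incompatibility with singletons) to rule out the disjoint branch of the compatibility dichotomy. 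The paper's bottom-up decomposition avoids any case distinction on the size of the boundary, whereas your top-down argument needs the separate treatment of the cut-vertex case $|\partial\tube|=1$; on the other hand your identity $\partial\Phi(\tube)=\partial\tube$ is a clean reusable fact, and it actually disposes of that last case more quickly than your detour through the external neighbor $u$ of $w$: since $\partial\Phi(\tube)=\{w\}$ forces $w\notin\Phi(\tube)$, the swap subcase $x\in\tube$ is immediately impossible. Both proofs use the same two ingredients (preservation of cardinalities and of compatibility with singletons), so neither is more general, but each is a valid self-contained argument.
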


\begin{proof}
Two vertices~$v$ and~$w$ of~$\graphG$ are adjacent if and only if the two tubes~$\{v\}$ and~$\{w\}$ are incompatible. Since~$\Phi$ preserves the compatibility relation, this shows that~$v$ and~$w$ are adjacent if and only if~$\psi(v)$ and~$\psi(w)$ are, \ie that~$\psi$ defines a graph isomorphism. Let~$\Psi$ denote the nested complex isomorphism induced by~$\psi$, \ie defined by~$\Psi(\tube) \eqdef \set{\psi(v)}{v \in \tube}$.

We prove by induction on~$|\tube|$ that~$\Phi(\tube) = \Psi(\tube)$ for any tube~$\tube$ on~$\graphG$. It holds for singletons. For the induction step, consider an arbitrary tube~$\tube$ of~$\graphG$. Let~$v \in \ground \ssm \tube$ be a neighbor of~$\tube$. Since~$\{v\}$ and~$\tube$ are incompatible, so are~$\Phi(\{v\}) = \{\psi(v)\}$ and~$\Phi(\tube)$, and thus~$\psi(v)$ is a neighbor of~$\Phi(\tube)$. Let~$w \in \ground$ be such that~$\psi(w)$ is a neighbor of~$\psi(v)$ in~$\Phi(\tube)$. If~$w \notin \tube$ then it is incompatible with~$\tube \cup \{v\}$, and thus~$\Phi(\{w\}) = \{\psi(w)\}$ is incompatible with~$\Phi(\tube \cup \{v\})$. Therefore, $\Phi(\tube \cup \{v\})$ is adjacent to and does not contain~$\psi(w)$ which is in~$\Phi(\tube)$. Since~${|\Phi(\tube \cup \{v\})| = |\tube| + 1 = |\Phi(\tube)| + 1}$, this implies that~$\Phi(\tube \cup \{v\})$ is incompatible with~$\Phi(\tube)$, a contradiction. Therefore, we know that~$w \in \tube$. Let~$\tube_1, \dots, \tube_k$ denote the connected components of~$\graphG{}[\tube \ssm \{w\}]$. By induction hypothesis, ${\Phi(\tube_i) = \Psi(\tube_i)}$ for all~${i \in [k]}$. Moreover, since~$\Phi(\tube_i)$ is compatible with~$\Phi(\tube)$ and adjacent to~${\psi(w) \in \Phi(\tube)}$, it is included in~$\Phi(\tube)$. We thus obtain~${\Psi(\tube) = \{\psi(w)\} \cup \Psi(\tube_1) \cup \dots \cup \Psi(\tube_k) \subseteq \Phi(\tube)}$ and thus~$\Phi(\tube) = \Psi(\tube)$ since~${|\Phi(\tube)| = |\tube| = |\Psi(\tube)|}$.
\end{proof}

\begin{lemma}
\label{lem:structureMaintenedSwappedTubes}
If~$\Phi$ does not maintain a tube~$\tube$ of~$\graphG$ (\ie~$|\Phi(\tube)| \ne |\tube|$), then
\begin{enumerate}[(i)]
\item $\Phi$ swaps any tube of~$\graphG$ containing~$\tube$,
\item $\Phi$ maintains any tube of~$\graphG$ disjoint from and non-adjacent to~$\tube$, and
\item $\Phi$ swaps at least one singleton included in~$\tube$.
\end{enumerate}
\end{lemma}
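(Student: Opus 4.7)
The plan is to exploit the link decomposition from Lemma~\ref{lem:restriction}: the link of $\tube$ in $\nestedComplex(\graphG)$ is isomorphic to $\nestedComplex(\graphG{}[\tube]) * \nestedComplex(\graphG^\star\tube)$, a join of nested complexes of two connected graphs. Here $\graphG{}[\tube]$ is connected since $\tube$ is a tube, and $\graphG^\star\tube$ is connected because $\graphG$ is: any path in $\graphG$ between two vertices of $\ground \ssm \tube$ can be rerouted to avoid $\tube$, using that any two neighbors of $\tube$ lying in $\ground \ssm \tube$ are automatically adjacent in $\graphG^\star\tube$. The analogous decomposition holds for $\Phi(\tube)$, and $\Phi$ restricts to an isomorphism between these two links. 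By Lemma~\ref{lem:isomorphismsPreserveConnectedComponents} the two join factors must map bijectively onto one another, and since $|\tube| \ne |\Phi(\tube)| = |\ground| - |\tube|$, Corollary~\ref{coro:isomorphismsPreserveConnectedSizePartition} forces the factors to be \emph{swapped}: $\Phi$ induces nested complex isomorphisms
\[
\Phi_* : \nestedComplex(\graphG^\star\tube) \to \nestedComplex(\graphG'{}[\Phi(\tube)])
\quad\text{and}\quad
\Phi_{**} : \nestedComplex(\graphG{}[\tube]) \to \nestedComplex(\graphG'^\star\Phi(\tube)).
\]
In particular, any tube $\tube' \supsetneq \tube$ of $\graphG$ satisfies $\Phi(\tube') \subsetneq \Phi(\tube)$, while any tube $\tube' \subsetneq \tube$ has $\Phi(\tube')$ either disjoint from and non-adjacent to $\Phi(\tube)$, or strictly containing it.

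For (i), I apply the preceding size lemma to $\Phi_*$: a tube $\tube' \supsetneq \tube$ corresponds to $\tube' \ssm \tube$ in $\graphG^\star\tube$, so $|\Phi(\tube')| = |\Phi_*(\tube' \ssm \tube)| \in \{|\tube'| - |\tube|,\, |\ground| - |\tube'|\}$. Intersecting with the allowed values $\{|\tube'|,\, |\ground| - |\tube'|\}$ for $\Phi$ itself, and observing that $|\tube'| - |\tube| = |\tube'|$ is impossible, the only surviving value is $|\Phi(\tube')| = |\ground| - |\tube'|$, so $\Phi$ swaps $\tube'$. For (ii), the analogous argument applied to a tube $\tube'$ disjoint from and non-adjacent to $\tube$ (identified as a tube of $\graphG^\star\tube$ of size $|\tube'|$) yields $|\Phi(\tube')| \in \{|\tube'|,\, |\ground| - |\tube| - |\tube'|\} \cap \{|\tube'|,\, |\ground| - |\tube'|\} = \{|\tube'|\}$, so $\Phi$ maintains $\tube'$.

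For (iii), I argue by contradiction, assuming every singleton $\{v\}$ with $v \in \tube$ is maintained by $\Phi$. Via the link identification for $\Phi(\tube)$, the image $\Phi(\{v\})$ is a tube of $\graphG'$ that is either a singleton $\{\phi(v)\}$ with $\phi(v) \in \ground' \ssm \Phi(\tube)$ non-adjacent to $\Phi(\tube)$, or a tube strictly containing $\Phi(\tube)$. The second option forces $|\Phi(\{v\})| > |\Phi(\tube)| = |\ground| - |\tube| \ge 1$, which contradicts $|\Phi(\{v\})| = 1$ unless $|\tube| = |\ground| - 1$; even in that edge case, having $\Phi(\{v\}) = \Phi(\tube)$ forces $\{v\} = \tube$ by injectivity of $\Phi$, impossible once $|\tube| \ge 2$. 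Therefore $v \mapsto \phi(v)$ is an injection from $\tube$ into the set of vertices of $\ground' \ssm \Phi(\tube)$ non-adjacent to $\Phi(\tube)$. But $\graphG'$ is connected and $\Phi(\tube)$ is a proper tube, so at least one vertex of $\ground' \ssm \Phi(\tube)$ is adjacent to $\Phi(\tube)$; this codomain has size at most $|\tube| - 1$, contradicting injectivity.

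The main obstacle is part (iii): while (i) and (ii) reduce to short size computations once the swap of join factors is established, (iii) genuinely combines the injectivity of $\Phi$ with the connectedness of $\graphG'$, and requires careful handling of the edge case $|\tube| = |\ground| - 1$ in which the ``containing $\Phi(\tube)$'' option for maintenance is not immediately ruled out by size alone.
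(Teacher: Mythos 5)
Your proposal is correct, but it is organized differently from the paper's proof, and part~(iii) follows a genuinely different route. For~(i) and~(ii) the paper works with the link of the \emph{pair}~$\{\tube[s], \tube\}$, which splits into three connected pieces of sizes~$|\tube|$, $|\tube[s]|-|\tube|$ (or~$|\tube[s]|$) and the complement; comparing these size partitions first forces~$\Phi(\tube[s]) \subseteq \Phi(\tube)$ and then pins down~$|\Phi(\tube[s])|$, one tube~$\tube[s]$ at a time. You instead decompose the link of the single tube~$\tube$ via Lemma~\ref{lem:restriction}, use Lemma~\ref{lem:isomorphismsPreserveConnectedComponents} and Corollary~\ref{coro:isomorphismsPreserveConnectedSizePartition} together with~$|\tube| \ne |\ground|-|\tube|$ to conclude once and for all that the two join factors are exchanged, and then apply the size dichotomy to the induced isomorphisms on the factors; this is a cleaner packaging of the same ingredients and avoids repeating the case analysis for each~$\tube[s]$. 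For~(iii) the two arguments really diverge: the paper pulls back a maximal proper tube of~$\graphG'$ containing~$\Phi(\tube)$ and applies part~(i) to~$\Phi^{-1}$ to see that its preimage is a swapped singleton inside~$\tube$, whereas you run a pigeonhole argument, injecting the~$|\tube|$ singletons of~$\tube$ into the vertices of~$\ground' \ssm \Phi(\tube)$ non-adjacent to~$\Phi(\tube)$, of which connectivity of~$\graphG'$ leaves at most~$|\tube|-1$. Both are valid; the paper's is shorter but leans on~(i) for the inverse map, yours is self-contained and makes the role of connectivity explicit. Two cosmetic remarks: the caveat about~$|\tube| = |\ground|-1$ in your~(iii) is unnecessary, since strict containment of~$\Phi(\tube)$ already forces~$|\Phi(\{v\})| \ge |\Phi(\tube)|+1 \ge 2$; and in~(i) the value~$|\tube'| = |\ground|-|\tube'|$ can occur, but then maintaining and swapping coincide, so the conclusion still reads correctly.
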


\begin{proof}
Consider a tube~$\tube[s]$ of~$\graphG$ strictly containing~$\tube$. The link of~$\{\tube[s], \tube\}$ in~$\nestedComplex(\graphG)$ is isomorphic to the nested complex of the union of the graphs~$\graphG{}[\tube]$, $(\graphG^\star\tube)[\tube[s] \ssm \tube]$, and~$\graphG^\star\tube[s]$ with~$|\tube|$, $|\tube[s]| - |\tube|$ and~${|\ground| - |\tube[s]|}$ vertices respectively. Therefore, Corollary~\ref{coro:isomorphismsPreserveConnectedSizePartition} ensures that the link of~$\{\Phi(\tube[s]), \Phi(\tube)\}$ in~$\nestedComplex(\graphG')$ is isomorphic to the nested complex of a graph with three connected components with~$|\tube|$, $|\tube[s]| - |\tube|$ and~$|\ground| - |\tube[s]|$ vertices respectively.
If~$\Phi(\tube[s])$ is not contained in~$\Phi(\tube)$, then the link of~$\{\Phi(\tube[s]), \Phi(\tube)\}$ in~$\nestedComplex(\graphG')$ would be isomorphic to the nested complex of a graph with one connected component~$\graphG'[\Phi(\tube)]$ having~$|\Phi(\tube)|$ vertices. We reach a contradiction as~$|\Phi(\tube)| = |\ground| - |\tube|$ is neither~$|\tube|$ (by assumption on~$\tube$), nor~$|\ground| - |\tube[s]|$ (since~$|\tube[s]| > |\tube|$), nor~$|\tube[s]| - |\tube|$ (since~$|\tube[s]| < |\ground|$).
Therefore, $\Phi(\tube[s])$ is contained in~$\Phi(\tube)$ and the link of~$\{\Phi(\tube[s]), \Phi(\tube)\}$ in~$\nestedComplex(\graphG')$ is isomorphic to the union of the graphs~$\graphG{}[\Phi(\tube[s])]$, $(\graphG^\star\Phi(\tube[s]))[\Phi(\tube) \ssm \Phi(\tube[s])]$, and~$\graphG^\star\Phi(\tube)$ with~$|\Phi(\tube[s])|$, $|\Phi(\tube)| - |\Phi(\tube[s])|$ and~${|\ground| - |\Phi(\tube)|}$ vertices respectively. If~$|\Phi(\tube[s])| \ne |\ground| - |\tube[s]|$, then it forces~$|\Phi(\tube[s])| = |\tube[s]| - |\tube| = |\tube[s]|$, a contradiction.
This proves~(i).

Consider now a tube~$\tube[s]$ of~$\graphG$ disjoint from and non-adjacent to~$\tube$. Note that~$|\tube[s]| + |\tube| < |\ground|$ as there is at least a vertex separing them. The link of~$\{\tube[s], \tube\}$ in~$\nestedComplex(\graphG)$ is isomorphic to the nested complex of the union of the graphs~$\graphG{}[\tube[s]]$, $\graphG{}[\tube]$, and~$(\graphG^\star\tube[s])^\star\tube$ with~$|\tube[s]|$, $|\tube|$ and~${|\ground| - |\tube[s]| - |\tube|}$ vertices respectively. Again, Corollary~\ref{coro:isomorphismsPreserveConnectedSizePartition} ensures that the link of~$\{\Phi(\tube[s]), \Phi(\tube)\}$ in~$\nestedComplex(\graphG')$ is isomorphic to the nested complex of a graph with three connected components with~$|\tube[s]|$, $|\tube|$ and~${|\ground| - |\tube[s]| - |\tube|}$ vertices~respectively.
If~$\Phi(\tube[s])$ is not contained in~$\Phi(\tube)$, then the link of~$\{\Phi(\tube[s]), \Phi(\tube)\}$ in~$\nestedComplex(\graphG')$ would be isomorphic to the nested complex of a graph with one connected component~$\graphG'[\Phi(\tube)]$ having~$|\Phi(\tube)|$ vertices. We reach a contradiction as~$|\Phi(\tube)| = |\ground| - |\tube|$ is neither~$|\tube|$ (by assumption on~$\tube$), nor~$|\tube[s]|$ (since~$|\tube[s]| + |\tube| < |\ground|$), nor~$|\ground| - |\tube[s]| - |\tube|$ (since~$|\tube[s]| > 0$).
Therefore, $\Phi(\tube[s])$ is contained in~$\Phi(\tube)$ and the link of~$\{\Phi(\tube[s]), \Phi(\tube)\}$ in~$\nestedComplex(\graphG')$ is isomorphic to the union of the graphs~$\graphG{}[\Phi(\tube[s])]$, $(\graphG^\star\Phi(\tube[s]))[\Phi(\tube) \ssm \Phi(\tube[s])]$, and~$\graphG^\star\Phi(\tube)$ with~$|\Phi(\tube[s])|$, $|\Phi(\tube)| - |\Phi(\tube[s])|$ and~${|\ground| - |\Phi(\tube)|}$ vertices respectively. If~$|\Phi(\tube[s])| \ne |\tube[s]|$, then it forces~$|\Phi(\tube[s])| = |\ground| - |\tube[s]| - |\tube| = |\ground| - |\tube[s]|$, a contradiction.
This proves~(ii).

Finally, to prove~(iii) we can assume that~$\tube$ is not a singleton. Thus~$\Phi(\tube)$ is not an inclusion maximal tube. Let~$\tube'$ be a tube of~$\graphG$ such that~$\Phi(\tube')$ is a maximal tube of~$\graphG'$ containing~$\Phi(\tube)$. Since~$\Phi^{-1}$ swaps~$\Phi(\tube)$ and~$\Phi(\tube')$ contains~$\Phi(\tube)$, $\Phi^{-1}$ also swaps~$\Phi(\tube')$ by~(i). Thus, $\tube'$ is a singleton swapped by~$\Phi$ and contained in~$\tube$.
\end{proof}

\begin{lemma}
\label{lem:structureMaintenedSwappedVertices}
Denote by~$M \eqdef \set{v \in V}{|\Phi(\{v\})| = 1}$ the set of vertices maintained by~$\Phi$ and by~$S \eqdef \set{v \in V}{|\Phi(\{v\}|) = |\ground| - 1}$  the set of vertices swapped by~$\Phi$. Then
\begin{enumerate}[(i)]
\item $S$ forms a clique of~$\graphG$,
\item any vertex in~$M$ has at most one neighbor in~$S$, and
\item any vertex in~$S$ has at most one neighbor in~$M$.
\end{enumerate}
\end{lemma}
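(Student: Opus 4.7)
My plan is to set up a dictionary between the singletons of $\graphG$ and the singletons or cosingletons of $\graphG'$, and then reduce each of the three items to short bookkeeping using the compatibility of these elementary tubes. The whole argument will rest on two combinatorial facts about $\graphG'$, both immediate from the connectedness of $\graphG'$ (Lemma~\ref{lem:isomorphismsPreserveConnectedComponents}) and the definition of compatibility.

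First I would write $\Phi(\{v\}) = \{\phi(v)\}$ for each $v \in M$, and $\Phi(\{u\}) = \ground' \ssm \{\sigma(u)\}$ for each $u \in S$, using Corollary~\ref{coro:isomorphismsPreserveConnectedSizePartition} to ensure $|\ground'| = |\ground|$. Injectivity of $\Phi$ yields injectivity of both $\phi \colon M \to \ground'$ and $\sigma \colon S \to \ground'$. The two compatibility facts I will use are: (a) two distinct cosingletons of $\graphG'$ are always incompatible (they are neither nested nor, barring the trivial case $|\ground'| \le 2$, disjoint, and their union is the tube $\ground'$); (b) a singleton $\{w\}$ and a cosingleton $\ground' \ssm \{w'\}$ are incompatible if and only if $w = w'$ (otherwise $\{w\}$ is nested inside $\ground' \ssm \{w'\}$; in the equality case the two tubes are disjoint but adjacent via $\ground'$).

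For (i), I would suppose two vertices $u, u' \in S$ are non-adjacent. Then $\{u\}$ and $\{u'\}$ are compatible, so their images $\ground' \ssm \{\sigma(u)\}$ and $\ground' \ssm \{\sigma(u')\}$ must be compatible as well, contradicting (a). For (ii), given $v \in M$ adjacent to two distinct $u_1, u_2 \in S$, the incompatibility of $\{v\}$ with each $\{u_i\}$ transports through $\Phi$ to incompatibility of $\{\phi(v)\}$ with $\ground' \ssm \{\sigma(u_i)\}$, which by (b) forces $\phi(v) = \sigma(u_i)$; combining the two equalities yields $\sigma(u_1) = \sigma(u_2)$, contradicting injectivity of $\sigma$. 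Statement (iii) follows by the symmetric argument, with the roles of $\phi$ and $\sigma$ exchanged.

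I do not anticipate any genuine obstacle: once the dictionary is in place, each item is a two-line verification. The only delicate point is to make sure that $\graphG'$ is connected so that $\ground'$ is a tube and facts (a) and (b) apply, but this is guaranteed by Lemma~\ref{lem:isomorphismsPreserveConnectedComponents} together with the standing assumption that $\graphG$ is connected.
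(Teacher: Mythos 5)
Your proposal is correct and follows essentially the same route as the paper: part (i) comes from the fact that two distinct cosingletons are always incompatible, and parts (ii)–(iii) come from the observation that adjacency between a maintained vertex $m$ and a swapped vertex $s$ forces $\Phi(\{s\}) = \ground' \ssm \Phi(\{m\})$, so each determines the other (your $\phi(v)=\sigma(u)$ plus injectivity is exactly this). The only cosmetic difference is that you make the injectivity bookkeeping explicit via the maps $\phi$ and $\sigma$, where the paper leaves it implicit.
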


\begin{proof}
Let~$s,s' \in S$. Since~$|\Phi(\{s\})| = |\Phi(\{s'\})| = |\ground| - 1$, the tubes~$\Phi(\{s\})$ and~$\Phi(\{s'\})$ are incompatible. Therefore~$\{s\}$ and~$\{s'\}$ are incompatible, so that~$s$ and~$s'$ are neighbors. The set~$S$ thus forms a clique.

To prove~(ii) and~(iii), assume that some vertices~$m \in M$ and~$s \in S$ are neighbors. The tubes~$\{m\}$ and~$\{s\}$ are thus incompatible, so that~$\Phi(\{m\})$ and~$\Phi(\{s\})$ are also incompatible. Since ${|\Phi(\{m\})| = 1}$ while~$|\Phi(\{s\})| = |\ground| - 1$, this implies~$\Phi(\{s\}) = \ground' \ssm \Phi(\{m\})$. It follows that~$m$ cannot have another neighbor swapped by~$\Phi$ and~$s$ cannot have another neighbor maintained~by~$\Phi$.
\end{proof}

We are now ready to prove that any non-trivial nested complex isomorphism coincides, up to composition with a trivial nested complex isomorphism, with the isomorphism~$\Omega$ on a spider.

\begin{proof}[Proof of Theorem~\ref{theo:nestedComplexIsomorphisms}]
The proof works by induction on the number~$|\ground|$ of vertices of~$\graphG$. It is clear when~$|\ground| \le 2$. For the induction step, assume that the result holds for all graphs on less than~$|\ground|$ vertices and consider a non-trivial nested complex isomorphism~$\Phi : \nestedComplex(\graphG) \to \nestedComplex(\graphG')$. Then~$\Phi$ does not maintain all tubes of~$\graphG$ by Proposition~\ref{prop:nonReversing}, and thus swaps at least one singleton~$\{s\}$ by Lemma~\ref{lem:structureMaintenedSwappedTubes}\,(iii). Let~$s'$ denote the vertex of~$\graphG'$ such that~$\Phi(\{s\}) = \ground' \ssm \{s'\}$. 

The map~$\Phi$ induces a nested complex isomorphism between the link of~$\{s\}$ in~$\nestedComplex(\graphG)$ and the link of~$\Phi(\{s\})$ in~$\nestedComplex(\graphG')$. The former is isomorphic to the nested complex of the reconnected complement~$\widetilde\graphG \eqdef \graphG^\star\{s\}$ while the latter is isomorphic to the nested complex of the restriction~$\widetilde\graphG' \eqdef \graphG'[\Phi(\{s\})]$. Let~$\widetilde\Phi : \widetilde\tube \mapsto \Phi(\tube)$ denote the resulting nested complex isomorphism between~$\nestedComplex(\widetilde\graphG)$ and~$\nestedComplex(\widetilde\graphG')$. This isomorphism~$\widetilde\Phi$ is non-trivial: by Lemma~\ref{lem:structureMaintenedSwappedTubes}, $\Phi$ swaps any tube~$\tube$ containing~$\{s\}$, so that $\widetilde\Phi$ swaps the tube~${\tube \ssm \{s\}}$. It follows by induction hypothesis that~$\widetilde\graphG$ and~$\widetilde\graphG'$ are spiders and that there exists a graph isomorphism~${\widetilde\psi : \widetilde\graphG \to \widetilde\graphG'}$ inducing a trivial nested complex isomorphism~$\widetilde\Psi : \nestedComplex(\widetilde\graphG) \to \nestedComplex(\widetilde\graphG')$ such that~$\widetilde\Psi^{-1} \circ \widetilde\Phi \defeq \widetilde\Omega$ is the automorphism of~$\nestedComplex(\widetilde\graphG)$ described in Section~\ref{subsec:many}. In other words, we can label by~$\widetilde v^i_j$ the vertices of the spider~$\widetilde\graphG$ and by~$\widetilde v'^i_j$ the vertices of the spider~$\widetilde\graphG'$, with~$i \in [\,\widetilde\ell\,]$ and~$0 \le j \le \widetilde n_i$, such that
\begin{equation}
\label{eq:barPhi}
\widetilde\Phi\big( \big[ \widetilde v^i_j, \widetilde v^i_k \big] \big) = \big[ \widetilde v'^i_{\widetilde n_i+1-k}, \widetilde v'^i_{\widetilde n_i+1-j} \big]
\quad\text{and}\quad
\widetilde\Phi\big( \bigcup_{i \in [\,\widetilde\ell\,]} \big[ \widetilde v^i_0, \widetilde v^i_{k_i} \big] \big) = \bigcup_{i \in [\,\widetilde\ell\,]} \big[ \widetilde v'^i_0, \widetilde v'^i_{\widetilde n_i-1-k_i} \big].
\end{equation}

We now claim that~$\graphG$ and~$\graphG'$ are both spiders. To prove it, we distinguish two cases:
\begin{description}
\item[Body case] all neighbors of~$s$ in~$\graphG$ are swapped by~$\Phi$. Then they form a clique in~$\graphG$ (by Lemma~\ref{lem:structureMaintenedSwappedVertices}\,(i)), so that~$\graphG$ is the spider~$\widetilde\graphG$ where we add one more body vertex~$s$ with no attached leg. Moreover,~$s'$ is necessarily swapped by~$\Phi^{-1}$ (otherwise~$\Phi^{-1}(\{s'\})$ would be a neighbor of~$s$ maintained by~$\Phi$). We conclude by symmetry that~$\graphG'$ is the spider~$\widetilde\graphG'$ where we add one more body vertex~$s'$ with no attached leg.
\item[Leg case] $s$ has a neighbor~$m$ maintained by~$\Phi$. It is unique by Lemma~\ref{lem:structureMaintenedSwappedVertices}\,(iii) and not connected to any other vertex swapped by~$\Phi$ by Lemma~\ref{lem:structureMaintenedSwappedVertices}\,(ii). Therefore,~$\graphG$ is the spider~$\widetilde\graphG$ where we replace the edges connecting~$m$ to all other body vertices of~$\widetilde\graphG$ by a new body vertex~$s$ with an edge to~$m$. Moreover,~$s'$ is necessarily maintained by~$\Phi^{-1}$ (otherwise~$\Phi^{-1}(\{s'\})$ should be~$\ground \ssm \{s\}$ which is not connected). We conclude that~$\graphG'$ is the spider~$\widetilde\graphG'$ where we add one additional leg vertex~$s'$ to the free endpoint of a leg.
\end{description}

We now label by~$v^i_j$ the vertices of~$\graphG$ according to the labels~$\widetilde v^i_j$ of~$\widetilde\graphG$ and by~$v'^i_j$ the vertices of~$\graphG'$ according to the labels~$\widetilde v'^i_j$ of~$\widetilde\graphG'$. We follow the two cases above:
\begin{description}
\item[Body case] We set~$\ell \eqdef \widetilde\ell + 1$, $n_i \eqdef \widetilde n_i$ for~$i \in [\,\widetilde\ell\,]$ and~$n_\ell = 0$. For any~$i \in [\,\widetilde\ell\,]$ and~$0 \le j \le n_i$, we label by~$v^i_j$ the vertex of~$\graphG$ corresponding to the vertex labeled by~$\widetilde v^i_j$ in~$\widetilde\graphG$, and similarly we label by~$v'^i_j$ the vertex of~$\graphG'$ corresponding to the vertex labeled by~$\widetilde v'^i_j$ in~$\widetilde\graphG'$. Finally, we label~$s$ by~$v^\ell_0$ and~$s'$ by~$v'^\ell_0$.
\item[Leg case] Assume that the neighbor of~$s$ maintained by~$\Phi$ corresponds to the vertex labeled by~$\widetilde v^a_0$ in~$\widetilde\graphG$. Then the neighbor of~$s'$ corresponds to the vertex labeled by~$\widetilde v'^a_{\widetilde n_a}$ in~$\widetilde\graphG'$. We set~$\ell \eqdef \widetilde\ell$, $n_i \eqdef \widetilde n_i$ for~$i \in [\ell] \ssm \{a\}$ and~$n_a \eqdef \widetilde n_a + 1$. For any~$i \in [\ell]$ and~$0 \le j \le \widetilde n_i$, we label by~$v^i_j$ if~$i \ne a$ and~$v^i_{j+1}$ if~$i = a$ the vertex of~$\graphG$ corresponding to the vertex labeled by~$\widetilde v^i_j$ in~$\widetilde\graphG$ and by~$v'^i_j$ the vertex of~$\graphG'$ corresponding to the vertex labeled by~$\widetilde v'^i_j$ in~$\widetilde\graphG'$. Finally, we label~$s$ by~$v^a_0$ and~$s'$ by~$v'^a_{n_a}$.
\end{description}
By our previous description of the graphs~$\graphG$ and~$\graphG'$, these labelings are indeed valid labelings of spiders, meaning that the edges of~$\graphG$ are indeed given by~$\bigset{\big\{v^i_{j-1}, v^i_j\big\}}{i \in [\ell], j \in [n_i]} \cup \bigset{\big\{v^i_0, v^{i'}_0\big\}}{i \ne i' \in [\ell]}$, and similarly for~$\graphG'$. We moreover claim that~$\Phi$ is given by
\begin{equation}
\label{eq:Phi}
\Phi\big( \big[ v^i_j, v^i_k \big] \big) = \big[ v^i_{n_i+1-k}, v^i_{n_i+1-j} \big]
\qquad\text{and}\qquad
\Phi\big( \bigcup_{i \in [\ell]} \big[ v^i_0, v^i_{k_i} \big] \big) = \bigcup_{i \in [\ell]} \big[ v^i_0, v^i_{n_i-1-k_i} \big].
\end{equation}
It is immediate for all tubes compatible with~$\{s\} = \{v^a_0\}$ as it is easily transported from~\eqref{eq:barPhi}. Therefore, we only have to check it for the tubes of~$\graphG$ adjacent to~$s = v^a_0$ and not containing~it. Observe first that~$\Phi\big( \big[ v^a_1, v^a_k \big] \big)$ is a tube with~$k$ vertices (by Lemma~\ref{lem:structureMaintenedSwappedTubes}\,(iii)), and that it contains~$s' = v'^a_{n_a}$ since it has to be incompatible with~$\Phi(\{s\}) = \ground' \ssm \{s'\}$. Therefore, $\Phi\big( \big[ v^a_1, v^a_k \big] \big) = \big[ v^a_{n_a+1-k}, v^a_{n_a} \big]$. Consider now a tube~$\tube = \bigcup_{i \in [\ell]} \big[ v^i_0, v^i_{k_i} \big]$ not containing~$s = v^a_0$ (\ie with~$k_a = -1$). Since the nested tubes~$\tube$ and~$\tube \cup \{s\}$ are both swapped, we have~$\Phi(\tube) = \Phi(\tube \cup \{s\}) \cup \{s'\}$. Since~$\Phi(\tube \cup \{s\})$ is given by Equation~\eqref{eq:Phi}, so is~$\Phi(\tube)$. This concludes the proof that~$\Phi$ is given by Equation~\eqref{eq:Phi}, so that it coincides with~$\Omega$ up to the graph automorphism defined by~$v^i_j \mapsto v'^i_j$.
\end{proof}

We now prove that if the primal and dual compatibility fans of~$\graphG$ with respect to the same initial maximal tubing~$\tubing^\circ$ are linearly isomorphic, then~$\graphG$ is an octopus whose head is contained in no tube of~$\tubing^\circ$.

\begin{proof}[Proof of Lemma~\ref{lem:comparisonPrimalDual}]
Consider a graph~$\graphG$ and an initial tubing~$\tubing^\circ$ such that the fans~$\compatibilityFan{\graphG}{\tubing^\circ}$ and~$\dualCompatibilityFan{\graphG}{\tubing^\circ}$ are linearly isomorphic. The fans~$\compatibilityFan{\graphG}{\tubing^\circ}$ and~$\dualCompatibilityFan{\graphG}{\tubing^\circ}$ both contains precisely $n$ pairs of opposite rays, given by the vectors~$\b{e}_i$ of the canonical basis and their opposites~$-\b{e}_i$. Therefore, the fans~$\compatibilityFan{\graphG}{\tubing^\circ}$ and~$\dualCompatibilityFan{\graphG}{\tubing^\circ}$ have the same rays, which implies that the compatibility vector~$\compatibilityVector{\tubing^\circ}{\tube}$ and dual compatibility vector~$\dualCompatibilityVector{\tube}{\tubing^\circ}$ are collinear for any tube~$\tube$ of~$\graphG$. In other words, we have~$\compatibilityDegree{\tube^\circ_1}{\tube} \compatibilityDegree{\tube}{\tube^\circ_2} = \compatibilityDegree{\tube}{\tube^\circ_1} \compatibilityDegree{\tube^\circ_2}{\tube}$ for all tubes~$\tube$ of~$\graphG$ and~$\tube^\circ_1, \tube^\circ_2 \in \tubing^\circ$.

We now prove by induction that this condition implies that~$\graphG$ is an octopus whose head is contained in no tube of~$\tubing^\circ$. The result is clear when~$|\ground| \le 3$. Consider thus a connected graph~$\graphG$ on more than~$4$ vertices and a maximal tubing~$\tubing^\circ$ on~$\graphG$ with root~$u$ (\ie $u$ is the only vertex of~$\ground$ contained in no proper tube of~$\tubing^\circ$). The graph~$\graphG{}[\ground\ssm\{u\}]$ has connected components~$\graphG_1,\dots,\graphG_k$ and~$\tubing^\circ$ induces a maximal tubing~$\tubing^\circ_i$ on each component~$\graphG_i$. We know that~$\graphG_1$ is an octopus whose head~$v$ is contained in no tube of~$\tubing^\circ$. Otherwise, by induction hypothesis, we could find three tubes~$\tube, \tube^\circ_1, \tube^\circ_2$ of~$\graphG_1$ such that~$\compatibilityDegree{\tube^\circ_1}{\tube} \compatibilityDegree{\tube}{\tube^\circ_2} \ne \compatibilityDegree{\tube}{\tube^\circ_1} \compatibilityDegree{\tube^\circ_2}{\tube}$, which would contradict our assumption on~$\tubing^\circ$ since~$\tube, \tube^\circ_1, \tube^\circ_2$ are also tubes of~$\graphG$. Assume now that~$\graphG_1$ is not a path. We distinguish five~cases:
\begin{itemize}
\item Suppose that~$v$ is the unique vertex of~$\graphG_1$ adjacent to~$u$. Let~${\tube = \{u,v\}}$, ${\tube^\circ_1 = \graphG_1}$ and $\tube^\circ_2$ be a leg of~$\graphG_1$. Then~$\compatibilityDegree{\tube}{\tube_1^\circ} \geq 2$ and~$\compatibilityDegree{\tube_1^\circ}{\tube} = \compatibilityDegree{\tube}{\tube_2^\circ} = \compatibilityDegree{\tube_2^\circ}{\tube} = 1$. 
\item Suppose that~$u$ is adjacent to~$v$ and at least another vertex~$w$ of~$\graphG_1$. Let~${\tube = \{u\}}$, ${\tube^\circ_1 = \graphG_1}$ and $\tube^\circ_2$ be the leg of~$\graphG_1$ containing~$w$. Then~$\compatibilityDegree{\tube_1^\circ}{\tube} = \compatibilityDegree{\tube_2^\circ}{\tube} = 1$ and~${\compatibilityDegree{\tube}{\tube_2^\circ} < \compatibilityDegree{\tube}{\tube_1^\circ}}$. 
\item Suppose that~$u$ is adjacent to at least two legs of~$\graphG_1$. Let~${\tube = \{u\}}$, ${\tube^\circ_1 = \graphG_1}$ and $\tube^\circ_2$ be a leg of~$\graphG_1$ adjacent to~$u$. Then~${\compatibilityDegree{\tube_1^\circ}{\tube} = \compatibilityDegree{\tube_2^\circ}{\tube} = 1}$ and~${\compatibilityDegree{\tube}{\tube_2^\circ} < \compatibilityDegree{\tube}{\tube_1^\circ}}$.
\item Suppose that~$u$ is adjacent to a single leg~$\graphG[H]$ of~$\graphG_1 \ssm \{v\}$ but not to~$v$. Let~$\tube = \{u,v\} \cup \graphG[H]$, $\tube^\circ_1 = \graphG_1$ and $\tube^\circ_2$ be a leg of~$\graphG_1$ distinct from~$\graphG[H]$. Then~$\compatibilityDegree{\tube}{\tube_1^\circ} \geq 2$ and~$\compatibilityDegree{\tube_1^\circ}{\tube} = \compatibilityDegree{\tube}{\tube_2^\circ} = \compatibilityDegree{\tube_2^\circ}{\tube} = 1$.
\end{itemize}
In all cases, we have~$\compatibilityDegree{\tube^\circ_1}{\tube} \compatibilityDegree{\tube}{\tube^\circ_2} \ne \compatibilityDegree{\tube}{\tube^\circ_1} \compatibilityDegree{\tube^\circ_2}{\tube}$, contradicting our assumption on~$\tubing^\circ$. Thus,~$\graphG_1$ is a path. A similar case analysis shows that~$\graphG_1$ is attached to~$u$ only by one of its endpoints. By symmetry, all components~$\graphG_1, \dots, \graphG_k$ of~$\graphG \ssm \{u\}$ are paths attached to~$u$ only by an endpoint, so that~$\graphG$ is an octopus whose head is contained in no tube~of~$\tubing^\circ$.
\end{proof}

\subsection{Polytopality of compatibility fans (Theorem~\ref{theo:polytopalityPathsCycles} and Proposition~\ref{prop:polytopeStar})}
\label{subsec:proofPolytopality}

This section provides the proof of the polytopality results presented in Section~\ref{subsec:polytopality}. Using a similar method as~\cite[Section~5]{CeballosSantosZiegler} based on Proposition~\ref{prop:polytopalityFan}, we first prove that all compatibility and dual compatibility fans of paths and cycles are polytopal.

\begin{proof}[Proof of Theorem~\ref{theo:polytopalityPathsCycles}]
We use the characterization of polytopality of complete simplicial fans given in Proposition~\ref{prop:polytopalityFan}. For this, we need to understand better the linear dependences on compatibility vectors for paths and cycles.

\medskip
Consider first the case of the path. When~$(\tubing \cup \tubing') \cap \tubing^\circ = \varnothing$, the linear dependences can only be of the form
\begin{align*}
\compatibilityVector{\tubing^\circ}{\tube} + \compatibilityVector{\tubing^\circ}{\tube'} & = \compatibilityVector{\tubing^\circ}{\tsup} + \compatibilityVector{\tubing^\circ}{\tinf},
\\
\compatibilityVector{\tubing^\circ}{\tube} + \compatibilityVector{\tubing^\circ}{\tube'} & = \compatibilityVector{\tubing^\circ}{\tube[a]} + \compatibilityVector{\tubing^\circ}{\tube[a]'},
\\
2 \, \compatibilityVector{\tubing^\circ}{\tube} + \compatibilityVector{\tubing^\circ}{\tube'} & = \compatibilityVector{\tubing^\circ}{\tsup} + \compatibilityVector{\tubing^\circ}{\tube[a]},
\\
2 \, \compatibilityVector{\tubing^\circ}{\tube} + \compatibilityVector{\tubing^\circ}{\tube'} & = \compatibilityVector{\tubing^\circ}{\tinf} + \compatibilityVector{\tubing^\circ}{\tube[a]'},
\end{align*}
up to exchanging simultaneously~$\tube$ with~$\tube'$ and~$\tube[a]$ with~$\tube[a]'$. If~$\tubing \cap \tubing'$ contains a tube~$\tube^\circ \in \tubing^\circ$, then the compatibility degree of all tubes of~$(\tubing \cup \tubing') \ssm \{\tube^\circ\}$ with~$\tube^\circ$ vanishes, so that the tube~$\tube^\circ$ cannot appear in the linear dependence. When~$\tube, \tube' \notin \tubing^\circ$ but~$(\tubing \cap \tubing') \cap \tubing^\circ \ne \varnothing$, the relations are thus obtained from the ones above by deleting terms in their right hand sides. The dependences when~$\tube$ or~$\tube'$ belong to~$\tubing^\circ$ will be treated separately.

We now define a height function~$\omega$ on tubes on~$\pathG_{n+1}$ by
\[
\omega(\tube) = 
\begin{cases}
f(|\tube|) & \text{if } \tube \notin \tubing^\circ, \\
\Omega & \text{otherwise,}
\end{cases}
\]
where~$f: \R \to \R_{>0}$ is any strictly concave increasing positive function and~$\Omega \in \R$ is a large enough constant. When~$(\tubing \cup \tubing') \cap \tubing^\circ = \varnothing$, we obtain by definition of~$\tsup$, $\tinf$, $\tube[a]$ and~$\tube[a]'$, and using that~$f$ is concave and increasing, that
\begin{align*}
\omega(\tube) + \omega(\tube') & > \omega(\tsup) + \omega(\tinf),
\\
\omega(\tube) + \omega(\tube') & > \omega(\tube[a]) + \omega(\tube[a]'),
\\
2 \, \omega(\tube) + \omega(\tube') & > \omega(\tsup) + \omega(\tube[a]),
\\
2 \, \omega(\tube) + \omega(\tube') & > \omega(\tinf) + \omega(\tube[a]').
\end{align*}
Moreover, the inequalities still hold when we delete terms in their right hand sides since~$\omega$ is positive. Therefore, $\omega$ satisfies Condition~(2) of Proposition~\ref{prop:polytopalityFan} when we do not flip an initial tube. Finally, initial tubes only appear in the left hand sides of linear dependences, so choosing~$\omega(\tube^\circ) = \Omega$ large enough ensures that $\omega$ satisfies Condition~(2) of Proposition~\ref{prop:polytopalityFan} for any flip. Observe that this is essentially the same proof as in~\cite[Section~5]{CeballosSantosZiegler}.

\medskip
We now adapt this proof for the cycle~$\cycleG_{n+1}$. Clearly, the dependences described above for the path also appear for the cycle (as cycles contain paths). Beside those, when~$(\tubing \cup \tubing') \cap \tubing^\circ = \varnothing$, a straightforward case analysis shows that the linear dependences can only be of the form
\[
\compatibilityVector{\tubing^\circ}{\tube} + \compatibilityVector{\tubing^\circ}{\tube'} = 2 \, \compatibilityVector{\tubing^\circ}{\tinf_1}, \qquad\text{where } \tinf_1 \in \tinf.
\]
Again, no tube of~$\tubing^\circ$ can appear in their right hand sides of the linear dependences. Therefore, when~$\tube, \tube' \notin \tubing^\circ$ but $(\tubing \cap \tubing') \cap \tubing^\circ \ne \varnothing$, the linear dependences are obtained from the generic ones above by deleting terms in their right hand sides. The dependences when~$\tube$ or~$\tube'$ belong to~$\tubing^\circ$ will again be treated separately.

We choose the same height function~$\omega$ as before. For the same reasons, the linear dependences for the path are again transformed to strict inequalities on~$\omega$. Moreover, as~$\tube_1 \subseteq \tube \cap \tube'$ and~$f$ is increasing, we have
\[
\omega(\tube) + \omega(\tube') > 2 \, \omega(\tinf_1).
\]
We conclude as before by choosing~$\Omega$ large enough that~$\omega$ satisfies the Condition~(2) of Proposition~\ref{prop:polytopalityFan} for any flip.

\medskip
Finally, for dual compatibility vectors, a straightforward case analysis shows that the linear dependences are all of the form
\begin{align*}
\dualCompatibilityVector{\tube}{\tubing^\circ} + \dualCompatibilityVector{\tube'}{\tubing^\circ} & = \dualCompatibilityVector{\tsup}{\tubing^\circ} + \dualCompatibilityVector{\tinf}{\tubing^\circ},
\\
\dualCompatibilityVector{\tube}{\tubing^\circ} + \dualCompatibilityVector{\tube'}{\tubing^\circ} & = \dualCompatibilityVector{\tube[a]}{\tubing^\circ} + \dualCompatibilityVector{\tube[a]'}{\tubing^\circ},
\\
2 \, \dualCompatibilityVector{\tube}{\tubing^\circ} + \dualCompatibilityVector{\tube'}{\tubing^\circ} & = \dualCompatibilityVector{\tsup}{\tubing^\circ} + \dualCompatibilityVector{\tube[a]}{\tubing^\circ},
\\
2 \, \dualCompatibilityVector{\tube}{\tubing^\circ} + \dualCompatibilityVector{\tube'}{\tubing^\circ} & = \dualCompatibilityVector{\tinf}{\tubing^\circ} + \dualCompatibilityVector{\tube[a]'}{\tubing^\circ},
\end{align*}
when none of~$\tube$, $\tube'$ and~$\tsup$ have~$n$ vertices. We can also have the linear dependences
\begin{align*}
\dualCompatibilityVector{\tube}{\tubing^\circ} + \dualCompatibilityVector{\tube'}{\tubing^\circ} & = 2 \, \dualCompatibilityVector{\tsup}{\tubing^\circ} + \dualCompatibilityVector{\tinf}{\tubing^\circ},
\\
2 \, \dualCompatibilityVector{\tube}{\tubing^\circ} + \dualCompatibilityVector{\tube'}{\tubing^\circ} & = 2 \, \dualCompatibilityVector{\tsup}{\tubing^\circ} + \dualCompatibilityVector{\tube[a]}{\tubing^\circ},
\end{align*}
when~$|\tsup| = n$ and
\[
\dualCompatibilityVector{\tube}{\tubing^\circ} + \dualCompatibilityVector{\tube'}{\tubing^\circ} = \dualCompatibilityVector{\tinf_1}{\tubing^\circ}, \qquad\text{where } \tinf_1 \in \tinf.
\]
when~$|\tube| = |\tube'| = n$.
Again, no tube of~$\tubing^\circ$ can appear in the right hand sides of the linear dependences. Therefore, when~$\tube, \tube' \notin \tubing^\circ$ but $(\tubing \cap \tubing') \cap \tubing^\circ \ne \varnothing$, the linear dependences are obtained from the generic ones above by deleting terms in their right hand sides. 

We now define a height function~$\omega$ on tubes on~$\cycleG_{n+1}$ by
\[
\omega(\tube) = 
\begin{cases}
f(|\tube|) & \text{if } \tube \notin \tubing^\circ \text{ and } |\tube| \ne n, \\
\displaystyle{\frac{f(|\tube|)}{2}} & \text{if } \tube \notin \tubing^\circ \text{ and } |\tube| = n , \\
\Omega & \text{otherwise,}
\end{cases}
\]
where~$f: \R \to \R_{>0}$ is any strictly concave increasing positive function and~$\Omega \in \R$ is a large enough constant. By definition of~$\tsup$, $\tinf$, $\tube[a]$ and~$\tube[a]'$, and using that~$f$ is concave and increasing, we obtain that~$\omega$ satisfies a strict inequality for each linear dependence above. We conclude as before by choosing~$\Omega$ large enough that~$\omega$ satisfies the Condition~(2) of Proposition~\ref{prop:polytopalityFan} for any flip.
\end{proof}

Our last proof concerns the polytopality of the compatibility fan for the star, for which we have presented a candidate in Section~\ref{subsec:polytopality}. 

\begin{proof}[Proof of Proposition~\ref{prop:polytopeStar}]
We just have to show that for any tube~$\tube$ and any maximal tubing~$\tubing$ on~$\starG_{n+1}$, the point~$\b{x}(\tubing)$ belongs to the half-space~$\HS(\tube)$ and to the boundary of this half-space if and only if~$\tube \in \tubing$. 

Consider first a tube~$\tube$ not in~$\tubing^\circ$. Let~$\tinf$ denote the inclusion minimal tube of~$\tubing \cup \ground$ containing the central vertex~$*$. Then the other tubes of~$\tubing$ are all leaves of~$\starG_{n+1}$ contained in~$\tinf$ and a nested chain of tubes~$\tinf = \tube_{|\tinf|} \subsetneq \tube_{|\tinf|+1} \subsetneq \dots \subsetneq \tube_{n+1} = \ground$ of~$\starG_{n+1}$. Therefore, we have~$\b{x}(\tubing)_i = 0$ if $\{\ell_i\} \subseteq \tinf$ and~$\b{x}(\tubing)_i = j-1$ if~$\{\ell_i\} = \tube_j \ssm \tube_{j-1}$. We conclude that
\[
\dotprod{\compatibilityVector{\tubing^\circ}{\tube}}{\b{x}(\tubing)} = \sum_{\substack{i \in [n] \\ \ell_i \in \tube}} \b{x}(\tube)_i = \sum_{\substack{|\tinf| \le j \le n+1 \\ \tube_j \ssm \tube_{j-1} \not\subseteq \tube}} (j-1) \le \sum_{j = |\tube|}^{n} j = f(|\tube|),
\]
with equality if and only if~$\tube_j \ssm \tube_{j-1} \not\subseteq \tube$ for all~$|\tube| \le j \le n+1$, \ie if and only if~$\tube = \tube_{|\tube|}$.

Finally, for any~$i \in [n]$, we have~$\dotprod{\compatibilityVector{\tubing^\circ}{\{\ell_i\}}}{\b{x}(\tubing)} = -\b{x}(\tubing)_i \le 0$, with equality if and only if the inclusion minimal tube of~$\tubing \cup \ground$ containing~$i$ is~$\{\ell_i\}$, \ie if and only if~$\{\ell_i\} \in \tubing$.
\end{proof}

\subsection{Design compatibility fan (Theorem~\ref{theo:compatibilityFanDesign})}
\label{subsec:proofDesignCompatibilityFan}

The proof of Theorem~\ref{theo:compatibilityFanDesign} still relies on Proposition~\ref{prop:characterizationFan}, that is on the understanding of the linear dependences of the compatibility vectors of the tubes involved in a flip. We now need to distinguish two kinds of flips: we call \defn{round flips} those exchanging two round tubes, and \defn{square flips} those exchanging a square to a round tube.

We claim that Theorem~\ref{theo:compatibilityFanRefined} still holds for round flips. Indeed, a coordinatewise verification shows that the linear dependences exhibited in Section~\ref{subsec:proofCompatibilityFan} still hold for a initial maximal design tubing~$\tubing^\circ$: the arguments are identical for coordinates corresponding to round tubes and straightforward for coordinates corresponding to square tubes.

It thus remains to show the \textbf{Separating Flip Property} for square flips. It turns out that the proof for square flips is much easier as the linear dependences only involve compatibility vectors of forced tubes. Using the duality trick presented in Section~\ref{subsec:proofDualCompatibilityFan} and the fact that any two maximal design tubings are connected by a sequence of square flips, it is equivalent to prove the \textbf{Separating Flip Property} for the compatibility fan or for the dual compatibility fan. In the sequel, we prefer to work with the dual compatibility vectors.

Fix an initial maximal design tubing~$\tubing^\circ$ on a graph~$\graphG$. Consider a round tube~$\tube$ exchangeable with a square tube~$\squareTube{v}$, that is the vertex~$v$ is contained in~$\tube$. The forced tubes of this square flip (\ie the tubes contained in all flips exchanging~$\tube$ and~$\squareTube{v}$) are the following:
\begin{itemize}
\item the square tubes~$\squareTube{w_1}, \dots \squareTube{w_k}$ for the neighbors~$w_1, \dots, w_k$ of~$\tube$ in~$\graphG$,
\item the round tubes~$\tube[a]_1,\dots,\tube[a]_{\ell}$ given by the connected components of~$\graphG{}[\tube\ssm \{v\}]$.
\end{itemize} 
Let~$p$ be number of~$w_i$'s which are roots of initial round tubes in~$\tubing^\circ$ and let~$q$ be the number of~$\tube[a]_i$'s containing an initial square tube of~$\tubing^\circ$. Suppose that 
\begin{itemize}
\item the~$w_i$'s are ordered such that~$w_1, \dots, w_p$ are roots of tubes~$\tube_1^\circ, \dots, \tube_p^\circ$ in~$\tubing^\circ$ with~$\tube_i^\circ \not\supseteq \tube_j^\circ$ for~$1 \le i < j \le p$, while~$\squareTube{w_{p+1}}, \dots, \squareTube{w_k}$ are square tubes of~$\tubing^\circ$,
\item the~$\tube[a]_i$'s are ordered such that~$\tube[a]_1,\dots,\tube[a]_q$ contain an initial square tube of~$\tubing^\circ$ while~$\tube[a]_{q+1},\dots,\tube[a]_{\ell}$ do not. 
\end{itemize}
With these notations, the reader can check that if~$p = 0$ and~$q > 0$, then the dual compatibility vectors of these forced tubes satisfy the dependence
\begin{equation}
\label{eq:designTrivialDependence}
\dualCompatibilityVector{\tube}{\tubing^\circ} = \sum_{j \in [q]} \big(\dualCompatibilityVector{\tube[a]_j}{\tubing^\circ} - \dualCompatibilityVector{\squareTube{v}}{\tubing^\circ}\big)
\end{equation}
We now adapt this linear dependence to cover the case when~$p \ge 0$ and~$q > 0$. Namely, we claim that
\[
\dualCompatibilityVector{\tube}{\tubing^\circ} = \sum_{j \in [q]} \big(\dualCompatibilityVector{\tube[a]_j}{\tubing^\circ} - \dualCompatibilityVector{\squareTube{v}}{\tubing^\circ}\big) + \sum_{i \in [p]}\beta_i \, \dualCompatibilityVector{\squareTube{w_i}}{\tubing^\circ}
\]
where the coefficients~$\beta_1, \dots, \beta_p$ are recursively defined by
\begin{equation}
\label{eq:designFullDependence}
\beta_i = \compatibilityDegree{\tube}{\tube_i^\circ} - \bigg(\sum_{j \in [q]} \big(\compatibilityDegree{\tube[a]_{j}}{\tube_i^\circ} -\compatibilityDegree{\squareTube{v}}{\tube_i^\circ}\big) + \sum_{r \in [i-1]} \beta_r \, \compatibilityDegree{\squareTube{w_r}}{\tube_i^\circ} \bigg).
\end{equation}
To prove it, we check this linear dependence coordinate by coordinate. It boils down to Equation~\eqref{eq:designTrivialDependence} for initial tubes contained in~$\tube$. It is also clear for the initial square tubes not contained in~$\tube$ as all dual compatibility degrees involved in~\eqref{eq:designFullDependence} vanish. Moreover, the coefficients~$\beta_1, \dots, \beta_p$ are defined in order to compensate for the default of the initial tubes~$\tube_1^\circ, \dots, \tube_p^\circ$ in Equation~\eqref{eq:designTrivialDependence}. Finally, we proceed by induction for the remaining initial tubes of~$\tubing^\circ$, that is the initial round tubes not contained in~$\tube$ and whose root is not one of the~$w_i$'s. Namely, since~$q > 0$ and thanks to our special ordering on the $w_i$'s, the coordinate corresponding to such a tube in all terms of Equality~\eqref{eq:designFullDependence} is a linear combination of the coordinates corresponding to its predecessors in the spine of~$\tubing^\circ$ (\ie the inclusion poset on round tubes of~$\tubing^\circ$).

Finally, we still have to check the case where~$q = 0$, meaning where~$\tube$ is contained in an initial round tube of~$\tubing^\circ$. We denote by~$\tube^\circ$ the inclusion minimal initial round tube of~$\tubing^\circ$ containing~$\tube$ and by~$z^\circ$ its root in~$\tubing^\circ$. We then have to distinguish whether or not~$z^\circ=v$:
\begin{itemize}
\item If~$z^\circ = v$, call~$w_1,\dots,w_r$ ($r \ge 1$) the neighbors of~$\tube$ that also belong to~$\tube^\circ$. We then claim that the following linear dependence holds:
\[
\dualCompatibilityVector{\tube}{\tubing^\circ} + r \, \dualCompatibilityVector{\squareTube{v}}{\tubing^\circ}= \sum_{i \in [r]} \dualCompatibilityVector{\squareTube{w_i}}{\tubing^\circ}.
\]
Indeed an initial tube~$\tube[s]^\circ$ of~$\tubing^\circ$ either contains~$\tube$ and thus all the vertices~$v, w_1, \dots, w_r$, or does not contain~$v$ so that the equality holds by counting the vertices~$w_1, \dots, w_r$ in~$\tube[s]^\circ$.
\item If~$z^\circ \ne v$, then~$z^\circ$ belongs to one of the tubes~$\tube[a]_1, \dots, \tube[a]_\ell$, say~$\tube[a]_1$. Let~$w_1,\dots,w_r$ be the neighbors of~$\tube$ contained in~$\tube^\circ$ that are also neighbors of~$\tube[a]_1$, and~$w_{r+1},\dots,w_s$ be the other neighbors of~$\tube$ contained in~$\tube^\circ$. Observe that~$s \ge 1$ since~$\tube \subset \tube^\circ$. One can then check that the linear dependence we look for is
\[
(s-r+1) \, \dualCompatibilityVector{\tube}{\tubing^\circ} = s \, \bigg( \dualCompatibilityVector{\tube[a]_1}{\tubing^\circ}-\dualCompatibilityVector{\squareTube{v}}{\tubing^\circ}-\sum_{i = r +1}^s \dualCompatibilityVector{\squareTube{w_i}}{\tubing^\circ} \bigg) + (s-r+1)\sum_{i = 1}^s \dualCompatibilityVector{\squareTube{w_i}}{\tubing^\circ}.
\]
It clearly holds for tubes not contained in~$\tube^\circ$ and for~$\tube^\circ$ itself. Consider thus an initial round tube~$\tube[s]^\circ$ of~$\tubing^\circ$ contained in~$\tube^\circ$. Since~$z^\circ \in \tube[a]_1$, $\tube[s]^\circ$ cannot contain~$\tube[a]_1$. Thus the first term of the right hand side vanishes for~$\tube[s]^\circ$, while~$\compatibilityDegree{\tube}{\tube[s]^\circ} = \sum_{i \in [s]} \compatibilityDegree{\squareTube{w_i}}{\tube[s]^\circ}$, concluding the proof. \qed
\end{itemize}

\subsection{Design nested complex isomorphisms (Proposition~\ref{prop:isomorphismDesignNormal} and Theorem~\ref{theo:designNestedComplexIsomorphisms})}
\label{subsec:proofDesignIsomorphisms}

We now prove our characterization of the design nested complex isomorphisms announced in Section~\ref{subsec:designNestedComplex}. In both proofs, we will use the following description of links in design nested complexes, similar to that of~\cite{CarrDevadoss} for links in nested complexes. We leave this proof to the reader.

\begin{lemma}
\label{lem:linksDesign}
The link of a tube~$\tube$ in the design nested complex~$\designNestedComplex(\graphG)$ is isomorphic,
\begin{itemize}
\item for a square tube~$\tube = \squareTube{v}$, to the join~$\designNestedComplex(\graphG_1) \ast \dots \ast \, \designNestedComplex(\graphG_\ell)$ of the design nested complexes of the connected components~$\graphG_1, \dots, \graphG_\ell$ of~$\graphG{}[\ground \ssm \{v\}]$.
\item for a round tube~$\tube$, to the join of the nested complex~$\nestedComplex(\graphG{}[\tube])$ of the restriction of~$\graphG$ to~$\tube$ with the design nested complex~$\designNestedComplex(\graphG^\star\tube)$ of the reconnected complement of~$\tube$ in~$\graphG$.
\end{itemize}
\end{lemma}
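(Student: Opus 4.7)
The proof will follow, for each of the two cases, the two-step strategy used in Lemma~\ref{lem:restriction}: first exhibit a bijection between design tubes of~$\graphG$ compatible with~$\tube$ and design tubes of the smaller graph(s) appearing in the statement, and then check that the compatibility relation is preserved. For the square case~$\tube = \squareTube{v}$, my starting point is that a design tube~$\tube'$ is compatible with~$\squareTube{v}$ precisely when~$\tube'$ and~$\squareTube{v}$ are not nested. For round~$\tube'$ this unfolds to~$v \notin \tube'$ (the case~$\tube' = \{v\}$ as a singleton round tube being ruled out since then~$\tube' \subseteq \squareTube{v}$), and for square~$\tube' = \squareTube{w}$ to~$w \ne v$. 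In either subcase~$\tube'$ uses only vertices of~$\ground \ssm \{v\}$; conversely, each design tube of~$\graphG{}[\ground \ssm \{v\}]$ pulls back to a design tube of~$\graphG$ compatible with~$\squareTube{v}$ (round tubes remain connected since they avoid~$v$). Since compatibility between two design tubes not involving~$v$ is intrinsic, this yields a simplicial isomorphism between the link of~$\squareTube{v}$ in~$\designNestedComplex(\graphG)$ and~$\designNestedComplex(\graphG{}[\ground \ssm \{v\}])$, and the splitting of the latter as the join~$\designNestedComplex(\graphG_1) \ast \dots \ast \designNestedComplex(\graphG_\ell)$ over the connected components of~$\graphG{}[\ground \ssm \{v\}]$ will then follow from the direct analogue of Proposition~\ref{prop:product} for design nested complexes, since two design tubes living in distinct connected components are always compatible.

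For the round case, I will split the design tubes compatible with~$\tube$ into three families. By Lemma~\ref{lem:restriction}, the round tubes of~$\graphG$ compatible with~$\tube$ are in bijection, via the map~$\tube[s] \mapsto \widetilde{\tube[s]}$, with the round tubes of~$\graphG{}[\tube] \sqcup \graphG^\star\tube$. The square tubes~$\squareTube{v}$ compatible with~$\tube$ are exactly those with~$v \notin \tube$ (otherwise~$\squareTube{v} \subseteq \tube$ would force a nested, hence incompatible, pair), which are precisely the square tubes of~$\graphG^\star\tube$. Merging these three families, the candidate join~$\nestedComplex(\graphG{}[\tube]) \ast \designNestedComplex(\graphG^\star\tube)$ arises naturally: round tubes of~$\graphG{}[\tube]$ on one side, and all design tubes of~$\graphG^\star\tube$ on the other.

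The last step will be to verify that this bijection preserves the full compatibility relation. Inside each of the two sides, compatibility between two round tubes reduces to Lemma~\ref{lem:restriction}, while compatibility involving a square tube is immediate from the local definition (it only depends on whether the square vertex belongs to the other tube). The real work lies in checking cross-compatibility between a round tube~$\tube_1 \subsetneq \tube$ and a design tube~$\tube_2'$ coming from~$\graphG^\star\tube$: $\tube_2'$ is either the image of a round tube of~$\graphG$ strictly containing~$\tube$, or the image of a round tube of~$\graphG$ disjoint and non-adjacent to~$\tube$, or a square tube~$\squareTube{v}$ with~$v \notin \tube$. A short case analysis shows that in each subcase $\tube_1$ and $\tube_2'$ are compatible (respectively nested, disjoint and non-adjacent, or non-nested involving a square tube whose vertex lies outside~$\tube_1$). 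I expect this case analysis to be the only mildly delicate step; once it is settled, the join decomposition announced in the statement follows immediately.
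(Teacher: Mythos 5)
Your proof is correct, and it follows exactly the route the paper intends: the paper does not write out a proof of this lemma (it is explicitly left to the reader, with a pointer to the analogous Lemma~\ref{lem:restriction}), and your argument is precisely that analogue — identify the design tubes compatible with~$\tube$ via the nestedness criterion for square tubes and the map~$\tube[s] \mapsto \widetilde{\tube[s]}$ for round ones, then check that compatibility is preserved and that the two families are mutually compatible, giving the join. The only cosmetic point is that in the round case the bijection lands on the \emph{proper} round tubes of~$\graphG{}[\tube]$ but on \emph{all} round tubes of~$\graphG^\star\tube$ (the improper tube~$\ground \ssm \tube$ being the image of~$\ground$), which is exactly why the statement pairs~$\nestedComplex(\graphG{}[\tube])$ with~$\designNestedComplex(\graphG^\star\tube)$; your final decomposition reflects this correctly.
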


Before proving Proposition~\ref{prop:isomorphismDesignNormal} and Theorem~\ref{theo:designNestedComplexIsomorphisms}, we need a technical result.

\begin{lemma}
\label{lem:groundImage}
Let~$\bar\graphG$ and~$\graphG$ be connected graphs with vertex sets~$\bar\ground$ and~$\ground$ respectively. Let~$\Phi$ be an isomorphism from the design nested complex~$\designNestedComplex(\bar\graphG)$ of~$\bar\graphG$ to the design (resp.~standard) nested complex~$\designNestedComplex(\graphG)$ (resp.~$\nestedComplex(\graphG)$) of~$\graphG$. If there exists a vertex~$v \in \ground$ such that all tubes of~$\graphG$ incompatible with~$\Phi(\bar\ground)$ are round tubes containing~$v$, then~$\bar\graphG$ and~$\graphG$ are paths.
\end{lemma}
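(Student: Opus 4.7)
The proof splits into two cases depending on whether $\Phi$ maps into $\designNestedComplex(\graphG)$ (Case~1) or into $\nestedComplex(\graphG)$ (Case~2). In both cases, since $\bar\graphG$ is connected, the tubes of $\bar\graphG$ incompatible with $\bar\ground$ are precisely the $\bar n = |\bar\ground|$ square tubes of $\bar\graphG$, and the fact that $\Phi$ preserves incompatibility yields a bijection between this set and the set of tubes of $\graphG$ incompatible with $\Phi(\bar\ground)$.

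The technical core is the following counting claim, which I would prove first: \emph{a connected graph $H$ on $N$ vertices has exactly $N$ connected subgraphs containing a fixed vertex $u$ if and only if $H$ is a path with $u$ at an endpoint.} The poset of connected subgraphs of $H$ containing $u$, ordered by inclusion, has minimum $\{u\}$ and maximum $H$; every maximal chain has length $N-1$ (adding one vertex at a time), so the poset has at least $N$ elements, with equality iff it is itself a single chain. This forces each connected subgraph containing $u$ to admit a unique outside adjacent vertex; unravelling this from $\{u\}$ upward shows that $H$ is a path with $u$ at an endpoint.

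In Case~1, if $\Phi(\bar\ground)$ were a round tube $T$, the square tube $\squareTube{u}$ for any $u \in T$ would be incompatible with $T$, violating the hypothesis that all incompatible tubes are round. Hence $\Phi(\bar\ground) = \squareTube{w}$ for some $w$, and since the round singleton $\{w\}$ is incompatible with $\squareTube{w}$, the hypothesis forces $w = v$. The bijection above then produces exactly $n = \bar n$ round tubes of $\graphG$ containing $v$, so the counting claim gives $\graphG = \pathG_n$ with $v$ at an endpoint. To handle $\bar\graphG$, Lemma~\ref{lem:linksDesign} identifies the link of $\bar\ground$ in $\designNestedComplex(\bar\graphG)$ with $\nestedComplex(\bar\graphG)$, and the link of $\squareTube{v}$ in $\designNestedComplex(\pathG_n)$ with $\designNestedComplex(\pathG_{n-1})$, which by Example~\ref{exm:isomorphismDesignNormal}\,(ii) is isomorphic to $\nestedComplex(\pathG_n)$. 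The resulting isomorphism $\nestedComplex(\bar\graphG) \cong \nestedComplex(\pathG_n)$ combined with Theorem~\ref{theo:nestedComplexIsomorphisms} then yields $\bar\graphG \cong \pathG_n$ as graphs.

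In Case~2, $\Phi(\bar\ground) = T$ is a proper round tube of $\graphG$. Applying the hypothesis to singletons of vertices adjacent to $T$ from outside shows that $v$ is the unique such vertex, and a short analysis identifies the tubes of $\graphG$ incompatible with $T$ as exactly the proper tubes of $\graphG$ containing $v$ but not containing $T$. Matching via Lemma~\ref{lem:restriction} the link of $T$ in $\nestedComplex(\graphG)$ with the link of $\bar\ground$ in $\designNestedComplex(\bar\graphG)$ yields $\nestedComplex(\bar\graphG) \cong \nestedComplex(\graphG[T]) \ast \nestedComplex(\graphG^\star T)$. A direct check shows that the nested complex of a connected graph is not a nontrivial combinatorial join: singletons of adjacent vertices are incompatible, so connectedness forces all singletons into the same factor, and any further tube in the other factor would be incompatible with some neighboring singleton. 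Hence either $|T| = 1$ or $|\ground \ssm T| = 1$; in either subcase the bijective count $n - 1 = \bar n$ together with the counting claim applied to $\graphG \ssm T$ or to $\graphG$ forces $\graphG$ to be a path, and a final invocation of Theorem~\ref{theo:nestedComplexIsomorphisms} on the link isomorphism gives $\bar\graphG$ a path as well. The main obstacle is the combinatorial counting claim, together with the delicate case analysis of how the link decomposes in Case~2.
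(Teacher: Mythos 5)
Your proof is correct, but it runs in the opposite direction from the paper's and rests on a different key lemma. The paper first proves that $\bar\graphG$ is a path: it observes that the images of the square tubes of $\bar\graphG$ form a nested chain of round tubes of $\graphG$ through $v$, then uses Lemma~\ref{lem:linksDesign} to show that the link of $\squareTube{\bar w}$ is a non-trivial join for all but two vertices $\bar w$, so that all but two vertices of $\bar\graphG$ are cut vertices and $\bar\graphG$ is a path; only afterwards does it deduce that $\graphG$ is a path (via the isomorphism $\Pi$ and Theorem~\ref{theo:nestedComplexIsomorphisms} in the standard case, and via a counting induction in the design case). You instead prove first that $\graphG$ is a path, using the enumerative characterization that a connected graph on $N$ vertices has at least $N$ connected subgraphs through any fixed vertex, with equality exactly for a path with that vertex at an endpoint; you then transfer the conclusion to $\bar\graphG$ through link isomorphisms and Theorem~\ref{theo:nestedComplexIsomorphisms}. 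Your counting lemma is essentially what the paper's ``immediate induction'' in its design case amounts to, so you have promoted a side remark into the engine of the whole proof, at the price of a finer preliminary analysis of $\Phi(\bar\ground)$ in the standard case (identifying $v$ as the unique outside neighbour of $T$, and using the join-indecomposability of the nested complex of a connected graph to force $|T|=1$ or $|\ground \ssm T|=1$). Both routes are sound; yours does not even need the nestedness of the images of the square tubes, and the join-indecomposability observation is a clean self-contained fact, while the paper's cut-vertex argument dispatches $\bar\graphG$ in one stroke without any case analysis on $\Phi(\bar\ground)$. The only place where your sketch is terse is the ``unravelling'' step of the counting claim: to conclude that $H$ is a path you also need that each new vertex is adjacent to no earlier vertex of the chain, but this follows at once by applying the uniqueness of the outside neighbour to the smaller initial segments, so it is not a gap.
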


\begin{proof}
Since~$\Phi$ is an isomorphism, it induces a bijection between the design tubes of~$\bar\graphG$ incompatible with~$\bar\ground$ and the tubes of~$\graphG$ incompatible with~$\Phi(\bar\ground)$. The former are precisely the square tubes of~$\bar\graphG$ (by definition of the compatibility of design tubes) while the later are some round tubes of~$\graphG$ containing~$v$ (by assumption). Since all the square tubes of~$\bar\graphG$ are compatible, it follows that their images by~$\Phi$ are nested in~$\graphG$. Let~$\bar{u}, \bar{v} \in \bar\ground$ be the only vertices such that~$|\Phi(\squareTube{\bar{u}})| = 1$ and~$|\Phi(\squareTube{\bar{v}})| = |\ground|-1$. For any other vertex~$\bar{w} \in \bar\ground$, the link of the tube~$\Phi(\squareTube{\bar{w}})$ is the join of two non-trivial design or standard nested complexes by Lemma~\ref{lem:linksDesign}. Since~$\Phi$ is an isomorphism, so is the link of the square tube~$\squareTube{\bar{w}}$, so that~$\bar{w}$ disconnects~$\bar\graphG$ again by Lemma~\ref{lem:linksDesign}. We conclude that all but two vertices of~$\bar\graphG$ disconnect~$\bar\graphG$, which implies that~$\bar\graphG$ is a path. In order to show that~$\graphG$ is also a path, we now distinguish two situations:
\begin{enumerate}[(i)]
\item Suppose first that~$\Phi$ is an isomorphism from the design nested complex~$\designNestedComplex(\bar\graphG)$ of~$\bar\graphG$ to the standard nested complex~$\nestedComplex(\graphG)$ of~$\graphG$. Then the composition of~$\Phi^{-1}$ with the isomorphism~$\Pi$ of Example~\ref{exm:isomorphismDesignNormal}\,(ii) is an isomorphism between the standard nested complex~$\nestedComplex(\graphG)$ of~$\graphG$ and the standard nested complex of a path, which implies by Theorem~\ref{theo:nestedComplexIsomorphisms} that~$\graphG$ itself is a path.
\item Suppose now that~$\Phi$ is an isomorphism from the design nested complex~$\designNestedComplex(\bar\graphG)$ of~$\bar\graphG$ to the design nested complex~$\designNestedComplex(\graphG)$ of~$\graphG$. Using Lemma~\ref{lem:linksDesign} and a similar argument as in the proof of Lemma~\ref{lem:isomorphismsPreserveConnectedComponents}, the image~$\Phi(\bar\ground)$ of~$\bar\ground$ is either a square tube~$\squareTube{v}$ of~$\graphG$, or a singleton round tube~$\{v\}$ of~$\graphG$, or the round tube~$\ground$ of~$\graphG$. The last two cases are discarded by our assumption since the square tube~$\squareTube{v}$ is incompatible with~$\{v\}$ and with~$\ground$. Therefore, we obtain that~$\Phi(\bar\ground) = \squareTube{v}$ and the tubes of~$\graphG$ incompatible with~$\Phi(\bar\ground) = \squareTube{v}$ are exactly the round tubes containing~$v$. So the square tubes of~$\bar\graphG$ are in bijection with the tubes of~$\graphG$ containing~$v$. Since~$|\bar\ground|=|\ground|$ (dimensions of isomorphic simplicial complexes), an immediate induction shows that~$\graphG$ is a path. \qedhere
\end{enumerate}
\end{proof}

We are now ready to prove our classification of design nested complex isomorphisms.

\begin{proof}[Proof of Proposition~\ref{prop:isomorphismDesignNormal}]

We show the result by induction on~$|\bar\ground|$, the cases~$|\bar\ground| \le 2$ being trivial. Assume that~$|\bar\ground| \ge 3$ and consider an isomorphism~$\Phi:\designNestedComplex(\bar\graphG)\to\nestedComplex(\graphG)$. 

We first consider the image~$\Phi(\bar\ground)$ of the round tube~$\bar\ground$ of~$\bar\graphG$. It is either a singleton, or the complement of a singleton. Otherwise, its link would be the join of two non-trivial complexes by Lemma~\ref{lem:linksDesign}, which yields a contradiction using a similar argument as in the proof of Lemma~\ref{lem:isomorphismsPreserveConnectedComponents}. We claim that we can assume that~$\Phi(\bar\ground)$ is a singleton adjacent to at least two vertices in~$\graphG$. Indeed, 
\begin{itemize}
\item if~$\Phi(\bar\ground) = \ground \ssm \{v\}$, then all tubes incompatible with~$\Phi(\bar\ground) = \ground \ssm \{v\}$ contain~$v$,
\item if~$\Phi(\bar\ground) = \{v\}$ where~$v$ has a unique neighbor~$w$ in~$\graphG$, then all tubes incompatible with~$\Phi(\bar\ground) = \{v\}$ contain~$w$,
\end{itemize}
In both cases, Lemma~\ref{lem:groundImage} ensures that~$\bar\graphG$ and~$\graphG$ are paths, and we can compose~$\Phi$ with a rotation~$\rot^p$ to ensure that~$\Phi(\bar\ground)$ is a singleton adjacent to at least two vertices in~$\graphG$.

We now consider a vertex~$\bar{w} \in \bar\ground$ which does not disconnect~$\bar\graphG$ (such a vertex always exists). By Lemma~\ref{lem:linksDesign}, the link of the square tube~$\squareTube{\bar{w}}$ is not the join of two non-trivial nested complexes. Since~$\Phi$ is an isomorphism, so is the link of~$\Phi(\squareTube{\bar{w}})$, so that~$\Phi(\squareTube{\bar{w}})$ is either a singleton or the complement of a singleton again by Lemma~\ref{lem:linksDesign}. Moreover, since~$\squareTube{\bar{w}}$ is incompatible with~$\bar\ground$ and~$\Phi$ is an isomorphism, $\Phi(\squareTube{\bar{w}})$ is incompatible with~$\Phi(\bar\ground) = \{v\}$. Therefore, $\Phi(\squareTube{\bar{w}})$ is either a singleton~$\{w\}$ adjacent to~$\{v\}$, or the complement~$\ground \ssm \{v\}$ of the singleton~$\{v\}$. Now, since~$\bar\ground$ contains at least two vertices~$\bar{w}, \bar{w}'$ which do not disconnect~$\graphG$ and since~$\Phi(\squareTube{\bar{w}}) \ne \Phi(\squareTube{\bar{w}'})$, we can assume that~$\Phi(\squareTube{\bar{w}})$ is a singleton~$\{w\}$ adjacent to~$\{v\}$.

It implies that~$\Phi$ induces an isomorphism from the design nested complex~$\designNestedComplex(\bar\graphG{}[\bar\ground\ssm\{\bar{w}\}])$ to the nested complex~$\nestedComplex(\graphG^\star\{w\})$ of the reconnected complement of the tube~$\{w\}$ in~$\graphG$. The induction hypothesis implies that
\begin{itemize}
\item the graph~$\bar\graphG{}[\bar\ground\ssm\{\bar{w}\}]$ is isomorphic to the spider~$\spiderG_{\ninf}$ and the graph~$\graphG^\star\{w\}$ is isomorphic to the octopus~$\octopusG_{\ninf}$ (with head denoted~$v$), for a certain~$\ninf = \{n_1, \dots, n_{\ell}\} \in \N^\ell$.
\item the image~$\Phi(\bar\ground\ssm\{\bar{w}\})$ of~$\bar\ground\ssm\{\bar{w}\}$ is the pair~$\{v,w\}$ containing the central vertex of~$\graphG^\star\{w\}$,
\item the description of the images of the tubes of~$\bar\graphG$ not containing~$\bar{w}$ is given by~$\bar\Omega$, in particular~$\Phi(\squareTube{\bar{w}'})$ is a singleton adjacent to~$v$ in~$\graphG^\star\{w\}$. Since~$\Phi(\squareTube{\bar{w}'})$ cannot be an edge in~$\graphG$, it then has to be also a singleton~$\{w'\}$ in~$\graphG$, non-adjacent to~$w$ for compatibility.
\end{itemize}

Since~$\graphG^\star\{w\}$ is an octopus with head~$v$, it follows by definition of the reconnected complement that~$\graphG$ is either an octopus with head~$v$ or an octopus with head~$v$ with an additional edge of the form~$\{v,v_i^1\}$ for a certain~$i\in[\ell]$. We can now apply the same reasonning to~$\bar{w}'$ and since~$\Phi(\squareTube{\bar{w}})$ and~$\Phi(\squareTube{\bar{w}'})$ are non-adjacent singletons in~$\graphG$, we conclude that~$\graphG$ is an octopus. Since~$\Phi(\bar\ground)=\{v\}$, the graph~$\bar\graphG$ is a spider and the reader can check that the restriction of~$\Phi$ to the link of~$\bar\ground$ in the design nested complex~$\designNestedComplex(\bar\graphG)$ is the non-trivial isomorphism~$\Omega:\nestedComplex(\bar\graphG)\to\nestedComplex(\graphG^\star\{v\})$ defined in Section~\ref{subsec:many}. It follows that~$\Phi$ coincides with~$\bar\Omega$.
\end{proof}

\begin{proof}[Proof of Theorem~\ref{theo:designNestedComplexIsomorphisms}]
If~$\graphG$ is the path~$\pathG_n$, then its design nested complex~$\designNestedComplex(\graphG)$ is isomorphic to the nested complex~$\nestedComplex(\pathG_{n+1})$ by Example~\ref{exm:isomorphismDesignNormal}\,(ii). Therefore, the design nested complex~$\designNestedComplex(\graphG')$ is also isomorphic to the nested complex~$\nestedComplex(\pathG_{n+1})$, which implies that~$G'$ is the path~$\pathG_n$ by Proposition~\ref{prop:isomorphismDesignNormal}. We can therefore assume that~$\graphG$ and~$\graphG'$ are not paths.

Let~$\ground$ and~$\ground'$ denote the vertex sets of~$\graphG$ and~$\graphG'$. By Lemma~\ref{lem:linksDesign}, the link of~$\ground$ in~$\designNestedComplex(\graphG)$ is the nested complex~$\nestedComplex(\graphG)$. With a similar argument as in the proof of Lemma~\ref{lem:isomorphismsPreserveConnectedComponents}, it follows that its image~$\Phi(\ground)$ is either a square tube of~$\graphG'$, or a singleton round tube of~$\graphG'$, or the round tube~$\ground'$ of~$\graphG'$. We treat these situations separately.
\begin{description}
\item[if~$\Phi(\ground) = \squareTube{v'}$] 
All tubes of~$\graphG'$ incompatible with~$\Phi(\ground) = \squareTube{v'}$ are round tubes containing~$v'$. Thus by Lemma~\ref{lem:groundImage}, the graphs~$\graphG$ and~$\graphG'$ are paths, which we already excluded.

\item[if~$\Phi(\ground)=\ground'$]
By Lemma~\ref{lem:linksDesign}, $\Phi$ induces a non-trivial nested complex isomorphism~$\Psi$ from~$\nestedComplex(\graphG)$ to~$\nestedComplex(\graphG')$. It follows from Theorem~\ref{theo:nestedComplexIsomorphisms} that~$\graphG$ and~$\graphG'$ are isomorphic spiders and~$\Psi$ coincides with the non-trivial nested complex isomorphism~$\Omega$ described in Section~\ref{subsec:many}. It thus suffices to show that the nested complex isomorphism~$\Omega$ cannot be extended to design nested complexes. For this observe first that such an extension would send square tubes to square tubes. Now consider a singleton~$\{v\}$ of~$\graphG$ swapped by~$\Omega$. This singleton~$\{v\}$ is incompatible with the square tube~$\squareTube{v}$ and compatible with all other square tubes of~$\graphG$. Yet its image is incompatible with more than one square tube, a contradiction.

\item[if~$\Phi(\ground)=\{v'\}$]
The link of~$\{v'\}$ in the design nested complex~$\designNestedComplex(\graphG')$ is a design nested complex  isomorphic to the nested complex~$\nestedComplex(\graphG)$. By Proposition~\ref{prop:isomorphismDesignNormal}, we obtain that~$\graphG$ is an octopus and~$\Phi^{-1}(\ground')$ is the singleton containing its central vertex. Since~$\Phi^{-1}(\ground')\neq\ground$, the same argument applies to~$\Phi^{-1}$ and shows that~$\graphG'$ is also an octopus with the same legs as~$\graphG$. Moreover Proposition~\ref{prop:isomorphismDesignNormal} describes the images of round tubes of~$\graphG$ by~$\Phi$ and of round tubes of~$\graphG'$ by~$\Phi^{-1}$, which together forces~$\Phi$ to coincide with the map~$\Omega\design$. \qedhere
\end{description}
\end{proof}


\section*{Acknoledgements}

\enlargethispage{.4cm}
We thank Francisco Santos for comments and suggestions on a former version of this paper. We also thank an FPSAC referee for pointing out a misleading over-simplification in a previous version of the proof of Theorem~\ref{theo:dualCompatibilityFan}.


\bibliographystyle{alpha}
\bibliography{compatibilityFans}

\begin{thebibliography}{MHPS12}

\bibitem[BFS90]{BilleraFillimanSturmfels}
Louis~J. Billera, Paul Filliman, and Bernd Sturmfels.
\newblock Constructions and complexity of secondary polytopes.
\newblock {\em Adv.~Math.}, 83(2):155--179, 1990.

\bibitem[BHLT09]{BergeronHohlwegLangeThomas}
Nantel Bergeron, Christophe Hohlweg, Carsten Lange, and Hugh Thomas.
\newblock Isometry classes of generalized associahedra.
\newblock {\em S\'em. Lothar. Combin.}, 61A:Art. B61Aa, 13, 2009.

\bibitem[CD06]{CarrDevadoss}
Michael~P. Carr and Satyan~L. Devadoss.
\newblock Coxeter complexes and graph-associahedra.
\newblock {\em Topology Appl.}, 153(12):2155--2168, 2006.

\bibitem[CFZ02]{ChapotonFominZelevinsky}
Fr{\'e}d{\'e}ric Chapoton, Sergey Fomin, and Andrei Zelevinsky.
\newblock Polytopal realizations of generalized associahedra.
\newblock {\em Canad. Math. Bull.}, 45(4):537--566, 2002.

\bibitem[Cha00]{Chapoton}
Fr{\'e}d{\'e}ric Chapoton.
\newblock Alg\`ebres de {H}opf des permutah\`edres, associah\`edres et
  hypercubes.
\newblock {\em Adv. Math.}, 150(2):264--275, 2000.

\bibitem[CP14]{ChatelPilaud}
Gr\'egory Chatel and Vincent Pilaud.
\newblock {C}ambrian {H}opf {A}lgebras.
\newblock Preprint,
  \href{http://arxiv.org/abs/1411.3704}{\texttt{arXiv:1411.3704}}, 2014.

\bibitem[CP15]{CeballosPilaud}
Cesar Ceballos and Vincent Pilaud.
\newblock Denominator vectors and compatibility degrees in cluster algebras of
  finite types.
\newblock {\em Trans. Amer. Math. Soc.}, 367:1421--1439, 2015.

\bibitem[CSZ15]{CeballosSantosZiegler}
Cesar Ceballos, Francisco Santos, and G\"unter~M. Ziegler.
\newblock Many non-equivalent realizations of the associahedron.
\newblock {\em Combinatorica}, 35(5):513--551, 2015.

\bibitem[DCP95]{DeConciniProcesi}
Conrado De~Concini and Claudio Procesi.
\newblock Wonderful models of subspace arrangements.
\newblock {\em Selecta Math. (N.S.)}, 1(3):459--494, 1995.

\bibitem[Deh10]{Dehornoy}
Patrick Dehornoy.
\newblock On the rotation distance between binary trees.
\newblock {\em Adv. Math.}, 223(4):1316--1355, 2010.

\bibitem[Dev09]{Devadoss}
Satyan~L. Devadoss.
\newblock A realization of graph associahedra.
\newblock {\em Discrete Math.}, 309(1):271--276, 2009.

\bibitem[DFRS15]{DevadossForceyReisdorfShowers}
Satyan~L. Devadoss, Stefan Forcey, Stephen Reisdorf, and Patrick Showers.
\newblock Convex polytopes from nested posets.
\newblock {\em European J. Combin.}, 43:229--248, 2015.

\bibitem[DHV11]{DevadossHeathVipismakul}
Satyan~L. Devadoss, Timothy Heath, and Wasin Vipismakul.
\newblock Deformations of bordered surfaces and convex polytopes.
\newblock {\em Notices Amer. Math. Soc.}, 58(4):530--541, 2011.

\bibitem[DRS10]{DeLoeraRambauSantos}
Jesus~A. {De Loera}, J\"org Rambau, and Francisco Santos.
\newblock {\em Triangulations: Structures for Algorithms and Applications},
  volume~25 of {\em Algorithms and {C}omputation in Mathematics}.
\newblock Springer Verlag, 2010.

\bibitem[FS05]{FeichtnerSturmfels}
Eva~Maria Feichtner and Bernd Sturmfels.
\newblock Matroid polytopes, nested sets and {B}ergman fans.
\newblock {\em Port. Math. (N.S.)}, 62(4):437--468, 2005.

\bibitem[FZ02]{FominZelevinsky-ClusterAlgebrasI}
Sergey Fomin and Andrei Zelevinsky.
\newblock Cluster algebras. {I}. {F}oundations.
\newblock {\em J. Amer. Math. Soc.}, 15(2):497--529 (electronic), 2002.

\bibitem[FZ03a]{FominZelevinsky-ClusterAlgebrasII}
Sergey Fomin and Andrei Zelevinsky.
\newblock Cluster algebras. {II}. {F}inite type classification.
\newblock {\em Invent. Math.}, 154(1):63--121, 2003.

\bibitem[FZ03b]{FominZelevinsky-YSystems}
Sergey Fomin and Andrei Zelevinsky.
\newblock {$Y$}-systems and generalized associahedra.
\newblock {\em Ann. of Math. (2)}, 158(3):977--1018, 2003.

\bibitem[FZ07]{FominZelevinsky-ClusterAlgebrasIV}
Sergey Fomin and Andrei Zelevinsky.
\newblock Cluster algebras. {IV}. {C}oefficients.
\newblock {\em Compos. Math.}, 143(1):112--164, 2007.

\bibitem[GKZ08]{GelfandKapranovZelevinsky}
Israel Gelfand, Mikhail M.~M. Kapranov, and Andrei Zelevinsky.
\newblock {\em Discriminants, resultants and multidimensional determinants}.
\newblock Modern Birkh\"auser Classics. Birkh\"auser Boston Inc., Boston, MA,
  2008.
\newblock Reprint of the 1994 edition.

\bibitem[Hai84]{Haiman}
Mark Haiman.
\newblock Constructing the associahedron.
\newblock Unpublished manuscript, 11 pages, available at
  \url{http://www.math.berkeley.edu/~mhaiman/ftp/assoc/manuscript.pdf}, 1984.

\bibitem[HL07]{HohlwegLange}
Christophe Hohlweg and Carsten Lange.
\newblock Realizations of the associahedron and cyclohedron.
\newblock {\em Discrete Comput.~Geom.}, 37(4):517--543, 2007.

\bibitem[HLR10]{HohlwegLortieRaymond}
Christophe Hohlweg, Jonathan Lortie, and Annie Raymond.
\newblock The centers of gravity of the associahedron and of the permutahedron
  are the same.
\newblock {\em Electron. J. Combin.}, 17(1):Research Paper 72, 14, 2010.

\bibitem[HLT11]{HohlwegLangeThomas}
Christophe Hohlweg, Carsten Lange, and Hugh Thomas.
\newblock Permutahedra and generalized associahedra.
\newblock {\em Adv. Math.}, 226(1):608--640, 2011.

\bibitem[HN99]{HurtadoNoy}
Ferran Hurtado and Marc Noy.
\newblock Graph of triangulations of a convex polygon and tree of
  triangulations.
\newblock {\em Comput. Geom.}, 13(3):179--188, 1999.

\bibitem[HNT05]{HivertNovelliThibon-algebraBinarySearchTrees}
Florent Hivert, Jean-Christophe Novelli, and Jean-Yves Thibon.
\newblock The algebra of binary search trees.
\newblock {\em Theoret. Comput. Sci.}, 339(1):129--165, 2005.

\bibitem[Hoh12]{Hohlweg}
Christophe Hohlweg.
\newblock Permutahedra and associahedra.
\newblock In Folkert M{\"u}ller-Hoissen, Jean Pallo, and Jim Stasheff, editors,
  {\em Associahedra, Tamari Lattices and Related Structures~--~Tamari Memorial
  Festschrift}, volume 299 of {\em Progress in Mathematics}, pages 129--159.
  Birkh{\"a}user, 2012.

\bibitem[Lee89]{Lee}
Carl~W. Lee.
\newblock The associahedron and triangulations of the {$n$}-gon.
\newblock {\em European J.~Combin.}, 10(6):551--560, 1989.

\bibitem[Lod04]{Loday}
Jean-Louis Loday.
\newblock Realization of the {S}tasheff polytope.
\newblock {\em Arch.~Math.~(Basel)}, 83(3):267--278, 2004.

\bibitem[LP12a]{LamPylyavskyy-LaurentPhenomenonAlgebras}
Thomas Lam and Pavlo Pylyavskyy.
\newblock {L}aurent phenomenon algebras.
\newblock Preprint,
  \href{http://arxiv.org/abs/1206.2611}{\texttt{arXiv:1206.2611}}, 2012.

\bibitem[LP12b]{LamPylyavskyy-LinearLaurentPhenomenonAlgebras}
Thomas Lam and Pavlo Pylyavskyy.
\newblock Linear {L}aurent phenomenon algebras.
\newblock Preprint,
  \href{http://arxiv.org/abs/1206.2612}{\texttt{arXiv:1206.2612}}, 2012.

\bibitem[LP13]{LangePilaud}
Carsten Lange and Vincent Pilaud.
\newblock Associahedra via spines.
\newblock To appear in \emph{Combinatorica} (preprint available at
  \href{http://arxiv.org/abs/1307.4391}{\texttt{arXiv:1307.4391}}), 2013.

\bibitem[LR98]{LodayRonco}
Jean-Louis Loday and Mar{\'{\i}}a~O. Ronco.
\newblock Hopf algebra of the planar binary trees.
\newblock {\em Adv. Math.}, 139(2):293--309, 1998.

\bibitem[MHPS12]{TamariFestschrift}
Folkert M{\"u}ller-Hoissen, Jean~Marcel Pallo, and Jim Stasheff, editors.
\newblock {\em Associahedra, {T}amari Lattices and Related Structures. Tamari
  Memorial Festschrift}, volume 299 of {\em Progress in Mathematics}.
\newblock Birkh{\"a}user, Basel, 2012.

\bibitem[{OEIS}]{OEIS}
The {O}n-{L}ine {E}ncyclopedia of {I}nteger {S}equences.
\newblock Published electronically at \url{http://oeis.org}, 2010.

\bibitem[Pil13]{Pilaud}
Vincent Pilaud.
\newblock Signed tree associahedra.
\newblock Preprint,
  \href{http://arxiv.org/abs/1309.5222}{\texttt{arXiv:1309.5222}}, 2013.

\bibitem[Pos09]{Postnikov}
Alexander Postnikov.
\newblock Permutohedra, associahedra, and beyond.
\newblock {\em Int. Math. Res. Not. IMRN}, (6):1026--1106, 2009.

\bibitem[Pou14]{Pournin}
Lionel Pournin.
\newblock The diameter of associahedra.
\newblock {\em Adv. Math.}, 259:13--42, 2014.

\bibitem[PS12]{PilaudSantos-brickPolytope}
Vincent Pilaud and Francisco Santos.
\newblock The brick polytope of a sorting network.
\newblock {\em European~J.~Combin.}, 33(4):632--662, 2012.

\bibitem[PS15a]{PilaudStump-brickPolytope}
Vincent Pilaud and Christian Stump.
\newblock Brick polytopes of spherical subword complexes and generalized
  associahedra.
\newblock {\em Adv.~Math.}, 276:1--61, 2015.

\bibitem[PS15b]{PilaudStump-barycenter}
Vincent Pilaud and Christian Stump.
\newblock Vertex barycenter of generalized associahedra.
\newblock {\em Proc. Amer. Math. Soc.}, 143(6):2623--2636, 2015.

\bibitem[Rea04]{Reading-latticeCongruences}
Nathan Reading.
\newblock Lattice congruences of the weak order.
\newblock {\em Order}, 21(4):315--344 (2005), 2004.

\bibitem[Rea06]{Reading-CambrianLattices}
Nathan Reading.
\newblock Cambrian lattices.
\newblock {\em Adv.~Math.}, 205(2):313--353, 2006.

\bibitem[Rea14]{Reading-UniversalClusterAlgebra}
Nathan Reading.
\newblock Universal geometric cluster algebras.
\newblock {\em Math. Z.}, 277(1-2):499--547, 2014.

\bibitem[RS09]{ReadingSpeyer}
Nathan Reading and David~E. Speyer.
\newblock Cambrian fans.
\newblock {\em J.~Eur.~Math.~Soc.~(JEMS)}, 11(2):407--447, 2009.

\bibitem[RSS03]{RoteSantosStreinu-polytope}
G{\"u}nter Rote, Francisco Santos, and Ileana Streinu.
\newblock Expansive motions and the polytope of pointed pseudo-triangulations.
\newblock In {\em Discrete and computational geometry}, volume~25 of {\em
  Algorithms Combin.}, pages 699--736. Springer, Berlin, 2003.

\bibitem[Sta63]{Stasheff}
Jim Stasheff.
\newblock Homotopy associativity of {H}-spaces {I}, {II}.
\newblock {\em Trans. Amer. Math. Soc.}, 108(2):293--312, 1963.

\bibitem[Ste13]{Stella}
Salvatore Stella.
\newblock Polyhedral models for generalized associahedra via {C}oxeter
  elements.
\newblock {\em J. Algebraic Combin.}, 38(1):121--158, 2013.

\bibitem[STT88]{SleatorTarjanThurston}
Daniel~D. Sleator, Robert~E. Tarjan, and William~P. Thurston.
\newblock Rotation distance, triangulations, and hyperbolic geometry.
\newblock {\em J. Amer. Math. Soc.}, 1(3):647--681, 1988.

\bibitem[Tam51]{Tamari}
Dov Tamari.
\newblock {\em Mono\"ides pr\'eordonn\'es et cha\^ines de Malcev}.
\newblock PhD thesis, Universit\'e Paris Sorbonne, 1951.

\bibitem[Vol10]{Volodin}
Vadim Volodin.
\newblock Cubical realizations of flag nestohedra and a proof of {G}al's
  conjecture for them.
\newblock {\em Uspekhi Mat. Nauk}, 65(1(391)):183--184, 2010.

\bibitem[Zel06]{Zelevinsky}
Andrei Zelevinsky.
\newblock Nested complexes and their polyhedral realizations.
\newblock {\em Pure Appl. Math. Q.}, 2(3):655--671, 2006.

\bibitem[Zie95]{Ziegler}
G{\"u}nter~M. Ziegler.
\newblock {\em Lectures on polytopes}, volume 152 of {\em Graduate Texts in
  Mathematics}.
\newblock Springer-Verlag, New York, 1995.

\end{thebibliography}
\label{sec:biblio}

\end{document}